\documentclass[a4paper, british, oneside]{amsart}

\usepackage[utf8]{inputenc}
\usepackage[T1]{fontenc}

\usepackage[british]{babel}
\usepackage{csquotes}

\usepackage{amsmath,amsthm,amssymb}
\usepackage[margin=2cm]{geometry}
\usepackage[all]{xy}
\usepackage[hidelinks]{hyperref}
\usepackage{microtype}
\usepackage{enumitem}

\renewcommand{\MR}[1]{}

\theoremstyle{definition}
\newtheorem{theorem}{Theorem}[section]
\newtheorem{lemma}[theorem]{Lemma}
\newtheorem{proposition}[theorem]{Proposition}
\newtheorem{corollary}[theorem]{Corollary}
\newtheorem{definition}[theorem]{Definition}

\theoremstyle{remark}
\newtheorem{remark}[theorem]{Remark}

\newtheorem{example}[theorem]{Example}

\DeclareMathOperator{\Th}{Th}
\DeclareMathOperator{\Spec}{Spec}
\DeclareMathOperator{\Frac}{Frac}
\DeclareMathOperator{\Gal}{Gal}

\newcommand{\unspacedrt}[2][]{\mspace{-3mu}\sqrt[#1]{#2}}

\usepackage[backend=biber,style=alphabetic,
  doi=false,isbn=false,
  maxbibnames=10, maxalphanames=4,
  sorting=nyt,
  giveninits=true,
  block = space
  ]{biblatex}

\DeclareSourcemap{
  \maps{
    \map{
      \step[fieldsource=edition,
            match=First,
            replace=1]
      \step[fieldsource=edition,
            match=Second,
            replace=2]
      \step[fieldsource=edition,
            match=Third,
            replace=3]
    }
  }
}

\renewbibmacro*{addendum+pubstate}{%
  \printfield{pubstate}%
  \newunit\newblock
  \printfield{addendum}}

\renewbibmacro{in:}{%
  \ifentrytype{article}{}{\printtext{\bibstring{in}\intitlepunct}}}
 \DeclareFieldFormat
  [article,inbook,incollection,inproceedings,patent,thesis,unpublished]
  {title}{#1\isdot}

\AtEveryBibitem{\clearfield{pagetotal}}
\AtEveryBibitem{\ifentrytype{book}{\clearfield{pages}}{}}
\AtEveryBibitem{\ifentrytype{book}{\clearfield{url}}{}}
\AtEveryBibitem{\ifentrytype{article}{\clearfield{url}}{}}
\AtEveryBibitem{\ifentrytype{incollection}{\clearfield{url}}{}}

\author{Philip Dittmann}
\address{Department of Mathematics, University of Manchester, Manchester M13 9PL, United Kingdom}
\email{philip.dittmann@manchester.ac.uk}

\title{Existential theories of henselian valued fields under a formal smoothness assumption}

\begin{document}

\begin{abstract}
  We study existential theories of henselian valued fields of positive characteristic with parameters from
  a trivially valued subfield.
  Compared to previous work, we relax perfectness and separability assumptions,
  and instead work with the weaker algebraic hypothesis of formal smoothness over the parameter field,
  which we discuss in detail in our setting.
  Assuming a weak consequence of resolution of singularities, which was already used in previous work,
  we obtain an axiomatisation of the existential theory of such a valued field in terms of
  the existential theory of the residue field, both over the same parameter field.
  This result has natural applications to asymptotic theories of completions
  of function fields of curves.
  We work these out in detail for the case of function fields over fairly general pseudo-algebraically closed fields,
  where we obtain decidability of the sets of universal/existential sentences
  holding in all completions or all but finitely many completions, respectively.
\end{abstract}

\maketitle

\section{Introduction}

Hilbert's tenth problem asked for an algorithm to decide, given a polynomial $f \in \mathbb{Z}[X_1, \dotsc, X_n]$,
whether $f$ has a zero in $\mathbb{Z}$.
This was shown to be undecidable in work of Davis, Putnam, Robinson, and Matiyasevich,
but spawned ongoing investigations of variants in a wide variety of other rings and fields \cite{Shlapentokh_book}.
In the terminology of model theory, these variants concern the decidability of the existential theories of rings in certain languages.
Here we study the existential first-order theory of a henselian valued field $(K,v)$.
Prominent examples for us are power series fields $F(\!(\pi)\!)$ over a base field $F$ with their $\pi$-adic (complete discrete) valuation.

The first-order theories of henselian valued fields are extremely well-understood if their residue characteristic is zero:
by celebrated results of Ax--Kochen and Ershov, such theories are completely determined by the theories of their value group and residue field
\cite[Theorem~5.1]{Dries_lectures-model-theory-valued-fields}.
Moving beyond residue characteristic zero, only partial results are known:
Our understanding is essentially complete in the case of finitely ramified henselian valued fields
(which are always of characteristic zero, but positive residue characteristic) \cite{ADJ_finitely-ramified},
and we have good understanding for tame fields \cite{Kuhlmann_tame} (although some questions remain in mixed characteristic tame fields)
and separably tame fields \cite{KuhlmannPal_separably-tame,Anscombe_lambda-fns-separably-tame}.

Our concern in this article are henselian valued fields of positive characteristic, such as the aforementioned power series fields $F(\!(t)\!)$,
which are never (separably) tame.
A very general result in this regard is the following:
\begin{theorem}[{\cite[Corollary~1.2]{AnscombeFehm_existential-equichar}}]
  Let $(K,v)$ and $(L,w)$ be henselian non-trivially valued fields of positive characteristic.
  Then the residue fields $Kv$ and $Lw$ have the same existential theory
  if and only if the valued fields $(K,v)$ and $(L,w)$ have the same existential theory.
\end{theorem}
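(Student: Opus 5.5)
The plan follows Anscombe--Fehm. The direction from valued fields to residue fields is easy. Existential sentences about the residue field translate into existential sentences about the valued field: an existential $\mathcal{L}_{\mathrm{ring}}$-sentence $\exists \bar x\, \varphi(\bar x)$ holds in $Kv$ iff $\exists \bar y\,(\bar y \in \mathcal{O} \wedge \varphi'(\bar y))$ holds in $(K,v)$. Here $\varphi'$ replaces each polynomial equation $f=0$ by $f(\bar y)\in\mathfrak m$ and each inequation $f \ne 0$ by $f(\bar y)\in\mathcal{O}^\times$. Both $\mathfrak m$ and $\mathcal{O}^\times$ are existentially (even quantifier-free) definable in the language of valued fields with a predicate for $\mathcal{O}$. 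So equality of the existential theories of $(K,v)$ and $(L,w)$ gives equality for $Kv$ and $Lw$.

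For the converse, assume $\Th_\exists(Kv)=\Th_\exists(Lw)$. By symmetry and a standard compactness/saturation argument, it suffices to show the following: every existential sentence true in $(K,v)$ holds in some valued field elementarily equivalent to $(L,w)$. I will reduce both sides to a canonical model. Let $F$ be a field. I claim that any henselian non-trivially valued field $(K,v)$ of characteristic $p$ with residue field $F$ has the same existential theory as $(F(\!(t)\!),v_t)$.

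For this I will prove two embedding facts.

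(a) $(F(\!(t)\!),v_t)$ embeds into an elementary extension of $(K,v)$. Pass to a sufficiently saturated $(K^*,v^*)\succeq(K,v)$. By saturation, the residue field $F$ lifts: there is a subfield of $\mathcal{O}^*$ mapping onto $F$. I expect this to need care in characteristic $p$, since $F$ need not be perfect. I would use henselianity and $\aleph_1$-saturation to lift a $p$-basis and build a coefficient field via an ultraproduct/Cohen-type argument. Then pick $t\in K^*$ with $v^*(t)>0$ and realise the power series ring inside the saturated field via pseudo-limits of Cauchy sequences.

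(b) $(K,v)$ existentially embeds into an elementary extension of $F(\!(t)\!)$, or at least every existential sentence true in $K$ is true in $F(\!(t)\!)$. This is the hard step. An existential sentence true in $(K,v)$ is witnessed in a finitely generated valued subfield. That subfield is a function field over $\mathbb{F}_p$, and one wants to specialise the witnesses to a place into $F(\!(t)\!)$ while preserving the valuation conditions.

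The key tool for (b) is that $F(\!(t)\!)$ is existentially closed in $F(\!(t)\!)(\!(s)\!)$, or more generally that $F(\!(t)\!)$ is existentially closed in any extension whose residue field embeds into $F$. Anscombe--Fehm prove this without resolution of singularities. The mechanism is that the valuation can be coarsened and composed: a rank-two valuation on $K$ refines through $F(\!(t)\!)$-type fields. One then uses that henselian fields of equal characteristic $p$ with the same residue field $F$ contain, up to elementary extension, a copy of $F(\!(t)\!)$. After that, large-field arguments apply, using that $F(\!(t)\!)$ is large and existentially closed in $F(\!(t)\!)(\!(s)\!)$.

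Combining (a) and (b): $(K,v)\equiv_\exists (Kv(\!(t)\!),v_t)$, and likewise $(L,w)\equiv_\exists (Lw(\!(t)\!),v_t)$. Finally, $\Th_\exists(F)=\Th_\exists(F')$ implies $F(\!(t)\!)\equiv_\exists F'(\!(t)\!)$. To see this, embed $F$ into an ultrapower of $F'$, then $F(\!(t)\!)$ into $(F'^{\mathcal U})(\!(t)\!)$. That field is existentially equivalent to $F'(\!(t)\!)$ by applying (a) and (b) again with residue field $F'^{\mathcal U}\equiv F'$.

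The main obstacle is (b), transferring existential truths from an arbitrary henselian field down to the power series field over its residue field. I would follow the Anscombe--Fehm strategy via existential closedness of large fields in their power series fields, rather than any resolution argument.
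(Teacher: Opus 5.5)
The paper does not prove this statement; it only quotes it from Anscombe--Fehm, so your argument has to stand on its own.

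\textbf{What works.} The easy direction is correct. So is your final step, which passes from $\Th_\exists(F)=\Th_\exists(F')$ to the Laurent series fields via an ultrapower. Step (a) is right in outline, with one repair. Pseudo-limits chosen one series at a time do not give a ring homomorphism in a saturated model, since $\bigcap_n t^n\mathcal{O}^\ast\neq 0$. As in the proof of Proposition~\ref{prop:main-monotonicity}, you have to pass to a coarsening, embed $F(\!(t)\!)$ into its residue field using completeness of the induced rank-one valuation, and lift back with a section (Lemma~\ref{lem:bivalued-exists-section}).

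\textbf{The gap is (b)}, which is the entire content of the theorem. The tool you invoke is not a theorem of Anscombe--Fehm.
\begin{itemize}
\item Read literally, ``$F(\!(t)\!)$ is existentially closed in any extension whose residue field embeds into $F$'' is false: $F(\!(t^{1/p})\!)$ has residue field $F$ and contains a $p$-th root of $t$.
\item The version you would actually need is that the ample field $F(\!(t)\!)$ is existentially closed in every extension carrying an $F(\!(t)\!)$-trivial valuation with residue field $F(\!(t)\!)$. That is an instance of (R4), the unproven hypothesis this paper has to assume.
\item What is true, $F(\!(t)\!)\preceq_\exists F(\!(t)\!)(\!(s)\!)$, only helps if the finitely generated valued subfield of $K$ containing your witnesses embeds into an iterated Laurent series field over $F$. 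That subfield can carry an arbitrary non-Abhyankar valuation with defect. ``Specialising to a place into $F(\!(t)\!)$ while preserving the valuation conditions'' is then exactly a uniformisation problem.
\item Coarsening, and finding a copy of $F(\!(t)\!)$ inside an elementary extension of $K$, only reprove (a).
\end{itemize}

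\textbf{Why the sketch cannot work as it stands.} Nothing in it uses that the sentences are parameter-free. Yet the same argument with parameters from a trivially valued subfield $C$ (with $Kv/C$ separable) would prove Theorem~\ref{thm:adf-existential-transfer} without (R4). As the paper recalls right after that theorem, (R4) is necessary for it.

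\textbf{The missing idea is to exploit the absence of parameters.}
\begin{itemize}
\item If $\bar a$ witnesses an existential $\mathcal{L}_{\mathrm{val}}$-sentence in the perfect hull $K^{\mathrm{perf}}$, then so does $\bar a^{p^n}$, because Frobenius is an $\mathcal{L}_{\mathrm{val}}$-automorphism of $K^{\mathrm{perf}}$. For large $n$ we have $\bar a^{p^n}\in K$. Hence $(K,v)\equiv_\exists(K^{\mathrm{perf}},v)$, and likewise $Kv\equiv_\exists(Kv)^{\mathrm{perf}}$.
\item This reduces everything to perfect henselian fields with perfect residue field and $p$-divisible value group. Such a field embeds into a tame field with the same residue field and divisible value group.
\item Kuhlmann's existential Ax--Kochen--Ershov principle for tame fields, applied to the relative algebraic closure of a lift of $Kv(t)$, shows that this tame field has the same existential theory as a tame field algebraic over $Kv(t)$ with residue field $Kv$.
\item The finitely generated subfields of the latter are one-variable function fields with a rational place. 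By Cohen's structure theorem they embed into $Kv(\!(t)\!)$ as valued fields.
\end{itemize}
Together with (a), to undo the passage to $(Kv)^{\mathrm{perf}}$, this gives $\Th_\exists(K,v)\subseteq\Th_\exists(Kv(\!(t)\!),v_t)$ unconditionally. To my knowledge this is the route Anscombe--Fehm take. It is exactly what breaks down once a $p$-basis of $C$ is among the parameters, which is why the present paper needs (R4) and formal smoothness.
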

While this result means that the existential theory of $(K,v)$ is completely determined by the theory of the residue field $Kv$,
and a decidability result for the existential theory of $(K,v)$ relative to the existential theory of $Kv$ follows \cite[Corollary~1.3]{AnscombeFehm_existential-equichar},
it is not entirely satisfactory in that no parameters are allowed.
In the Hilbert's tenth problem formulation, this corresponds to only allowing polynomials $f$ with coefficients in the prime field,
as opposed to in some naturally given subfield of $K$.
In this vein, only the following rather partial result is obtained using the methods of \cite{AnscombeFehm_existential-equichar}.
\begin{theorem}[{\cite[Proposition~4.2]{DittmannFehm_completions}; essentially \cite[Corollary~5.7]{AnscombeFehm_existential-equichar}}]
  Let $(K,v)$ and $(L,w)$ be henselian non-trivially valued fields of positive characteristic
  with the common trivially valued subfield $C$,
  so $C$ can also be seen as a subfield of the residue fields $Kv$ and $Lw$.
  Assume that $C$, $Kv$ and $Lw$ are all perfect.
  Then the existential theories of $Kv$ and $Lw$ with parameters from $C$ agree if and only if
  the existential theories of the valued fields $(K,v)$ and $(L,w)$ with parameters from $C$ agree.
\end{theorem}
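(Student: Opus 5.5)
The proof will follow the Anscombe--Fehm strategy and reduce everything to the power series field over the residue field. One direction is straightforward. An existential $C$-sentence about the residue field can be rewritten as an existential $C$-sentence about the valued field, by quantifying over elements of the valuation ring $\mathcal{O}$ and working modulo the maximal ideal. Since $C$ is trivially valued, $C$ embeds into $\mathcal{O}$ and maps isomorphically onto its image in the residue field. Hence if $(K,v)$ and $(L,w)$ have the same existential $C$-theory, so do $Kv$ and $Lw$.

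For the converse I will show that $(K,v)$ and the power series field $(Kv(\!(t)\!), v_t)$ have the same existential $C$-theory. Here $C \subseteq Kv$ sits inside $Kv(\!(t)\!)$ as constants. The same holds for $(L,w)$. Then it suffices to prove that $Kv \equiv_{\exists,C} Lw$ implies $Kv(\!(t)\!) \equiv_{\exists,C} Lw(\!(t)\!)$. For this last step I take an ultrapower $Lw^*$ into which $Kv$ embeds over $C$; this exists because every existential $C$-sentence true in $Kv$ holds in $Lw$. The embedding induces a $C$-embedding $Kv(\!(t)\!) \to Lw^*(\!(t)\!)$ of valued fields. By the first claim applied to the henselian field $Lw^*(\!(t)\!)$ and to an ultrapower of $Lw(\!(t)\!)$, whose residue field is $Lw^*$, existential $C$-sentences transfer from $Kv(\!(t)\!)$ into $Lw(\!(t)\!)$. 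By symmetry the two existential $C$-theories are equal.

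The core is the first claim, and I split it into two embedding directions. First, $(K,v)$ existentially contains $Kv(\!(t)\!)$ up to elementary extension. Since $Kv$ is perfect and $C \subseteq K$ is trivially valued and perfect, I pass to a sufficiently saturated elementary extension of $(K,v)$. There I find a lift of $Kv$ over $C$, i.e.\ a subfield mapping isomorphically onto the residue field and containing $C$. Perfectness of $Kv$ makes the extension $Kv/C$ separable, so the standard Cohen-type lifting argument in henselian fields, combined with saturation, extends the inclusion $C \subseteq \mathcal{O}$. I then pick an element $t$ of small positive value. Saturation and henselianity give an embedding of the henselisation of $Kv(t)$. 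To reach the full $Kv(\!(t)\!)$ I use the theorem of Anscombe--Fehm quoted in the introduction. More precisely, I use its proof ingredient: $Kv(t)^h$ is existentially closed in $Kv(\!(t)\!)$ as a valued field. This holds because $Kv(\!(t)\!)/Kv(t)^h$ is separable and the residue field is perfect, so it is an immediate extension by a large field. This gives that sentences true in $Kv(\!(t)\!)$ hold in $K$.

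Second, conversely, existential $C$-sentences true in $(K,v)$ must hold in $Kv(\!(t)\!)$. Here I embed $C$ into $Kv(\!(t)\!)$ via the residue map. I then argue that $(K,v)$ and $Kv(\!(t)\!)$ have elementarily equivalent existentially closed henselian extensions with the same residue field and divisible value group. Concretely, a finitely generated subextension of $K$ over $C$ is a function field. Using the perfectness of $C$, this function field is separably generated, and I can specialise it into $Kv(\!(t)\!)^*$ by a place argument. I expect this direction to be the main obstacle. Without any resolution of singularities, one must find points of the relevant variety over the power series field from points over $K$. This is where perfectness of $C$ and of the residue field (so that everything is separable and smoothness arguments apply) is genuinely used. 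In the given perfect setting I would handle it with Kuhlmann's \enquote{large field} embedding lemma, i.e.\ existential closedness of a large field in its Laurent series field, applied to $Kv$.
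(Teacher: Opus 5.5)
The paper does not prove this statement; it quotes it from Dittmann--Fehm and Anscombe--Fehm. Your proposal therefore has to stand on its own, and it does not: one of the two directions of the core claim is left unproved.

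\textbf{The parts that work.} The forward direction, the reduction to comparing $(K,v)$ with $(Kv(\!(t)\!),v_t)$, and the inclusion $\Th_\exists(Kv(\!(t)\!))\subseteq\Th_\exists(K,v)$ are fine in outline. That inclusion goes via a section over $C$ in a saturated model, then $Kv(t)^h\hookrightarrow K$, then $Kv(t)^h\preceq_\exists Kv(\!(t)\!)$. Two corrections are needed.
\begin{enumerate}
\item It is perfectness of $C$, not of $Kv$, that makes $Kv/C$ separable.
\item ``Immediate separable extension of a large field'' is no reason for existential closedness. An immediate Artin--Schreier defect extension of a henselian field is separable and immediate, yet the base is not existentially closed in it. The correct reason is the approximation property of the henselisation of the excellent DVR $Kv[t]_{(t)}$ (Artin approximation; for henselian DVRs with separable completion, Greenberg's theorem). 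This holds for any residue field.
\end{enumerate}

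\textbf{The gap.} You give no argument for the other inclusion,
\[
\Th_\exists^{\mathcal{L}_{\mathrm{val}}(C)}(K,v)\subseteq\Th_\exists^{\mathcal{L}_{\mathrm{val}}(C)}(Kv(\!(t)\!)),
\]
and this is where the substance of the theorem lies. The tool you name cannot do the job. First, $Kv$ need not be large: take $K=\mathbb{F}_p(\!(t)\!)$ and $C=\mathbb{F}_p$. Second, even when $Kv$ is large, $Kv\preceq_\exists Kv(\!(t)\!)$ says nothing about the field $K$.

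The witnesses of a sentence generate a valued function field $(F,v|_F)$ over $C$. Its valuation may have higher rank or rational rank, and its residue field need not be finitely generated. ``Specialising into $Kv(\!(t)\!)^\ast$ by a place argument'' means replacing $v|_F$ by a discrete rank-one place with a compatible residue field, while keeping finitely many valuation conditions. That is a statement of local-uniformisation type.

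To see that this is not routine, take $C=Kv=E$ a perfect large field. Let $(K,v)=(F^h,v)$ for any extension $F/E$ carrying a valuation trivial on $E$ with residue field $E$. Your direction (b) then gives
\[
\Th_\exists(F)\subseteq\Th_\exists(E(\!(t)\!))=\Th_\exists(E)
\]
in $\mathcal{L}_{\mathrm{ring}}(E)$. This is exactly axiom (R4) for perfect $E$.

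The paper's own formally smooth version is consistent with this. The analogous step, passing from $(K,v)$ down to a DVR in Lemma~\ref{lem:e-theory-pass-to-dvr} as used in Proposition~\ref{prop:main-monotonicity}, is precisely where (R4) enters. The paper also notes that (R4) is necessary for Theorem~\ref{thm:adf-existential-transfer}.

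So perfectness of $C$ and $Kv$ must be used essentially in this direction, not merely to make extensions separable. Your appeal to ``elementarily equivalent existentially closed henselian extensions with divisible value group'' does not supply this. No general Ax--Kochen--Ershov principle is available for such extensions in positive characteristic, and showing that $(K,v)$ is existentially closed in one is the same problem again. This missing step is what the cited Anscombe--Fehm and Dittmann--Fehm works provide.
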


Relaxing the perfectness assumption, a description of existential theories over a trivially valued parameter field
was only obtained later, and only under an additional hypothesis (R4),
a weak version of Resolution of Singularities (in fact implied by the conjecture of Local Uniformisation).
\begin{theorem}[{\cite[Corollary~4.16]{ADF_existential}}]\label{thm:adf-existential-transfer}
  Assume (R4).
  Let $(K,v)$ and $(L,w)$ be henselian non-trivially valued fields with the common trivially valued subfield $C$,
  so $C$ can also be seen as a subfield of the residue fields $Kv$ and $Lw$.
  Assume that $Kv/C$ and $Lw/C$ are separable.
  Then the existential theories of $Kv$ and $Lw$ with parameters from $C$ agree if and only if
  the existential theories of the valued fields $(K,v)$ and $(L,w)$ with parameters from $C$ agree.
\end{theorem}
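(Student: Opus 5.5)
The plan is the standard embedding argument for existential transfer. I treat the two directions separately, and in the substantial direction I reduce to an embedding of $C$-structures into an elementary extension.

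\emph{Easy direction.} Suppose $(K,v)$ and $(L,w)$ have the same existential theory over $C$. The residue field is existentially interpretable in the valued field, with $C$ mapping identically. An existential sentence about $Kv$ with parameters from $C$ can therefore be rewritten as an existential sentence about $(K,v)$: quantify over elements of the valuation ring, and express equalities modulo the maximal ideal by existentially quantifying the difference in $\mathfrak m$. The parameters from $C$ remain parameters, since $C$ sits inside the valuation ring and maps isomorphically to the residue field. Hence the residue fields have the same existential theory over $C$.

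\emph{Hard direction.} Assume $\Th_\exists(Kv,C)=\Th_\exists(Lw,C)$. It suffices to show that every finitely generated valued subfield $(E,v)$ of $(K,v)$ containing $C$ admits a $C$-embedding into an elementary extension $(L^*,w^*)$ of $(L,w)$; the statement is symmetric, so the other inclusion follows in the same way.

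\begin{enumerate}
\item First replace $(L,w)$ by a sufficiently saturated elementary extension. By the residue-field hypothesis, $(Kv,C)$ then embeds over $C$ into an elementary extension of $Lw$, and we may assume that this is the residue field of $L^*$.
\item Next use (R4), in its local-uniformisation form, together with separability of $Kv/C$, to arrange the following. The finitely generated extension $E/C$ can be enlarged inside a suitable henselian field elementarily equivalent to $(K,v)$ over $C$, for instance an ultrapower, to an extension whose valuation is "smooth". Concretely, it should become an immediate-free extension of a rational function field $F(t_1,\dots,t_n)$ with a monomial valuation, where $F/C$ is the residue field, separable over $C$.
\item Separability, via a separating transcendence basis and henselianity, then lets us lift $F$ into $L^*$.
\item Choose the images of $t_1,\dots,t_n$ with the correct values using saturation and nontriviality of $w$. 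Because the valuation is smooth, the remaining generators are simple roots of polynomials with good reduction, so Hensel's lemma in $L^*$ finishes the embedding.
\end{enumerate}

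The main obstacle is the second step. We must show that (R4) genuinely yields a model over $C$ in which the valued function field $E$ becomes a henselisation of a separable, smooth algebra over its residue field, in a way that respects $C$. The separability of $Kv/C$ is exactly what is needed to descend "smooth over $C$" to "residually separable over $C$". I would rely on the precise form of (R4) in \cite{ADF_existential} and follow their Corollary 4.16 argument, treating $C$ as a trivially valued base of a smooth variety with a point specialising in the residue field.
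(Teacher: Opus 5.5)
Your easy direction is fine. The hard direction has a genuine gap at step~2, and it cannot be filled as stated.

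\textbf{Step 2 fails.} First, (R4) has no ``local-uniformisation form''. By Proposition~\ref{prop:R4-equiv} it is equivalent to (R3) and (NLU). These only concern valuations whose residue field equals the base field, and they only produce a discrete valuation, or a smooth rational point on \emph{some} model, with no relation to the given valuation. The discussion after that proposition stresses exactly this difference from Local Uniformisation.

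Second, not even Local Uniformisation would yield the structure you want. Suppose $E'\supseteq E$ is generated over $F(t_1,\dots,t_n)$, with a monomial valuation, by simple roots of polynomials with good reduction. Then $F(t_1,\dots,t_n)\subseteq E'\subseteq F(t_1,\dots,t_n)^h$, so $\mathrm{trdeg}(E'/C)=\mathrm{trdeg}(E'v/C)+\dim_{\mathbb{Q}}(vE'\otimes\mathbb{Q})$. Adding Abhyankar's inequality for $E/C$ and for $E'/E$ forces the same equality for $E$. This fails in general. Take $C=\mathbb{F}_p$, $(K,v)=(\mathbb{F}_p(\!(t)\!),v_t)$, and $E=\mathbb{F}_p(t,x)$ with $x\in\mathbb{F}_p[\![t]\!]$ transcendental over $\mathbb{F}_p(t)$ (such $x$ exists by countability). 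Then $E$ has transcendence degree $2$, residue field $\mathbb{F}_p$ and value group $\mathbb{Z}$. These invariants are intrinsic to $E$, so no enlargement inside any elementary extension helps.

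If you instead allow immediate pieces, step~4 collapses. The extra generators are then pseudo-limits rather than simple roots, and for immediate algebraic (defect) extensions Hensel's lemma gives no control. This is precisely the positive-characteristic obstruction that puts a direct embedding of arbitrary finitely generated subfields of $(K,v)$ out of reach.

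\textbf{What the cited proof does instead.} The paper does not prove this theorem; it quotes it from \cite{ADF_existential}. The mechanism is nonetheless visible in Lemma~\ref{lem:bivalued-exists-section}, in Lemma~\ref{lem:e-theory-pass-to-dvr}, and in the first step of the proof of Proposition~\ref{prop:main-monotonicity}. It never needs to embed arbitrary finitely generated subfields of $(K,v)$.
\begin{itemize}
\item Separability of $Kv/C$ is used to obtain, in an elementary extension, a section of the residue map on the \emph{whole} residue field that fixes $C$ \cite[Proposition~4.5]{ADF_existential}.
\item (R4) is then used as the existential-closedness statement it literally is. It is applied to a large subfield, such as the image of the section when the residue field is large, of an extension carrying a valuation that is trivial on that subfield and has residue field equal to it.
\item This lets one trade $(K,v)$ for $(Kv(\!(t)\!),v_t)$ without changing the existential $\mathcal{L}_{\mathrm{val}}(C)$-theory.
\item The only embedding ever needed is then between such power series fields. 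After saturation, $Kv\hookrightarrow Lw^\ast$ over $C$ simply induces $Kv(\!(t)\!)\hookrightarrow Lw^\ast(\!(t)\!)$.
\end{itemize}
Thus (R4) replaces your steps~2--4 entirely, and the defect problem is sidestepped rather than solved.
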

We discuss (R4) in Section~\ref{sec:main-thms}.
Let us note that it was shown in \cite[Remark~4.17]{ADF_existential} that the assumption of (R4) is necessary for the theorem to hold.
See also \cite{DenefSchoutens,Kartas_tamely-ramified} for other work on existential theories of henselian valued fields relying on variants of Resolution of Singularities.

The assumption that $Kv/C$ and $Lw/C$ be separable is necessary in Theorem~\ref{thm:adf-existential-transfer},
as is clear from the following example.
\begin{example}\label{ex:basic}
  Let $C=\mathbb{F}_p(t)$ for some prime $p>2$,
  and consider the fields $K_0 = C(\!(\pi)\!)$, $K_1 = K_0(\unspacedrt[p]{t})$, $K_2 = K_0(\unspacedrt[p]{\pi+t})$, $K_3 = K_0(\unspacedrt[p]{\pi^2+t})$.
  Then $K_0$ is a complete (in particular henselian) discretely valued field with the $\pi$-adic valuation,
  it contains $C$ as a trivially valued subfield, and has residue field $C$.
  We view $K_1$, $K_2$ and $K_3$ as valued fields with the unique extension of the $\pi$-adic valuation.
  It is easy to see that they all have residue field $C(\unspacedrt[p]{t}) = \mathbb{F}_p(\unspacedrt[p]{t})$.
  However, $K_1$ contains a $p$-th root of $t$, while neither $K_2$ nor $K_3$ does;
  therefore the existential theories with parameters from $C$ differ.
  Furthermore, $K_3$ satisfies the existential sentence $\varphi$ given by
  \[ \exists r, s (r \not\in \mathcal{O} \mathrel\land r^2(s^p-t) \in \mathcal{O}) \]
  in the language of valued fields with parameters from $C$
  (take $r = \pi^{-1}, s = \sqrt[p]{\pi^2+t}$),
  while $K_2$ does not satisfy this sentence:
  indeed, a witness $s$ would need to be chosen such that $s^p-t$ is in the maximal ideal,
  so $s = \sqrt[p]{\pi+t} + m$ for some $m$ in the maximal ideal;
  but then $r^2(s^p-t) = r^2(\pi + m^p)$, and $\pi+m^p$ is a uniformiser since $\pi$ is,
  so for $r$ of negative valuation the element $r^2(\pi+m^p)$ also has negative valuation,
  proving that $K_2$ does not satisfy the sentence.
  The choice of sentence is related to Remark~\ref{rem:comparison-URt} below.

  While $\varphi$ is a statement about valued fields since it uses a predicate for the valuation ring both positively
  and negatively,
  we note that in this particular example,
  it can be rewritten as an existential sentence in the language of rings with parameters from $C$
  since by \cite{AnscombeFehm_characterizing-diophantine}
  the valuation ring is uniformly definable by both an existential and a universal formula
  in the class of henselian non-trivially valued fields containing $C$ as a trivially valued subfield with residue field $C(\unspacedrt[p]{t})$:
  For universal definability, this follows from \cite[Theorem~3.18]{AnscombeFehm_characterizing-diophantine},
  since the extension field $C(\unspacedrt[p]{t})/C$ is not large in the sense of \cite[Definition~3.5]{AnscombeFehm_characterizing-diophantine}
  (essentially because of \cite[Proposition~6.10]{AnscombeFehm_characterizing-diophantine}).
  For existential definability, this follows from \cite[Theorem~3.11]{AnscombeFehm_characterizing-diophantine},
  since the extension field $C(\unspacedrt[p]{t})/C$ does not have embedded residue in the sense of \cite[Definition~3.5]{AnscombeFehm_characterizing-diophantine}
  (see \cite[Theorem~4.1.1]{Dittmann_thesis},
  in analogy to the situation of number fields considered in \cite[Corollary~6.13, Corollary~6.14]{ADF_denseness}).
\end{example}

Our goal here is to relax the separability assumption,
by replacing it with the weaker assumption that the valuation ring is \emph{formally smooth} over the parameter field
(in a suitable topological sense, see Definition~\ref{def:fs} and Remark~\ref{rem:fs-topo-or-not}).
This is a notion from commutative algebra studied in \cite[§§~19--22]{EGA-IV-1}, which we consider in Section~\ref{sec:props-formal-smoothness}.
One of our main results can therefore be stated as follows:
\begin{theorem}[{Corollary~\ref{cor:existential-ake}}]
  Assume (R4).
  Let $(K,v)$ and $(L,w)$ be henselian non-trivially valued fields with the common trivially valued subfield $C$.
  Assume that the valuation rings $\mathcal{O}_v$ and $\mathcal{O}_w$ are formally smooth over $C$.
  Then the existential theories of $Kv$ and $Lw$ with parameters from $C$ agree if and only if
  the existential theories of the valued fields $(K,v)$ and $(L,w)$ with parameters from $C$ agree.
\end{theorem}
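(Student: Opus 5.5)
The easy direction comes first: a residue-field sentence over $C$ translates into a valued-field sentence over $C$. If $\exists \bar x\, \psi(\bar x)$ holds in $Kv$ with parameters from $C$, where $\psi$ is quantifier-free, we rewrite it in $(K,v)$. We quantify over $\bar x \in \mathcal{O}_v$ and replace each equation $f=0$ by $f(\bar x)\in\mathfrak m_v$ and each inequation by $f(\bar x)\in \mathcal{O}_v^\times$. Both conditions are existential in the language of valued fields, using a witness for an inverse. Since $C$ is trivially valued, its image in $Kv$ is the canonical embedding, so the translation is uniform in $(K,v)$. Hence agreement of the existential theories of the valued fields forces agreement of the existential theories of the residue fields. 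This direction needs neither (R4) nor formal smoothness.

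For the hard direction, the plan is to reduce to Theorem~\ref{thm:adf-existential-transfer} by a base change that makes the residue extension separable.

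Step 1. We use the characterisation of formal smoothness from Section~\ref{sec:props-formal-smoothness}. The expected form is this: $\mathcal{O}_v$ is formally smooth over $C$ if and only if $\mathcal{O}_v$ is formally smooth over $C^{1/p^\infty}\cap K$, or equivalently the natural map $\Omega_C\otimes Kv\to$ (the appropriate completed module of differentials) is injective. This is the analogue of the criterion \enquote{$Kv/C$ separable} after passing to the residue field.

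Step 2. We pass to $K' = K\cdot C^{\mathrm{perf}}$, or more practically to a henselisation of $K(C^{1/p^\infty})$, and similarly for $L$. Formal smoothness should guarantee that the residue field of $K'$ is $Kv\cdot C^{1/p^\infty}$. This residue field is separable over, indeed contains, the perfect field $C'=C^{1/p^\infty}$. Moreover, the existential theory of $K'$ over $C'$ should be interpretable from that of $K$ over $C$, because adjoining $p$-th roots of parameters is existentially expressible. The difficulty is that existential sentences in $K'$ over $C'$ must be pulled back to sentences in $K$ over $C$. Example~\ref{ex:basic} shows precisely that information is lost without formal smoothness.

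Step 3 (main obstacle). We must prove that, under formal smoothness, the existential theory of $(K,v)$ over $C$ is determined by that of $Kv$ over $C$. My first approach would be to emulate the proof of Theorem~\ref{thm:adf-existential-transfer} directly rather than base-change. One shows that every finitely generated $C$-subalgebra of $\mathcal{O}_v$ sits, after (R4)-type resolution, inside a regular local ring that is smooth over $C$ at the centre of $v$, and then lifts residue points to points of $\mathcal{O}_v$ by Hensel's lemma. The separability assumption in \cite{ADF_existential} was used exactly to ensure that the residue point is a smooth point of the variety over $C$. Formal smoothness of $\mathcal{O}_v$ over $C$ is the replacement for this: one shows that the completed local ring at the centre is formally smooth, hence a power series ring over a coefficient field containing $C$. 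Through the Cohen structure theory, this places a copy of $Kv$ (over $C$) inside $\widehat{\mathcal{O}_v}$ up to henselian equivalence. With that in hand, one embeds the relevant existential data into $Kv(\!(t)\!)$ over $C$ and compares models in a saturated elementary extension of $(L,w)$. The technical core is to verify that the coefficient field can be chosen to contain $C$ compatibly with formal smoothness in the topological sense of Definition~\ref{def:fs}.
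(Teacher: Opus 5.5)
Your forward direction is correct and is the paper's argument. The residue field is existentially interpretable in the valued field, uniformly over $C$, and inequations are removed using inverses.

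The backward direction has a genuine gap, and your route fails exactly in the new case, when $Kv/C$ is inseparable.

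\emph{Step~3.} It rests on the claim that formal smoothness makes $\widehat{\mathcal{O}}_v$ a power series ring over a coefficient field containing $C$, so that $(K,v)$ can be compared with $Kv(\!(t)\!)$ over $C$. This is false. Such a coefficient field would be a $C$-copy of $Kv$ inside $\Frac(\widehat{\mathcal{O}}_v)$, making the latter inseparable over $C$. That contradicts the geometric regularity used in Proposition~\ref{prop:fs-separability}~(3). Concretely, $K_2$ from Example~\ref{ex:basic} is formally smooth over $C=\mathbb{F}_p(t)$ with residue field $C(\unspacedrt[p]{t})$. Yet $\exists x\,(x^p=t)$ holds in $C(\unspacedrt[p]{t})(\!(\pi)\!)$ and fails in $K_2$. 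The best one can hope for is a coefficient field containing a $p$-basis of $C$ with one element removed (cf.\ Remark~\ref{rem:dvr-embedding-ad-hoc-proof}).

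\emph{Step~2.} This does not repair the problem. After adjoining $C^{1/p^\infty}$ you can push sentences from $K$ up to $K'$. Granting residue equivalence over $C^{1/p^\infty}$, Theorem~\ref{thm:adf-existential-transfer} then carries them across to $L'$. But nothing brings a witness in $L'$ back down to $L$, and you give no mechanism by which formal smoothness would.

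\emph{Use of (R4).} (R4) does not produce a regular model on which $v$ is centred. That is local uniformisation, which the paper explicitly distinguishes from (R4) in the discussion of (NLU).

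The missing ideas are a reduction to complete DVRs and a lifting lemma for them.

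First, pass to $\aleph_1$-saturated elementary extensions with a $C$-embedding $Kv\hookrightarrow Lw$; formal smoothness survives by Corollary~\ref{cor:fo-fs}. Then the paper replaces each side by a complete discretely valued field with the same residue field and the same existential $\mathcal{L}_{\mathrm{val}}(C)$-theory:
\begin{itemize}
\item If $Kv/C$ is separable, use $Kv(\!(t)\!)$, by Theorem~\ref{thm:adf-existential-transfer}.
\item Otherwise, Proposition~\ref{prop:fs-separability}~(2) gives a uniformiser. Saturation makes $(Kv^0,\overline{v})$ complete with valuation ring $\widehat{\mathcal{O}}_v$, which is therefore formally smooth, so $Kv^0/C$ is separable. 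A section of $v^0$ over $C$ (Lemma~\ref{lem:bivalued-exists-section}) combined with (R4) yields $(K,v)\equiv_\exists(Kv^0,\overline{v})$ (Lemma~\ref{lem:e-theory-pass-to-dvr}).
\end{itemize}

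The decisive step is Lemma~\ref{lem:dvr-embedding}. The lifting property of formal smoothness lifts $R\to R/\mathfrak{m}\hookrightarrow S/\mathfrak{n}$ to a local $C$-homomorphism $R\to S$. Formal smoothness of $S$ forbids kernel $\mathfrak{m}$ when $R/\mathfrak{m}$ is inseparable over $C$. This gives a $C$-embedding $(K,v)\hookrightarrow(L,w)$ directly, with no coefficient field containing $C$ and no base change.
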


While the omission of the separability hypothesis compared to Theorem~\ref{thm:adf-existential-transfer} might seem like a modest gain,
it is essential in applications.
In particular, when $C = C_0(t)$ for some base field $C_0$, or more generally $C$ is the function field of some curve over $C_0$,
then any non-principal ultraproduct of the completions of $C$ with respect to the discrete valuations trivial on $C_0$
will contain $C$ as a trivially valued subfield;
however, the residue field of the ultraproduct will almost never be separable over $C$:
for instance, if $C_0$ is perfect, then so is the residue field of the ultraproduct, and so it cannot be separable over the imperfect field $C$.
Understanding such ultraproducts of completions of $C_0(t)$, or more generally function fields of curves, i.e.\ finite extensions of $C_0(t)$,
is essential for analysing the set of sentences holding in all but finitely many such completions.

The second half of this article, see Section~\ref{sec:function-fields-pac}, is dedicated to such applications to completions of function fields.
This is in analogy to Ax's result of decidability of the sets of sentences holding in all or all but finitely many $\mathbb{Q}_p$,
see \cite[Section~11]{Ax_elementary-theory-finite-fields} and also \cite[Corollary~1.2]{DittmannFehm_completions}
for a generalisation to number fields.
Results of this kind are very natural in the light of the local-global philosophy in arithmetic geometry.
First steps in the function field setting were undertaken in \cite{DittmannFehm_completions},
albeit restricted to function fields over finite fields.
We can now understand this situation in much greater generality.
In particular we obtain the following results for universal/existential sentences, i.e.\ boolean combinations of existential sentences,
over a large class of pseudo-algebraically closed (PAC) fields
(see \cite[Chapter~11]{FriedJarden} and later chapters for an extensive treatment of PAC fields).

\begin{theorem}[Remark~\ref{rem:pac-aa-completions-axiomatisable},
  Corollary~\ref{cor:pac-aa-completions-decidable}, Proposition~\ref{prop:all-completions-decidable}]
  \label{thm:intro-pac-completions}
  Let $K$ be a PAC field of characteristic $p>0$ whose absolute Galois group is a free profinite group
  (e.g.\ $K$ is separably closed or pseudofinite),
  and let $F/K$ be a finitely generated separable field extension of transcendence degree $1$.
  Consider the completions $\widehat{F}^v$ with respect to valuations $v$ on $F$ with value group $\mathbb{Z}$
  which are trivial on $K$.
  \begin{enumerate}
  \item The set of universal/existential $\mathcal{L}_{\mathrm{ring}}(F)$-sentences holding in all but finitely many completions of $F$
    admits an explicit axiomatisation.
  \item Assume (R4). Then the same holds for the universal/existential $\mathcal{L}_{\mathrm{val}}(F)$-sentences.
  \end{enumerate}
  Assume now that $K$ and $F$ are endowed with a sensible indexing by natural numbers,
  so that it makes sense to speak of decidability of sets of $\mathcal{L}_{\mathrm{val}}(F)$-sentences.
  (Such indexings exist for instance if $K$ is the separably closed field $\mathbb{F}_p(t_1, \dotsc, t_e)^{\mathrm{sep}}$
  and in many other cases, see Remark~\ref{rem:constructing-examples-pac};
  the precise hypothesis is that $K$ needs to be a computable field with splitting algorithm and decidable $p$-independence relation.)
  \begin{enumerate}
  \item The set of universal/existential $\mathcal{L}_{\mathrm{ring}}(F)$-sentences holding in all but finitely many completions of $F$
    is decidable.
  \item Assume (R4). Then the set of universal/existential $\mathcal{L}_{\mathrm{val}}(F)$-sentences holding in all but finitely many completions of $F$,
    as well as the set of such sentences holding in all completions of $F$, are both decidable.
  \end{enumerate}
\end{theorem}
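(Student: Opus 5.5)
The strategy is to reduce both the axiomatisation and the decidability claims to the existential transfer principle (the main theorem, Corollary~\ref{cor:existential-ake}) applied to non-principal ultraproducts of completions. The extra input is an explicit description of the possible residue fields together with their $C$-structure, where here $C = F$.

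\textbf{Step 1: ultraproducts.} Let $\mathcal{U}$ be a non-principal ultrafilter on the set of discrete valuations $v$ of $F/K$, and let $(K^*,v^*) = \prod \widehat{F}^v/\mathcal{U}$. Then $F$ embeds diagonally as a trivially valued subfield, and $K^*$ is henselian and non-trivially valued. Its residue field is $\prod Fv/\mathcal{U}$, an ultraproduct of finite extensions of $K$. We first check that $\mathcal{O}_{v^*}$ is formally smooth over $F$. For this we use the characterisation from Section~\ref{sec:props-formal-smoothness}: it should reduce to preservation of $p$-independence. Concretely, a separating transcendental $t$ of $F/K$ is, for almost all $v$, of the form unit plus uniformiser-adic expansion, with $dt \neq 0$ in the appropriate completed differential module. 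Since $F/K$ is separable of transcendence degree $1$, a $p$-basis of $F$ is a $p$-basis of $K$ together with $t$, and this is preserved in almost all completions. Formal smoothness is thus expressed by sentences holding in almost all $\widehat{F}^v$, so by Łoś it holds in $K^*$.

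\textbf{Step 2: the residue side.} By Corollary~\ref{cor:existential-ake}, the universal/existential $\mathcal{L}_{\mathrm{val}}(F)$-theory of $K^*$ is determined by the existential $\mathcal{L}_{\mathrm{ring}}(F)$-theory of the residue field $K^*v^*$, where $F$ embeds via the residue map. In the $\mathcal{L}_{\mathrm{ring}}$ case one can avoid (R4): the existential ring theory of a henselian field over $F$ is that of its residue field viewed as an $F$-algebra through a suitable Hensel-type section, which is handled by the known AKE-type results for $\mathcal{L}_{\mathrm{ring}}$.

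The main task is to classify the possible $F$-structures on $K^*v^*$. Each $Fv$ is a finite extension of $K$, hence PAC with free absolute Galois group of controlled rank. The embedding $F \to K^*v^*$ is transcendental over $K$, since a given non-constant element of $F$ is a unit with residue outside any fixed finite extension for almost all $v$. I expect the candidate axioms to be:
\begin{itemize}
\item $K^*v^*$ is a PAC field extending $F$ regularly-ish, with the same imperfection degree as $K$;
\item its absolute Galois group is free of the appropriate rank;
\item the algebraic part over $F$ is controlled by Chebotarev-type density over PAC fields, giving all finite extensions of $K$ as degree data and their compatibility with $F$.
\end{itemize}
The existential theory of a PAC field over a subfield $F$ is governed by which finite extensions of $F$ embed (elementary equivalence theorem for PAC fields, \cite{FriedJarden}). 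So the axioms amount to specifying, for each finite extension $F'/F$, whether $F'$ embeds in almost all / some ultraproducts. Using a function-field Chebotarev theorem over PAC fields with free Galois group, one shows that every $F'$ whose constant field extension is realisable occurs. This gives the explicit axiomatisation (1) and (2).

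\textbf{Step 3: decidability.} A sentence holds in all but finitely many completions iff it holds in all non-principal ultraproducts. This holds iff it follows from the axioms of Step~2, which form a recursively enumerable list under the computability hypotheses: splitting algorithm, decidable $p$-independence, and explicit finite extensions. The negation is equally recursively enumerable, witnessed by a model built from a finite extension in which an existential statement is satisfied by a place. For "all completions", we additionally handle the finitely many exceptional $v$ individually. Each such $\widehat{F}^v$ has a residue field that is an explicitly computable finite extension of $K$, with formal smoothness automatic, so Corollary~\ref{cor:existential-ake} combined with decidability of the existential theory of such PAC fields over $F$ applies.

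The main obstacle is Step~2: pinning down precisely which finite extensions of $F$ occur inside residue fields of ultraproducts. This requires a Chebotarev-type density statement for places of $F/K$ over a PAC base field with free absolute Galois group, and I would attempt it via the PAC property applied to the curves defined by such extensions.
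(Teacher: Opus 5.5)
Your architecture matches the paper's: ultraproducts of completions, formal smoothness over $F$ by Łoś, transfer to the residue field, a description of the theory $R_0$ of almost all residue fields, then decidability and finitely many exceptional places. But Step~2, which carries the real content, would fail as proposed.

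\textbf{Step 2.} The residue fields of the ultraproducts are typically \emph{not} separable over $F$: when $e<\infty$ they have imperfect exponent $e$, while $F$ has $e+1$. So the elementary equivalence theorem for PAC fields over $F$ is unavailable, and the theory over $F$ is not governed by which finite extensions of $F$ embed.

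Concretely, let $K=\mathbb{F}_p(a)^{\mathrm{sep}}$ and $F=K(t)$. Every $Fv$ equals $K(a^{1/p^m})$ for some $m\geq 0$ and is generated over $K$ by the residue of $t$. Hence $a$ and $t$ are never both $p$-th powers in $Fv$. So $\neg\exists x,y\,(x^p=a\wedge y^p=t)$ lies in $R_0$, and $F(a^{1/p},t^{1/p})$ never embeds, although its constant field $K(a^{1/p})$ is realised; this contradicts your Chebotarev-type claim. Yet $(F(a^{1/p^\infty},t^{1/p^\infty})(s))^{\mathrm{sep}}$, with $s$ transcendental, is a $0$-free PAC field of imperfect exponent $1$ containing $F$, so it satisfies your candidate axioms.

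What is missing is condition~(2) of Theorem~\ref{thm:almost-all-res-pac}: $\ker(\Omega_F\otimes_F L\to\Omega_L)$ has dimension at most $1$, which is forced by formal smoothness via Lemma~\ref{lem:fs-almost-separable}. For sufficiency you also need a way to realise, at actual places, sentences in the normal form of Proposition~\ref{prop:pac-normal-form}. These involve $p$-independence of prescribed elements, not only embeddings of finite extensions. The paper does this in Lemmas~\ref{lem:separable-half} and~\ref{lem:inseparablisation}. It specialises $R\to K''$ onto a finite extension with controlled purely inseparable part, keeps given elements $p$-independent, and sends the separating element $t$ to a primitive element of $K''/K$, so that the specialisation comes from a place of $F/K$. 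For $1<f<\omega$ you must also admit $\omega$-free models.

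Your $\mathcal{L}_{\mathrm{ring}}$ shortcut fails for the same reason. There is no section over $F$, since the ultraproduct of completions is separable over $F$ (Proposition~\ref{prop:fs-separability}) while its residue field is not. Indeed $\exists x\,(x^p=a)$ can hold in the residue field but not in the valued field. The paper uses instead the transfer result \cite[Proposition~5.1]{DittmannFehm_completions} for PAC residue fields.

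\textbf{Step 3.} There are two further gaps.

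First, the theory of almost all completions is far from complete, so a computably enumerable axiomatisation only makes it computably enumerable. Decidability requires deciding whether the axioms are consistent with a given sentence, and ``a model built from a finite extension'' is not yet an argument. Proposition~\ref{prop:almost-all-res-fields-decidable} does this as follows:
\begin{enumerate}
\item pass to the normal form and reduce to a prime ideal with function field $L_0$;
\item either verify decidable conditions on $L_0$ (the required $p$-independences, absence of roots, the condition of Lemma~\ref{lem:almost-separable} via Lemma~\ref{lem:decide-almost-separable}, a Galois condition, and relative separable closedness of $K'$), which by Lemma~\ref{lem:pac-construction} guarantee an extension to a model;
\item or find a proof of inconsistency that adds a polynomial to the ideal, which terminates by Noetherianity;
\item all of this after bounding the possible $f'$.
\end{enumerate}

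Second, for \emph{all} completions you cannot apply Corollary~\ref{cor:existential-ake} to a single $\widehat{F}^v$. There $F$ is not trivially valued, so ``formally smooth over $F$'' is meaningless. The paper uses \cite[Theorem~4.12]{ADF_existential} instead, via Lemma~\ref{lem:single-completion-decidable}. Moreover, the exceptional places must be found effectively. From a finite derivation $\Phi$ of $\varphi$, discard the places where constants of $\Phi$ have non-zero value, $t$ has a pole, or $F/K(t)$ ramifies. Lemma~\ref{lem:ff-fs-exceptional-places} then shows that formal smoothness for the whole $p$-basis $B_K\cup\{t\}$ and the residue axioms (3$_B$) hold at all remaining places; the other axioms of $\Phi$ hold there automatically. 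Your Łoś argument in Step~1 gives no such uniform, computable bound. It should also be phrased as linear independence of the $ds\otimes 1$ in $\Omega_{\mathcal{O}_v}\otimes Fv$, not as preservation of $p$-independence in $\widehat{F}^v$, which only gives separability.
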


The proof of the theorem works by reducing the truth of universal/existential sentences in all but finitely many completions
to the truth of universal/existential $\mathcal{L}_{\mathrm{ring}}(F)$-sentences in all but finitely many residue fields
(Proposition~\ref{prop:theories-aa-completions-to-res-flds}).
Under (R4), this works in complete generality, without restriction of the base field,
crucially relying on our results for formally smooth valuation rings.
Without (R4), we must rely on a weaker ad hoc result \cite[Proposition~5.1]{DittmannFehm_completions}
restricted to PAC base fields.

Our second crucial ingredient to Theorem~\ref{thm:intro-pac-completions} is therefore an understanding of which $\mathcal{L}_{\mathrm{ring}}(F)$-sentences hold
in almost all residue fields $Fv$ of $F/K$.
For a finite base field $K$, this theory of almost all residue fields is simply the theory of pseudofinite fields containing $F$,
see \cite[Lemma~4.5]{DittmannFehm_completions} (summarising results of \cite[Chapter~20]{FriedJarden}).
This beautiful result relies on a close analysis of the theories of pseudofinite fields (following Ax) and the Chebotarev Density Theorem.
For the PAC base fields that interest us, we obtain an analogous but more complicated result.
Here we treat arbitrary sentences (not merely universal/existential ones), and the results and techniques may be of independent interest.
\begin{theorem}[Theorem~\ref{thm:almost-all-res-pac}, Proposition~\ref{prop:almost-all-res-fields-decidable}]\label{thm:intro-pac-aa-res}
  Let $K$ be a PAC field of characteristic $p>0$ with free absolute Galois group,
  and let $F/K$ be a finitely generated separable field extension of transcendence degree $1$.
  We consider the residue fields $Fv$ of $F$ with respect to valuations $v$ on $F$ with value group $\mathbb{Z}$
  which are trivial on $K$. All of these are finite extensions of $K$.
  We take $Fv$ to be an $\mathcal{L}_{\mathrm{ring}}(F)$-structure by letting the constant symbol for $a \in F$ stand for its residue in $Fv$ if $v(a) \geq 0$,
  and for $0$ otherwise (which only happens for finitely many $v$).
  Let $T$ be the set of $\mathcal{L}_{\mathrm{ring}}(F)$-sentences that hold in all but finitely many $Fv$.
  \begin{enumerate}
  \item The set of such $\mathcal{L}_{\mathrm{ring}}(F)$-sentences admits an explicit axiomatisation.
  \item Assume that $K$ and $F$ are endowed with a sensible indexing by natural numbers (cf.\ Theorem~\ref{thm:intro-pac-completions}).
    Then this set of $\mathcal{L}_{\mathrm{ring}}(F)$-sentences is decidable.
  \end{enumerate}
\end{theorem}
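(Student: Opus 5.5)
We determine the theory $T$ of almost all residue fields $Fv$ by a Chebotarev-type argument over $K$, modelled on Ax's treatment of pseudofinite fields and the proof of \cite[Lemma~4.5]{DittmannFehm_completions}. The first step is to show that every non-principal ultraproduct $F^* = \prod Fv/\mathcal{U}$ is PAC, contains $F$, and has an absolute Galois group we can control.

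\emph{PAC property.} Each $Fv$ is a finite extension of the PAC field $K$. It is therefore itself PAC, and these extensions are uniformly PAC in the relevant sense. Since PAC is elementary, ultraproducts remain PAC.

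\emph{Galois group.} Because $K$ has free absolute Galois group $\hat F_e$ (with $e$ possibly infinite), each $Fv$ has absolute Galois group an open subgroup of $\hat F_e$. This subgroup is free of rank $1+[Fv:K](e-1)$ by Nielsen--Schreier, so the ranks are unbounded when $e\ge 2$. The case $e=1$ gives procyclic groups, and $e=0$ (separably closed) is trivial.

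\emph{Embedding problems.} The elementary invariants of a PAC field are its imperfection degree and the algebraic part together with the Galois-theoretic data, i.e.\ which finite embedding problems are solvable. Here the imperfection degree of $Fv$ equals that of $K$, and $F$ is separable over $K$. Hence by the elementary equivalence theorem for PAC fields \cite[Chapter~20]{FriedJarden}, the theory of $F^*$ over $F$ is determined by the isomorphism type of the restriction map $\mathrm{Gal}(F^*)\to \mathrm{Gal}(F)$ on finite quotients, that is, by which finite Galois extensions $M/F$ have which decomposition groups $\mathrm{Gal}(M/F)_v$ occurring for infinitely many $v$.

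\emph{Chebotarev step.} The key claim is the following. For each finite Galois $M/F$ with constant field extension $M\cap K^{\mathrm{sep}}=K'$, and each subgroup $H\le\mathrm{Gal}(M/F)$, there are infinitely many $v$ with decomposition group conjugate to $H$ (and residue field of prescribed degree) if and only if $H$ satisfies the natural compatibility with the restriction to $\mathrm{Gal}(K'/K)$. We want to prove this using the PAC property of $K$ on the curve cut out by $M$ with suitable twisting: the PAC property provides infinitely many rational points on the twisted curve, and these correspond to places with the prescribed Frobenius-type data. Freeness of $\mathrm{Gal}(K)$ is needed so that every subgroup $H$ mapping onto a quotient of a finite extension's Galois group is realisable, which amounts to solvability of embedding problems for free profinite groups of the appropriate rank. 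The axiomatisation of $T$ will then consist of:
\begin{enumerate}
\item PAC;
\item imperfection degree;
\item the diagram of $F$;
\item for each such $(M,H)$, the axiom scheme that the decomposition data of the model lies among the admissible ones;
\item the statement that the absolute Galois group behaves like that of a finite extension of $K$, i.e.\ is free of the appropriate rank, or $\omega$-free when the ranks grow.
\end{enumerate}
Completeness follows by checking that every model of these axioms is elementarily equivalent over $F$ to some ultraproduct.

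For decidability we enumerate this axiom system effectively. This uses the splitting algorithm for $K$ and $F$ to list Galois extensions $M/F$ and compute their Galois groups and constant fields, and the decidable $p$-independence to pin down imperfection degrees. We then combine this with the fact that the complement of $T$ is also recursively enumerable: a sentence fails in $T$ if and only if its negation holds in some completion of $T$, and these completions are themselves effectively describable by the admissible data (an effective Frobenius-field-style elimination as in \cite[Chapter~30]{FriedJarden}).

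The main obstacle will be the Chebotarev step over an infinite PAC base. It requires showing that all admissible decomposition groups occur infinitely often, and in particular that the residue degrees $[Fv:K]$ interact correctly with freeness of the Galois group, so that the ultraproduct's Galois group is the predicted free or $\omega$-free group. I would first try to reduce to $F=K(x)$ by a separating transcendental element and then use PAC on the twisted covers.
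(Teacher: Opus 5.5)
\textbf{There is a genuine gap in your ``embedding problems'' step.}

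The elementary equivalence theorem for PAC fields over a common subfield $F$ (\cite[Chapter~20]{FriedJarden}, or the criterion \cite[Theorem~5.15]{Chatzidakis_properties-forking-omega-free-pac}) requires the PAC fields to be \emph{separable over $F$}. That $F$ is separable over $K$ is irrelevant to this hypothesis. An ultraproduct $F^*$ of residue fields is essentially never separable over $F$. If $K$ has finite imperfect exponent $e$, then $F^*$ has exponent $e$ while $F$ has $e+1$; if $K$ is perfect, $F^*$ is perfect while $F$ is not. So the $\mathcal{L}_{\mathrm{ring}}(F)$-theory of $F^*$ is \emph{not} determined by Galois-theoretic data. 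Which elements of $F$, or polynomial expressions in witnesses, remain $p$-independent in the model is additional information, and your axioms (1)--(5) say nothing about it.

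Here is a concrete counterexample. Take $K=\mathbb{F}_p(c)^{\mathrm{sep}}$ and $F=K(t)$.
\begin{itemize}
\item For every place with $v(t)\ge 0$, one has $Fv=K(a^{1/p^m})$ with $t\mapsto a^{1/p^m}$, where $m\ge 0$ and $a\notin K^p$ if $m\ge1$.
\item If $m=0$, then $c$ is not a $p$-th power in $Fv$. If $m\ge 1$, the residue of $t$ is a $p$-basis of $Fv$, so it is not a $p$-th power. Hence $\neg\exists x,y\,(x^p=c\wedge y^p=t)$ lies in $T$.
\item Since $K$ is separably closed, all your Galois data are trivial. Your axioms then reduce to ``separably closed, imperfect exponent $1$, containing $F$''.
\item The field $(F^{\mathrm{perf}}(u))^{\mathrm{sep}}$, with $u$ transcendental, satisfies these axioms, yet both $c$ and $t$ are $p$-th powers in it.
\end{itemize}
So your axiomatisation is strictly weaker than $T$, and the ``completeness check'' must fail. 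For perfect $K$ your reduction can be repaired by working over $F^{\mathrm{perf}}$, over which $F^*$ is separable. For imperfect $K$ the inseparability data are genuinely new.

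\textbf{What is missing.} You need condition~(2) of Theorem~\ref{thm:almost-all-res-pac}: the kernel of $\Omega_F\otimes_F L\to\Omega_L$ has dimension at most $1$.
\begin{itemize}
\item Its necessity comes from formal smoothness over $F$ of ultraproducts of completions (Corollary~\ref{cor:aa-completion-fs} together with Lemma~\ref{lem:fs-almost-separable}). No decomposition-group or Chebotarev analysis detects it.
\item Its sufficiency needs a normal form that retains $p$-independence predicates (Proposition~\ref{prop:pac-normal-form}).
\item It also needs a realisation result producing infinitely many places whose residue fields are purely inseparable extensions $K''$ of the prescribed separable part $K'$, with prescribed tuples staying $p$-independent. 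This is Lemma~\ref{lem:inseparablisation}, built on Lemma~\ref{lem:separable-half}.
\end{itemize}

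\textbf{Smaller gap in the Chebotarev step.} Rational points on a twisted curve give places of degree one. You do not explain how to obtain places with residue field exactly $K'$ of degree $[K':K]>1$, which you need to get the right Galois rank. The paper handles this with a Weil-restriction argument (Lemma~\ref{lem:ample-primitive-element}, Corollary~\ref{cor:pac-primitive-element}) inside the field-crossing argument.

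\textbf{Decidability.} This part inherits the main gap. The completions are not described by Galois data alone. One must also decide condition~(2) for finitely generated extensions (Lemma~\ref{lem:decide-almost-separable}) and construct PAC models with a prescribed $p$-basis (Lemma~\ref{lem:pac-construction}).
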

This result requires a perhaps surprisingly intricate proof.
It relies on the one hand on the model theory of PAC fields
(as presented in \cite[§~5]{Chatzidakis_properties-forking-omega-free-pac}, after Cherlin--van den Dries--Macintyre),
but on the other hand also needs extensions of techniques regarding the algebra of PAC fields (as detailed in \cite{FriedJarden}),
see for instance Lemma~\ref{lem:separable-half} and Lemma~\ref{lem:inseparablisation} below.
The case of imperfect base fields $K$ causes particular challenges.

\section{Properties of formal smoothness}
\label{sec:props-formal-smoothness}

In this section, we investigate the condition of formal smoothness of a valuation ring over a subfield.
The setting is as follows.
We fix a prime $p>0$ and a field $C$ of characteristic $p$.
We work with valuation rings $\mathcal{O}$ containing $C$.
This corresponds to a valued field $(K,v)$, with $K$ the fraction field of $\mathcal{O}$ and $v$ the corresponding Krull valuation,
such that $K$ contains $C$ as a trivially valued subfield.
We use \cite{EGA-IV-1} as our main reference, but see also \cite[Tag~07EA]{StacksProject}.

The definition of formal smoothness is through the following lifting property;
we already note, however, that we will generally not work with the definition as such,
preferring instead to rely on characterisations in \cite[Chapitre~0]{EGA-IV-1}.
\begin{definition}[{\cite[Définition~19.3.1]{EGA-IV-1}}]\label{def:fs}
  Let $\mathcal{O} \supseteq C$ be a valuation ring with maximal ideal $\mathfrak{m}$.
  Then $\mathcal{O}$ is \emph{formally smooth over $C$} (equivalently, the ring homomorphism $C \to \mathcal{O}$ is formally smooth) if,
  for every $C$-algebra $B$ and every nilpotent ideal $I$ of $B$,
  every $C$-algebra homomorphism $\mathcal{O} \to B/I$
  which is continuous with respect to the $\mathfrak{m}$-adic topology on $\mathcal{O}$ and the discrete topology on $B/I$
  factors through a $C$-algebra morphism $\mathcal{O} \to B$ which is continuous
  (again with respect to the $\mathfrak{m}$-adic topology on $\mathcal{O}$ and the discrete topology on $B$).
\end{definition}
Here the $\mathfrak{m}$-adic topology on $\mathcal{O}$ is the ring topology for which the powers $\mathfrak{m}^n$, $n \in \mathbb{N}$,
form a basis of open neighbourhoods of $0$.
Note that this topology is exactly the valuation topology if $\mathcal{O}$ is a discrete valuation ring (DVR)
and the discrete topology if $\mathcal{O}$ is a field.
In all other cases, this topology is not Hausdorff, since $\bigcap_{n \geq 0} \mathfrak{m}^n \supsetneq \{ 0 \}$.
Continuity of a homomorphism $\mathcal{O} \to B/I$, or $\mathcal{O} \to B$,
where the codomain is endowed with the discrete topology,
precisely means that some power of $\mathfrak{m}$ is contained in the kernel.
\begin{remark}\label{rem:fs-topo-or-not}
  Some care needs to be taken with regards to the terminology defined above.
  To avoid ambiguity, one may prefer to call $\mathcal{O}$ \emph{formally smooth over $C$ for the $\mathfrak{m}$-adic topology on $\mathcal{O}$ and the discrete topology on $C$}
  if the property above is satisfied,
  and use the discrete topology on $\mathcal{O}$ instead when no topology is mentioned.\footnote{%
    The topology on $C$ essentially plays no role for formal smoothness,
    see \cite[Proposition~19.3.8]{EGA-IV-1}, \cite[Tag~07CE]{StacksProject}.
    }
  This is a common convention in the literature.
  However, since we will always use the $\mathfrak{m}$-adic topology on $\mathcal{O}$ when considering formal smoothness,
  we prefer to suppress mentioning the topology every time.
\end{remark}
Given a valuation ring $\mathcal{O}$ with maximal ideal $\mathfrak{m}$,
it is useful to consider its completion\footnote{``séparé complété'' in the terminology of Bourbaki, as used in \cite{EGA-IV-1}}
$\widehat{\mathcal{O}}$, which we always take with respect to the $\mathfrak{m}$-adic topology,
i.e.\ $\widehat{\mathcal{O}} = \varprojlim_{n \geq 0} \mathcal{O}/\mathfrak{m}^n$.
This is not to be confused with the completion of $\mathcal{O}$ with respect to the valuation topology (in the sense of the completion of a uniform space).
The completion $\widehat{\mathcal{O}}$ admits the following description:
If $\mathfrak{m}$ is either the zero ideal (in which case $\mathcal{O}$ is a field) or is not a principal ideal (i.e.\ the valuation does not have a uniformiser),
we have $\mathfrak{m}^n = \mathfrak{m}$ for all $n \geq 1$, and so $\widehat{\mathcal{O}} = \mathcal{O}/\mathfrak{m}$ is simply the residue field of $\mathcal{O}$.
Otherwise, $\mathfrak{p} = \bigcap_{n \geq 0} \mathfrak{m}^n$ is a non-maximal prime ideal of $\mathcal{O}$, $\mathcal{O}/\mathfrak{p}$ is a DVR with maximal ideal $\mathfrak{m}/\mathfrak{p}$,
and $\widehat{\mathcal{O}} = \widehat{\mathcal{O}/\mathfrak{p}}$ is the completion of the DVR $\mathcal{O}/\mathfrak{p}$ in the usual sense.
In particular, we see that $\widehat{\mathcal{O}}$ is either a complete DVR or a field in all cases,
and the natural map $\mathcal{O} \to \widehat{\mathcal{O}}$ is usually not injective.

Formal smoothness can be checked on the completion:
\begin{lemma}\label{lem:fs-completion}
  Let $\mathcal{O} \supseteq C$ be a valuation ring.
  Then $\mathcal{O}$ is formally smooth over $C$ if and only if $\widehat{\mathcal{O}}$ is.
\end{lemma}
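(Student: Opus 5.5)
The plan is to reduce the statement to the general fact from EGA that formal smoothness for an adic topology only depends on the separated completion, namely \cite[Proposition~19.3.6]{EGA-IV-1} (or \cite[Tag~07ED]{StacksProject}). Even without quoting it, the argument is short and direct, so I would give it from the definition. We equip $\widehat{\mathcal{O}}$ with its $\widehat{\mathfrak{m}}$-adic topology, where $\widehat{\mathfrak{m}} = \mathfrak{m}\widehat{\mathcal{O}}$ is its maximal ideal. From the explicit description above, $\widehat{\mathcal{O}}$ is either the residue field (discrete topology) or a complete DVR, and in both cases $\widehat{\mathfrak{m}}^n = \mathfrak{m}^n\widehat{\mathcal{O}}$. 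Moreover, the maps $\mathcal{O}/\mathfrak{m}^n \to \widehat{\mathcal{O}}/\widehat{\mathfrak{m}}^n$ are isomorphisms for all $n$:
\begin{itemize}
\item if $\mathfrak{m}$ is not principal, or zero, both sides are the residue field for $n \geq 1$;
\item in the DVR case, this is the standard fact that completing a DVR does not change its quotients by powers of the maximal ideal, applied to $\mathcal{O}/\mathfrak{p}$, where $\mathfrak{m}^n \supseteq \mathfrak{p}$.
\end{itemize}

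The key observation is then the following. For any discrete $C$-algebra $D$, composition with the natural map $\mathcal{O} \to \widehat{\mathcal{O}}$ gives a bijection between continuous $C$-homomorphisms $\widehat{\mathcal{O}} \to D$ and continuous $C$-homomorphisms $\mathcal{O} \to D$. Indeed, a continuous map on either side kills some $\mathfrak{m}^n$, respectively $\widehat{\mathfrak{m}}^n$, so it factors through $\mathcal{O}/\mathfrak{m}^n \cong \widehat{\mathcal{O}}/\widehat{\mathfrak{m}}^n$. Thus both sets are identified with $\varinjlim_n \operatorname{Hom}_C(\mathcal{O}/\mathfrak{m}^n, D)$, compatibly with the natural map.

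With this bijection, the lifting problems in Definition~\ref{def:fs} for $\mathcal{O}$ and for $\widehat{\mathcal{O}}$ correspond exactly. Take a $C$-algebra $B$ with nilpotent ideal $I$ and continuous maps into $B/I$ and $B$. The bijections for $D = B$ and $D = B/I$ commute with composition along $B \to B/I$. Hence a continuous map $\widehat{\mathcal{O}} \to B/I$ lifts continuously to $B$ if and only if the corresponding map $\mathcal{O} \to B/I$ does. Since every continuous map from either ring arises from one from the other, the two lifting properties are equivalent, which proves the lemma.

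The main obstacle is the identification $\mathcal{O}/\mathfrak{m}^n \cong \widehat{\mathcal{O}}/\widehat{\mathfrak{m}}^n$ together with the check that the topology on $\widehat{\mathcal{O}}$ used in the formal smoothness condition is the $\widehat{\mathfrak{m}}$-adic one, which agrees with the inverse limit topology. In the non-discrete case this needs $\mathfrak{m}$ to be finitely generated, which is automatic: $\mathfrak{m}$ is principal once we pass to $\mathcal{O}/\mathfrak{p}$. In the remaining case, where $\mathfrak{m} = \mathfrak{m}^2$, both topologies are discrete on the residue field and there is nothing to check. I would handle these two cases separately, following the case distinction in the description of $\widehat{\mathcal{O}}$ preceding the lemma.
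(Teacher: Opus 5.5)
Your proposal is correct and follows the same route as the paper, which simply cites \cite[Chapitre~0, Proposition~19.3.6]{EGA-IV-1}. Your direct argument, identifying continuous $C$-homomorphisms from $\mathcal{O}$ and from $\widehat{\mathcal{O}}$ into discrete algebras via $\mathcal{O}/\mathfrak{m}^n \cong \widehat{\mathcal{O}}/\widehat{\mathfrak{m}}^n$, is exactly the content of that proposition; your check that the $\widehat{\mathfrak{m}}$-adic topology is the one to use on $\widehat{\mathcal{O}}$ is a useful detail that the paper leaves implicit.
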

\begin{proof}
  See \cite[Chapitre~0, Proposition~19.3.6]{EGA-IV-1}.
\end{proof}

Although this is of secondary importance for us in developing the theory, we note the following consequence:
\begin{lemma}\label{lem:fs-henselisation}
  Let $\mathcal{O} \supseteq C$ be a valuation ring.
  Then $\mathcal{O}$ is formally smooth over $C$ if and only if its henselisation $\mathcal{O}^h$ is.
\end{lemma}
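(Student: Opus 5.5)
The plan is to reduce the statement to Lemma~\ref{lem:fs-completion}: we show that $\mathcal{O}$ and $\mathcal{O}^h$ have the same completion $\widehat{\mathcal{O}} \cong \widehat{\mathcal{O}^h}$ as $C$-algebras. Applying Lemma~\ref{lem:fs-completion} to both rings then gives
\[ \mathcal{O} \text{ formally smooth over } C \iff \widehat{\mathcal{O}} \text{ formally smooth over } C \iff \widehat{\mathcal{O}^h} \text{ formally smooth over } C \iff \mathcal{O}^h \text{ formally smooth over } C. \]

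To compare the completions, we use the standard facts about the henselisation of a valued field. The extension $\mathcal{O} \subseteq \mathcal{O}^h$ is immediate: it has the same residue field and the same value group. It is also a local extension, with $\mathfrak{m}_{\mathcal{O}^h} \cap \mathcal{O} = \mathfrak{m}$ and $\mathfrak{m}\mathcal{O}^h = \mathfrak{m}_{\mathcal{O}^h}$. We then argue by the case distinction given in the description of $\widehat{\mathcal{O}}$.

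\emph{Case 1: $\mathfrak{m}$ is zero or not principal.} Since the value groups agree, $\mathfrak{m}_{\mathcal{O}^h}$ is likewise zero or non-principal. Both completions are therefore just the residue fields, which coincide, and the identification is compatible with the maps from $C$.

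\emph{Case 2: $\mathfrak{m}$ is principal, generated by a uniformiser $\pi$.} Then $\pi$ is also a uniformiser of $\mathcal{O}^h$, again because the value groups agree. It suffices to show that $\mathcal{O}/\pi^n\mathcal{O} \to \mathcal{O}^h/\pi^n\mathcal{O}^h$ is an isomorphism for every $n$, and then pass to the inverse limit.
\begin{itemize}
\item Injectivity follows from $\pi^n\mathcal{O}^h \cap \mathcal{O} = \pi^n\mathcal{O}$, which holds because the valuation on $\mathcal{O}^h$ extends that of $\mathcal{O}$.
\item Surjectivity is the main point. Given $x \in \mathcal{O}^h$, immediacy provides $a_0 \in \mathcal{O}$ with $v(x-a_0) > 0$, i.e.\ $x - a_0 \in \pi\mathcal{O}^h$. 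Write $x - a_0 = \pi x_1$ with $x_1 \in \mathcal{O}^h$ and repeat. After $n$ steps we get $a \in \mathcal{O}$ with $x - a \in \pi^n \mathcal{O}^h$.
\end{itemize}
Only the residue fields need to agree for this induction; value groups matter only to keep $\pi$ a uniformiser.

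A cleaner alternative to the surjectivity step is to quote that $\mathcal{O} \to \mathcal{O}^h$ is a filtered colimit of local-étale maps, so it is flat with $\mathcal{O}^h/\mathfrak{m}\mathcal{O}^h = \mathcal{O}/\mathfrak{m}$. From this, $\mathcal{O}/\mathfrak{m}^n \cong \mathcal{O}^h/\mathfrak{m}^n\mathcal{O}^h$ follows by the standard argument. Either way, the main obstacle is just being careful that the uniformiser and the $\mathfrak{m}$-adic filtrations correspond, which the immediacy of the henselisation guarantees.
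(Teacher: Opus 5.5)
Your proposal is correct and takes the same route as the paper: it reduces to Lemma~\ref{lem:fs-completion} via a $C$-algebra isomorphism $\widehat{\mathcal{O}} \cong \widehat{\mathcal{O}^h}$. The only difference is that the paper simply cites \cite[Théorème~18.6.6~(iv)]{EGA-IV-4} for this isomorphism, whereas you verify it directly, either from the immediacy of the henselisation (checking that each map $\mathcal{O}/\mathfrak{m}^n \to \mathcal{O}^h/\mathfrak{m}^n\mathcal{O}^h$ is bijective) or from flatness; both verifications are sound.
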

\begin{proof}
  By \cite[Théorème~18.6.6~(iv)]{EGA-IV-4}, the completions $\widehat{\mathcal{O}^h}$ and $\widehat{\mathcal{O}}$
  are isomorphic as $C$-algebras,
  so Lemma~\ref{lem:fs-completion} yields the claim.
\end{proof}

We now develop some criteria for formal smoothness in terms of Kähler differentials.
Recall that when $R, S$ are rings with a homomorphism $S \to R$,
the $R$-module $\Omega_{R/S}$ of Kähler differentials of $R$ over $S$
is the $R$-module generated by elements $dr$, $r \in R$,
modulo the relations $d(a+b) = da + db$ (additivity), $d(a \cdot b) = a \cdot db + b \cdot da$ (Leibniz rule),
and $dr = 0$ whenever $r$ is in the image of $S$.
The by far most important case for us is the module of absolute Kähler differentials $\Omega_R$,
which is the special case of the above for $S = \mathbb{Z}$:
since the Leibniz rule already forces $d(1) = 0$, this means we only impose the
additivity and Leibniz rule relations on the generators $dr$.
If $R$ is of characteristic $p$,
we can also identify $\Omega_R$ with $\Omega_{R/\mathbb{F}_p}$ or $\Omega_{R/R^p}$,
since the Leibniz rule forces $d(r^p) = 0$ for $r \in R$.
\begin{remark}\label{rem:differentials-completion}
  For a valuation ring $\mathcal{O}$ with maximal ideal $\mathfrak{m}$, the differential module $\Omega_{\mathcal{O}}$ is chiefly interesting to us
  through the tensor product $\Omega_{\mathcal{O}} \otimes_{\mathcal{O}} \mathcal{O}/\mathfrak{m}$.
  As far as this $\mathcal{O}/\mathfrak{m}$-vector space is concerned, $\mathcal{O}$ may be freely replaced by its completion:
  indeed, $\mathcal{O} \to \mathcal{O}/\mathfrak{m}^2$ induces an isomorphism $\Omega_{\mathcal{O}} \otimes_{\mathcal{O}} \mathcal{O}/\mathfrak{m} \to \Omega_{(\mathcal{O}/\mathfrak{m}^2)} \otimes_{\mathcal{O}/\mathfrak{m}^2} \mathcal{O}/\mathfrak{m}$ by \cite[Tag~02HQ]{StacksProject},
  and $\mathcal{O}/\mathfrak{m}$, $\mathcal{O}/\mathfrak{m}^2$ remain the same under passage to the completion.
\end{remark}

\begin{lemma}\label{lem:fs-differential-criterion}
  Let $\mathcal{O} \supseteq C$ be a valuation ring with maximal ideal $\mathfrak{m}$.
  Then $\mathcal{O}$ is formally smooth over $C$ if and only if the map
  \[ \Omega_{C} \otimes_C \mathcal{O}/\mathfrak{m} \to \Omega_{\mathcal{O}} \otimes_{\mathcal{O}} \mathcal{O}/\mathfrak{m} ,\]
  induced by the natural map $\Omega_{C} \otimes_C \mathcal{O} \to \Omega_{\mathcal{O}}, dc \otimes x \mapsto x \cdot dc$,
  is injective.
\end{lemma}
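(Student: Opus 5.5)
We plan to reduce to the completion $\widehat{\mathcal{O}}$ via Lemma~\ref{lem:fs-completion}. By Remark~\ref{rem:differentials-completion}, the target $\Omega_{\mathcal{O}} \otimes_{\mathcal{O}} \mathcal{O}/\mathfrak{m}$ is unchanged under replacing $\mathcal{O}$ by $\widehat{\mathcal{O}}$, and the residue field $k = \mathcal{O}/\mathfrak{m}$ is also unchanged. It therefore suffices to prove the criterion when $\mathcal{O}$ is either a field or a complete DVR, in both cases with the $\mathfrak{m}$-adic topology and noetherian.

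For complete noetherian local rings $A$ over a field $C$, we will invoke the criterion of \cite[Chapitre~0, §~22]{EGA-IV-1}, specifically Théorème~22.5.8 or its variant 22.6.x. It states that $A$ is formally smooth over $C$ (for the $\mathfrak{m}$-adic topology) if and only if $A$ is regular and the map $\Omega_C \otimes_C k \to \Omega_A \otimes_A k$ is injective; this is the case of a residue field extension, in the form of "formal smoothness equals geometric regularity plus the differential criterion". In fact, the cleanest statement is 22.6.x / 22.5.8: for a noetherian complete local $C$-algebra, formal smoothness holds if and only if $A$ is regular and $\Upsilon_{k/C}$-type injectivity holds. Here we use that, in characteristic $p$, the map $\Omega_C\otimes k \to \Omega_A\otimes k$ factors through $\Omega_k$ together with the conormal term. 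Since $\widehat{\mathcal{O}}$ is a field or a DVR, it is automatically regular, so the regularity condition is vacuous. What remains is exactly injectivity of the displayed map.

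The key step, and the main obstacle, is locating and matching the precise EGA statement. The cleanest intermediate route is the following. By \cite[0,~19.6.x/20.5.x]{EGA-IV-1} (the Jacobian/Cohen-type criterion), a complete regular local $C$-algebra $A$ is formally smooth over $C$ if and only if the composite $\Omega_C \otimes_C k \to \Omega_A \otimes_A k$ is injective. If instead we can only find the version phrased with $\Omega_{k}$ and $\mathfrak{m}/\mathfrak{m}^2$, we will use the exact sequence
\[ \mathfrak{m}/\mathfrak{m}^2 \to \Omega_A \otimes_A k \to \Omega_k \to 0 \]
to translate. In the field case, $\Omega_A\otimes k = \Omega_k$, and the statement reduces to the classical fact that $k/C$ is separable (equivalently formally smooth) if and only if $\Omega_C\otimes_C k \to \Omega_k$ is injective, by \cite[0,~20.6.3/21.4.x]{EGA-IV-1}. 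In the DVR case we apply the regular-local-ring form of the criterion. We will double-check that the first map $\mathfrak{m}/\mathfrak{m}^2 \to \Omega_A\otimes k$ can be non-injective without affecting the argument: only the image of $\Omega_C\otimes k$ matters.
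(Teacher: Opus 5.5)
Your proposal is correct and follows the paper's fallback argument: pass to $\widehat{\mathcal{O}}$, which is a field or a complete DVR, using Lemma~\ref{lem:fs-completion} and Remark~\ref{rem:differentials-completion}, then apply the criterion for noetherian local rings (formally smooth if and only if regular and the differential map is injective), where regularity is automatic. The reference you are looking for is \cite[Tag 07EL]{StacksProject}; 22.5.8 is the geometric-regularity characterisation, not the differential one. The paper's primary citation, \cite[Chapitre~0, Théorème~22.2.2]{EGA-IV-1} with Remarque (a), gives the criterion directly for arbitrary valuation rings without passing to the completion.
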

\begin{proof}
  See \cite[Chapitre~0, Théorème~22.2.2]{EGA-IV-1} and (a) of the following Remarque.
  
  With other references like \cite[Tag 07EL]{StacksProject}, one only immediately gets this for $\mathcal{O}$ a DVR or a field.
  However, by Remark~\ref{rem:differentials-completion} we may always replace $\mathcal{O}$ by its completion,
  and this is a DVR or a field.
\end{proof}

For later use, we note the following exact sequence relating the modules of differentials $\Omega_{\mathcal{O}}$ and $\Omega_{\mathcal{O}/\mathfrak{m}}$.
\begin{lemma}\label{lem:conormal-sequence}
  Let $\mathcal{O}$ be a valuation ring of characteristic $p$ with maximal ideal $\mathfrak{m}$.
  We have an exact sequence of $\mathcal{O}/\mathfrak{m}$-vector spaces
  \[ 0 \to \mathfrak{m}/\mathfrak{m}^2 \to \Omega_{\mathcal{O}} \otimes_{\mathcal{O}} \mathcal{O}/\mathfrak{m} \to \Omega_{\mathcal{O}/\mathfrak{m}} \to 0 ,\]
  where an element $x + \mathfrak{m}^2 \in \mathfrak{m}/\mathfrak{m}^2$ is mapped to $dx \otimes 1 \in \Omega_\mathcal{O} \otimes_{\mathcal{O}} \mathcal{O}/\mathfrak{m}$,
  and an element $da \otimes b \in \Omega_{\mathcal{O}} \otimes_{\mathcal{O}} \mathcal{O}/\mathfrak{m}$ is mapped to $b \cdot d(a + \mathfrak{m}) \in \Omega_{\mathcal{O}/\mathfrak{m}}$.
\end{lemma}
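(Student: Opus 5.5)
The plan is to start from the conormal (second fundamental) exact sequence for the surjection $\mathcal{O} \to \mathcal{O}/\mathfrak{m} = k$. By \cite[Tag~00RU]{StacksProject} it reads
\[ \mathfrak{m}/\mathfrak{m}^2 \xrightarrow{\ \delta\ } \Omega_{\mathcal{O}} \otimes_{\mathcal{O}} k \to \Omega_{k} \to 0, \]
with $\delta(x+\mathfrak{m}^2) = dx \otimes 1$ and the second map given by $da \otimes b \mapsto b\, d(a+\mathfrak{m})$. This already gives exactness at the middle and right-hand terms. What remains is the injectivity of $\delta$.

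First, reduce to the completion. By Remark~\ref{rem:differentials-completion}, the middle term depends only on $\mathcal{O}/\mathfrak{m}^2$. The outer terms $\mathfrak{m}/\mathfrak{m}^2$ and $\Omega_k$ are likewise unchanged under passage to $\widehat{\mathcal{O}}$, and the maps are compatible with this passage. So we may assume $\mathcal{O}$ is either a field or a complete DVR. If $\mathfrak{m}$ is $0$ or non-principal, then $\mathfrak{m}=\mathfrak{m}^2$ and there is nothing to prove. It remains to treat a complete DVR $\mathcal{O}$ with uniformiser $\pi$. Here $\mathfrak{m}/\mathfrak{m}^2$ is one-dimensional, spanned by $\pi$, so we must show that $d\pi \otimes 1 \neq 0$ in $\Omega_{\mathcal{O}} \otimes k$.

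For this, construct a derivation $D\colon \mathcal{O} \to k$ (with respect to the $\mathcal{O}$-module structure on $k$) such that $D(\pi) = 1$. Since $\mathcal{O}$ is complete of characteristic $p$, Cohen's structure theorem gives a coefficient field $k_0 \subseteq \mathcal{O}$ mapping isomorphically onto $k$, and $\mathcal{O} = k_0[[\pi]]$. Define $D\bigl(\sum a_i \pi^i\bigr) = \bar a_1$, the residue of the coefficient of $\pi$. Additivity is clear. The Leibniz rule holds because the $\pi$-coefficient of $fg$ is $a_0 b_1 + a_1 b_0$, and modulo $\mathfrak{m}$ the factors $f,g$ act on $k$ via $\bar a_0, \bar b_0$. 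By the universal property, $D$ induces a $k$-linear map $\Omega_{\mathcal{O}} \otimes k \to k$ sending $d\pi\otimes 1$ to $1$, so $\delta$ is injective.

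The main subtlety is that this uses the existence of a coefficient field. Alternatively, one can avoid Cohen's theorem by working in $\mathcal{O}/\mathfrak{m}^2$: this ring is a complete local ring of characteristic $p$ with square-zero maximal ideal. Lifting a $p$-basis of $k$ gives a subfield $k_0 \subseteq \mathcal{O}/\mathfrak{m}^2$ with $\mathcal{O}/\mathfrak{m}^2 = k_0 \oplus k_0\pi$. The same derivation, namely taking the $\pi$-coefficient, then works, with Leibniz checked directly. Either way, the only nonroutine step is this splitting.
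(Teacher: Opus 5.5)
Your argument is correct and is essentially the paper's. The paper starts from the same right-exact conormal sequence for $\mathbb{F}_p \to \mathcal{O} \to \mathcal{O}/\mathfrak{m}$ and gets injectivity by citing Eisenbud's Corollary~16.13: the sequence is left exact when the residue field is separable over the base field, which is automatic over the perfect field $\mathbb{F}_p$. The mechanism behind that corollary is exactly the splitting of $\mathcal{O}/\mathfrak{m}^2 \to \mathcal{O}/\mathfrak{m}$ that you construct, together with the derivation it induces. You simply make this explicit, using that $\mathfrak{m}/\mathfrak{m}^2$ has dimension at most one, so it suffices to exhibit a derivation $D$ with $D(\pi)=1$. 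The detour through the completion is unnecessary, since your square-zero variant already works directly in $\mathcal{O}/\mathfrak{m}^2$.
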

\begin{proof}
  This is a special case of the conormal sequence \cite[Proposition~16.3]{Eisenbud_comm-algebra} associated
  to the ring homomorphisms $\mathbb{F}_p \to \mathcal{O} \to \mathcal{O}/\mathfrak{m}$.
  In general this is only right exact, but here left exactness is ensured by \cite[Corollary~16.13]{Eisenbud_comm-algebra},
  since we are working over the perfect field $\mathbb{F}_p$.
\end{proof}

For a field $K$ of characteristic $p$, Kähler differentials characterise $p$-independence.
Let $K_0 \subseteq K$ be a subfield.
Recall that a family $(x_i)_{i \in I}$ of elements of $K$ is relatively $p$-independent over $K_0$ if $K^pK_0(\{ x_i \colon i \in I_0 \}) \subsetneq K^pK_0(\{ x_i \colon i \in I \})$ for all $I_0 \subsetneq I$.
It is a relative $p$-basis of $K/K_0$ if additionally $K^p K_0(\{ x_i \colon i \in I \}) = K$, or equivalently, if it is a maximal relatively $p$-independent family
\cite[§~4]{MacLane_sep-trans-bases}.
Note that relative $p$-independence over $K_0$ is the same as relative $p$-independence over $K^p K_0$;
some sources such as \cite[Chapitre~V, §13]{Bourbaki_algebre-4567} restrict the definitions by requiring that $K$ be purely inseparable of height at most $1$ over $K_0$,
but this can always be achieved by replacing $K_0$ by $K^p K_0$.
The family $(x_i)_{i \in I}$ is relatively $p$-independent (a relative $p$-basis) over $K_0$ if and only if
the elements $dx_i \in \Omega_{K/K_0}$ are $K$-linearly independent (form a $K$-vector space basis of $\Omega_{K/K_0}$, respectively)
\cite[Chapitre~V, §~13, No~2, Théorème~1]{Bourbaki_algebre-4567}.
The by far most important case for us is for the subfield $K_0 = \mathbb{F}_p$ (or equivalently $K_0 = K^p$).
Here we speak of absolute $p$-independence and absolute $p$-bases, or simply $p$-independence and $p$-bases without further qualification.
By the above, these properties are characterised by the absolute differential module $\Omega_K$.

If a family $t_0, \dotsc, t_n \in K$ is $p$-dependent, there exists an index $i_0$ such that $t_{i_0} \in K^p(\{ t_i \colon 0 \leq i \leq n, i \neq i_0 \})$.
After reïndexing, we may assume $i_0 = 0$, so $t_0 \in K^p(t_1, \dotsc, t_n)$.
This means that there exist coefficients $x_\alpha \in K$ indexed by multi-indices $\alpha = (\alpha_1, \dotsc, \alpha_n)$ with entries $0 \leq \alpha_i < p$
such that $t_0 = \sum_\alpha x_\alpha^p t^\alpha$, where we write $t^\alpha = t_1^{\alpha_1} \dotsm t_n^{\alpha_n}$,
since the $t^\alpha$ generate the $K^p$-vector space $K^p(t_1, \dotsc, t_n)$.

With a view towards reinterpreting the injectivity statement of Lemma~\ref{lem:fs-differential-criterion},
we can now give a somewhat complicated but elementary characterisation of linear independence in $\Omega_{\mathcal{O}} \otimes_{\mathcal{O}} \mathcal{O}/\mathfrak{m}$.
In the following, as is common practice, we generally omit the index in the tensor product symbol $\otimes$
when there is no risk of ambiguity for the base ring.
\begin{lemma}\label{lem:diff-indep-ad-hoc-crit}
  Let $\mathcal{O}$ be a valuation ring of characteristic $p$ with maximal ideal $\mathfrak{m}$ and $T \subseteq \mathcal{O}$ a subset.
  The following are equivalent:
  \begin{enumerate}
  \item The elements $dt \otimes 1 \in \Omega_{\mathcal{O}} \otimes \mathcal{O}/\mathfrak{m}$, $t \in T$, are $\mathcal{O}/\mathfrak{m}$-linearly independent.
  \item For every list of finitely many distinct elements $\gamma, t_1, \dotsc, t_n \in T$, the following holds:
    For all choices of elements $x_\alpha \in \mathcal{O}$, indexed by multi-indices $\alpha = (\alpha_1, \dotsc, \alpha_n)$ with entries $0 \leq \alpha_i < p$,
    and the resulting element $\hat{x} = \gamma - \sum_\alpha x_\alpha^p t^\alpha$,
    we either have $\hat{x} \not\in \mathfrak{m}$,
    or $\hat{x}$ generates $\mathfrak{m}$
    and the residues of the elements $t_i$ in $\mathcal{O}/\mathfrak{m}$ are $p$-independent.
  \end{enumerate}
\end{lemma}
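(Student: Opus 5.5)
The plan is to work throughout in the $k$-vector space $V = \Omega_{\mathcal{O}} \otimes \mathcal{O}/\mathfrak{m}$, where $k = \mathcal{O}/\mathfrak{m}$, using the exact sequence of Lemma~\ref{lem:conormal-sequence}, $0 \to \mathfrak{m}/\mathfrak{m}^2 \to V \to \Omega_k \to 0$. The basic computation is the following. Given $\gamma, t_1, \dotsc, t_n$ and $x_\alpha \in \mathcal{O}$, set $\hat{x} = \gamma - \sum_\alpha x_\alpha^p t^\alpha$ and apply $d$. Since $d(x_\alpha^p) = 0$ and $d(t^\alpha) = \sum_i \alpha_i t^\alpha t_i^{-1}\, dt_i$ (formally; more precisely, the monomial with $\alpha_i$ lowered by one), we get
\[ d\hat{x} \otimes 1 = d\gamma \otimes 1 - \sum_i c_i\, dt_i \otimes 1 \]
in $V$, for explicit $c_i \in k$. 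We also use the observation from the discussion of the completion: $\mathfrak{m}/\mathfrak{m}^2$ is either $0$ or one-dimensional. Consequently an element $\hat{x} \in \mathfrak{m}$ either generates $\mathfrak{m}$ (and then $\hat{x} + \mathfrak{m}^2$ spans $\mathfrak{m}/\mathfrak{m}^2 \neq 0$) or lies in $\mathfrak{m}^2$.

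For (1)$\Rightarrow$(2), suppose $\hat{x} \in \mathfrak{m}$.
\begin{itemize}
\item If $\hat{x} \in \mathfrak{m}^2$, then $d\hat{x} \otimes 1 = 0$ by the description of the map $\mathfrak{m}/\mathfrak{m}^2 \to V$. The displayed identity is then a linear relation among the $dt \otimes 1$ in which $d\gamma \otimes 1$ has coefficient $1$, contradicting (1).
\item If $\hat{x}$ generates $\mathfrak{m}$ but the residues $\bar{t}_i$ are $p$-dependent, there is a nontrivial relation $\sum_i b_i\, d\bar{t}_i = 0$ in $\Omega_k$. By exactness, $\sum_i b_i\, dt_i \otimes 1$ lies in the image of $\mathfrak{m}/\mathfrak{m}^2 = k\cdot(\hat{x}+\mathfrak{m}^2)$. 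So it equals $\lambda\,(d\gamma \otimes 1 - \sum_i c_i\, dt_i \otimes 1)$ for some $\lambda \in k$.
\item This is again a nontrivial linear relation among $d\gamma \otimes 1, dt_1 \otimes 1, \dotsc, dt_n \otimes 1$: if $\lambda \neq 0$ the coefficient of $d\gamma$ is nonzero, and otherwise the $b_i$ are not all zero. This contradicts (1).
\end{itemize}

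For (2)$\Rightarrow$(1), suppose $\sum_{j=0}^n a_j\, ds_j \otimes 1 = 0$ with distinct $s_j \in T$ and $a_j \in k$ not all zero.
\begin{enumerate}
\item Mapping to $\Omega_k$ gives the same nontrivial relation among the $d\bar{s}_j$, so the residues $\bar{s}_j$ are $p$-dependent.
\item After reindexing, $\bar{s}_0 \in k^p(\bar{s}_1, \dotsc, \bar{s}_n)$, so $\bar{s}_0 = \sum_\alpha \bar{x}_\alpha^p \bar{s}^\alpha$. Lift the $\bar{x}_\alpha$ to $x_\alpha \in \mathcal{O}$ and put $\gamma = s_0$, $t_i = s_i$. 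Then $\hat{x} \in \mathfrak{m}$.
\item By (2), $\hat{x}$ generates $\mathfrak{m}$ and $\bar{s}_1, \dotsc, \bar{s}_n$ are $p$-independent, i.e.\ the $d\bar{s}_i$ are linearly independent in $\Omega_k$.
\item Reducing the displayed identity to $\Omega_k$ gives $d\bar{s}_0 = \sum_i \bar{c}_i\, d\bar{s}_i$. Comparing with the original relation yields $a_i = -a_0 c_i$ for $i \geq 1$, and $a_0 \neq 0$, since otherwise all $a_j$ vanish.
\item Hence the original relation is $a_0$ times $(ds_0 - \sum_i c_i\, ds_i) \otimes 1 = a_0\, d\hat{x} \otimes 1$, so $d\hat{x} \otimes 1 = 0$.
\item But $\hat{x} + \mathfrak{m}^2 \neq 0$ because $\hat{x}$ generates $\mathfrak{m}$, and $\mathfrak{m}/\mathfrak{m}^2 \to V$ is injective by Lemma~\ref{lem:conormal-sequence}. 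This is a contradiction.
\end{enumerate}

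The main point needing care is the bookkeeping in the differential computation. One has to make sure the $c_i$ obtained from $d(t^\alpha)$ are the same coefficients in $V$ and in $\Omega_k$. One also has to check the dichotomy ``$\hat{x} \in \mathfrak{m}^2$ or $\hat{x}$ generates $\mathfrak{m}$'', which is immediate from the structure of valuation ideals: when $\mathfrak{m}$ is not principal, $\mathfrak{m} = \mathfrak{m}^2$; when $\mathfrak{m}$ is principal, a non-generator has strictly larger value than a generator and so lies in $\mathfrak{m}^2$.
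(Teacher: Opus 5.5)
Your proposal is correct and follows the paper's proof essentially step for step. Both directions rest on the conormal sequence of Lemma~\ref{lem:conormal-sequence}, the identity $d\hat{x}\otimes 1 = d\gamma\otimes 1 - \sum_i c_i\, dt_i\otimes 1$, and the dichotomy ``$\hat{x}\in\mathfrak{m}^2$ or $\hat{x}$ generates $\mathfrak{m}$''; your (2)$\Rightarrow$(1) is only recast as a contradiction from an assumed relation, where the paper directly checks that $d\hat{x}\otimes 1, dt_1\otimes 1,\dotsc,dt_n\otimes 1$ are independent.

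One caveat, which the paper shares: your claim that $\hat{x}+\mathfrak{m}^2\neq 0$ whenever $\hat{x}$ generates $\mathfrak{m}$ fails when $\mathcal{O}=K$ is a field, since then $\hat{x}=0$ generates $\mathfrak{m}=0$. The statement itself needs ``generates $\mathfrak{m}$'' to be read as ``is a uniformiser'' in that case. For example, take $p$ odd and $T=\{t,t^2\}$ with $t\notin K^p$: then (2) holds under the literal reading, but (1) fails because $d(t^2)=2t\,dt$.
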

\begin{proof}
  Throughout, we use the exact sequence
  $0 \to \mathfrak{m}/\mathfrak{m}^2 \to \Omega_{\mathcal{O}} \otimes \mathcal{O}/\mathfrak{m} \to \Omega_{\mathcal{O}/\mathfrak{m}} \to 0$
  from Lemma~\ref{lem:conormal-sequence}.
  Let us suppose the second statement holds.
  Consider finitely many distinct elements $t_0, \dotsc, t_n \in T$.
  If their residues $\overline{t_i}$ in $\mathcal{O}/\mathfrak{m}$ are $p$-independent,
  so the $d\overline{t_i}$ are linearly independent in $\Omega_{\mathcal{O}/\mathfrak{m}}$,
  then certainly the $dt_i \otimes 1$ are linearly independent in $\Omega_{\mathcal{O}} \otimes \mathcal{O}/\mathfrak{m}$.
  Let us suppose that this is not the case.
  Without loss of generality, $\overline{t_0}$ lies in the field $(\mathcal{O}/\mathfrak{m})^p[\{ \overline{t_i} \colon i \geq 1 \}]$.
  We set $\gamma := t_0$ and find elements $x_\alpha \in \mathcal{O}$ indexed by multi-indices $\alpha = (\alpha_1, \dotsc, \alpha_n)$ with entries $0 \leq \alpha_i < p$
  such that $\overline \gamma = \sum_\alpha \overline{x_\alpha}^p \overline{t}^\alpha$ (where we write $\overline{x_\alpha} \in \mathcal{O}/\mathfrak{m}$ for the residue of $x_\alpha$).
  The element $\hat{x} = \gamma - \sum_\alpha x_\alpha^p t^\alpha$ therefore lies in $\mathfrak{m}$,
  so by our assumption it must generate $\mathfrak{m}$,
  and furthermore the $t_1, \dotsc, t_n$ have $p$-independent residues in $\mathcal{O}/\mathfrak{m}$.
  By the exact sequence, it follows that the elements
  $d\hat{x} \otimes 1, dt_1 \otimes 1, \dotsc, dt_n \otimes 1$ are linearly independent in $\Omega_{\mathcal{O}} \otimes \mathcal{O}/\mathfrak{m}$;
  therefore the same holds on replacing $d\hat{x}$ by $d\gamma = dt_0$,
  since $d\gamma - d\hat{x}$ is a linear combination of $dt_1, \dotsc, dt_n$
  (apply the Leibniz rule to the definition of $\hat{x}$).
  Since our choice of elements $t_i$ was arbitrary,
  this proves the first statement.

  Let us suppose the second statement is not true,
  so there exist distinct elements $\gamma, t_1, \dotsc, t_n \in T$
  and elements $x_\alpha \in \mathcal{O}$ violating the condition.
  Suppose $\hat{x}$ is in $\mathfrak{m}$ but does not generate it, so that $\hat{x} \in \mathfrak{m}^2$.
  Writing $\hat{x}$ as a sum of non-trivial products of elements of $\mathfrak{m}$ and applying the Leibniz rule,
  we see that $d\hat{x} \otimes 1 \in \Omega_{\mathcal{O}} \otimes \mathcal{O}/\mathfrak{m}$ vanishes,
  so $d\gamma \otimes 1 = \sum_\alpha d(x_\alpha^p t^\alpha) \otimes 1$.
  Expanding the right-hand side using the Leibniz rule and the fact $d(x_\alpha^p) = p \cdot x_\alpha^{p-1} dx_\alpha = 0$,
  we obtain an $\mathcal{O}/\mathfrak{m}$-linear combination of the $dt_i \otimes 1$,
  contradicting the first statement.
  Let us suppose on the other hand that $\hat{x}$ generates $\mathfrak{m}$,
  but the residues of the elements $t_i$ in $\mathcal{O}/\mathfrak{m}$ are $p$-dependent.
  If the elements $d\gamma \otimes 1, dt_1 \otimes 1, \dotsc, dt_n \otimes 1$ are linearly independent,
  then so are the elements $d\hat{x} \otimes 1, dt_1 \otimes 1, \dotsc, dt_n \otimes 1$.
  Since the first of these lies in the image of the one-dimensional $\mathcal{O}/\mathfrak{m}$-vector space $\mathfrak{m}/\mathfrak{m}^2$,
  it follows from the exact sequence that the $d\overline{t}_i$ must be linearly independent in $\Omega_{\mathcal{O}/\mathfrak{m}}$,
  in contradiction to our assumption of $p$-dependence.
  Therefore the $d\gamma \otimes 1, dt_1 \otimes 1, \dotsc, dt_n \otimes 1$ must have been linearly dependent after all,
  showing that in all cases the first statement is not true.
\end{proof}

Let us now deduce a first-order characterisation of formal smoothness.
We work in the language of rings $\mathcal{L}_{\mathrm{ring}} = \{+,\mathrel{-},\cdot,0,1\}$.
As usual, $\mathcal{L}_{\mathrm{ring}}(C)$ is the language of rings expanded by constant symbols for elements of $C$,
so that rings extending $C$ are naturally $\mathcal{L}_{\mathrm{ring}}(C)$-structures.
Later we will use the one-sorted language of valued fields $\mathcal{L}_{\mathrm{val}} = \mathcal{L}_{\mathrm{ring}} \cup \{ \mathcal{O} \}$
with a unary predicate for the valuation ring.
\begin{corollary}\label{cor:fo-independent-differentials}
  Let $n > 0$.
  There exists an $\mathcal{L}_{\mathrm{ring}}$-formula in $n$ free variables which
  holds in a valuation ring $\mathcal{O}$ of characteristic $p$ with maximal ideal $\mathfrak{m}$ for $n$ given elements $t_1, \dotsc, t_n \in \mathcal{O}$
  if and only if the $dt_i \otimes 1 \in \Omega_{\mathcal{O}} \otimes \mathcal{O}/\mathfrak{m}$ are $\mathcal{O}/\mathfrak{m}$-linearly independent.
\end{corollary}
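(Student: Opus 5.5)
The plan is to translate condition (2) of Lemma~\ref{lem:diff-indep-ad-hoc-crit} directly into a first-order formula, applied with $T = \{t_1, \dotsc, t_n\}$. There is one preliminary point. The lemma is phrased for a set $T$, while the corollary concerns a tuple. If two of the $t_i$ coincide, the $dt_i \otimes 1$ are certainly dependent, so the formula will include the conjunct $\bigwedge_{i \neq j} t_i \neq t_j$. Once that holds, the tuple is a list of distinct elements and the lemma applies.

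First I express the auxiliary notions by $\mathcal{L}_{\mathrm{ring}}$-formulas in the valuation ring $\mathcal{O}$, viewed as a ring.
\begin{itemize}
\item Membership $y \in \mathfrak{m}$ is the formula $\neg\exists z\,(yz = 1)$, since $\mathfrak{m}$ is the set of non-units.
\item The statement "$y$ generates $\mathfrak{m}$" is $y \in \mathfrak{m} \land \forall z\,(z \in \mathfrak{m} \to \exists w\, (z = yw))$.
\item For $p$-independence of the residues $\overline{s_1}, \dotsc, \overline{s_k}$ (with $k \le n-1$), I use that they are $p$-dependent if and only if some $\overline{s_j}$ lies in $(\mathcal{O}/\mathfrak{m})^p[\overline{s_l} : l \neq j]$. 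This in turn holds if and only if there exist $y_\beta \in \mathcal{O}$, with $\beta$ ranging over the finitely many multi-indices with entries in $\{0,\dots,p-1\}$, such that $s_j - \sum_\beta y_\beta^p s^\beta \in \mathfrak{m}$. Here one lifts residues arbitrarily, and $p$-th powers of residues are residues of $p$-th powers.
\end{itemize}
Taking the negation of the finite disjunction over $j$ gives a formula $\mathrm{Ind}_k(s_1,\dots,s_k)$. For $k=0$ it is just "true".

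Next I quantify over the finitely many choices in condition (2). Since $T$ is finite, a "list of distinct elements $\gamma, t'_1,\dots,t'_m$ of $T$" is one of finitely many injective sequences of indices $(i_0, i_1, \dots, i_m)$ with $m \le n-1$. For each such sequence I write the formula
\[ \forall (x_\alpha)_\alpha \Bigl( \hat{x} \notin \mathfrak{m} \lor \bigl(\hat{x} \text{ generates } \mathfrak{m} \land \mathrm{Ind}_m(t_{i_1},\dots,t_{i_m})\bigr)\Bigr), \]
where $\hat{x} = t_{i_0} - \sum_\alpha x_\alpha^p t_{i_1}^{\alpha_1}\dotsm t_{i_m}^{\alpha_m}$ is a polynomial term. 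The $m = 0$ case is included: then $\hat x = t_{i_0} - x^p$. The desired formula is the conjunction of these finitely many formulas together with the distinctness clause. By Lemma~\ref{lem:diff-indep-ad-hoc-crit} this conjunction holds exactly when the $dt_i \otimes 1$ are linearly independent.

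The only step requiring care is the residue-level $p$-dependence formula. In particular, I must check that $p$-dependence of a finite family in the residue field is witnessed by a single index $j$ together with a polynomial of bounded shape, as noted in the text before Lemma~\ref{lem:diff-indep-ad-hoc-crit}. I must also check that quantifying over lifts in $\mathcal{O}$ captures all coefficients in $\mathcal{O}/\mathfrak{m}$. Both follow because $\mathcal{O} \to \mathcal{O}/\mathfrak{m}$ is surjective. Everything else is bookkeeping over finitely many multi-indices, which depend only on $n$ and $p$, so the resulting formula depends only on $n$ (and the fixed prime $p$).
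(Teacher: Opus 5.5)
Your proposal is correct and is essentially the paper's proof: the paper takes a formula asserting that the $t_i$ are distinct and that condition~(2) of Lemma~\ref{lem:diff-indep-ad-hoc-crit} holds for every permutation of the $t_i$, and you additionally write out the first-order encodings of membership in $\mathfrak{m}$, of generating $\mathfrak{m}$, and of $p$-independence of residues. The only difference is that you range over all injective index sequences rather than just permutations, which is harmless though unnecessary: a violating choice of $x_\alpha$ for a shorter list becomes a violating choice for a full-length list once the extra coefficients are set to zero.
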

\begin{proof}
  Immediate from the lemma:
  Take a formula expressing that the $t_i$ are distinct and that
  the second condition of the lemma holds
  for every permutation of the $t_i$.
\end{proof}

\begin{corollary}\label{cor:fo-fs}
  There is a set of $\mathcal{L}_{\mathrm{ring}}(C)$-sentences such that a valuation ring $\mathcal{O} \supseteq C$
  satisfies all of them if and only if it is formally smooth over $C$.
\end{corollary}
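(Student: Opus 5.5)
The plan is to combine Lemma~\ref{lem:fs-differential-criterion} with Corollary~\ref{cor:fo-independent-differentials}. By Lemma~\ref{lem:fs-differential-criterion}, $\mathcal{O}$ is formally smooth over $C$ if and only if the map $\Omega_C \otimes_C \mathcal{O}/\mathfrak{m} \to \Omega_{\mathcal{O}} \otimes_{\mathcal{O}} \mathcal{O}/\mathfrak{m}$ is injective. We therefore first rewrite this injectivity as a condition on finitely many elements of $C$ at a time. Fix a $p$-basis $B$ of $C$; by the discussion of $p$-independence before Lemma~\ref{lem:diff-indep-ad-hoc-crit}, the elements $db$, $b \in B$, form a $C$-basis of $\Omega_C$. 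Hence the elements $db \otimes 1$ form an $\mathcal{O}/\mathfrak{m}$-basis of $\Omega_C \otimes_C \mathcal{O}/\mathfrak{m}$, since base change of a free module preserves bases. The map above sends $db \otimes 1$ to $db \otimes 1 \in \Omega_{\mathcal{O}} \otimes \mathcal{O}/\mathfrak{m}$. Consequently injectivity is equivalent to the following: for every finite tuple of distinct $b_1, \dotsc, b_n \in B$, the images $db_i \otimes 1$ are $\mathcal{O}/\mathfrak{m}$-linearly independent in $\Omega_{\mathcal{O}} \otimes_{\mathcal{O}} \mathcal{O}/\mathfrak{m}$. This holds because a linear map sending a basis to a family is injective exactly when that family is linearly independent, and linear independence is a condition on finite subfamilies.

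The axiomatisation then follows directly. For each $n>0$, let $\psi_n(x_1,\dotsc,x_n)$ be the $\mathcal{L}_{\mathrm{ring}}$-formula from Corollary~\ref{cor:fo-independent-differentials}. The desired set of $\mathcal{L}_{\mathrm{ring}}(C)$-sentences consists of $\psi_n(b_1,\dotsc,b_n)$ for all $n$ and all distinct $b_1, \dotsc, b_n \in B$, where each $b_i$ is read as a constant symbol for an element of $C$. A valuation ring $\mathcal{O} \supseteq C$ interprets these constants as the corresponding elements of $\mathcal{O}$. By Corollary~\ref{cor:fo-independent-differentials} it satisfies all these sentences if and only if each finite family $(db_i \otimes 1)$ is linearly independent. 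By the previous paragraph, that is exactly the formal smoothness criterion of Lemma~\ref{lem:fs-differential-criterion}.

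There are a few points to check. The characteristic-$p$ hypothesis needed for Corollary~\ref{cor:fo-independent-differentials} is automatic, since $\mathcal{O} \supseteq C$. We should also confirm that the natural map of Lemma~\ref{lem:fs-differential-criterion} indeed sends $db \otimes 1$ to $d(b) \otimes 1$ computed in $\Omega_{\mathcal{O}}$, which is immediate from its definition $dc \otimes x \mapsto x\,dc$. The only step with real content is the reduction from injectivity to linear independence of images of a basis. This is where choosing a $p$-basis of $C$, rather than quantifying over arbitrary elements of $C$, is essential: with a $p$-basis, the condition becomes a conjunction of statements each involving only finitely many fixed parameters. If one wants to avoid choosing a basis, one could alternatively include $\psi_n(c_1,\dotsc,c_n)$ for every $p$-independent tuple $(c_1, \dotsc, c_n)$ of $C$; this gives an equivalent axiom set, since any such tuple extends to a $p$-basis.
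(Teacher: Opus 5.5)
Your proposal is correct and follows essentially the same route as the paper: you fix a $p$-basis of $C$ and apply Corollary~\ref{cor:fo-independent-differentials} to its finite tuples. Since the $dt \otimes 1$ form a basis of $\Omega_C \otimes \mathcal{O}/\mathfrak{m}$, injectivity in Lemma~\ref{lem:fs-differential-criterion} reduces to linear independence of their images. Your closing variant quantifying over all $p$-independent tuples of $C$ is also fine.
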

\begin{proof}
  Fix a $p$-basis $T \subseteq C$.
  By the previous corollary we can find a set of sentences with parameters from $C$
  expressing that the elements $dt \otimes 1 \in \Omega_{\mathcal{O}} \otimes \mathcal{O}/\mathfrak{m}$, $t \in T$, are linearly independent.
  Since the elements $dt$ form a basis of the $C$-vector space $\Omega_{C}$,
  and so the $dt \otimes 1$ form a basis of the $\mathcal{O}/\mathfrak{m}$-vector space $\Omega_{C} \otimes \mathcal{O}/\mathfrak{m}$,
  the chosen set of sentences is as desired by Lemma~\ref{lem:fs-differential-criterion}.
\end{proof}

\begin{remark}\label{rem:fo-fs-computable}
  In a context where this makes sense,
  we can ask that there be a \emph{computably enumerable} set of $\mathcal{L}_{\mathrm{ring}}(C)$-sentences as above.
  So let us suppose that $C$ is countable and endowed with a bijection from a decidable subset $I \subseteq \mathbb{N}$.
  We obtain an injection of the set of $\mathcal{L}_{\mathrm{ring}}(C)$-formulas into $\mathbb{N}$ via a standard Gödel coding,
  so it makes sense to consider notions of decidability for sets of formulas.
  It is clear from the proof of Corollary~\ref{cor:fo-fs} that there is
  a computably enumerable set of sentences characterising formal smoothness over $C$ as soon as $C$ has a computably enumerable $p$-basis,
  i.e.\ a $p$-basis whose preimage under the bijection $I \to C$ is a computably enumerable set of natural numbers.
  This condition is always satisfied if $C$ has finite degree of imperfection $[C : C^p]$.
\end{remark}

\begin{remark}\label{rem:comparison-URt}
  In \cite[Section~6]{DittmannFehm_completions},
  the special case of a field $C$ separable algebraic over $C_0(t)$ for a perfect subfield $C_0 \subseteq C$ and $t$ transcendental over $C_0$ was considered.
  For valued fields $(F,v)$ containing $C$ as a trivially valued subfield,
  a special axiom $\mathrm{UR}(t)$ given by the $\mathcal{L}_{\mathrm{val}}(C)$-sentence
  \[ \forall s, r (r \not\in \mathcal{O} \to (t-s^p)r^2 \not\in \mathcal{O}) \]
  was considered there.
  It was observed in \cite[Remark~6.4]{DittmannFehm_completions} that this axiom is equivalent to demanding
  that $dt \otimes 1 \in \Omega_{\mathcal{O}_v} \otimes_{\mathcal{O}_v} Fv$ does not vanish.
  Given that $\{ t \}$ is a $p$-basis of $C$ and therefore $dt$ spans $\Omega_{C}$,
  Lemma~\ref{lem:fs-differential-criterion} implies that $\mathrm{UR}(t)$ exactly axiomatises that $\mathcal{O}_v$ is formally smooth over $C$.
  The equivalent condition from Lemma~\ref{lem:diff-indep-ad-hoc-crit}, which greatly simplifies for the field $C$ at hand,
  was also implicitly observed in \cite{DittmannFehm_completions}.
\end{remark}

The following proposition gives some connections between formal smoothness over $C$ and separability over $C$.
Recall that a (not necessarily algebraic) extension field $K$ of $C$ is separable over $C$ if and only if
every $p$-basis of $C$ remains $p$-independent in $K$;
equivalently, the natural map $\Omega_{C} \otimes K \to \Omega_{K}$ is injective \cite[Chapitre~V, §~16, No~4, Corollaire]{Bourbaki_algebre-4567}.
\begin{proposition}\label{prop:fs-separability}
  Let $\mathcal{O} \supseteq C$ be a valuation ring with maximal ideal $\mathfrak{m}$.
  \begin{enumerate}
  \item If $\mathcal{O}/\mathfrak{m}$ is separable over $C$, then $\mathcal{O}$ is formally smooth over $C$.
  \item If $\mathcal{O}$ does not have a uniformiser (i.e.\ $\mathfrak{m}$ is not principal) and $\mathcal{O}$ is formally smooth over $C$, then $\mathcal{O}/\mathfrak{m}$ is separable over $C$.
  \item If $\mathcal{O}$ is formally smooth over $C$, then the field of fractions $K = \Frac(\mathcal{O})$ is separable over $C$.
  \end{enumerate}
\end{proposition}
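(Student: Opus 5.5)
All three parts will be read off from the differential criterion of Lemma~\ref{lem:fs-differential-criterion}. The tools are the conormal sequence of Lemma~\ref{lem:conormal-sequence} and the characterisation of separability via injectivity of $\Omega_C \otimes K \to \Omega_K$. Write $k = \mathcal{O}/\mathfrak{m}$ and let $\phi\colon \Omega_C \otimes_C k \to \Omega_{\mathcal{O}} \otimes_{\mathcal{O}} k$ be the map from Lemma~\ref{lem:fs-differential-criterion}.

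For (1), compose $\phi$ with the surjection $\Omega_{\mathcal{O}} \otimes k \to \Omega_k$ from Lemma~\ref{lem:conormal-sequence}. The composite is the natural map $\Omega_C \otimes_C k \to \Omega_k$, since it sends $dc \otimes 1$ to $d\bar c$. This map is injective because $k/C$ is separable. Hence $\phi$ is injective, and Lemma~\ref{lem:fs-differential-criterion} gives formal smoothness.

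For (2), if $\mathfrak{m}$ is not principal then $\mathfrak{m} = \mathfrak{m}^2$; this also covers the trivial case $\mathfrak{m} = 0$. So $\mathfrak{m}/\mathfrak{m}^2 = 0$, and the conormal sequence shows that $\Omega_{\mathcal{O}} \otimes k \to \Omega_k$ is an isomorphism. Composing $\phi$, which is injective by formal smoothness, with this isomorphism shows that $\Omega_C \otimes k \to \Omega_k$ is injective. Therefore $k/C$ is separable.

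For (3), I would argue with $p$-bases. By the recalled criterion, it suffices to show that any finitely many elements $t_1, \dotsc, t_n$ of a $p$-basis of $C$ remain $p$-independent in $K$. Formal smoothness implies that the $dt_i \otimes 1$ are $k$-linearly independent in $\Omega_{\mathcal{O}} \otimes k$. I would then use the valuation ring structure to conclude that the $dt_i$ are $K$-linearly independent in $\Omega_K = \Omega_{\mathcal{O}} \otimes_{\mathcal{O}} K$; this identification holds because $K$ is a localisation of $\mathcal{O}$. Suppose $\sum a_i\, dt_i = 0$ in $\Omega_K$ with not all $a_i$ zero. Scaling by the coefficient of minimal valuation, we may assume all $a_i \in \mathcal{O}$ and some $a_i$ is a unit.

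The main obstacle is to pass from a relation in $\Omega_{\mathcal{O}} \otimes K$ to one in $\Omega_{\mathcal{O}} \otimes k$, since $\Omega_{\mathcal{O}}$ may have torsion. To get around this, I would avoid differentials and argue directly. If the $t_i$ were $p$-dependent in $K$, then after reindexing $t_1 = \sum_\alpha y_\alpha^p t^\alpha$ with $y_\alpha \in K$ and $t^\alpha$ ranging over monomials in $t_2, \dotsc, t_n$. The task is to produce a violation of condition (2) of Lemma~\ref{lem:diff-indep-ad-hoc-crit}.

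If all $y_\alpha \in \mathcal{O}$, then $\hat x = t_1 - \sum y_\alpha^p t^\alpha = 0 \in \mathfrak{m}$, and $0$ does not generate a nonzero $\mathfrak{m}$; if $\mathfrak{m} = 0$, then $k$ contains a $p$-dependence among the $\bar t_i$, which also contradicts the criterion. So the difficult case is when some $y_\alpha$ has negative valuation. Here I would use that the $t^\alpha$ are $p$-independent over $C^p$, hence $C^p$-linearly independent, together with the fact that $C$ is trivially valued. The aim is to show that the $t^\alpha$ are \enquote{valuation-independent}: $v(\sum y_\alpha^p t^\alpha) = \min_\alpha v(y_\alpha^p)$. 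The first thing I would try is to reduce to a minimal counterexample and then rescale by a $p$-th power $z^p$ with $v(z) = -\min_\alpha v(y_\alpha)$.

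After rescaling, the relation $z^p t_1 = \sum (z y_\alpha)^p t^\alpha$ has all coefficients in $\mathcal{O}$, some unit coefficient $zy_\alpha$, and left-hand side in $\mathfrak{m}$. Reducing modulo $\mathfrak{m}$ gives a nontrivial $k^p$-linear relation among the residues $\bar t^\alpha$. This shows that $\bar t_2, \dotsc, \bar t_n$ are $p$-dependent in $k$. From this I expect to construct a violation of Lemma~\ref{lem:diff-indep-ad-hoc-crit}, but I anticipate this bookkeeping to be the delicate step.
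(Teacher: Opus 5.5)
Parts (1) and (2) are correct. Part (1) is exactly the paper's argument. In (2), your use of $\mathfrak{m}=\mathfrak{m}^2$ together with the conormal sequence is a slightly more direct substitute for the paper's passage to the completion $\widehat{\mathcal{O}}=\mathcal{O}/\mathfrak{m}$.

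Part (3), however, has a genuine gap, and it sits exactly where the content of the statement lies. The point you reach, that $\bar t_2,\dotsc,\bar t_n$ are $p$-dependent in $k$, is not contradictory. Take $C=\mathbb{F}_p(s,t)$ and $\mathcal{O}=C[X]_{(X^p-t)}$, which is formally smooth over $C$ by Proposition~\ref{prop:fs-dvrs-from-varieties}; there the residue of $t$ is the $p$-th power $\bar X^p$. The same example refutes your intermediate goal: $v(t-X^p)=1$ while $\min(v(1),v(X^p))=0$. So monomials in elements of a $p$-basis of $C$ are \emph{not} valuation-independent over $K^p$ once $k/C$ is inseparable, and that is the only case in which (3) goes beyond the easy rescaling argument. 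Hence no bookkeeping with the original monomials $t^\alpha$ can produce the contradiction; you need a change of generators.

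The missing idea is to use formal smoothness to produce a uniformiser inside $\mathcal{O}^p[t_1,\dotsc,t_n]$.
\begin{itemize}
\item If $\bar t_1,\dotsc,\bar t_n$ are $p$-independent in $k$, your rescaling argument already shows that the valuation of a $K^p$-combination of the monomials in $t_1,\dotsc,t_n$ is the minimum of the valuations of the terms. You are then done.
\item Otherwise (with $\mathcal{O}$ not a field), reorder so that $\bar t_1\in k^p(\bar t_2,\dotsc,\bar t_n)$ and lift a representation. Condition~(2) of Lemma~\ref{lem:diff-indep-ad-hoc-crit} holds for a $p$-basis of $C$ by formal smoothness. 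It says that $\pi=t_1-\sum_\beta x_\beta^pt^\beta$ (with $x_\beta\in\mathcal{O}$) generates $\mathfrak{m}$, and that $\bar t_2,\dotsc,\bar t_n$ are $p$-independent.
\end{itemize}
In the second case, $v(\pi)$ is the least positive element of $vK$, so the values $j\,v(\pi)$, $0\le j<p$, lie in distinct cosets of $p\,vK$. For arbitrary $z_{j\beta}\in K$, put $S_j=\sum_\beta z_{j\beta}^pt^\beta$. Your rescaling argument, now legitimate for $t_2,\dotsc,t_n$, gives $v(S_j)=p\min_\beta v(z_{j\beta})\in p\,vK$ whenever some $z_{j\beta}\neq0$. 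Thus $\sum_{j<p}S_j\pi^j\neq 0$ unless all $z_{j\beta}=0$. In other words, the $p^n$ elements $\pi^jt^\beta$, which span $K^p(t_1,\dotsc,t_n)=K^p(\pi,t_2,\dotsc,t_n)$, are $K^p$-linearly independent, so $t_1,\dotsc,t_n$ are $p$-independent in $K$.

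Completed this way, your route would be an elementary alternative to the paper's. The paper instead handles the inseparable case by passing to the complete DVR $\widehat{\mathcal{O}}$. It quotes geometric regularity of formally smooth complete local rings \cite[Chapitre~0, Théorème~22.5.8]{EGA-IV-1} to get $\Frac(\widehat{\mathcal{O}})/C$ separable. It then descends to $K$ via the residue field $Kv^0$ of the finest proper coarsening and part~(1).
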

\begin{proof}
  For the first part, by the separability hypothesis the natural map
  $\Omega_{C} \otimes \mathcal{O}/\mathfrak{m} \to \Omega_{(\mathcal{O}/\mathfrak{m})}$
  is injective.
  However, this map factors through $\Omega_{\mathcal{O}} \otimes \mathcal{O}/\mathfrak{m}$,
  and so the criterion of Lemma~\ref{lem:fs-differential-criterion} implies formal smoothness.

  For the second part, $\mathcal{O}$ not having a uniformiser implies that the completion $\widehat{\mathcal{O}}$ is simply the field $\mathcal{O}/\mathfrak{m}$ with the discrete topology.
  Formal smoothness passes to the completion by Lemma~\ref{lem:fs-completion},
  and a field being formally smooth over $C$ means separability by the differential criterion from Lemma~\ref{lem:fs-differential-criterion}.

  For the third part, suppose first that $\mathcal{O}/\mathfrak{m}$ is separable over $C$.
  Then any $p$-basis of $C$ has $p$-independent residues in $\mathcal{O}/\mathfrak{m}$,
  and it follows easily that it must remain $p$-independent in $K$,
  so $K/C$ is separable.
  So now we may suppose that $\mathcal{O}/\mathfrak{m}$ is inseparable over $C$,
  and in particular, by the second part, that $\mathcal{O}$ has a uniformiser.
  The complete DVR $\widehat{\mathcal{O}}$ is formally smooth over $C$ by Lemma~\ref{lem:fs-completion},
  and therefore geometrically regular by \cite[Chapitre~0, Théorème~22.5.8]{EGA-IV-1}.
  In particular, its quotient field $\Frac(\widehat{\mathcal{O}})$ is separable over $C$.
  For the finest proper coarsening $v^0$ of the valuation $v$, the residue field $Kv^0$
  embeds into $\Frac(\widehat{\mathcal{O}})$: indeed, we observed earlier that $\widehat{\mathcal{O}}$ is the completion
  of the valuation ring of the discrete valuation on $Kv^0$ induced by $v$.
  Therefore $Kv^0$ must be separable over $C$ since $\Frac(\widehat{\mathcal{O}})$ is,
  so $\mathcal{O}_{v^0}$ is formally smooth over $C$ by the first part,
  and $K$ must be separable over $C$ by the first case considered.
\end{proof}

\begin{remark}\label{rem:fs-separability}
The first part of Proposition~\ref{prop:fs-separability} means that our axiomatisation results below generalise
some earlier ones, in particular \cite[Corollary~4.16]{ADF_existential}.
On the other hand, the second part of the proposition means that our results are only new for valuations which
admit a uniformiser.
\end{remark}

In the same vein as Proposition~\ref{prop:fs-separability},
one can show that formal smoothness implies a certain weakening of separability
for the field extension $(\mathcal{O}/\mathfrak{m})/C$.
This weakening is described in the following lemma.
\begin{lemma}\label{lem:almost-separable}
  Let $L/K$ be an extension of fields of characteristic $p$.
  The following are equivalent:
  \begin{enumerate}
  \item The kernel of the natural map $\Omega_K \otimes_K L \to \Omega_L$ of $L$-vector spaces has dimension at most $1$.
  \item For every $p$-independent set $A \subseteq K$,
    any subset $A' \subseteq A$ which is $p$-independent in $L$ and maximal with respect to this property
    satisfies $\lvert A \setminus A' \rvert \leq 1$.
  \end{enumerate}
  For every $p$-basis $B$ of $K$, these conditions are further equivalent to the following:
  \begin{enumerate}[resume,label=\protect{(\arabic*$_B$)}]
  \item For every finite non-empty subset $B_0 \subseteq B$,
    there exists $c \in B_0$ such that $B_0 \setminus \{ c \}$ is $p$-independent in $L$.
  \end{enumerate}
  If $L/K$ is finitely generated of transcendence degree $d$, the conditions above are further equivalent to the following:
  \begin{enumerate}[resume]
  \item The $L$-vector space $\Omega_{L/K}$ has dimension at most $d+1$.
  \end{enumerate}
\end{lemma}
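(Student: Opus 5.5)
The whole argument lives in linear algebra over $L$. Let $\phi\colon \Omega_K \otimes_K L \to \Omega_L$ be the natural map. For any $p$-independent $A \subseteq K$, the elements $da \otimes 1$, $a \in A$, are $L$-linearly independent in $\Omega_K \otimes_K L$, by the characterisation of $p$-independence via Kähler differentials \cite[Chapitre~V, §~13, No~2, Théorème~1]{Bourbaki_algebre-4567}. Moreover, a subset $A'\subseteq A$ is $p$-independent in $L$ if and only if the images $\phi(da\otimes 1) = da$, $a\in A'$, are linearly independent in $\Omega_L$.

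\emph{(1)$\Rightarrow$(2).} Take $A$ and a maximal $A'$ as in (2). By maximality, for each $c \in A\setminus A'$ the element $dc$ lies in the $L$-span of $\{da : a\in A'\}$ in $\Omega_L$. This gives a relation $dc\otimes 1 - \sum_{a\in A'} \lambda_{c,a}\, da\otimes 1 \in \ker\phi$. These elements, one for each $c \in A\setminus A'$, are linearly independent in $\Omega_K\otimes L$, since each involves its own basis vector $dc\otimes 1$. Hence $|A\setminus A'|\le \dim\ker\phi \le 1$.

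\emph{(2)$\Rightarrow$(3$_B$).} Apply (2) to $A = B_0$ and any maximal $A'\subseteq B_0$ that is $p$-independent in $L$. Such an $A'$ exists because $B_0$ is finite. If $A' = B_0$, pick any $c\in B_0$; otherwise $A' = B_0\setminus\{c\}$.

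\emph{(3$_B$)$\Rightarrow$(1).} The family $(db\otimes 1)_{b\in B}$ is a basis of $\Omega_K\otimes L$. Suppose $\ker\phi$ contained two linearly independent elements $\omega_1,\omega_2$. Both are finite combinations of the $db\otimes 1$, so they lie in the span of $db\otimes 1$, $b\in B_0$, for some finite $B_0$. Now $\ker\phi \cap \operatorname{span}(db\otimes 1 : b\in B_0)$ has dimension at least $2$, and the hyperplane spanned by $db\otimes 1$, $b \in B_0\setminus\{c\}$, has codimension $1$ in that span. So for every $c\in B_0$ this hyperplane contains a nonzero kernel element, which means $B_0\setminus\{c\}$ is $p$-dependent in $L$. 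This contradicts (3$_B$). Since (3$_B$) is derived from (2) for an arbitrary $p$-basis $B$, and (1) is derived from (3$_B$) for any single $B$, all three conditions are equivalent for every $p$-basis $B$.

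\emph{(4).} Here I would use the exact sequence
\[ \Omega_K\otimes_K L \xrightarrow{\phi} \Omega_L \to \Omega_{L/K} \to 0, \]
which holds because $\Omega_{L/K} = \Omega_L/\langle dK\rangle$. This gives $\dim \Omega_{L/K} = \dim\Omega_L - \operatorname{rank}\phi$. The main obstacle is that $\Omega_L$ and $\Omega_K\otimes L$ may both be infinite-dimensional, so the dimension formula must be set up carefully. The plan is to use the standard fact for finitely generated $L/K$ that $[L:L^p] = [K:K^p]\,p^d$, i.e.\ that the degrees of imperfection satisfy $\operatorname{imp}(L) = \operatorname{imp}(K)+d$ (e.g.\ via a separating-type argument, or by citing the relevant statement in \cite{Bourbaki_algebre-4567}).

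When $K$ has finite imperfection $e$, everything is finite-dimensional, and
\[ \dim\Omega_{L/K} = (e+d) - (e - \dim\ker\phi) = d + \dim\ker\phi . \]
So (4) is equivalent to $\dim\ker\phi\le 1$, which is (1).

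For infinite imperfection, I would reduce to the finite case. A direct approach is as follows. Write $L = K(y_1,\dots,y_m)$. Then $\Omega_{L/K}$ is spanned by $dy_1, \dots, dy_m$. Its dimension equals $m$ minus the rank of the relations among the $dy_j$ modulo $dK$, and only finitely many $p$-basis elements $B_0\subseteq B$ of $K$ enter these relations. Adjoining all of $B\setminus B_0$ to the base on both sides, i.e.\ passing to $K_1 = K^p(B\setminus B_0)$-relative differentials, gives a finite-dimensional situation. There the same rank count applies with relative imperfection degrees, and $\ker\phi$ is detected on $\operatorname{span}(db\otimes 1 : b\in B_0)$ for suitable finite $B_0$. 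This is the step I expect to need the most care.
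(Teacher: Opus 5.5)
Your arguments for (1)$\Rightarrow$(2)$\Rightarrow$(3$_B$)$\Rightarrow$(1) are correct and are the same linear algebra as in the paper. Your count for (4) when $[K:K^p]$ is finite is also correct: it derives Cartier's equality $\dim_L\Omega_{L/K} = d + \dim_L\ker\phi$ from $[L:L^p]=p^d[K:K^p]$. The paper instead cites Cartier's equality directly for all finitely generated $L/K$.

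The gap is (4) when $[K:K^p]=\infty$. The lemma allows this case, and the paper uses it, e.g.\ in Lemma~\ref{lem:inseparablisation} and Remark~\ref{rem:monotonicity-half-fs}, where the base field may have infinite imperfect exponent. Your reduction rests on the claim that, after passing to $K_1=K^p(B\setminus B_0)$, ``the same rank count applies with relative imperfection degrees''. That means $\dim_L\Omega_{L/K_1}=d+\lvert B_0\rvert$, equivalently $[L:L^pK_1]=p^d[K:K_1]$. This does not follow from the absolute degree formula:
\begin{itemize}
\item Its usual proof reduces to $L/K(x_1,\dots,x_d)$ finite and uses $[M:M^p]=[N:N^p]$. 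This does not relativise, because $[M^pK_1:N^pK_1]$ can be smaller than $[M:N]$.
\item The claim is false for a careless $B_0$. For $K=\mathbb{F}_p(s,t)$, $L=K(t^{1/p})$ and $B_0=\{s\}$ one gets $[L:L^pK_1]=[\mathbb{F}_p(s,t^{1/p}):\mathbb{F}_p(s^p,t)]=p^2\neq p$.
\end{itemize}

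Making your reduction precise shows exactly what is missing. Write $L=\Frac(K[Y_1,\dots,Y_m]/P)$ with $P=(g_1,\dots,g_r)$. Then $\Omega_L=\bigl((\Omega_K\otimes_K L)\oplus L^m\bigr)/W$, where $W$ is spanned by the $dg_k(y)$. Choose $B_0$ to contain the $p$-basis elements occurring in the differentials of the coefficients of the $g_k$. This does give $\ker\phi\subseteq\operatorname{span}(db\otimes 1 : b\in B_0)$, so $B\setminus B_0$ stays $p$-independent in $L$, and your rank count yields
\begin{itemize}
\item $\dim\Omega_{L/K}=m-\operatorname{rank}J$ and $\dim\ker\phi=\dim W-\operatorname{rank}J$, where $J$ is the Jacobian matrix;
\item $\dim\Omega_{L/K_1}=\lvert B_0\rvert+m-\dim W$.
\end{itemize}
So everything hinges on $\dim_L W=m-d$, which is Cartier's equality in disguise. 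Your sketch does not establish it when $[K:K^p]=\infty$.

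You can supply it in either of two ways. One is the injectivity of $\mathfrak{m}/\mathfrak{m}^2\to\Omega_A\otimes_A L$ for the regular local ring $A=K[Y]_P$, which has dimension $m-d$. This is the same left exactness of the conormal sequence over the perfect field $\mathbb{F}_p$ that underlies Lemma~\ref{lem:conormal-sequence}, and it needs no finiteness of $\Omega_A$. The other, as the paper does, is to cite Cartier's equality \cite[Chapitre~V, §~16, No~6, Théorème~4]{Bourbaki_algebre-4567} for arbitrary finitely generated $L/K$, and identify $\operatorname{coker}\phi$ with $\Omega_{L/K}$.
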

Note that condition (3$_B$) evidently passes from $L/K$ to subextensions $L'/K$,
and that it is an $\mathcal{L}_{\mathrm{ring}}(K)$-axiomatisable condition on $L$.
\begin{proof}
  Let $B$ be a $p$-basis of $K$.
  We use throughout that $p$-independence in a field is characterised by linear independence in the vector space of Kähler differentials
  \cite[Chapitre~V, §~13, No~2, Théorème~1]{Bourbaki_algebre-4567}.
  It is clear that (2) implies (3$_B$).
  Let us assume that (1) holds and prove (2).
  So let $A \subseteq K$ be $p$-independent in $K$.
  Then the elements $da \in \Omega_K$, $a \in A$, are $K$-linearly independent,
  and therefore the $da \otimes 1 \in \Omega_K \otimes L$ are $L$-linearly independent.
  For a subset $A' \subseteq A$, the condition of being $p$-independent in $L$
  is equivalent to the $da$, $a \in A'$, being $L$-linearly independent
  in $\Omega_L$,
  which in turn is equivalent to the $da \otimes 1$, $a \in A'$ being $L$-linearly independent
  modulo $\ker(\Omega_K \otimes L \to \Omega_L)$.
  Since the kernel in question is at most $1$-dimensional by assumption (1),
  the maximal subsets $A' \subseteq A$ with this property all satisfy $\lvert A \setminus A'\rvert \leq 1$
  as a matter of linear algebra.

  Let us now assume (3$_B$) and prove (1).
  For every finite non-empty subset $B_0 \subseteq B$,
  assumption (3$_B$) forces that the $db \in \Omega_L$, $b \in B_0$, span a subspace of $\Omega_L$ of dimension at least $\lvert B_0 \rvert - 1$.
  In other words, the kernel of $\Omega_K \otimes L \to \Omega_L$ intersected with the span of the elements $db \otimes 1 \in \Omega_K \otimes L$, $b \in B_0$,
  has dimension at most $1$.
  Given that the $db$, $b \in B$, span $\Omega_K$ and therefore the $db \otimes 1$ span $\Omega_K \otimes L$,
  it follows that the entire kernel of $\Omega_K \otimes L \to \Omega_L$ has dimension at most $1$.

  Let us now assume that $L/K$ is finitely generated of transcendence degree $d$.
  By \cite[Chapitre~V, §~16, No~6, Théorème~4]{Bourbaki_algebre-4567},
  the kernel of $\Omega_K \otimes_K L \to \Omega_L$ has dimension at most $1$ if and only if its cokernel has dimension at most $d+1$.
  This cokernel can be identified with $\Omega_{L/K}$ \cite[Proposition~16.2]{Eisenbud_comm-algebra}.
  This proves the equivalence of (4) and (1).
\end{proof}

\begin{remark}\label{rem:compare-kraft-inseparability}
  In the case of finitely generated $L/K$,
  the conditions of Lemma~\ref{lem:almost-separable}
  are equivalent to the inseparability of $L/K$ in the sense of \cite[p.~111, Definition~3)]{Kraft_inseparable-koerpererw} being at most $1$.
  This notion is only defined for finitely generated $L/K$.
  It would seem sensible to extend the definition by declaring the inseparability of $L/K$ to be the dimension of the kernel
  of $\Omega_K \otimes_K L \to \Omega_L$ for a general field extension.
  However, we shall not pursue this further here.
\end{remark}

\begin{lemma}\label{lem:fs-almost-separable}
  Let $\mathcal{O}$ be a valuation ring formally smooth over $C$, and let $\mathfrak{m}$ be its maximal ideal.
  Then the equivalent conditions of Lemma~\ref{lem:almost-separable}
  hold for the field extension $(\mathcal{O}/\mathfrak{m})/C$.
\end{lemma}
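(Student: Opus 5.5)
We verify condition (1) of Lemma~\ref{lem:almost-separable} for $L = \mathcal{O}/\mathfrak{m}$ and $K = C$. That is, we show that the kernel of $\Omega_C \otimes_C \mathcal{O}/\mathfrak{m} \to \Omega_{\mathcal{O}/\mathfrak{m}}$ has dimension at most $1$.

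The natural map factors as the composite
\[ \Omega_C \otimes_C \mathcal{O}/\mathfrak{m} \to \Omega_{\mathcal{O}} \otimes_{\mathcal{O}} \mathcal{O}/\mathfrak{m} \to \Omega_{\mathcal{O}/\mathfrak{m}} . \]
This holds because both maps send $dc \otimes 1$ to $d\overline{c}$, so it suffices to check it on generators. By Lemma~\ref{lem:fs-differential-criterion}, formal smoothness of $\mathcal{O}$ over $C$ says precisely that the first map is injective. By the exact sequence of Lemma~\ref{lem:conormal-sequence}, the kernel of the second map is the image of $\mathfrak{m}/\mathfrak{m}^2$.

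The kernel of the composite therefore maps injectively, via the first map, into this image of $\mathfrak{m}/\mathfrak{m}^2$. Hence its dimension is at most $\dim_{\mathcal{O}/\mathfrak{m}} \mathfrak{m}/\mathfrak{m}^2$.

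It remains to show that $\mathfrak{m}/\mathfrak{m}^2$ has dimension at most $1$ for a valuation ring. There are two cases.
\begin{itemize}
\item If $\mathfrak{m}$ is principal, generated by $\pi$, then $\mathfrak{m}/\mathfrak{m}^2$ is spanned by the class of $\pi$, so it has dimension at most $1$.
\item If $\mathfrak{m}$ is not principal, then $\mathfrak{m} = \mathfrak{m}^2$, as noted in the discussion of completions, so $\mathfrak{m}/\mathfrak{m}^2 = 0$. This case also follows directly from the second part of Proposition~\ref{prop:fs-separability}.
\end{itemize}

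Condition (1) follows, and Lemma~\ref{lem:almost-separable} yields the remaining equivalent conditions. The only point needing care is the compatibility of the factorisation with the maps in Lemmas~\ref{lem:fs-differential-criterion} and~\ref{lem:conormal-sequence}, which is routine.
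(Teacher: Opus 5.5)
Your proposal is correct and follows essentially the same route as the paper: factor $\Omega_C \otimes \mathcal{O}/\mathfrak{m} \to \Omega_{\mathcal{O}/\mathfrak{m}}$ through $\Omega_{\mathcal{O}} \otimes \mathcal{O}/\mathfrak{m}$. The first map is injective by Lemma~\ref{lem:fs-differential-criterion}, the kernel of the second is $\mathfrak{m}/\mathfrak{m}^2$ by Lemma~\ref{lem:conormal-sequence}, and this space has dimension at most $1$. Your explicit case split justifying that last bound is a harmless elaboration of what the paper states in one line.
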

\begin{proof}
  We must show that the kernel of $\Omega_C \otimes \mathcal{O}/\mathfrak{m} \to \Omega_{(\mathcal{O}/\mathfrak{m})}$ is at most $1$-dimensional.
  This linear map factors through $\Omega_C \otimes \mathcal{O}/\mathfrak{m} \to \Omega_{\mathcal{O}} \otimes \mathcal{O}/\mathfrak{m}$,
  which is injective by Lemma~\ref{lem:fs-differential-criterion}.
  The remaining linear map $\Omega_{\mathcal{O}} \otimes \mathcal{O}/\mathfrak{m} \to \Omega_{(\mathcal{O}/\mathfrak{m})}$
  has kernel isomorphic to $\mathfrak{m}/\mathfrak{m}^2$ by Lemma~\ref{lem:conormal-sequence}.
  Since $\mathfrak{m}/\mathfrak{m}^2$ has dimension at most $1$ as an $\mathcal{O}/\mathfrak{m}$-vector space,
  this proves the claim.
\end{proof}

Let us give some examples of valuation rings formally smooth over $C$.
\begin{proposition}\label{prop:fs-dvrs-from-varieties}
  Let $V$ be a smooth $C$-scheme.
  For every codimension-$1$ point $x \in V$,
  the local ring $\mathcal{O}_{V,x}$ is a DVR which is formally smooth over $C$.
  In particular, for every irreducible polynomial $f \in C[X]$,
  the ring $C[X]_{(f)}$ (obtained from $C[X]$ by inverting every polynomial not divisible by $f$)
  is a DVR which is formally smooth over $C$.
\end{proposition}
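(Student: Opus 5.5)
We argue in two steps: first the local ring is a DVR, then it satisfies the differential criterion of Lemma~\ref{lem:fs-differential-criterion}.

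\textbf{The local ring is a DVR.} A smooth $C$-scheme is regular, so $\mathcal{O}_{V,x}$ is a regular local ring. Since $x$ has codimension $1$, this ring has dimension $1$. A regular local ring of dimension $1$ is a DVR.

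\textbf{Formal smoothness.} Write $\mathcal{O} = \mathcal{O}_{V,x}$, $\mathfrak{m}$ for its maximal ideal and $\kappa = \mathcal{O}/\mathfrak{m}$. It suffices to show that
\[ \Omega_C \otimes_C \kappa \to \Omega_{\mathcal{O}} \otimes_{\mathcal{O}} \kappa \]
is injective.

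For a smooth morphism $C \to A$ (with $A$ a local coordinate ring around $x$), the first fundamental sequence
\[ 0 \to \Omega_C \otimes_C A \to \Omega_A \to \Omega_{A/C} \to 0 \]
is exact. Moreover it is split, because $\Omega_{A/C}$ is a projective $A$-module. The splitting persists after localising at $x$ and tensoring with $\kappa$, so the map above is injective. Here we use $\Omega_{\mathcal{O}} = (\Omega_A)_x$, since Kähler differentials commute with localisation.

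The step that needs care is the left exactness of this sequence. This is standard for formally smooth $C \to A$ (e.g.\ \cite[Tag~06B6]{StacksProject} or EGA~0$_{\rm IV}$ 20.5.7). If one prefers to avoid it, there is a more direct route. Smooth implies $A$ is flat with geometrically regular fibres, hence formally smooth for the discrete topology by \cite[Chapitre~0, Théorème~22.5.8]{EGA-IV-1}. Formal smoothness for a finer topology implies it for the coarser $\mathfrak{m}$-adic one: continuous maps for the $\mathfrak{m}$-adic topology are in particular maps for the discrete topology. The lift so obtained is automatically continuous, since some power of $\mathfrak{m}$ maps into $I$, which is nilpotent, and so some higher power maps to $0$. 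Localisation preserves formal smoothness, so $\mathcal{O}$ is formally smooth over $C$. I would present this lifting argument as the main proof, and keep the differential argument as a cross-check.

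\textbf{The special case.} $\mathbb{A}^1_C = \Spec C[X]$ is smooth over $C$. The prime $(f)$ is a codimension-$1$ point, and its local ring is $C[X]_{(f)}$, so the general statement applies directly.
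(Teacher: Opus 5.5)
Your proof is correct, but it proves by hand what the paper gets from a single citation. The paper applies \cite[Proposition~17.5.3]{EGA-IV-4} to $V \to \Spec C$. That result says that, for a locally finite type morphism of locally noetherian schemes, smoothness at a point is equivalent to formal smoothness of the local ring (with its adic topology) over the base. The paper leaves the DVR property implicit, and your regularity argument makes it explicit.

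Each of your two routes proves only the direction actually needed. The differential route runs through the same criterion the paper relies on elsewhere: the split first fundamental sequence for the formally smooth map $C \to A$, localised and base-changed to $\kappa$, followed by Lemma~\ref{lem:fs-differential-criterion}. The lifting route is essentially the proof of the easy half of 17.5.3 and avoids differentials altogether: smooth implies formally smooth for the discrete topology, this is stable under localisation, and nilpotence of $I$ lets you pass to the coarser $\mathfrak{m}$-adic topology.

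One reference should be corrected. \cite[Chapitre~0, Théorème~22.5.8]{EGA-IV-1} concerns noetherian local $C$-algebras with the $\mathfrak{m}$-adic topology: such an algebra is formally smooth if and only if it is geometrically regular. It says nothing about finite type algebras with the discrete topology. That $A$ is formally smooth for the discrete topology is simply the standard fact that smooth algebras are formally smooth. Conversely, applying 22.5.8 directly to the geometrically regular local ring $\mathcal{O}_{V,x}$ would give $\mathfrak{m}$-adic formal smoothness in one step, making your coarsening argument unnecessary.
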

\begin{proof}
  Smoothness at a point for a locally finite type morphism between locally noetherian schemes
  is equivalent to formal smoothness for the corresponding local rings with their local topologies
  \cite[Proposition~17.5.3]{EGA-IV-4}.
  Applying this to the structure morphism $V \to \Spec C$ at the point $x$
  (so that the relevant local rings are $\mathcal{O}_{V,x}$ with the $\mathfrak{m}_{V,x}$-adic topology
  and $C$ with the discrete topology)
  yields the first statement.
  The ``in particular'' is the first part applied to $V = \mathbb{A}^1$.
\end{proof}

Note that the examples from the proposition do not necessarily have residue field separable over $C$.
As a simple example, consider $C=\mathbb{F}_p(t)$, and consider the valuation ring $C[X]_{(f)}$ with $f=X^p-t$.
The associated valuation is the $(X^p-t)$-adic valuation on the field $C(X)$, with residue field $C(\unspacedrt[p]{t})$.
The completion of $C(X)$ with respect to this valuation is in fact isomorphic to the valued field $K_2$
from Example~\ref{ex:basic} (send $X$ to the element $\sqrt[p]{\pi + t} \in K_2$).
This shows that the valuation ring of $K_2$ is formally smooth over $C$,
while this is not the case for $K_1$ and $K_3$ from the same example,
as follows for instance from Remark~\ref{rem:comparison-URt}.

However, the examples obtained from Proposition~\ref{prop:fs-dvrs-from-varieties}
are not particularly interesting for the axiomatisation results we have in mind.
There is a much more interesting class of examples of formally smooth valuation rings for these purposes.
These are in some sense obtained as ``non-standard versions'' of the geometric examples above, for specific fields $C$.
We need some work to establish these.
\begin{lemma}\label{lem:dvr-lin-indep-differentials}
  Let $\mathcal{O}$ be a DVR with maximal ideal $\mathfrak{m}$.
  Let $t_1, \dotsc, t_n \in \mathcal{O}$ be distinct and $p$-independent in the quotient field of $\mathcal{O}$.
  Then the $dt_i \otimes 1 \in \Omega_{\mathcal{O}} \otimes \mathcal{O}/\mathfrak{m}$ are $\mathcal{O}/\mathfrak{m}$-linearly independent if and only if
  $\mathcal{O}[X_1, \dotsc, X_n]/(X_1^p - t_1, \dotsc, X_n^p - t_n) = \mathcal{O}[\unspacedrt[p]{t_1}, \dotsc, \unspacedrt[p]{t_n}]$ is a DVR.
\end{lemma}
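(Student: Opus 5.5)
Let $R = \mathcal{O}[X_1,\dotsc,X_n]/(X_1^p-t_1,\dotsc,X_n^p-t_n)$. The plan is to reduce the DVR condition to a statement about the maximal ideal of $R$, and then compare that with $\Omega_{\mathcal{O}} \otimes \mathcal{O}/\mathfrak{m}$ through the conormal sequence of Lemma~\ref{lem:conormal-sequence}.

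First we settle the basic structure of $R$.
\begin{enumerate}
\item $R$ is a free $\mathcal{O}$-module with basis the monomials $X^\alpha$, $0 \le \alpha_i < p$. Hence $R$ is torsion-free, and $R \otimes_{\mathcal{O}} K = K[X]/(X_i^p - t_i) = K(\unspacedrt[p]{t_1}, \dotsc, \unspacedrt[p]{t_n})$. This is a field of degree $p^n$ over $K=\Frac(\mathcal{O})$, by the $p$-independence of the $t_i$ in $K$.
\item Therefore $R$ is a domain and is identified with $\mathcal{O}[\unspacedrt[p]{t_1}, \dotsc, \unspacedrt[p]{t_n}]$, which justifies the equality in the statement.
\item $R$ is integral, indeed finite, over $\mathcal{O}$, since each generator satisfies $X_i^p = t_i$.
\item $R/\mathfrak{m}R = k[X]/(X_i^p - \bar t_i)$, where $k = \mathcal{O}/\mathfrak{m}$. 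Since $X_i \mapsto \bar t_i^{1/p}$ is well defined into $k^{1/p}$, this ring has a unique prime ideal, namely the kernel of that map. So $R$ is local, with maximal ideal $\mathfrak{n} = (\mathfrak{m}, X_i - c_i)$ for suitable lifts $c_i$ when they exist. More intrinsically, $\mathfrak{n}$ is the preimage of the nilradical of $R/\mathfrak{m}R$.
\item $R$ is noetherian of dimension $1$.
\end{enumerate}
Consequently $R$ is a DVR if and only if $\mathfrak{n}$ is principal, that is, if and only if $\dim_{k'} \mathfrak{n}/\mathfrak{n}^2 \le 1$, where $k' = R/\mathfrak{n} = k(\bar t_1^{1/p}, \dotsc, \bar t_n^{1/p})$.

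Next we compute $\mathfrak{n}/\mathfrak{n}^2$ via differentials. Apply the conormal sequence to $\mathbb{F}_p \to R \to k'$; exactness on the left holds by \cite[Corollary~16.13]{Eisenbud_comm-algebra} as in Lemma~\ref{lem:conormal-sequence}. This gives
\[ 0 \to \mathfrak{n}/\mathfrak{n}^2 \to \Omega_R \otimes k' \to \Omega_{k'} \to 0 . \]
We also have
\[ \Omega_R = (\Omega_{\mathcal{O}} \otimes_{\mathcal{O}} R \oplus \textstyle\bigoplus_i R\,dX_i)\big/(dt_i), \]
since $d(X_i^p - t_i) = -dt_i$. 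Hence
\[ \dim \Omega_R \otimes k' = \dim(\Omega_{\mathcal{O}} \otimes k) + n - r, \]
where $r$ is the rank of the span of the $dt_i \otimes 1$ in $\Omega_{\mathcal{O}} \otimes k$. Dimensions here may be infinite; to handle this we work with the finite-dimensional relevant pieces, or equivalently compare kernels rather than counting dimensions.

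For $\Omega_{k'}$, write $\Omega_k = \Omega_{\mathcal{O}} \otimes k / (\mathfrak{m}/\mathfrak{m}^2)$, with $\dim \mathfrak{m}/\mathfrak{m}^2 = 1$. Let $s$ be the rank of the $d\bar t_i$ in $\Omega_k$. Adjoining $p$-th roots of the $\bar t_i$ kills the span of the $d\bar t_i$ and adds $n$ new generators $d\bar t_i^{1/p}$, subject to relations coming from the $p$-dependence among the $\bar t_i$. The fact to use is the standard formula for $\Omega$ of the height-one extension $k' = k(\bar t_i^{1/p})$: one has $\dim \Omega_{k'} = \dim \Omega_k$ in the finite relative sense, since $k'/k$ is generated by $s$ $p$-independent roots. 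Putting everything together gives
\[ \dim \mathfrak{n}/\mathfrak{n}^2 = 1 + (n - r) - (n - s) + (\text{correction}) = 1 + s - r + \dotsb . \]
The claim to be verified is $\dim \mathfrak{n}/\mathfrak{n}^2 = 1 + n - r$. Then $R$ is a DVR if and only if $r = n$, which is exactly linear independence of the $dt_i \otimes 1$.

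The main obstacle is this bookkeeping, which is error-prone with infinite-dimensional spaces. As a fallback I would argue elementwise using Lemma~\ref{lem:diff-indep-ad-hoc-crit}, reducing to $n$ small by induction via the tower $\mathcal{O} \subseteq \mathcal{O}[\unspacedrt[p]{t_1}] \subseteq \dotsb$, where each step is a single $p$-th root.

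For a single step with $t = t_1$, the case analysis is as follows.
\begin{enumerate}
\item If $\bar t \notin k^p$, then $k'/k$ is inseparable of degree $p$, $\mathfrak{n} = \mathfrak{m}R$ is principal, and $dt \otimes 1 \ne 0$ because $d\bar t \ne 0$. Both sides of the equivalence hold.
\item If $\bar t = \bar c^p$, set $\hat x = t - c^p \in \mathfrak{m}$ and $\mathfrak{n} = (\pi, X - c)$, where $(X-c)^p = \hat x$.
\begin{enumerate}
\item If $\hat x$ generates $\mathfrak{m}$ for some choice of $c$, then $X - c$ is a uniformiser and $R$ is a DVR.
\item If $\hat x \in \mathfrak{m}^2$ for every choice of $c$, then $R$ is not a DVR. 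In that case $v(\hat x) \ge 2$ is not divisible by $p$ in a suitable sense; one adjusts $c$ and checks that $\mathfrak{n}$ needs two generators, with ramification control.
\end{enumerate}
Lemma~\ref{lem:diff-indep-ad-hoc-crit} translates case (a) versus case (b) exactly into nonvanishing versus vanishing of $dt \otimes 1$.
\end{enumerate}

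For the induction, I would need that linear independence of all the $dt_i \otimes 1$ over $\mathcal{O}$ is equivalent to $dt_1 \otimes 1 \ne 0$ over $\mathcal{O}$ together with independence of $dt_2 \otimes 1, \dotsc, dt_n \otimes 1$ over the DVR $\mathcal{O}[\unspacedrt[p]{t_1}]$. That equivalence again comes from the explicit presentation of $\Omega_R$ above.
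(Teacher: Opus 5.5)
Your reduction is sound, and it is a genuinely different, self-contained route; the paper simply cites \cite[Chapitre~0, Théorème~22.5.4]{EGA-IV-1}. Since $R$ is a one-dimensional noetherian local domain, it is a DVR iff $\dim_{k'}\mathfrak{n}/\mathfrak{n}^2\le 1$. The right tool is then the conormal sequence for $R\to k'$, together with $\Omega_R\otimes_R k'\cong (V/W)\otimes_k k'\oplus\bigoplus_i k'\,dX_i$, where $V=\Omega_{\mathcal{O}}\otimes k$ and $W\subseteq V$ is the span of the $dt_i\otimes 1$, of dimension $r$. But there is a genuine gap: the proposal stops at the one step that carries the content.

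\textbf{Main line.} Your displayed count $1+(n-r)-(n-s)+(\text{correction})$ is wrong, and $1+n-r$ is only announced as ``the claim to be verified''. If $[k:k^p]<\infty$, your own two ingredients already give $(\dim V+n-r)-(\dim V-1)=1+n-r$, so the term $-(n-s)$ is spurious. If $[k:k^p]=\infty$, counting dimensions is meaningless, and the equality $\dim\Omega_{k'}=\dim\Omega_k$ is not the fact you need.

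\textbf{Fallback.} The induction does not fill the hole either. In case 2(b), the remarks about divisibility of $v(\hat x)$ and ``ramification control'' are not an argument. The induction also silently uses that $R$ being a DVR forces $\mathcal{O}[\sqrt[p]{t_1}]$ to be one.

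\textbf{Closing the main line.} Compare kernels; no finiteness is needed. Put $M=\mathfrak{m}/\mathfrak{m}^2\subseteq V$, and let $s=\dim (M+W)/M$ be the rank of the $d\bar t_i$ in $\Omega_k$. Take a $p$-basis of $k$ containing $s$ of the $\bar t_i$ that are $p$-independent, and replace these by their $p$-th roots to get a $p$-basis of $k'$. This shows two things.
\begin{enumerate}
\item $\ker(\Omega_k\otimes k'\to\Omega_{k'})$ is exactly the span of the $d\bar t_i\otimes 1$.
\item The cokernel has dimension $s$ and is spanned by the images of the $d(\bar t_i^{1/p})$.
\end{enumerate}
By (1), the kernel of $(V/W)\otimes k'\to\Omega_{k'}$ is $((M+W)/W)\otimes k'$, of dimension $1+s-r$. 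The elements $dX_i\otimes 1\mapsto d(\bar t_i^{1/p})$ surject onto the $s$-dimensional cokernel from (2), so they contribute $n-s$ further dimensions to the kernel. Hence $\dim\mathfrak{n}/\mathfrak{n}^2=1+n-r$, and $R$ is a DVR iff $r=n$.

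\textbf{Completing the induction instead.} For case 2(b), work in the one-root ring. There $R/(X-c)\cong\mathcal{O}/(\hat x)$ with $\hat x\in\mathfrak{m}^2$, and $R/\pi R\cong k[X]/((X-\bar c)^p)$. Together these show that $\pi$ and $X-c$ are independent modulo $\mathfrak{n}^2$. The dichotomy (a)/(b) does not depend on the lift $c$, since $m^p\in\mathfrak{m}^2$; this is what lets you match it with Lemma~\ref{lem:diff-indep-ad-hoc-crit}. For the descent, $R$ is free over $\mathcal{O}_1=\mathcal{O}[\sqrt[p]{t_1}]$ with $1$ as a basis element. So if $R$ is a DVR, then $\mathcal{O}_1=R\cap\Frac(\mathcal{O}_1)$ is a noetherian valuation ring that is not a field. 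Finally, $t_2,\dots,t_n$ remain $p$-independent in $\Frac(\mathcal{O}_1)$ by Lemma~\ref{lem:p-indep-simple-ext}.
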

\begin{proof}
  The condition on $p$-independence of the $t_i$ ensures the equality of rings given.
  Now see \cite[Chapitre~0, Théorème~22.5.4]{EGA-IV-1}.
\end{proof}

\begin{proposition}\label{prop:fs-from-ultraproduct}
  Let $R$ be a normal integral domain of characteristic $p$, finitely generated over a field,
  and let $C$ denote its fraction field.
  Let $\mathcal{U}$ be a non-principal ultrafilter on the set of codimension-$1$ points $\mathfrak{p} \in \Spec(R)$.
  For all such $\mathfrak{p}$, the localisation $R_{\mathfrak{p}}$ is a discrete valuation ring of $C$,
  and so we have a corresponding completion $\widehat{C}^{\mathfrak{p}}$ with valuation $\widehat{v}_{\mathfrak{p}}$.
  Then for the ultraproduct of completions
  \[ (E, w) = \prod_{\mathfrak p} (\widehat{C}^{\mathfrak{p}}, \widehat{v}_{\mathfrak{p}})/\mathcal{U}, \]
  the valuation ring $\mathcal{O}_w$ is formally smooth over $C$.
\end{proposition}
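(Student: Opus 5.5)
Via Corollary~\ref{cor:fo-fs} and its proof, formal smoothness of $\mathcal{O}_w$ over $C$ amounts to a set of $\mathcal{L}_{\mathrm{ring}}(C)$-sentences. These sentences say that for a fixed $p$-basis $T$ of $C$ and any finitely many distinct $t_1, \dotsc, t_n \in T$, the elements $dt_i \otimes 1$ are linearly independent in $\Omega_{\mathcal{O}} \otimes \mathcal{O}/\mathfrak{m}$. Corollary~\ref{cor:fo-independent-differentials} gives a single formula for this condition. Each such sentence concerns only finitely many $t_i$, so by Łoś's theorem, applied to the valuation rings in the language $\mathcal{L}_{\mathrm{ring}}(C)$, it suffices to prove the following. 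For each finite $p$-independent tuple $t_1, \dotsc, t_n \in C$, the valuation ring $\widehat{\mathcal{O}}_{\mathfrak{p}}$ of $\widehat{C}^{\mathfrak{p}}$ satisfies the corresponding formula for all but finitely many $\mathfrak{p}$. (Here $C$ embeds diagonally into $E$, and finitely many bad $\mathfrak{p}$ are irrelevant because $\mathcal{U}$ is non-principal.) Since $R$ is normal and noetherian, $R_{\mathfrak{p}}$ is indeed a DVR. Each $t_i$ lies in $R_{\mathfrak{p}}$ for almost all $\mathfrak{p}$, since $t_i$ has only finitely many poles among height-one primes.

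Next, for a fixed $\mathfrak{p}$ with all $t_i \in R_{\mathfrak{p}}$, I would reduce from $\widehat{\mathcal{O}}_{\mathfrak{p}}$ to $R_{\mathfrak{p}}$. By Remark~\ref{rem:differentials-completion}, the vector space $\Omega \otimes \mathcal{O}/\mathfrak{m}$ and the images $dt_i \otimes 1$ are the same for $R_{\mathfrak{p}}$ and its completion. Alternatively, since the criterion of Lemma~\ref{lem:diff-indep-ad-hoc-crit} only involves $\mathcal{O}/\mathfrak{m}^2$, the formula holds in $\widehat{\mathcal{O}}_{\mathfrak{p}}$ if and only if it holds in $R_{\mathfrak{p}}$. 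So the problem becomes geometric: show that $dt_1 \otimes 1, \dotsc, dt_n \otimes 1$ are linearly independent in $\Omega_{R_{\mathfrak{p}}} \otimes \kappa(\mathfrak{p})$ for almost all height-one $\mathfrak{p}$.

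For this geometric step, I would use Lemma~\ref{lem:dvr-lin-indep-differentials}. Independence holds exactly when $R_{\mathfrak{p}}[\unspacedrt[p]{t_1}, \dotsc, \unspacedrt[p]{t_n}]$ is a DVR. Since the $t_i$ are $p$-independent in $C$, the ring $S = R[t_1^{\pm 1}, \dotsc][X_1, \dotsc, X_n]/(X_i^p - t_i)$ is a domain, with fraction field $C(\unspacedrt[p]{t_1}, \dotsc, \unspacedrt[p]{t_n})$. After inverting an element to make the $t_i$ lie in $R$, $S$ is finitely generated over a field. The ground field is perfect after replacing it by its prime field, or it can simply be kept, since excellence holds in either case. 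So $S$ is excellent, its normal locus is open and nonempty, and its non-normal locus is a proper closed subset. That subset lies over a proper closed subset of $\Spec R$, because $S$ is finite over $R$, and so it contains only finitely many height-one primes. For every other height-one $\mathfrak{p}$, the localisation $S_{\mathfrak{p}} = R_{\mathfrak{p}}[\unspacedrt[p]{t_i}]$ is a normal one-dimensional local domain, hence a DVR. It is local because the extension is purely inseparable, so only one prime lies over $\mathfrak{p}$.

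I expect the main obstacle to be this last step: controlling the non-normal locus, and justifying that only finitely many codimension-one primes of $R$ lie under it. One option is to localise $R$ at a single nonzero element beforehand; this removes only finitely many height-one primes. Then check $\Spec S \to \Spec R$ is a homeomorphism, so a proper closed subset of $\Spec S$ maps to a proper closed subset of $\Spec R$. Another option, closer to Lemma~\ref{lem:fs-differential-criterion}, is to show directly that $R$ is smooth over $\mathbb{F}_p$ on a dense open set, together with a regularity statement for the differentials $dt_i$ there.
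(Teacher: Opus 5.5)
Your proposal is correct and follows the paper's proof essentially step for step. You reduce via Corollary~\ref{cor:fo-independent-differentials}, Łoś's theorem and Remark~\ref{rem:differentials-completion} to independence of the $dt_i \otimes 1$ over $R_{\mathfrak p}$ for almost all $\mathfrak p$, and then apply Lemma~\ref{lem:dvr-lin-indep-differentials} to $R[\unspacedrt[p]{t_1}, \dotsc, \unspacedrt[p]{t_n}]$ together with openness of a good locus of this finitely generated algebra. The only difference is cosmetic: you use the normal locus and push its closed complement down to $\Spec R$ along the finite map, whereas the paper uses the regular locus of $\Spec R'$ and counts its codimension-$1$ points directly. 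Your aside about replacing the ground field by its prime field should be dropped, since $R$ need not be finitely generated over $\mathbb{F}_p$; as you note yourself, keeping the original base field suffices.
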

\begin{proof}
  Observe first that the statement makes sense in that $C$ is a trivially valued subfield of $E$.
  Indeed, for any given $x \in R$, $x \neq 0$,
  the codimension-$1$ primes $\mathfrak{p}$ with $x \not\in R_{\mathfrak{p}}^\times$
  are exactly the minimal primes above $x$, of which there are only finitely many.
  By writing an arbitrary element $x \in C^\times$ as a quotient of two elements of $R$,
  we deduce that $x \in R_{\mathfrak{p}}^\times$ for all but finitely many codimension-$1$ primes $\mathfrak{p}$.

  We show formal smoothness by checking the condition of Lemma~\ref{lem:fs-differential-criterion}.
  So it suffices to show that for some $p$-basis $T$ of $C$, the elements $dt \otimes 1 \in \Omega_{\mathcal{O}_w} \otimes Ew$ ($t \in T$)
  are linearly independent, since the $dt$ form a basis of $\Omega_C$.
  It suffices to check this property on finite tuples,
  so let $t_1, \dotsc, t_n \in C$ be some $p$-independent set.
  We wish to show that the elements $dt_i \otimes 1 \in \Omega_{\mathcal{O}_w} \otimes Ew$ are $Ew$-linearly independent.
  By localising $R$ at some finite set of elements,
  we can ensure that $R$ contains all the $t_i$ while only removing finitely many
  codimension-$1$ points, thus ensuring that the statement remains unchanged.
    
  Consider the ring $R' = R[\unspacedrt[p]{t_1}, \dotsc, \unspacedrt[p]{t_n}]$.
  Then $R'$ is a domain with fraction field $C[\unspacedrt[p]{t_1}, \dotsc, \unspacedrt[p]{t_n}]$,
  and $R'$ is finite over $R$.
  Let $\mathfrak{p}' \in \Spec(R')$ be a codimension-$1$ prime,
  and let $\mathfrak{p} = \mathfrak{p}' \cap R \in \Spec(R)$ be the prime below.
  Since the $p$-th power of any element in $R'$ lies in $R$,
  we see that $\mathfrak{p}'$ consists of precisely those $x \in R'$ with $x^p \in \mathfrak{p}$.
  Therefore in the ring $R_{\mathfrak{p}}[\unspacedrt[p]{t_1}, \dotsc, \unspacedrt[p]{t_n}]$ containing $R'$,
  every $x \in R' \setminus \mathfrak{p}'$ is invertible since $x^p \in R \setminus \mathfrak{p}$ is invertible in $R_{\mathfrak{p}}$.
  It follows that $R_{\mathfrak{p}}[\unspacedrt[p]{t_1}, \dotsc, \unspacedrt[p]{t_n}] = R'_{\mathfrak{p}'}$.

  Now suppose that $R'_{\mathfrak{p}'}$ is a DVR.
  Then the elements $dt_i \otimes 1 \in \Omega_{R_{\mathfrak{p}}} \otimes R/\mathfrak{p}$ are $R/\mathfrak{p}$-linearly independent by Lemma~\ref{lem:dvr-lin-indep-differentials}.
  Thus the only codimension-$1$ primes $\mathfrak{p} \in \Spec(R)$
  for which the elements $dt_i \otimes 1 \in \Omega_{R_{\mathfrak{p}}} \otimes R/\mathfrak{p}$ may fail to be $R/\mathfrak{p}$-linearly independent
  must lie below primes $\mathfrak{p}' \in \Spec(R')$ for which the localisation $R'_{\mathfrak{p}'}$ is not a DVR,
  i.e.\ non-regular points of the scheme $\Spec(R')$.
  However, the regular locus of $\Spec(R')$ is open since $R'$ is finitely generated over a field
  \cite[Corollaire~6.12.5]{EGA-IV-2}.
  Since the generic point $(0) \in \Spec(R')$ is trivially regular,
  this means that only finitely many codimension-$1$ points $\mathfrak{p}' \in \Spec(R')$ can fail to be regular.

  Thus for all but finitely many $\mathfrak{p}$,
  the elements $dt_i \otimes 1 \in \Omega_{R_{\mathfrak{p}}} \otimes R/\mathfrak{p}$ are $R/\mathfrak{p}$-linearly independent.
  The same holds if we consider the $dt_i \otimes 1$ as elements of $\Omega_{\widehat{R_{\mathfrak{p}}}} \otimes \widehat{R_{\mathfrak{p}}}/\mathfrak{p}\widehat{R_{\mathfrak{p}}}$,
  since these are naturally isomorphic algebras over the field $R/\mathfrak{p} = \widehat{R_{\mathfrak{p}}}/\mathfrak{p}\widehat{R_{\mathfrak{p}}}$ by Remark~\ref{rem:differentials-completion}.
  By Corollary~\ref{cor:fo-independent-differentials} and Łoś's theorem, we see that
  the $dt_i \otimes 1 \in \Omega_{\mathcal{O}_w} \otimes Ew$ are $Ew$-linearly independent, which we had to show.
\end{proof}

\begin{remark}
  We have not stated the proposition in the maximal possible generality
  to keep the statement relatively elementary.
  Note first that the hypothesis of $R$ being finitely generated over a field
  is only used in the proof to ensure that $R$ is Noetherian and that for the finite ring extension $R'$,
  the scheme $\Spec(R')$ has an open set of normal points (since normality is equivalent to regularity in codimension $1$).
  For this it suffices to assume that $R$ is a Nagata ring \cite[Corollaire~6.13.3]{EGA-IV-2}.
  
  The proposition holds more generally in the situation of
  an integral noetherian Nagata scheme $X$ with function field $C$ of characteristic $p$ and
  a non-principal ultrafilter on the set of codimension-$1$ points $x \in X$.
  There is an open affine normal subscheme $X' \cong \Spec(R)$ of $X$, which is itself Nagata.
  Since $X'$ is dense in $X$ and $X$ is noetherian,
  all but finitely many codimension-$1$ points of $X$ lie in $X'$.

  Therefore, even though not all $x$ are normal points,
  the ultrafilter is concentrated on the normal ones,
  and so the ultraproduct of the local rings $\mathcal{O}_{X,x}$ is an ultraproduct of DVRs
  and therefore itself a valuation ring,
  and the reduction to the affine case shows that it is formally smooth over $C$.
\end{remark}

We note the following corollary for function fields over an arbitrary base field of characteristic $p$.
For us, a function field $F/K$ without further qualification
is a finitely generated field extension of transcendence degree $1$.
Associated to it is the set of places $\mathbb{P}_{F/K}$, i.e.\ the set of non-trivial valuations on $F$
which are trivial on $K$, which we normalise to have value group precisely $\mathbb{Z}$.
This corollary will be the basis of our analysis in Section~\ref{sec:function-fields-pac}.
\begin{corollary}\label{cor:aa-completion-fs}
  Let $K$ be a field of characteristic $p$ and $C/K$ a finitely generated field extension
  of transcendence degree $1$.
  In other words, $C/K$ is a function field, not necessarily separable.
  Then for every non-principal ultraproduct $(E,w)$ over the completions $(\hat{C}^v, \hat{v})$
  of $C$ with respect to places $v \in \mathbb{P}_{C/K}$,
  the valuation ring $\mathcal{O}_w$ is formally smooth over $C$.
\end{corollary}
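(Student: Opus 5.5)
This corollary should be a direct application of Proposition~\ref{prop:fs-from-ultraproduct}, once we choose a suitable ring $R$ and match the codimension-$1$ points of $\Spec(R)$ with all but finitely many places in $\mathbb{P}_{C/K}$.

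First I construct $R$. Pick $x \in C$ transcendental over $K$. Let $R$ be the integral closure of $K[x]$ in $C$. Since $C/K(x)$ is a finite extension (possibly inseparable), $R$ is a finitely generated $K$-algebra and finite over $K[x]$, by finiteness of integral closure for finitely generated algebras over a field (Noether). So $R$ is a normal integral domain, finitely generated over the field $K$, with fraction field $C$ and Krull dimension $1$.

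Next I match primes with places. The codimension-$1$ points of $\Spec(R)$ are its maximal ideals $\mathfrak{p}$. Each $R_{\mathfrak{p}}$ is a DVR of $C$ that is trivial on $K$, hence corresponds to a place in $\mathbb{P}_{C/K}$. Conversely, a place $v$ with $v(x) \geq 0$ has valuation ring containing $K[x]$ and hence $R$; its centre $\mathfrak{p} = \mathfrak{m}_v \cap R$ is a maximal ideal with $R_{\mathfrak{p}} \subseteq \mathcal{O}_v$. Since $R_{\mathfrak{p}}$ is a DVR and $v$ is nontrivial, $\mathcal{O}_v = R_{\mathfrak{p}}$. The places with $v(x) < 0$ are exactly the finitely many poles of $x$, namely the places above the infinite place of $K(x)$. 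Thus $\mathfrak{p} \mapsto R_{\mathfrak{p}}$ is a bijection onto $\mathbb{P}_{C/K}$ minus a finite set $S$, and the completions correspond.

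The step I expect to need the most care is the transfer of the ultrafilter. A non-principal ultrafilter $\mathcal{U}$ on $\mathbb{P}_{C/K}$ contains the cofinite set $\mathbb{P}_{C/K} \setminus S$. So it restricts to a non-principal ultrafilter $\mathcal{U}'$ on the codimension-$1$ points of $\Spec(R)$, and the two ultraproducts of completed valued fields coincide as valued fields containing $C$. Proposition~\ref{prop:fs-from-ultraproduct} applied with $\mathcal{U}'$ then gives that $\mathcal{O}_w$ is formally smooth over $C$.

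Note that no separability of $C/K$ is used anywhere. The finiteness of integral closure holds for arbitrary finitely generated domains over a field, which is why the corollary allows inseparable function fields.
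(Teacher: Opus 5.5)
Your proposal is correct and follows essentially the paper's argument: both choose an affine normal model of $C/K$ whose nonzero primes are all but finitely many places and then apply Proposition~\ref{prop:fs-from-ultraproduct}. The only difference is the choice of ring. The paper takes the holomorphy ring $\bigcap_{v \neq w} \mathcal{O}_v$ omitting a single place $w$, whereas you take the integral closure of $K[x]$, which omits the poles of $x$. You also spell out the ultrafilter restriction that the paper leaves implicit.
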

\begin{proof}
  Pick an arbitrary place $w$, and consider the holomorphy ring
  $R = \bigcap_{v \neq w} \mathcal{O}_v \subseteq C$.
  This is a Dedekind domain with fraction field $C$, finitely generated over $K$, whose non-zero prime ideals
  correspond bijectively to the places $v \in \mathbb{P}_{C/K} \setminus \{ w \}$,
  see \cite[Corollary~5.7.11 and the following paragraph]{VillaSalvador_algebraic-function-fields}.
  The statement now follows from the proposition.
\end{proof}

\begin{remark}
  The crux of the matter,
  and what makes the corollary somewhat delicate to prove (by means of the proposition),
  is that we assert formal smoothness \emph{over $C$}.
  If $C/K$ is separable, we also have formal smoothness over $K$,
  and this is much easier to prove --
  it follows since all but finitely many $\mathcal{O}_{\hat{v}_{\mathfrak{p}}}$ individually are
  (geometrically regular, and so) formally smooth over $K$.
\end{remark}

For a later application to the theory of all completions of a function field,
we formulate the following lemma related to Corollary~\ref{cor:aa-completion-fs} in the situation
of a separable function field.
\begin{lemma}\label{lem:ff-fs-exceptional-places}
  Let $K$ be a field of characteristic $p$ and $F/K$ a separable function field.
  Let $t \in F$ be a separating element, i.e.\ $F/K(t)$ is a finite separable field extension.
  Let $B_K$ be a $p$-basis of $K$, so $B_K \cup \{ t \}$ is a $p$-basis of $F$.

  Let $v$ be a $K$-trivial place of $F$.
  Assume that $v(t) \geq 0$ and $v$ is unramified in the field extension $F/K(t)$,
  i.e.\ $K(t)$ contains a uniformiser of $v$ and the residue field extension $Fv/K(t)v$ is separable.
  Then the elements $ds \otimes 1 \in \Omega_{\mathcal{O}_v} \otimes Fv$, $s \in B_K \cup \{ t \}$, are $Fv$-linearly independent,
  and there exists $c \in B_K \cup \{ t + \mathfrak{m}_v \} \subseteq Fv$ such that the elements $ds \in \Omega_{Fv}$, $s \in (B_K \cup \{ t + \mathfrak{m}_v \}) \setminus \{ c \}$ are $Fv$-linearly independent.

  The set of exceptional places $v$, i.e.\ those for which $v(t) < 0$ or $v$ is ramified in $F/K(t)$,
  is finite.
\end{lemma}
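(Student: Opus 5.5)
I would prove this by passing from $F$ down to the rational function field $K(t)$, where everything can be computed explicitly, and then pulling back along the unramified extension.

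Write $\mathcal{O}_0 = \mathcal{O}_v \cap K(t)$ with maximal ideal $\mathfrak{m}_0$, and let $v_0$ be the restriction of $v$ to $K(t)$. Since $v(t) \geq 0$, the ring $\mathcal{O}_0$ is a localisation $K[t]_{(f)}$ for some irreducible $f \in K[t]$. By the ``in particular'' of Proposition~\ref{prop:fs-dvrs-from-varieties}, $\mathcal{O}_0$ is formally smooth over $K$. Moreover $K(t)$ is a separable extension of $K$, so I can reduce to the following claim: the elements $ds\otimes 1$, $s \in B_K\cup\{t\}$, are linearly independent in $\Omega_{\mathcal{O}_0}\otimes K(t)v_0$. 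This claim then has to be transferred to $\mathcal{O}_v$.

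For the transfer I would use the unramifiedness hypothesis. The uniformiser $f$ of $v_0$ is also a uniformiser of $v$, and $Fv/K(t)v_0$ is separable. The conormal sequence of Lemma~\ref{lem:conormal-sequence} maps compatibly from $\mathcal{O}_0$ to $\mathcal{O}_v$, after tensoring the first sequence with $Fv$:
\begin{itemize}
\item on the left terms $\mathfrak{m}/\mathfrak{m}^2$ the map is an isomorphism, since $f$ generates both ideals;
\item on the right terms the map $\Omega_{K(t)v_0}\otimes Fv \to \Omega_{Fv}$ is injective, by separability of $Fv/K(t)v_0$ (the criterion recalled before Proposition~\ref{prop:fs-separability}).
\end{itemize}
The five lemma (or a direct snake-lemma argument) then makes the middle map injective. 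So it suffices to prove the linear independence over $\mathcal{O}_0$.

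For $\mathcal{O}_0$, the conormal sequence shows that the span of $df\otimes 1$ together with the $d\bar s$ controls the middle term. Write $k_0 = K(t)v_0 = K[t]/(f)$, a finite extension of $K$ generated by $\bar t$. There are two cases.
\begin{itemize}
\item If $k_0/K$ is separable, then $B_K$ stays $p$-independent in $k_0$ and $d\bar t$ lies in their span. Since $df = f'(t)\,dt + \sum_{b}(\partial f/\partial b)\,db$ (differentiating the coefficients), the independence comes down to $df\otimes 1 \notin \operatorname{span}(db\otimes1)$ modulo $\mathfrak{m}/\mathfrak{m}^2$. This is a careful computation.
\item Alternatively, I would avoid the case split: by Lemma~\ref{lem:fs-differential-criterion} together with formal smoothness over $K$, the $db\otimes 1$ are independent. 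It then remains to show that $dt\otimes 1$ is not in their span. I would use the direct decomposition $\Omega_{K[t]} = (\Omega_K\otimes K[t]) \oplus K[t]\,dt$ and compute $\Omega_{\mathcal{O}_0}\otimes k_0$ as $\Omega_{K[t]}\otimes k_0$ modulo $df$. Then $dt$ and the $db$ span a space of dimension $|B_K|+1$ minus at most $1$, namely the single relation $df$.
\end{itemize}
This is exactly the delicate step. The relation $df$ does involve $dt$ or the $db$ nontrivially, so the full independence of all of $B_K\cup\{t\}$ is false in this naive form. I therefore expect the correct reading to be as follows: the $ds\otimes 1$ are independent in $\Omega_{\mathcal{O}_v}\otimes Fv$ because $d(f)\otimes 1$ comes from $\mathfrak{m}/\mathfrak{m}^2$, which is nonzero, and $\Omega_{K[t]}$ is free on $B_K\cup\{t\}$ before quotienting by $f$. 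Since $\Omega_{\mathcal{O}_0}$ is the localisation of $\Omega_{K[t]}$, we get $\Omega_{\mathcal{O}_0}\otimes k_0 = \Omega_{K[t]}\otimes k_0$, which is free on $ds$ for $s \in B_K\cup\{t\}$, with no quotient by $df$ at all. This gives the independence directly, and I expect this to be the clean route. The quotient by $df$ is then what appears in $\Omega_{k_0}$, so the $d\bar s$ span $\Omega_{Fv}$ with exactly a one-dimensional kernel. Choosing $c$ as an element whose coefficient in $df$ is a nonzero residue gives that the remaining $d\bar s$ are independent in $\Omega_{k_0}$, hence in $\Omega_{Fv}$ by separability.

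The finiteness of the exceptional set is standard. Only finitely many places have $v(t)<0$, namely the poles of $t$, and only finitely many places ramify in the finite separable extension $F/K(t)$, by the different or discriminant.
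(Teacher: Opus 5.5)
Your final argument is correct, but it proves the first claim by a different route from the paper.

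\textbf{Paper's route.} The paper fixes finitely many $b_1,\dotsc,b_n\in B_K$. It uses Lemma~\ref{lem:dvr-lin-indep-differentials} (the EGA criterion) to reduce independence of $dt\otimes 1, db_i\otimes 1$ to the ring $\mathcal{O}_v[t^{1/p},b_1^{1/p},\dotsc,b_n^{1/p}]$ being a DVR. It then writes this ring as an étale base change along $\mathcal{O}_{v_0}\to\mathcal{O}_v$ of $\mathcal{O}_{v_0}[t^{1/p},b_i^{1/p}]$, which is a localisation of a PID and hence a DVR.

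\textbf{Your route.} You compute directly. The module $\Omega_{K[t]}\cong(\Omega_K\otimes_K K[t])\oplus K[t]\,dt$ is free on $\{ds : s\in B_K\cup\{t\}\}$, and Kähler differentials commute with localisation. So the $ds\otimes 1$ form a $k_0$-basis of $\Omega_{\mathcal{O}_0}\otimes k_0$. You then transfer this to $\mathcal{O}_v$ by the map of conormal sequences from Lemma~\ref{lem:conormal-sequence}. The left map is an isomorphism because $f$ stays a uniformiser, and the right map is injective because $Fv/k_0$ is separable. A four-lemma chase then gives injectivity of $\Omega_{\mathcal{O}_0}\otimes Fv\to\Omega_{\mathcal{O}_v}\otimes Fv$.

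\textbf{Comparison.} Your argument is more elementary: it needs neither the EGA criterion nor étale base change of DVRs. It also proves a general transfer statement for any local extension of DVRs with a common uniformiser and separable residue extension. The paper's argument reuses the machinery of Lemma~\ref{lem:dvr-lin-indep-differentials}, which it had already set up. For the second claim, you do the linear algebra in $\Omega_{k_0}$ and push it to $\Omega_{Fv}$ by separability. The paper uses the one-dimensional kernel of $\Omega_{\mathcal{O}_v}\otimes Fv\to\Omega_{Fv}$ directly. Both work.

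\textbf{Cleaning up the write-up.} Discard the two exploratory bullets. The claim there that $\Omega_{\mathcal{O}_0}\otimes k_0$ is $\Omega_{K[t]}\otimes k_0$ modulo $df$ is false: by the conormal sequence, that quotient is $\Omega_{k_0}$. Neither the unperformed ``careful computation'' nor formal smoothness of $\mathcal{O}_0$ over $K$ is needed once you have freeness. Finally, $df\otimes 1\neq 0$ follows from the left exactness in Lemma~\ref{lem:conormal-sequence}, not merely from $\mathfrak{m}_0/\mathfrak{m}_0^2$ being non-zero. If it did vanish, any $c$ would do anyway.
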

\begin{proof}
  Let $v$ be a place with $v(t) \geq 0$ which is unramified in the field extension $F/K(t)$.
  Write $v_0$ for the restriction of $v$ to $K(t)$.
  The valuation ring $\mathcal{O}_{v_0}$ is a localisation of $K[t]$,
  and by the unramifiedness assumption, $\mathcal{O}_v/\mathcal{O}_{v_0}$ is an étale extension of rings.
  Let $b_1, \dotsc, b_n \in B_K$.
  We wish to check that $dt \otimes 1, db_1 \otimes 1, \dotsc, db_n \otimes 1 \in \Omega_{\mathcal{O}_v} \otimes Fv$ are $Fv$-linearly independent.
  By Lemma~\ref{lem:dvr-lin-indep-differentials},
  we need to check whether $S=\mathcal{O}_v[X_1, \dotsc, X_n, Y]/(X_1^p - b_1, \dotsc, X_n^p - b_n, Y^p - t)$ is a DVR.
  This ring is the tensor product over $\mathcal{O}_{v_0}$ of $\mathcal{O}_v$ and $R=\mathcal{O}_{v_0}[t^{1/p}, b_1^{1/p}, \dotsc, b_n^{1/p}]$.
  Therefore $S/R$ is an étale extension of rings.
  Both $R$ and $S$ are still local rings, since an element is a non-unit if and only if its $p$-th power
  (an element of $\mathcal{O}_{v_0}$ or $\mathcal{O}_v$, respectively) is a non-unit, so the sum of two non-units remains a non-unit.
  The ring $R$ is a localisation of the principal ideal domain $K(b_1^{1/p}, \dotsc, b_n^{1/p})[t^{1/p}]$
  and therefore a DVR.
  Hence $S$ is also a DVR \cite[Tag~0AP2]{StacksProject}, as we had to show.

  The kernel of the natural map $\Omega_{\mathcal{O}_v} \otimes Fv \to \Omega_{Fv}$ sending $ds \otimes 1$ to $d(s + \mathfrak{m}_v)$ has dimension $1$ by Lemma~\ref{lem:conormal-sequence}.
  Since the elements $ds \otimes 1 \in \Omega_{\mathcal{O}_v} \otimes Fv$, $s \in B_K \cup \{ t \}$ are $Fv$-linearly independent by what we have shown,
  it follows that there is an element $c \in B_K \cup \{ t + \mathfrak{m}_v \}$ such that the $ds \in \Omega_{Fv}$, $s \in (B_K \cup \{ t + \mathfrak{m}_v \}) \setminus \{ c \}$
  are $Fv$-linearly independent.

  There are only finitely many places $v$ with $v(t) \neq 0$.
  Since $F/K(t)$ is a finite separable extension, the finiteness of the number of ramified places for $F/K(t)$ is a standard
  result in the theory of function fields;
  see for instance \cite[Corollary~5.6.4 and Proposition~5.6.9]{VillaSalvador_algebraic-function-fields}.
\end{proof}

We return to the situation of a general base field $C$ of characteristic $p$.
For our results on existential theories,
we will need the following embedding lemma for DVRs.
This is in parallel to the use of embedding lemmas for DVRs in the finitely ramified situation,
see for instance \cite[Lemma~4.6]{ADJ_finitely-ramified}.
\begin{lemma}\label{lem:dvr-embedding}
  Let $R$ and $S$ be DVRs containing the field $C$, with maximal ideals $\mathfrak{m}$ and $\mathfrak{n}$, respectively.
  Assume that $S$ is complete, and that both $R$ and $S$ are formally smooth over $C$.
  Let $\varphi \colon R/\mathfrak{m} \hookrightarrow S/\mathfrak{n}$ be an embedding over $C$.
  Then there exists an embedding $R \hookrightarrow S$ of local $C$-algebras inducing $\varphi$.
\end{lemma}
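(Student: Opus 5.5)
The plan is to build the embedding as a compatible system of $C$-algebra maps $R \to S/\mathfrak{n}^k$, using formal smoothness of $R$ to lift at each stage and completeness of $S$ to pass to the limit. The one real difficulty is making sure the limit map is injective rather than killing the maximal ideal. Formal smoothness of $S$ is used only for this non-degeneracy check.

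\emph{Lifting.} Start from the continuous $C$-algebra map $\psi_1 \colon R \to R/\mathfrak{m} \xrightarrow{\varphi} S/\mathfrak{n}$. For each $k \geq 1$, the ideal $\mathfrak{n}^k/\mathfrak{n}^{k+1}$ of $S/\mathfrak{n}^{k+1}$ has square zero. Definition~\ref{def:fs} applied to $R$ therefore lifts a continuous $\psi_k \colon R \to S/\mathfrak{n}^k$ to a continuous $\psi_{k+1} \colon R \to S/\mathfrak{n}^{k+1}$. Since $S = \varprojlim S/\mathfrak{n}^k$, the compatible family gives a $C$-algebra map $\psi \colon R \to S$ inducing $\varphi$ on residue fields. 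Because $\varphi$ is injective, $\psi^{-1}(\mathfrak{n}) = \mathfrak{m}$, so $\psi$ is local. Its kernel is a prime of the DVR $R$, hence either $0$ or $\mathfrak{m}$. It therefore suffices to arrange that a uniformiser $\pi$ of $R$ maps outside $\mathfrak{n}^2$: then $\psi(\pi) \neq 0$ and $\psi$ is injective. Only the choice of $\psi_2$ matters for this, and all later lifts are arbitrary.

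\emph{Choosing $\psi_2$.} Write $k = R/\mathfrak{m}$ and $\ell = S/\mathfrak{n}$. Any two lifts $\psi_2$ of $\psi_1$ differ by a $C$-derivation $R \to \mathfrak{n}/\mathfrak{n}^2$ that is continuous, i.e.\ factors through $\Omega_{R/C} \otimes_R k$. Conversely, any $k$-linear map $\Omega_{R/C} \otimes k \to \mathfrak{n}/\mathfrak{n}^2$, with $\mathfrak{n}/\mathfrak{n}^2$ viewed as a $k$-space via $\varphi$, can be added to $\psi_2$. Here $\Omega_{R/C} \otimes k$ is the cokernel of $\Omega_C \otimes k \to \Omega_R \otimes k$. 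There are two cases.

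In the first case, $d\pi \otimes 1$ does not lie in the image of $\Omega_C \otimes k$. Then its class in $\Omega_{R/C} \otimes k$ is nonzero, so some linear functional sends it to a generator of the one-dimensional space $\mathfrak{n}/\mathfrak{n}^2$. Correcting a given $\psi_2$ by a suitable multiple of the corresponding derivation arranges $\psi_2(\pi) \neq 0$.

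In the second case, $d\pi \otimes 1$ lies in that image; I expect this to be the main obstacle. Here I argue that $\psi_2(\pi) \neq 0$ holds automatically. Suppose instead $\psi_2(\pi) = 0$. Then $\psi_2$ kills $\mathfrak{m}$ and factors through a $C$-algebra map $k \to S/\mathfrak{n}^2$ lifting $\varphi$. Consequently the composite
\[ \Omega_C \otimes \ell \to \Omega_{S/\mathfrak{n}^2} \otimes \ell \cong \Omega_S \otimes \ell \]
(the isomorphism by Remark~\ref{rem:differentials-completion}) factors through $\Omega_k \otimes_k \ell$. On the other hand, Lemma~\ref{lem:fs-differential-criterion} for $R$ makes $\Omega_C \otimes k \to \Omega_R \otimes k$ injective. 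The kernel of $\Omega_R \otimes k \to \Omega_k$ is spanned by $d\pi \otimes 1$ (Lemma~\ref{lem:conormal-sequence}), and $d\pi \otimes 1 \neq 0$. So in this case some nonzero $\omega \in \Omega_C \otimes k$ maps to $0$ in $\Omega_k$. After tensoring with $\ell$ via $\varphi$, $\omega$ stays nonzero in $\Omega_C \otimes \ell$ but dies in $\Omega_S \otimes \ell$. This contradicts Lemma~\ref{lem:fs-differential-criterion} for $S$.

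In both cases, then, we obtain $\psi_2$ with $\psi_2(\pi) \neq 0$ in $S/\mathfrak{n}^2$. Continuing the lifting with this $\psi_2$ gives the desired embedding. The points to double-check are base-change compatibility of the differential modules along $\varphi$ and that continuity of derivations matches $\Omega_{R/C} \otimes k$; both look routine. (Completeness of $R$ is not needed, though one could first replace $R$ by $\widehat{R}$ via Lemma~\ref{lem:fs-completion}.)
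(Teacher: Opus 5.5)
Your proof is correct, and it takes a genuinely different route from the paper's.

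\textbf{The paper's route.} The paper first replaces $R$ by $\widehat{R}$. It then obtains a local $C$-homomorphism $R \to S$ in one stroke, from EGA's lifting property of formally smooth algebras into complete targets. Finally it splits on whether $R/\mathfrak{m}$ is separable over $C$.
\begin{itemize}
\item In the inseparable case, a non-injective lift would give $R/\mathfrak{m} \hookrightarrow S$ over $C$. This contradicts the separability of $\Frac(S)$ over $C$ from Proposition~\ref{prop:fs-separability}~(3), which rests on EGA's geometric regularity theorem.
\item In the separable case, the lift is discarded. An embedding is instead built by hand from the Cohen structure theorem $R \cong (R/\mathfrak{m})[\![\pi]\!]$.
\end{itemize}

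\textbf{Your route.} Your dichotomy, whether $d\pi \otimes 1$ lies in the image of $\Omega_C \otimes k$, is exactly the same separable/inseparable split once one uses injectivity for $R$ and the conormal sequence. The difference is that you treat both cases inside $S/\mathfrak{n}^2$.
\begin{itemize}
\item In the separable case, you correct the level-$2$ lift by a derivation rather than invoking Cohen's theorem.
\item In the inseparable case, you show that even a $C$-lift $k \to S/\mathfrak{n}^2$ of $\varphi$ is ruled out by the differential criterion for $S$. This avoids Proposition~\ref{prop:fs-separability}~(3) and geometric regularity altogether.
\end{itemize}

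\textbf{What each buys.} The paper's argument is shorter, given its citations. Yours is more self-contained: it uses only the definition of formal smoothness, Lemma~\ref{lem:fs-differential-criterion}, Lemma~\ref{lem:conormal-sequence} and Remark~\ref{rem:differentials-completion}. It does not need $R$ to be complete. It also shows that the only obstruction to injectivity is first-order, living in $S/\mathfrak{n}^2$, in the spirit of Remark~\ref{rem:dvr-embedding-ad-hoc-proof}.

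\textbf{Minor points.} Two small things are harmless. Continuity of the correcting derivation is automatic, since any derivation into a module killed by $\mathfrak{m}$ kills $\mathfrak{m}^2$. And in your second case the injectivity of $\Omega_C \otimes k \to \Omega_R \otimes k$ is not actually used; only $d\pi \otimes 1 \neq 0$ is.
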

\begin{proof}
  We may as well assume that $R$ is also complete,
  by replacing it by its completion if necessary.
  We have a $C$-homomorphism $R \to R/\mathfrak{m} \hookrightarrow S/\mathfrak{n}$.
  It is continuous with respect to the $\mathfrak{m}$-adic topology on $R$ and the discrete topology on $S/\mathfrak{n}$.
  By \cite[Chapitre~0, Corollaire~19.3.11]{EGA-IV-1} (using formal smoothness of $R$),
  the $C$-homomorphism $R \to S/\mathfrak{n}$ factors
  as the composition of $S \to S/\mathfrak{n}$ with a continuous $C$-homomorphism $R \to S$.
  Continuity implies that this is a homomorphism of local rings.

  Suppose that the $C$-homomorphism $R \to S$ we have obtained is not injective.
  Then its kernel must be precisely $\mathfrak{m}$, the only non-zero prime ideal of $R$.
  Therefore we have found an embedding $R/\mathfrak{m} \hookrightarrow S$ over $C$.
  If $R/\mathfrak{m}$ is inseparable over $C$,
  it follows that the fraction field of $S$ is inseparable over $C$,
  which is incompatible with the formal smoothness of $S$ over $C$ by Proposition~\ref{prop:fs-separability}~(3).
  If $R/\mathfrak{m}$ is separable over $C$, we know that $R \cong (R/\mathfrak{m})[\![\pi]\!]$ over $C$ by
  the structure theory of complete DVRs (cf.~\cite[Theorem~7.8]{Eisenbud_comm-algebra}),
  so we can construct an embedding $R \hookrightarrow S$ by using the embedding $R/\mathfrak{m} \hookrightarrow S$ and sending $\pi$ to an arbitrary non-zero element of $\mathfrak{n}$.
\end{proof}

\begin{remark}
  Without the hypothesis that $S$ is formally smooth over $C$,
  the proof above only yields a local $C$-homomorphism $R \to S$, not necessarily injective.
  In fact, the hypothesis of $S$ being formally smooth over $C$ is necessary to obtain an embedding.
  As an example, consider $C = \mathbb{F}_p(s)$, $R = C[\![\pi]\!](\unspacedrt[p^\infty]{s+\pi})$ and $S = C[\![\varpi]\!](\unspacedrt[p^\infty]{s})$.
  Both $R$ and $S$ are complete DVRs with residue field isomorphic to $C(\unspacedrt[p^\infty]{s})$ over $C$.
  One verifies that $R$ is formally smooth over $C$,
  for instance by showing that $ds \otimes 1 \in \Omega_{R} \otimes R/\mathfrak{m}$ is non-zero using the criterion of Lemma~\ref{lem:diff-indep-ad-hoc-crit},
  and concluding by Lemma~\ref{lem:fs-differential-criterion}
  (using that $ds$ spans $\Omega_{C}$).

  Any local $C$-homomorphism $R \to S$ must send $\pi$ to an element $x \in \mathfrak{n}$ such that $s + x$ has all $p$-power roots;
  equivalently, since $s$ has all $p$-power roots in $S$, $x$ must have all $p$-power roots.
  However, the only such element $x \in \mathfrak{n}$ is $x=0$,
  and so the unique local $C$-homomorphism $R \to S$ is not injective.
\end{remark}

\begin{remark}\label{rem:dvr-embedding-ad-hoc-proof}
  Let us sketch how one can prove Lemma~\ref{lem:dvr-embedding} without using the formalism of formal smoothness,
  using only the condition of Lemma~\ref{lem:diff-indep-ad-hoc-crit}~(2) for $T$ a $p$-basis of $C$
  (which axiomatises formal smoothness by the combination of Lemma~\ref{lem:diff-indep-ad-hoc-crit} and Lemma~\ref{lem:fs-differential-criterion}).
  This is in the spirit of \cite[Proposition~6.2]{DittmannFehm_completions} which, as discussed in Remark~\ref{rem:comparison-URt},
  works in a similar context.

  So let us work in the situation of Lemma~\ref{lem:dvr-embedding} and assume for simplicity that $R/\mathfrak{m}$ and $S/\mathfrak{n}$ are inseparable over $C$.
  Take a $p$-basis $T$ of $C$.
  By the inseparability assumption, $T$ is not $p$-independent in $R/\mathfrak{m}$,
  and so we can find distinct elements $\gamma, t_1, \dotsc, t_n \in T$ such that $\gamma = \sum_\alpha a_\alpha^p t^\alpha$ for suitable $a_\alpha \in R/\mathfrak{m}$ indexed by multi-indices $\alpha$.
  By the condition Lemma~\ref{lem:diff-indep-ad-hoc-crit}~(2) applied to $T$,
  it must certainly be the case that $T \setminus \{ \gamma \}$ is $p$-independent in $R/\mathfrak{m}$.
  Similarly, by considering the images $b_\alpha \in S/\mathfrak{n}$ of the $a_\alpha$ under the given mapping $R/\mathfrak{m} \hookrightarrow S/\mathfrak{n}$,
  the same condition yields that $T \setminus \{ \gamma \}$ is $p$-independent in $S/\mathfrak{n}$.

  By the structure theory of complete DVRs,
  there exists a coefficient field $D_1$ of $R$, i.e.\ a subfield mapping isomorphically onto $R/\mathfrak{m}$ under the residue map,
  containing $T \setminus \{ \gamma \}$:
  see for instance \cite[Theorem~9]{Cohen_complete-local-rings} for the existence of a coefficient field,
  and it is clear from the proof, see \cite[Lemma~11 and p.~77]{Cohen_complete-local-rings},
  that this coefficient field can be chosen to contain a given $p$-independent set;
  cf.\ also \cite[Theorem~7.8]{Eisenbud_comm-algebra}.
  Similarly, there is a coefficient field $D_2$ of $S$ containing $T \setminus \{ \gamma \}$.
  We can view $D_2$ as an extension field of $D_1$ via $D_1 \cong R/\mathfrak{m} \hookrightarrow S/\mathfrak{n} \cong D_2$.

  For every $\alpha$, let $x_\alpha \in D_1 \subseteq R$ be the lift of $a_\alpha$ in $D_1$.
  By the condition Lemma~\ref{lem:diff-indep-ad-hoc-crit}~(2),
  the element $\hat{x} = \gamma - \sum_\alpha x_\alpha^p t^\alpha$ must be a uniformiser of $R$.
  We can similarly lift the $b_\alpha \in S/\mathfrak{n}$ to uniquely determined elements $y_\alpha \in D_2 \subseteq S$,
  yielding a uniformiser $\hat{y} = \gamma - \sum_\alpha y_\alpha^p t^\alpha \in S$.

  The ring $R$ is isomorphic to the power series ring $D_1[\![t]\!]$ via a map preserving $D_1$ and sending $t$ to the uniformiser $\hat{x}$.
  We can thus construct an embedding $\iota \colon R \to S$ by sending $\hat{x}$ to $\hat{y}$ and $D_1$ into $D_2$.
  It only remains to be shown that $\iota$ is the identity on $C$.
  Note that $\iota$ preserves $T \setminus \{ \gamma \}$ by construction.
  Since the $x_\alpha$ are sent to the $y_\alpha$, it follows that also $\gamma$ is preserved.
  It now follows from a standard argument
  (cf.\ \cite[last paragraph of the proof of Lemma~4.2]{ADJ_finitely-ramified} in the mixed characteristic situation)
  that $C$ must be preserved since the $p$-basis $T$ of $C$ is preserved.
  Indeed, given an $x \in C$, choose a large natural number $N$.
  We have $C = C^{p^N}[T]$ since $T$ is a $p$-basis, and so we may expand $x$ as a sum of products of monomials in $T$ with $p^N$-th powers.
  Since $\iota$ preserves $T$, let us consider an element $c^{p^N}$ for some $c \in C$.
  A priori $\iota$ may not preserve $c^{p^N}$, but we certainly have $\iota(c^{p^N}) = \iota(c)^{p^N}$,
  and $\iota(c) - c \in \mathfrak{n}$ since the residue map $R/\mathfrak{m} \hookrightarrow S/\mathfrak{n}$ induced by $\iota$ is exactly the one initially given and therefore preserves $C$.
  Therefore $\iota(c^{p^N}) - c^{p^N} = (\iota(c) - c)^{p^N} \in \mathfrak{n}^N$.
  Returning to the given element $x \in C$, we see that we must have $\iota(x) - x \in \mathfrak{n}^N$.
  Since $N$ was arbitrary, $\iota(x) = x$ follows.
\end{remark}

\begin{remark}\label{rem:compare-münsterlemma}
  Lemma~\ref{lem:dvr-embedding} is reminiscent of the mixed characteristic situation in \cite[Lemma~4.2]{ADJ_finitely-ramified},
  where the following was obtained (in a different formulation):
  For DVRs $(R, \mathfrak{m})$ and $(S, \mathfrak{n})$, both of characteristic zero and residue characteristic $p$,
  with $R$ unramified (i.e.\ $\mathfrak{m} = pR$) and $S$ complete,
  any embedding $R/\mathfrak{m} \hookrightarrow S/\mathfrak{n}$ lifts to an embedding $R \hookrightarrow S$.
  Additionally, given a set $T \subseteq R/\mathfrak{m}$ of $p$-independent elements,
  the embedding $R \hookrightarrow S$ can be chosen to send a given lift in $R$ of $T$
  to a given lift in $S$ of the image of $T$ in $S/\mathfrak{n}$.
  Here the condition on lifts of $T$ is to be seen as an analogue of the condition in Lemma~\ref{lem:dvr-embedding} that the lift has to preserve the subfield $C$.

  The proof in given in \cite[Lemma~4.2]{ADJ_finitely-ramified} (with an alternative sketched in \cite[Remark~4.3]{ADJ_finitely-ramified})
  was rather complicated.
  Using formal smoothness, a significantly simpler proof can be given as follows.
  Let $B \subseteq R$ be a lift of $T$ and $B' \subseteq S$ be a lift of the image of $T$ in $S/\mathfrak{n}$.
  Let $R_0 = \mathbb{Z}[B]_{(p)} \subseteq R$.
  This is a local subring of $R$.
  By construction, its residue field is $\mathbb{F}_p(T)$, of which $R/(p)$ is a separable extension field,
  i.e.\ $R/(p)$ is a formally smooth $R_0/(p)$-algebra \cite[Chapitre~0, Théorème~19.6.1]{EGA-IV-1}.
  Furthermore $R$ is flat over $R_0$ since all torsion-free modules over DVRs are flat.
  Therefore $R$ is formally smooth over $R_0$ by \cite[Chapitre~0, Théorème~19.7.1]{EGA-IV-1}.
  We may see $S$ as an $R_0$-algebra by sending the elements of $B$ to the elements of $B'$.
  We now have $R_0$-algebra morphisms $R \hookrightarrow R/(p) \to S/\mathfrak{n}$.
  By the lifting result \cite[Chapitre~0, Corollaire~19.3.11]{EGA-IV-1} for formally smooth morphisms,
  the morphism $R \to S/\mathfrak{n}$ must factor through an $R_0$-morphism $R \to S$.
  This morphism induces the given $R/(p) = R/\mathfrak{m} \hookrightarrow S/\mathfrak{n}$ by construction,
  and it sends $B$ to $B'$ since it is an $R_0$-morphism.
  This proves the statement.
\end{remark}

\section{Existential theories of formally smooth valuation rings}
\label{sec:main-thms}

In this section we deduce from the previous embedding statement for DVRs, Lemma~\ref{lem:dvr-embedding},
our main result on existential theories of arbitrary formally smooth valuation rings.
This is a standard approach (cf.\ for instance \cite[Section~4 and Section~5]{ADJ_finitely-ramified}),
but our strategy bears closest resemblance to the one used in \cite[Section~4]{ADF_existential}.
Let us first discuss the crucial axiom (R4).
\begin{enumerate}
\item[(R4)] \label{R4} Every field $K$
 which is existentially closed in $K(\!(t)\!)$
is existentially closed in every extension $F/K$ for which there exists a valuation $v$
on $F$ trivial on $K$ with residue field $Fv = K$.
\end{enumerate}
Fields $K$ which are existentially closed in $K(\!(t)\!)$ are known as \emph{ample} or \emph{large} fields.
This notion was first studied in \cite{Pop_embedding-problems-large}.
Axiom (R4) was introduced in \cite[Section~2]{ADF_existential},
where also an extensive discussion of its relation to other axioms can be found,
and in particular the fact that it is implied by Resolution of Singularities.
See also \cite[Section~1.4]{Kuhlmann_places-arb-char} for related older work.
Although we will make no use of them in the following,
it is perhaps useful to observe some further equivalences.
In the following, a variety over a field is an integral separated scheme of finite type.
\begin{proposition}\label{prop:R4-equiv}
  Axiom (R4) is equivalent to each of the following other axioms:
  \begin{enumerate}
  \item[(R3)] For every field $K$ and every finitely generated extension $F/K$ such that there exists
    a valuation $v$ on $F$ which is trivial on $K$ with residue field $Fv = K$,
    there also exists such a valuation which additionally has value group $vF = \mathbb{Z}$.
  \item[(NLU)] For every field $K$ and every finitely generated extension $F/K$ such that there exists
    a valuation $v$ on $F$ which is trivial on $K$ with residue field $Fv = K$,
    there exists a $K$-variety $X$ with function field $F$ which has a smooth $K$-rational point.
  \end{enumerate}
\end{proposition}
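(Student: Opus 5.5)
I would prove the cycle (R4) $\Rightarrow$ (R3) $\Rightarrow$ (NLU) $\Rightarrow$ (R4), in each case fixing $K$ and a finitely generated $F/K$ carrying a $K$-trivial valuation $v$ with $Fv = K$.

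\emph{(R4) $\Rightarrow$ (R3).} Apply (R4) to the large field $K' = K(\!(t)\!)$. The valuation on $F' = FK'$ obtained by composing $v$ with the $t$-adic valuation has residue field $K'$ and is trivial on $K'$, after reducing to $F$ separable over $K$ (a valuation with residue field $K$ should force this; I would check it). Hence $K'$ is existentially closed in $F'$. Choose generators $x_1,\dots,x_n$ of $F$ with $v(x_i)>0$ and a presentation of $F$. The existential statement ``there is a $K$-point of the variety of $F$ near $0$ in the $t$-adic sense'' then transfers down to $K'$, giving a $K$-morphism from the coordinate ring to $K[\![t]\!]$ sending the $x_i$ into $tK[\![t]\!]$. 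To make the map injective, so that $F \hookrightarrow K(\!(t)\!)$, I would add the existential condition that finitely many chosen elements are nonzero; these elements witness a transcendence basis together with the algebraic relations. This step needs care. Pulling back the $t$-adic valuation then gives a valuation on $F$ with residue field $K$ and value group a nonzero subgroup of $\mathbb{Z}$, hence $\cong\mathbb{Z}$.

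\emph{(R3) $\Rightarrow$ (NLU).} Take a discrete valuation $w$ of rank $1$ with $Fw = K$. Its centre on a suitable projective model gives a $K$-rational point. To obtain a smooth one, I would use that $F$ embeds into $K(\!(t)\!)$ (completion). Writing $F = K(x_1,\dots,x_d)(y)$ with a separating transcendence basis, I would look for an arc through a point at which the Jacobian criterion holds. A cleaner route is to iterate (R3): the composite of $w$ with a suitable valuation on $K(\!(t)\!)$ should lead to a rational place whose centre is a regular point. Concretely, I would argue by induction on the transcendence degree. Reduce to $F \subseteq K(\!(t)\!)$ with $F$ containing $t$. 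Then $F/K(t)$ has a rational place over the $t$-adic one, and so on, to produce coordinates in which the model is smooth at the centre. I expect this step to be the main obstacle. The intended inductive construction of a smooth model is essentially local uniformisation of a rational place, and I would try to follow the argument of ADF Section 2 relating (R3) to (R4) rather than building it from scratch.

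\emph{(NLU) $\Rightarrow$ (R4).} This is the standard argument. Let $K$ be large and $F$ as given, and reduce to $F/K$ finitely generated. Take $X$ with a smooth $K$-point. Since $K$ is large, $K$-points are Zariski dense in any $K$-variety with a smooth $K$-point (Pop). Every existential sentence over $K$ true in $F$ is witnessed by a rational function defined on some dense open subset. Density yields a $K$-point in that open subset, so the sentence holds in $K$. Hence $K$ is existentially closed in $F$.
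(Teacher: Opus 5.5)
Your (NLU)$\Rightarrow$(R4) step is fine; it is the standard density argument for large fields. However, the other two arrows of your cycle have genuine gaps.

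\textbf{(R4)$\Rightarrow$(R3).} Two smaller points first.
\begin{itemize}
\item The $K'$-trivial valuation on $FK'$ with residue field $K'=K(\!(t)\!)$ is not a composite with the $t$-adic valuation. A composite would have residue field $K$ and be non-trivial on $K'$. What you need is the constant extension $v'(\sum_i f_i\ell_i)=\min_i v(f_i)$ for a $K$-basis $(\ell_i)$ of $K'$. This is a valuation with residue field $K'$ precisely because $Fv=K$.
\item Being ``$t$-adically near $0$'' is not an existential property that holds in $FK'$ itself.
\end{itemize}
The serious problem is injectivity. No finite set of inequations can force the images of a transcendence basis to be algebraically independent over $K$. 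For $A=K[x_1,x_2]$, the map $x_1\mapsto t$, $x_2\mapsto t^M$ satisfies any prescribed finitely many conditions $h_j\neq 0$ once $M$ is large. It also sends the $x_i$ into $tK[\![t]\!]$. Yet it kills $x_2-x_1^M$.

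What does work is to transfer only the existence of a $K(\!(t)\!)$-point in the smooth locus of a model $X$ of $F$. This holds in $FK'$ at the generic point, since $F/K$ is regular. Then apply the implicit function theorem over the complete field $K(\!(t)\!)$ to an étale chart $X\to\mathbb{A}^d_K$ at that point. The image of a neighbourhood is $t$-adically open, so it contains a $d$-tuple algebraically independent over $K$, because $K(\!(t)\!)$ has infinite transcendence degree over $K$. The corresponding point of $X$ is a $K$-embedding $F\hookrightarrow K(\!(t)\!)$, and the restricted $t$-adic valuation is the required discrete rational place. Alternatively, cite \cite[Proposition~2.3]{ADF_existential}, as the paper does.

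\textbf{(R3)$\Rightarrow$(NLU).} Here you give no actual argument, and this is where the substance lies. One needs that a $K$-rational place of $F/K$ with value group $\mathbb{Z}$ is centred at a smooth $K$-rational point of some model of $F$. This is a restricted local uniformisation statement. The paper obtains it from \cite[Lemma~9]{Fehm_embeddings-ample}, after noting that $F/K$ is regular. That lemma rests on Kuhlmann's result, a special case of \cite[Theorem~1.5]{KuhlmannKnaf_every-place-admits-lu}.

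None of your suggested routes supplies this:
\begin{itemize}
\item Iterating (R3) only produces further places, never a smooth model.
\item The ADF argument relating (R3) and (R4) does not construct models either.
\item Since $K$ is arbitrary here, you cannot descend a smooth $K(\!(t)\!)$-point to a smooth $K$-point via existential closedness.
\end{itemize}
So you must either cite such a uniformisation result or prove it. In this special case it can be done by hand, roughly as follows.
\begin{itemize}
\item Take a uniformiser $t\in F$, so that $F\subseteq K(\!(t)\!)$.
\item Since $t\notin F^pK\subseteq K(\!(t^p)\!)$, extend $t$ to a separating transcendence basis $t,y_2,\dots,y_d$ with $y_i\in K[\![t]\!]$.
\item Change coordinates $y_i\mapsto (y_i-c_i(t))/t^n$ with $c_i\in K[t]$ and $n$ large, so that finitely many relevant elements become $t^m$ times units at the rational centre.
\item Make a Hensel-type substitution $z=a+P'(a)z'$ in a primitive element $z$. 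Then $F$ becomes essentially étale over the local ring of $\mathbb{A}^d_K$ at that centre.
\end{itemize}
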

The name (NLU), for ``non-local uniformisation'', is somewhat tongue-in-cheek:
The point is that the only difference to (the birational version of) Local Uniformisation,
stated as (R2) in \cite[Section~2]{ADF_existential},
is that there the variety $X$ must be chosen in such a way that $v$ is centred on a regular point of $X$
(necessarily rational since $Fv = K$, and therefore smooth \cite[Proposition~17.15.1]{EGA-IV-4}).
In contrast, here there is a priori no connection between the rational points on $X$ and the given valuation $v$.
\begin{proof}
  The equivalence between (R4) and (R3) is part of \cite[Proposition~2.3]{ADF_existential}.
  For the equivalence between (R3) and (NLU),
  let $F/K$ be a finitely generated field extension with a valuation $v$ on $F$, trivial on $K$,
  with residue field $K$.
  The field extension $F/K$ is regular \cite[Lemma~2.6.9~(b)]{FriedJarden}.
  By \cite[Lemma~9]{Fehm_embeddings-ample},
  there exists a $K$-variety $X$ with function field $F$ with a smooth $K$-rational point
  if and only if there exists a valuation $v$ on $F$, trivial on $K$,
  with residue field $K$ and value group $\mathbb{Z}$.
\end{proof}

\begin{remark}\label{rem:lu-discrete}
  The proof of \cite[Lemma~9]{Fehm_embeddings-ample},
  which drives the equivalence between (R3) and (NLU) above,
  crucially relies on a restricted local uniformisation result due to Kuhlmann.
  In the version required here, this states that for every finitely generated field extension $F/K$,
  every valuation $v$ on $F$, trivial on $K$, with residue field $K$ and value group $\mathbb{Z}$,
  is centred on a smooth $K$-rational point of some $K$-variety with function field $F$.
  In \cite{Fehm_embeddings-ample}, this was cited from a preprint that was never published.
  However, the statement is also a special case of \cite[Theorem~1.5]{KuhlmannKnaf_every-place-admits-lu}.
\end{remark}

This finishes our preliminary discussion of (R4).
Recall that a \emph{section} of (the residue map of) a valued field $(K,v)$ is a one-sided inverse $\zeta \colon Kv \to \mathcal{O}_v$ to the residue map $\mathcal{O}_v \to Kv$.
A \emph{partial section} is a map $\zeta \colon k_0 \to \mathcal{O}_v$ defined on a subfield $k_0 \subseteq Kv$,
such that similarly the composition $k_0 \to \mathcal{O}_v \to Kv$ is the embedding map.
Note that (partial) sections can only exist when $(K,v)$ is of equal characteristic
(as they require that $\mathcal{O}_v$ contains a field),
and the data of a partial section is completely determined by its image,
a subfield of $\mathcal{O}_v$.
Following usual practice,
we normally write $\zeta \colon k_0 \to K$ for a partial section,
with the understanding that the image is contained in the valuation ring.

\begin{lemma}\label{lem:bivalued-exists-section}
  Let $(K,v^0,v)$ be a field with two henselian valuations $v^0$ and $v$,
  where $v$ refines $v^0$.
  (This can be seen as a structure in the language $\mathcal{L}_{\mathrm{ring}}$ together with two unary predicates for the two valuation rings.)
  For every partial section $\zeta_0 \colon k_0 \to K$ of the residue map of $v^0$
  with $Kv^0/k_0$ separable there exists an elementary extension
  $(K^\ast, (v^0)^\ast, v^\ast) \succ (K,v^0,v)$ with a section $\zeta \colon K^\ast (v^0)^\ast \to K^\ast$
  of the residue map of $(v^0)^\ast$ that extends $\zeta_0$.
\end{lemma}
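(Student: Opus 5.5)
The natural route is a saturation argument, as in the standard proofs that henselian equicharacteristic valued fields admit sections after passing to a suitably saturated elementary extension (cf.\ the analogous lemma in \cite[Section~4]{ADF_existential}). First pass to an $\aleph_1$-saturated (or $\lvert K \rvert^+$-saturated) elementary extension $(K^\ast,(v^0)^\ast,v^\ast)$ of $(K,v^0,v)$. The partial section $\zeta_0$ still makes sense there, since $K \preceq K^\ast$ and $k_0 \subseteq Kv^0 \subseteq K^\ast(v^0)^\ast$. Separability of $K^\ast(v^0)^\ast/k_0$ is inherited from separability of $Kv^0/k_0$: it says that a $p$-basis of $k_0$ stays $p$-independent, and $p$-independence of finitely many elements of $Kv^0$ is preserved under elementary extension of the residue field, which is interpretable in the bivalued structure. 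The refinement $v$ plays no role beyond being carried along; this is exactly why we work in the expanded language, so that $v^\ast$ remains a henselian refinement of $(v^0)^\ast$.

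The key algebraic input is the classical fact that in a henselian valued field of equal characteristic, a partial section defined on $k_0$ extends to a partial section on any subfield $k_1 \supseteq k_0$ of the residue field that is separable algebraic over $k_0$ (by Hensel's lemma, lifting roots of separable polynomials), and to purely transcendental extensions (by lifting a transcendence basis arbitrarily). Combined with separability of the residue field over $k_0$, this lets us extend $\zeta_0$ to the relative separable closure over $k_0(B)$ for any set $B$ that is $p$-independent over $k_0$ and algebraically independent. What remains is the inseparable part. Every finitely generated subextension $k_1/k_0$ of the residue field is separably generated, so $\zeta_0$ extends to $k_1$, but these extensions are not compatible, and a union of them need not exist. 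Saturation is meant to fix this.

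The main obstacle is therefore a compactness step producing a full section. I would consider the type, in variables $(x_a)_{a \in K^\ast(v^0)^\ast}$, expressing that $a \mapsto x_a$ is a ring homomorphism into $\mathcal{O}_{(v^0)^\ast}$, that $x_a$ has residue $a$, and that $x_c = \zeta_0(c)$ for $c \in k_0$. Finite satisfiability follows from the previous paragraph applied to the finitely generated extension $k_0(a_1,\dots,a_n)$. Since the type has too many variables relative to the saturation, I would instead follow the standard trick: work with a sufficiently saturated model and use a maximal partial section (by Zorn's lemma) extending $\zeta_0$ whose domain $k$ has $K^\ast(v^0)^\ast/k$ separable. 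I would then show that $k$ must be the whole residue field. For this, given $a \notin k$, either $a$ is separably algebraic over $k$ (use Hensel), or $a$ is transcendental over $k$ with $k(a)$ separable (lift arbitrarily). The delicate case is $a$ purely inseparable over $k$; that case cannot occur when separability is maintained, but maintaining separability of the residue field over $k(a)$ itself needs an elementary-extension step in which we add $p$-th roots compatibly. This is where I expect to spend the effort, realising the required $p$-th roots by saturation, alternatively by taking $K^\ast$ to be an ultrapower and constructing sections coordinatewise on countable subfields.
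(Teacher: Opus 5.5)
\paragraph{Gap: the existence of the section is never proved.} Your reduction is fine as far as it goes. Suppose every $\aleph_1$-saturated henselian equicharacteristic $(K,v)$, with a partial section $\zeta_0$ on $k_0$ and $Kv/k_0$ separable, had a section extending $\zeta_0$. Then you could apply this to the $(v^0)^\ast$-reduct of a saturated elementary extension of $(K,v^0,v)$, and $v^\ast$ would indeed just be carried along. But that existence statement is the whole content of the lemma, and your proposal stops exactly there.

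The hard case is also not the one you name. Take a maximal partial section on $k$ with $Kv/k$ separable, and fix a $p$-basis $B_k$ of $k$. Then no element of $Kv\setminus k$ is purely inseparable over $k$. Your Hensel step, together with lifting an element of a relative $p$-basis of $Kv/k$, only shows that $k$ is relatively algebraically closed in $Kv$ and that $Kv=(Kv)^pk$. Such extensions can be proper and transcendental. For instance, take $Kv=\bigcup_n k(x_n)$ with $x_0$ transcendental and $x_n=t\,x_{n+1}^p$ for some $t\in k\setminus k^p$. Now let $a\in Kv\setminus k$; it is necessarily transcendental over $k$. Then $Kv/k(a)$ is inseparable: since $a\in(Kv)^pk$, the $p$-basis $B_k\cup\{a\}$ of $k(a)$ becomes $p$-dependent in $Kv$. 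So no one-element extension can preserve your invariant. Interposing further elementary extensions does not help either, since it changes the residue field and breaks a Zorn argument inside a fixed model.

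\paragraph{How to close it on your route.} The missing idea is to adjoin a countable piece at once.
\begin{enumerate}
\item Extend $B_k$ by a set $D$ to a $p$-basis of $Kv$.
\item Close $k(a)$ off in countably many steps: for each generator $x$, choose an expansion $x=\sum_\alpha b_\alpha^p m_\alpha$ with $m_\alpha$ monomials in $B_k\cup D$, and adjoin the $b_\alpha$ and the elements of $D$ occurring in the $m_\alpha$.
\item The resulting countably generated $k'$ satisfies $k'=k'^p(B_k\cup(D\cap k'))$. Since $B_k\cup(D\cap k')$ is $p$-independent in $Kv$, it is a $p$-basis of $k'$ that stays $p$-independent in $Kv$, so $Kv/k'$ is separable.
\item Extending $\zeta|_k$ to $k'$ is a type in countably many variables over countably many parameters; the ideal of relations is countably generated.
\item The type is finitely satisfiable, because finitely generated subextensions of $Kv/k$ are separably generated: lift a transcendence basis arbitrarily, then use Hensel.
\item $\aleph_1$-saturation realises the type, contradicting maximality.
\end{enumerate}

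\paragraph{The paper's route.} The paper avoids all of this. It imports the one-valued statement from \cite[Proposition~4.5]{ADF_existential}. That result only provides \emph{some} elementary extension $(L,w^0)\succ(K,v^0)$ with a section $\zeta_1$ extending $\zeta_0$. The paper then uses Robinson's joint consistency lemma to get a structure elementarily extending both $(L,w^0,\zeta_1)$ and $(K,v^0,v)$ over their common reduct. That amalgamation step is what replaces your unproved claim that $v$ can simply be carried along through a saturated extension.
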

\begin{proof}
  This is a variant of \cite[Proposition~4.5]{ADF_existential} with two valuations, as follows.
  By loc.~cit.\ applied to the henselian valued field $(K,v^0)$,
  we obtain an elementary extension $(L, w^0) \succ (K,v^0)$
  with a section $\zeta_1 \colon L w^0 \to L$ that extends $\zeta_0$.
  We now take a structure $(K^\ast,(v^0)^\ast,v^\ast,\zeta)$, where $\zeta \colon K^\ast(v^0)^\ast \to K^\ast$,
  such that $(K^\ast,(v^0)^\ast,\zeta) \succ (L,w^0,\zeta_1)$ and $(K^\ast,(v^0)^\ast,v^\ast) \succ (K,v^0,v)$.
  Finding such $(K^\ast,(v^0)^\ast,v^\ast,\zeta)$ is a simple application of Robinson's Joint Consistency Lemma \cite[Theorem~6.6.1]{Hodges_longer}.
  (Using heavier machinery in the form of the Keisler--Shelah Theorem,
  one can also obtain $(K^\ast,(v^0)^\ast,v^\ast,\zeta)$ by
  taking ultrapowers of $(L,w^0,\zeta_1)$ and $(K,v^0,v)$ which
  are isomorphic as field extensions of $K$ with a single valuation.)
  Now $(K^\ast,(v^0)^\ast,v^\ast)$ and $\zeta$ are as desired.
\end{proof}

\begin{lemma}\label{lem:e-theory-pass-to-dvr}
  Assume (R4).
  Let $(K,v)$ be equicharacteristic henselian valued with a uniformiser,
  let $v^0$ be the finest proper coarsening of $v$,
  and let $\overline v$ be the induced valuation with value group $\mathbb{Z}$ on the residue field $Kv^0$.
  Let $C \subseteq K$ be a subfield on which $v^0$ is trivial, and assume that $Kv^0/C$ is separable.
  Then $\Th_\exists^{\mathcal{L}_{\mathrm{val}}(C)}(K, v) = \Th_\exists^{\mathcal{L}_{\mathrm{val}}(C)}(Kv^0, \overline{v})$.
\end{lemma}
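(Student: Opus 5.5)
We prove both inclusions by realising each structure, up to elementary equivalence, inside (an elementary extension of) the other, in a way that respects the parameters from $C$. Existential $\mathcal{L}_{\mathrm{val}}(C)$-sentences go up along $C$-embeddings of valued fields and are preserved under elementary extensions. So it suffices to produce, on each side, a $C$-embedding into a structure elementarily equivalent over $C$ to the other side. For the second embedding below we also need existential closedness, which is where (R4) enters.

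\emph{Step 1: embedding $(Kv^0,\overline v)$ into an elementary extension of $(K,v)$.}
Since $v^0$ is trivial on $C$, the inclusion $C \subseteq \mathcal{O}_{v^0}$ is a partial section $\zeta_0 \colon C \to K$ of the residue map of $v^0$. Because $Kv^0/C$ is separable, Lemma~\ref{lem:bivalued-exists-section} applies to $(K,v^0,v)$. It yields an elementary extension $(K^\ast,(v^0)^\ast,v^\ast)$ together with a section $\zeta \colon K^\ast(v^0)^\ast \to K^\ast$ extending $\zeta_0$; in particular $\zeta$ is the identity on $C$.

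Since $v$ refines $v^0$ with discrete quotient, the same holds in $K^\ast$. Hence $v^\ast$ restricted to $\zeta(K^\ast(v^0)^\ast)$ is exactly the induced valuation $\overline{v^\ast}$ on the residue field, because the residue map of $v^0$ transports $v$ to $\overline v$. Thus $\zeta$ is a $C$-embedding of valued fields $(K^\ast(v^0)^\ast,\overline{v^\ast}) \hookrightarrow (K^\ast,v^\ast)$.

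The structure $(Kv^0,\overline v)$ is interpretable in $(K,v^0,v)$ with parameters from $C$ (parameters map to themselves via $\zeta_0$). Therefore $(K^\ast(v^0)^\ast,\overline{v^\ast}) \succeq (Kv^0,\overline v)$ over $C$. This gives $\Th_\exists(Kv^0,\overline v) \subseteq \Th_\exists(K,v)$.

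\emph{Step 2: the reverse inclusion.}
By Step 1, we may replace $K$ by $K^\ast$ and assume $K$ has a section $\zeta$ of $v^0$ over $C$. Let $k = \zeta(Kv^0) \subseteq K$, carrying the valuation $v|_k \cong \overline v$. Then $(K,v^0)$ is a henselian valued field with residue field $k$ and section onto $k$, and the $C$-structure on $K$ factors through $k$.

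We want $(k,v|_k)$ to be existentially closed in $(K,v)$ as valued fields. If the pair can be arranged to be an immediate-type extension, this follows from (R4) in the valued-field form used in \cite[Section~4]{ADF_existential}. Concretely, the extension $K/k$ carries the valuation $v^0$, which is trivial on $k$ and has residue field $k$. Passing to an elementary extension of $(Kv^0,\overline v)$ to make $k$ ample if needed (it is henselian nontrivially valued, hence large), (R4) gives that $k$ is existentially closed in $K$ as fields. We then need to upgrade this to $\mathcal{L}_{\mathrm{val}}$: the valuation $v$ on $K$ is the composite of $v^0$ with $v|_k$. So we apply the valued-field version of (R4) from \cite[Proposition~4.x]{ADF_existential}, which gives existential closedness of henselian $(k,v_k)$ in $(K,v)$ whenever $v$ is a composite of a $k$-trivial valuation with residue field $k$ and the extension of $v_k$.

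This gives $\Th_\exists(K,v) \subseteq \Th_\exists(k,v|_k) = \Th_\exists(Kv^0,\overline v)$, with parameters from $C \subseteq k$.

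\emph{Main obstacle.} The delicate point is the last step: transferring field-theoretic existential closedness from (R4) to the two-valuation (or composite-valuation) setting. I expect to handle it as in \cite[Lemma~4.x/Corollary~4.16]{ADF_existential}. The idea is to encode the predicate $\mathcal{O}_v$ existentially via henselianity of $(K,v)$ and the discrete quotient. Alternatively, one can pass to a saturated model and use that $(k,v|_k)$ is henselian with $K \models$ the same existential sentences, through an ultrapower of $k$ containing an isomorphic copy of each finitely generated subextension.
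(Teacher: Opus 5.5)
Your Step~1 is correct and is the paper's first move. Lemma~\ref{lem:bivalued-exists-section}, applied to the trivial partial section on $C$, yields $\zeta$. Your check that $\zeta$ is a $C$-embedding of valued fields $(K^\ast(v^0)^\ast,\overline{v}^\ast)\hookrightarrow(K^\ast,v^\ast)$ is sound, so $\Th_\exists(Kv^0,\overline v)\subseteq\Th_\exists(K,v)$ holds even without (R4). You also correctly obtain from (R4) that $k=\zeta(K^\ast(v^0)^\ast)$ is existentially closed in $K^\ast$ as a field.

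The gap is the step you yourself flag: passing from this to existential closedness of $(k,v^\ast|_k)$ in $(K^\ast,v^\ast)$ in $\mathcal{L}_{\mathrm{val}}$. You outsource it to an unspecified ``Proposition~4.x'' of \cite{ADF_existential}. However, the relevant result there, \cite[Corollary~4.6]{ADF_existential}, only gives the $\mathcal{L}_{\mathrm{ring}}(C)$-statement; the $\mathcal{L}_{\mathrm{val}}$-upgrade in this two-valuation setting is exactly what the lemma must prove. Your hedge about an ``immediate-type'' extension does not apply either: the value group of $v^\ast$ on $K^\ast$ is in general strictly larger than on $k$. The ultrapower alternative merely restates field-theoretic existential closedness and says nothing about the valuation.

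The missing argument is short, and ``encode $\mathcal{O}_v$ existentially'' points the right way, but it needs three concrete ingredients.
\begin{enumerate}
\item Existential $\mathcal{L}_{\mathrm{val}}$-sentences may contain $x\notin\mathcal{O}$ as well as $x\in\mathcal{O}$. So you need existential definitions of both the valuation ring and its complement.
\item These definitions must be uniform in a uniformiser parameter. In any henselian valued field with uniformiser $\varpi$, the formula $\exists y\,(\varpi x^2=y^2-y)$ defines $\mathcal{O}$, and $\exists z,y\,(zx=1\wedge z^2=\varpi(y^2-y))$ defines its complement.
\item The parameter must be a single element of $k$ that is a uniformiser both for $v^\ast|_k$ and for $v^\ast$. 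Take $\zeta(\pi)$ with $\pi$ a uniformiser of $\overline{v}^\ast$. It is a uniformiser of $v^\ast$ because $\overline{v}^\ast(K^\ast(v^0)^\ast)$ is identified with the convex subgroup $v^\ast(\mathcal{O}_{(v^0)^\ast}^\times)$ of $v^\ast K^\ast$, which has a least positive element.
\end{enumerate}
Take an existential $\mathcal{L}_{\mathrm{val}}(C)$-sentence $\psi$ in negation normal form, and substitute these formulas for its atomic and negated atomic $\mathcal{O}$-occurrences. This gives an existential $\mathcal{L}_{\mathrm{ring}}(C)$-formula $\varphi(X)$ with
\[ (k,v^\ast|_k)\models\psi\iff k\models\varphi(\zeta(\pi))\iff K^\ast\models\varphi(\zeta(\pi))\iff(K^\ast,v^\ast)\models\psi, \]
where the middle equivalence is (R4). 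This yields both inclusions at once, which is how the paper argues. With it supplied, the embedding argument of your Step~1 becomes redundant, although the section construction itself is still needed.
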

\begin{proof}
  This is a variant of \cite[Corollary 4.6]{ADF_existential}, which implies this for the existential $\mathcal{L}_{\mathrm{ring}}(C)$-theories.
  By Lemma~\ref{lem:bivalued-exists-section} applied to the trivial partial section
  $\zeta_0 \colon Kv^0 \supseteq C \hookrightarrow K$,
  there exists an elementary extension $(K^\ast, (v^0)^\ast, v^\ast) \succ (K, v^0, v)$
  with a section $\zeta \colon K^\ast (v^0)^\ast \to K^\ast$ that is the identity on $C$.
  Let $\pi \in K^\ast (v^0)^\ast$ be a uniformiser for the valuation $\overline{v}^\ast$ induced by $v^\ast$.
  The residue field $K^\ast (v^0)^\ast$ is henselian and therefore ample
  (see for instance \cite{Pop_henselian-implies-large}),
  so by (R4) the field $\zeta(K^\ast (v^0)^\ast)$ is existentially closed in $K^\ast$.
  Hence for every existential $\mathcal{L}_{\mathrm{ring}}(C)$-formula in one variable $\varphi(X)$ we have
  \[ K^\ast (v^0)^\ast \models \varphi(\pi) \iff \zeta(K^\ast (v^0)^\ast) \models \varphi(\zeta(\pi)) \iff K^\ast \models \varphi(\zeta(\pi)) . \]
  Observe that $\zeta(\pi) \in K^\ast$ is a uniformiser for $v^\ast$ by construction.

  Let $\psi$ be an existential $\mathcal{L}_{\mathrm{val}}(C)$-sentence.
  We may assume it is in negation normal form, i.e.\ negation symbols only occur immediately before atomic formulas.
  We construct from it an existential $\mathcal{L}_{\mathrm{ring}}(C)$-formula $\varphi(X)$ by replacing every
  positive occurrence of $x \in \mathcal{O}$ by $\exists y (Xx^2 = y^2 - y)$
  and every negative occurrence $x \not\in \mathcal{O}$ by $\exists z, y (zx = 1 \mathrel\land z^2 = X(y^2 - y))$.
  For every henselian valued field $(L,w)$ containing $C$
  with a uniformiser $\varpi$ we have $(L,w) \models \psi$ if and only if $L \models \varphi(\varpi)$,
  since the formulas (in the free variable $x$)
  \[ \exists y (\varpi x^2 = y^2-y) \quad \text{and} \quad \exists z,y (zx=1 \mathrel\land z^2 = \varpi (y^2-y)) \]
  define the valuation ring $\mathcal{O}_w$ and its complement, respectively.

  In particular, we have
  \[ (K^\ast (v^0)^\ast, \overline{v}^\ast) \models \psi \iff K^\ast (v^0)^\ast \models \varphi(\pi) \iff K^\ast \models \varphi(\zeta(\pi)) \iff (K^\ast,v^\ast) \models \psi ,\]
  finishing the proof.
\end{proof}

We can now state the first version of our main result.
For the remainder of the section, we work over a field $C$ of characteristic $p>0$.
(For $C$ of characteristic $0$, all results still hold, but do not require (R4) and follow
from the model theory of henselian valued fields of residue characteristic $0$;
compare \cite[Proposition~4.11 and Remark~4.18]{ADF_existential}.)
\begin{proposition}\label{prop:main-monotonicity}
  Assume (R4).
  Let $(K,v)$ and $(L,w)$ be non-trivially henselian valued and containing the trivially valued subfield $C$.
  Assume that the valuation rings $\mathcal{O}_v$ and $\mathcal{O}_w$ are formally smooth over $C$.
  If $\Th_\exists^{\mathcal{L}_{\mathrm{ring}}(C)}(Kv) \subseteq \Th_\exists^{\mathcal{L}_{\mathrm{ring}}(C)}(Lw)$,
  then $\Th_\exists^{\mathcal{L}_{\mathrm{val}}(C)}(K,v) \subseteq \Th_\exists^{\mathcal{L}_{\mathrm{val}}(C)}(L,w)$.
\end{proposition}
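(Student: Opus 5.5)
We split into cases according to whether $\mathcal{O}_v$ has a uniformiser. If neither $\mathcal{O}_v$ nor $\mathcal{O}_w$ has one, Proposition~\ref{prop:fs-separability}~(2) makes $Kv/C$ and $Lw/C$ separable. Then the claim is exactly (the monotonicity form of) \cite[Corollary~4.16]{ADF_existential}, i.e.\ Theorem~\ref{thm:adf-existential-transfer}. More generally, whenever the residue field is separable over $C$ we fall back to that result. So the essential case is when at least one residue field is inseparable over $C$, which forces a uniformiser on that side.

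Next we reduce each side to a complete DVR. Suppose $\mathcal{O}_v$ has a uniformiser, and let $v^0$ be its finest proper coarsening with induced discrete valuation $\overline v$ on $Kv^0$. As in the proof of Proposition~\ref{prop:fs-separability}~(3), formal smoothness of $\mathcal{O}_v$ gives formal smoothness of the completion $\widehat{\mathcal{O}_v}$, a complete DVR with residue field $Kv$. Its fraction field is separable over $C$, hence so is $Kv^0$. Lemma~\ref{lem:e-theory-pass-to-dvr} (using (R4)) then gives $\Th_\exists^{\mathcal{L}_{\mathrm{val}}(C)}(K,v)=\Th_\exists^{\mathcal{L}_{\mathrm{val}}(C)}(Kv^0,\overline v)$. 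Since $(Kv^0,\overline v)$ is a henselian DVR, I expect to pass further to its completion $\Frac\widehat{\mathcal{O}_v}$ without changing the existential theory. This should follow by again invoking (R4), or \cite[Corollary~4.6]{ADF_existential}-type arguments, as the completion is an immediate separable extension. So we may assume $(K,v)$ is a complete discretely valued field with $\mathcal{O}_v$ formally smooth over $C$, and similarly for $(L,w)$. If $w$ has no uniformiser, we instead replace $(L,w)$ by an elementary extension. For example, take an $\aleph_1$-saturated one and use that existential theories only grow in elementary extensions. The aim is a structure containing a complete formally smooth DVR with the same existential theory, or one into which such a DVR embeds. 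This step needs care, and one tactic is to enlarge $(L,w)$ so that it has a uniformiser after all: take a coarsening of a saturated model whose residue field is elementarily equivalent to $Lw$ and whose induced valuation is discrete with residue field $Lw^\ast$.

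Now the main step is an embedding argument. The inclusion $\Th_\exists(Kv)\subseteq\Th_\exists(Lw)$ over $C$ means that, after passing to a sufficiently saturated elementary extension $(L^\ast,w^\ast)$, there is a $C$-embedding $\varphi\colon Kv\hookrightarrow L^\ast w^\ast$. Since $K$ is countable-generated over $C$ in the relevant sense, we may restrict to finitely generated substructures for each existential sentence. Then Lemma~\ref{lem:dvr-embedding} is applied with $R=\mathcal{O}_v$ (complete, formally smooth) and $S$ the completed DVR attached to $(L^\ast,w^\ast)$ as above (formally smooth over $C$, since formal smoothness is axiomatised by Corollary~\ref{cor:fo-fs} and hence preserved in the elementary extension). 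This yields a local $C$-embedding $R\hookrightarrow S$, hence a valued field embedding $(K,v)\hookrightarrow(\Frac S,\cdot)$ over $C$. Existential sentences go up along embeddings. Then transporting back via Lemma~\ref{lem:e-theory-pass-to-dvr} on the $L$ side gives the inclusion of existential $\mathcal{L}_{\mathrm{val}}(C)$-theories.

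The main obstacle I anticipate is the bookkeeping of the reductions on the $L$ side. One problem is producing a complete formally smooth DVR $S$ whose residue field receives $Kv$ while keeping control of $\Th_\exists^{\mathcal{L}_{\mathrm{val}}(C)}$. The other is justifying that passing from a henselian DVR to its completion preserves the existential theory over $C$. I would try first to handle both via saturation together with Lemma~\ref{lem:bivalued-exists-section} and (R4), exactly paralleling \cite[Section~4]{ADF_existential}.
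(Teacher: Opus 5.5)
Your skeleton matches the paper's: reduce both sides to complete DVRs formally smooth over $C$ via Lemma~\ref{lem:e-theory-pass-to-dvr} or \cite[Corollary~4.16]{ADF_existential}, get $Kv\hookrightarrow Lw$ over $C$ from saturation, and lift with Lemma~\ref{lem:dvr-embedding}. However, the two reductions you leave open are not handled correctly.

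\textbf{First gap: passing to the completion.} You want to pass from the henselian DVR $(Lw^0,\overline w)$ to its completion, but the justification you give fails. A henselian DVR need not have a completion separable over it; this is exactly the failure of excellence. For example, take the henselisation of $k^p[\![x]\!][k]$ with $[k:k^p]=\infty$, which is trivially formally smooth over $C=\mathbb{F}_p$ since $\Omega_{\mathbb{F}_p}=0$. Also, (R4) says nothing directly about this extension, because the valuation is non-trivial on the base field.

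What you actually need on the $L$ side is that existential $\mathcal{L}_{\mathrm{val}}(C)$-sentences true in the completion are true in $(Lw^0,\overline w)$. That is essentially an instance of the proposition itself.

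The missing idea is to pass at the very beginning to $\aleph_1$-saturated elementary extensions of both $(K,v)$ and $(L,w)$. Formal smoothness is preserved by Corollary~\ref{cor:fo-fs}; saturate $L$ after $K$, and enough to embed $Kv$ into $Lw$ over $C$. Then $(Kv^0,\overline v)$ and $(Lw^0,\overline w)$ are already complete \cite[Lemma~7.14]{Dries_lectures-model-theory-valued-fields}. So $\mathcal{O}_{\overline v}$ is literally the $\mathfrak{m}_v$-adic completion of $\mathcal{O}_v$, formally smooth by Lemma~\ref{lem:fs-completion}. Lemma~\ref{lem:e-theory-pass-to-dvr} then lands you in a complete DVR with no comparison to completions needed. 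On the $K$ side you never needed completeness anyway, since Lemma~\ref{lem:dvr-embedding} only requires $S$ to be complete.

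\textbf{Second gap: $w$ without a uniformiser.} Your tactic here fails. Having a uniformiser is expressed by an $\mathcal{L}_{\mathrm{val}}$-sentence, so no elementary extension acquires one. Changing the valuation, to a coarsening or anything else, produces a different valued field whose existential $\mathcal{L}_{\mathrm{val}}(C)$-theory you no longer control.

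Instead, Proposition~\ref{prop:fs-separability}~(2) makes $Lw/C$ separable. So \cite[Corollary~4.16]{ADF_existential} lets you replace $(L,w)$ by $Lw(\!(t)\!)$, whose valuation ring $Lw[\![t]\!]$ is a complete DVR formally smooth over $C$ by Proposition~\ref{prop:fs-separability}~(1). Do the same for $(K,v)$ whenever $Kv/C$ is separable, and otherwise use the uniformiser as above.

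With both sides complete DVRs, all cases are settled uniformly by Lemma~\ref{lem:dvr-embedding}. This includes the mixed cases your first paragraph does not address, and the detour through finitely generated substructures is not needed.
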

We will see later that the hypothesis that $\mathcal{O}_w$ be formally smooth over $C$ is unnecessary, see Remark~\ref{rem:monotonicity-half-fs};
on the other hand, the hypothesis that $\mathcal{O}_v$ be formally smooth over $C$ is essential, as is apparent from Example~\ref{ex:basic}.
\begin{proof}
  Passing to elementary extensions if necessary,
  we may assume without loss that both $(K,v)$ and $(L,w)$ are $\aleph_1$-saturated (although we will later destroy this assumption),
  and that there is an embedding $Kv \hookrightarrow Lw$ over $C$.
  Note that passing to elementary extensions preserves formal smoothness of the valuation rings (Corollary~\ref{cor:fo-fs}).

  If $Kv/C$ is separable, then by \cite[Corollary~4.16]{ADF_existential} (assuming (R4)),
  we may replace $(K,v)$ by $Kv(\!(t)\!)$ with the $t$-adic valuation,
  without changing its existential $\mathcal{L}_{\mathrm{val}}(C)$-theory.
  If $Kv/C$ is not separable,
  then Proposition~\ref{prop:fs-separability}~(2) implies that it has a uniformiser,
  and for the finest proper coarsening $v^0$ of $v$ and the induced valuation $\overline{v}$ with value group $\mathbb{Z}$ on the residue field $Kv^0$
  we have that $(Kv^0, \overline{v})$ is complete by the saturation assumption \cite[Lemma~7.14]{Dries_lectures-model-theory-valued-fields}.
  Thus on the level of valuation rings, the passage from $\mathcal{O}_v$ to $\mathcal{O}_{\overline{v}}$ is simply the passage to the $\mathfrak{m}_v$-adic completion.
  In particular, $\mathcal{O}_{\overline{v}}$ is still formally smooth over $C$ by Lemma~\ref{lem:fs-completion}.
  Therefore $Kv^0 = \Frac(\mathcal{O}_{\overline{v}})$ is separable over $C$ by Proposition~\ref{prop:fs-separability}~(3).
  By Lemma~\ref{lem:e-theory-pass-to-dvr} (again relying on (R4)), we may replace $(K,v)$ by the complete discretely valued field
  $(Kv^0, \overline{v})$, without changing the existential $\mathcal{L}_{\mathrm{val}}(C)$-theory.

  Thus we may assume that $(K,v)$ is complete discretely valued,
  and similarly that $(L,w)$ is complete discretely valued,
  without changing the residue fields $Kv$ and $Lw$.
  Now Lemma~\ref{lem:dvr-embedding} implies that we have an embedding $(K,v) \hookrightarrow (L,w)$ over $C$,
  yielding the statement.
\end{proof}

We note a number of consequences.
We write $\equiv_\exists$ for the notion of existential equivalence of structures, i.e.\ having the same existential theory.
We have the following Ax--Kochen/Ershov style characterisation of existential equivalence.
\begin{corollary}[Relative existential completeness]\label{cor:existential-ake}
  Assume (R4).
  Let $(K,v)$ and $(L,w)$ be non-trivially henselian valued, both containing the trivially valued subfield $C$.
  Assume that the valuation rings $\mathcal{O}_v$ and $\mathcal{O}_w$ are formally smooth over $C$.
  Then:
  \begin{align*}
    \underbrace{(K,v) \equiv_\exists (L,w)}_{\text{in }\mathcal{L}_{\mathrm{val}}(C)} \Longleftrightarrow \underbrace{Kv \equiv_\exists Lw}_{\text{in }\mathcal{L}_{\mathrm{ring}}(C)} 
  \end{align*}
\end{corollary}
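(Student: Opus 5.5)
The right-to-left direction follows by applying Proposition~\ref{prop:main-monotonicity} twice, once in each direction. Suppose $Kv \equiv_\exists Lw$ in $\mathcal{L}_{\mathrm{ring}}(C)$. Then $\Th_\exists^{\mathcal{L}_{\mathrm{ring}}(C)}(Kv) \subseteq \Th_\exists^{\mathcal{L}_{\mathrm{ring}}(C)}(Lw)$. Since both valuation rings are formally smooth over $C$ and both fields are non-trivially henselian valued containing $C$ as a trivially valued subfield, Proposition~\ref{prop:main-monotonicity} (under (R4)) yields $\Th_\exists^{\mathcal{L}_{\mathrm{val}}(C)}(K,v) \subseteq \Th_\exists^{\mathcal{L}_{\mathrm{val}}(C)}(L,w)$. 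The hypotheses are symmetric in $(K,v)$ and $(L,w)$, so exchanging their roles gives the reverse inclusion, hence equality.

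For the converse, I would show that every existential $\mathcal{L}_{\mathrm{ring}}(C)$-sentence about the residue field translates uniformly into an existential $\mathcal{L}_{\mathrm{val}}(C)$-sentence about the valued field. Let $\exists \bar x\, \theta(\bar x)$ be given, with $\theta$ quantifier-free and in negation normal form, so $\theta$ is a positive Boolean combination of atomic formulas $f(\bar x)=0$ and negated atomic formulas $f(\bar x)\neq 0$, where $f$ has coefficients in $C$. Replace it by
\[ \exists \bar y \,\Bigl(\textstyle\bigwedge_i y_i \in \mathcal{O} \mathrel\land \theta'(\bar y)\Bigr), \]
where each atom $f(\bar x)=0$ becomes $f(\bar y)\in\mathfrak{m}$ and each $f(\bar x)\neq0$ becomes $f(\bar y)\notin\mathfrak{m}$. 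Here $z\in\mathfrak{m}$ is expressed existentially as $z=0 \lor \exists u\,(zu=1 \land u\notin\mathcal{O})$, and $z\notin\mathfrak{m}$ as $\exists u\,(zu=1\land u\in\mathcal{O})$. Both use only positive and negative occurrences of $\mathcal{O}$, so the translated formula is again existential.

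The translation is correct for the following reason. Since $C$ is trivially valued, $C \subseteq \mathcal{O}$ and the residue map restricts to the given embedding $C \hookrightarrow Kv$. Hence residues of the $y_i$ satisfy $\theta$ in $Kv$ exactly when the translated formula holds for the $y_i$ in $(K,v)$. Conversely, any witnesses in $Kv$ lift to $\mathcal{O}$. It follows that $(K,v)\equiv_\exists(L,w)$ implies $Kv\equiv_\exists Lw$, and this direction needs neither (R4) nor formal smoothness.

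There is no real obstacle, since all the substance lies in Proposition~\ref{prop:main-monotonicity}. The only point needing care is that the interpretation of the $C$-constants in the residue field agrees with their residues, which holds because $v$ is trivial on $C$.
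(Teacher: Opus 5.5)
Your proposal is correct and follows essentially the same route as the paper. You get the backward direction from Proposition~\ref{prop:main-monotonicity}, with the symmetry you note, and the forward direction by existentially interpreting the residue field in the valued field; the paper presents $Kv$ as $\mathcal{O}$ modulo the relation $X=Y \lor \exists z\,(z(X-Y)=1 \land z\notin\mathcal{O})$, obtains equal positive existential theories, and then eliminates inequalities via $\exists y\,(xy=1)$, whereas your direct translation of $\neq 0$ atoms into $\notin\mathfrak{m}$ is an equivalent minor variant.
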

\begin{proof}
  The fields $Kv$ and $Lw$ are existentially interpretable in $(K,v)$ and $(L,w)$, respectively, by the same formulas:
  Indeed, the residue field is given as the valuation ring modulo the equivalence relation defined by the formula
  $X=Y \mathrel\lor \exists z (z(X-Y) = 1 \mathrel\land z \not\in \mathcal{O})$ in variables $X,Y$.
  Therefore $(K,v) \equiv_\exists (L,w)$ in $\mathcal{L}_{\mathrm{val}}(C)$ implies that $Kv$ and $Lw$ have the same positive existential $\mathcal{L}_{\mathrm{ring}}(C)$-theory.
  For fields in a constant expansion of $\mathcal{L}_{\mathrm{ring}}$, positive existential theories determine the full existential theories,
  since inequalities can be eliminated (replacing $x \neq 0$ by $\exists y (xy=1)$).
  This yields the forward direction.
  The backward direction is immediate from Proposition~\ref{prop:main-monotonicity}.
\end{proof}

We also want to give consequences concerning decidability.
Recall that, for theories $T_1$ and $T_2$ in computable languages $\mathcal{L}_1$ and $\mathcal{L}_2$,
a many-one reduction from $T_1$ to $T_2$ is given by a computable mapping from $\mathcal{L}_1$-sentences to $\mathcal{L}_2$-sentences
such that an $\mathcal{L}_1$-sentence lies in $T_1$ if and only if the associated $\mathcal{L}_2$-sentence lies in $T_2$.
This is a formalisation of the notion that membership in $T_1$ is at most as hard to decide as membership in $T_2$.
In particular, if $T_1$ is many-one reducible to $T_2$ and $T_2$ is decidable, then also $T_1$ is decidable.
\begin{theorem}\label{thm:exist-elimination}
  Assume (R4).
  \begin{enumerate}
  \item (Existential elimination) For every existential $\mathcal{L}_{\mathrm{val}}(C)$-sentence $\varphi$
  there exists an existential $\mathcal{L}_{\mathrm{ring}}(C)$-sentence $\varphi^\ast$
  such that for every non-trivially henselian valued field $(K,v)$ containing $C$ as a trivially valued subfield,
  with $\mathcal{O}_v$ formally smooth over $C$,
  we have $(K,v) \models \varphi$ if and only if $Kv \models \varphi^\ast$.
  \end{enumerate}
  Assume furthermore that $C$ is countable and endowed with a bijection from a decidable subset of $\mathbb{N}$,
  yielding an injection $\mathcal{L}_{\mathrm{val}}(C) \to \mathbb{N}$ via a standard Gödel coding,
  such that the quantifier-free diagram of $C$ is decidable (i.e.\ $C$ is a computable field)
  and $C$ has a computably enumerable $p$-basis.
  \begin{enumerate}[resume]
  \item (Computable existential elimination)
    There exists a \emph{computable} mapping sending $\mathcal{L}_{\mathrm{val}}(C)$-sentences $\varphi$ to a suitable $\varphi^\ast$ as above.
  \item (Relative decidability) The existential $\mathcal{L}_{\mathrm{val}}(C)$-theory of any $(K,v)$ as above is many-one reducible to
    the existential $\mathcal{L}_{\mathrm{ring}}(C)$-theory of $Kv$.
    In particular, if the latter theory is decidable, then so is the former.
  \end{enumerate}
\end{theorem}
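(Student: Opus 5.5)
We derive all three parts from Proposition~\ref{prop:main-monotonicity} and Corollary~\ref{cor:existential-ake} by a compactness argument, with an effective search in place of compactness for the computable versions. Let $T$ be the $\mathcal{L}_{\mathrm{val}}(C)$-theory consisting of: the axioms of non-trivially valued henselian fields; the quantifier-free diagram of $C$ together with $c \in \mathcal{O}^\times$ for all $c \in C^\times$ (so that $C$ is a trivially valued subfield); and the sentences from Corollary~\ref{cor:fo-fs} expressing formal smoothness of $\mathcal{O}$ over $C$. These sentences are $\mathcal{L}_{\mathrm{ring}}(C)$-sentences about the valuation ring, which we relativise to the predicate $\mathcal{O}$.

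For (1), fix an existential $\varphi$. As in the proof of Corollary~\ref{cor:existential-ake}, every existential $\mathcal{L}_{\mathrm{ring}}(C)$-sentence $\chi$ about the residue field translates uniformly into an existential $\mathcal{L}_{\mathrm{val}}(C)$-sentence $\chi^{\mathrm{res}}$ with $(K,v) \models \chi^{\mathrm{res}} \iff Kv \models \chi$. Here inequalities are first turned into the existence of inverses, and variables are relativised to $\mathcal{O}$, with equality in $Kv$ expressed positively as $X-Y \in \mathfrak{m}$, i.e.\ $X=Y \lor \exists z(z(X-Y)=1 \land z\notin\mathcal{O})$.

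We then claim that $T \cup \{\varphi\} \models \bigvee_{i\le k} \chi_i^{\mathrm{res}}$ for finitely many existential $\chi_i$ with $T \models \chi_i^{\mathrm{res}} \to \varphi$. To see this, let $(K,v) \models T \cup \{\varphi\}$. Let $\Sigma$ be the set of existential $\mathcal{L}_{\mathrm{ring}}(C)$-sentences $\chi$ true in $Kv$. If $T \cup \{\neg\varphi\} \cup \{\chi^{\mathrm{res}} : \chi \in \Sigma\}$ had a model $(L,w)$, then $\Th_\exists(Kv) \subseteq \Th_\exists(Lw)$, and Proposition~\ref{prop:main-monotonicity} would give $(L,w) \models \varphi$, a contradiction. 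Since $\Sigma$ is closed under conjunction, compactness yields a single $\chi_K \in \Sigma$ with $T \models \chi_K^{\mathrm{res}} \to \varphi$. The sentences $\neg\chi^{\mathrm{res}}$, for all $\chi$ with $T \models \chi^{\mathrm{res}}\to\varphi$, together with $T\cup\{\varphi\}$, are then inconsistent, so by compactness finitely many $\chi_i$ suffice. We set $\varphi^\ast = \bigvee_i \chi_i$, which is again existential. Then $(K,v)\models\varphi$ implies some $\chi_i^{\mathrm{res}}$ holds, i.e.\ $Kv\models\varphi^\ast$, and conversely $Kv \models \varphi^\ast$ gives $(K,v)\models\varphi$.

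For (2), the hypotheses make $T$ computably enumerable. The diagram of $C$ is decidable, the henselianity axioms are computable, and Remark~\ref{rem:fo-fs-computable} gives a c.e.\ axiomatisation of formal smoothness. Given $\varphi$, we dovetail a search over finite tuples $(\chi_1,\dots,\chi_k)$ of existential $\mathcal{L}_{\mathrm{ring}}(C)$-sentences together with formal proofs from $T$ of $\chi_i^{\mathrm{res}}\to\varphi$ and of $\varphi \to \bigvee_i\chi_i^{\mathrm{res}}$. By (1) and completeness such data exist, so the search terminates; we output the first $\bigvee_i\chi_i$ found. Any output is correct by soundness. For (3), the map $\varphi\mapsto\varphi^\ast$ is then a many-one reduction from $\Th_\exists^{\mathcal{L}_{\mathrm{val}}(C)}(K,v)$ to $\Th_\exists^{\mathcal{L}_{\mathrm{ring}}(C)}(Kv)$, and the decidability statement follows.

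The main obstacle is the compactness step in (1). One must check that Proposition~\ref{prop:main-monotonicity} applies to arbitrary models of $T$, which it does because formal smoothness is exactly axiomatised by Corollary~\ref{cor:fo-fs}. One must also handle the case where no model of $T\cup\{\varphi\}$ exists; then $\varphi^\ast$ is simply a false sentence such as $1=0$, and the search in (2) finds this as the empty disjunction, by allowing $k = 0$.
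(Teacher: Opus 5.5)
Your proposal is correct and follows the paper's proof. The paper likewise combines Proposition~\ref{prop:main-monotonicity} with the axiomatisability of formal smoothness (Corollary~\ref{cor:fo-fs}) and then applies the standard Separation Lemma, which your two compactness steps simply unfold; parts (2) and (3) then come from the same proof search over the computably enumerable theory $T$.
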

\begin{proof}
  We consider the $\mathcal{L}_{\mathrm{val}}(C)$-theory $T$ of valued fields $(K,v)$ as in the statement of the theorem
  (where we recall that formal smoothness is axiomatisable by Corollary~\ref{cor:fo-fs}).
  By Proposition~\ref{prop:main-monotonicity},
  for every $(K,v) \models T \cup \{ \varphi \}$ and every $(L,w) \models T \cup \{ \neg\varphi \}$,
  there exists an existential $\mathcal{L}_{\mathrm{ring}}(C)$-sentence $\psi$ with $Kv \models \psi$, $Lw \models \neg\psi$.
  For fixed $\varphi$, the existence of $\varphi^\ast$ (independent of $(K,v)$, $(L,w)$)
  is therefore an application of the standard Separation Lemma of model theory,
  see for instance \cite[Lemma~3.1.1]{TentZiegler}.

  Assume now we are in the situation of $C$ being a computable field with a computably enumerable $p$-basis.
  Then the theory $T$ above is computably enumerable
  (where the condition of a computably enumerable $p$-basis is needed to ensure a
  computably enumerable axiomatisation of formal smoothness, see Remark~\ref{rem:fo-fs-computable}).
  Given a formula $\varphi$ as above, we can then consider all candidate formulas $\varphi^\ast$
  and search for a proof in a sound and complete proof calculus that $\varphi$ holding in a model of $T$
  is equivalent to $\varphi^\ast$ holding in the residue field.
  This procedure is guaranteed to terminate.
  The third point is now immediate from the definition of many-one reducibility.
\end{proof}

\begin{remark}
  In the computable parts of the theorem, the condition that $C$ have a computably enumerable $p$-basis
  is automatic if $[C : C^p]$ is finite.
  We will later see that having a computably enumerable $p$-basis is part of the conditions of being a ``nicely behaved''
  computable field, see Remark~\ref{rem:nicely-computable}.
\end{remark}

\begin{remark}\label{rem:formal-bridges}
  As is apparent from the proof,
  Theorem~\ref{thm:exist-elimination} is largely a formal consequence of Proposition~\ref{prop:main-monotonicity}.
  See \cite[Proposition~2.18]{AnscombeFehm_syntactic-fragments} for a general formalism for this,
  although we prefer not to present our results in the abstract framework given there.
\end{remark}

\begin{remark}\label{rem:monotonicity-half-fs}
  Although we have no immediate use for this,
  let us (after the fact) give a proof that in Proposition~\ref{prop:main-monotonicity},
  the hypothesis that $\mathcal{O}_w$ is formally smooth over $C$ can be dispensed with.
  Indeed, assume (R4) and
  suppose that $(K,v)$ and $(L,w)$ are non-trivially henselian valued fields
  containing the trivially valued subfield $C$.
  Assume that $\mathcal{O}_v$ is formally smooth over $C$,
  and that $\Th_\exists^{\mathcal{L}_{\mathrm{ring}}(C)}(Kv) \subseteq \Th_\exists^{\mathcal{L}_{\mathrm{ring}}(C)}(Lw)$.
  We wish to show that $\Th_\exists^{\mathcal{L}_{\mathrm{val}}(C)}(K,v) \subseteq \Th_\exists^{\mathcal{L}_{\mathrm{val}}(C)}(L,w)$,
  so let $\varphi \in \Th_\exists^{\mathcal{L}_{\mathrm{val}}(C)}(K,v)$.
  Let $\varphi^\ast \in \Th_\exists^{\mathcal{L}_{\mathrm{ring}}(C)}(Kv)$ be a sentence corresponding to $\varphi$ under
  Theorem~\ref{thm:exist-elimination}~(1).
  After replacing $(L,w)$ by an elementary extension, we may assume that there is an embedding $Kv \hookrightarrow Lw$ over $C$.

  There is a finitely generated subextension $D/C$ of $Kv/C$ such that $\varphi^\ast$
  holds in $D$.
  Let $d$ be the transcendence degree of $D$ over $C$.
  The field $D$ can be generated by $d+1$ many elements over $C$:
  indeed, by Lemma~\ref{lem:fs-almost-separable} and Lemma~\ref{lem:almost-separable} we have $\dim_D \Omega_{D/C} \leq d+1$,
  so by \cite[Chapter~II, §~17, Theorem~41]{ZariskiSamuel_I},
  there exists $x_0, \dotsc, x_d \in D$ such that $D/C(x_0, \dotsc, x_d)$ is a finite separable extension.
  We may suppose after reordering that $x_1, \dotsc, x_d$ are algebraically independent, so $D/C(x_1, \dotsc, x_d)$ is finite.
  Its maximal separable subextension $D'/C(x_1, \dotsc, x_d)$ is also finite, and so by the Primitive Element Theorem
  in its strong version (see for instance \cite[Satz~12]{Bosch_algebra}), the extension $D'(x_0)/C(x_1, \dotsc, x_d)$
  is generated by a single element $x_0'$.
  The extension $D/D'(x_0)$ is both purely inseparable and separable by construction, so $D = D'(x_0) = C(x_0', x_1, \dotsc, x_d)$.

  Therefore $D$ can be realised as $\Frac(C[X_0, \dotsc, X_d]/(f))$ for some irreducible polynomial $f$.
  Applying Proposition~\ref{prop:fs-dvrs-from-varieties} to the codimension-$1$ point
  defined by $f$ on the scheme $\mathbb{A}^{d+1}_C$,
  we obtain a DVR $\mathcal{O} = C[X_0, \dotsc, X_d]_{(f)}$
  (i.e.\ the polynomial ring $C[X_0, \dotsc, X_d]$ with all elements not divisible by $f$ inverted)
  with quotient field $C(X_0, \dotsc, X_d)$ which is
  formally smooth over its subfield $C$.
  The embedding $C[X_0, \dotsc, X_d]/(f) \subseteq D \subseteq Kv \hookrightarrow Lw$
  lifts to a $C$-algebra homomorphism $h \colon C[X_0, \dotsc, X_d] \to \mathcal{O}_w$,
  determined by selecting suitable images for the $X_i$.
  By construction, the preimage of the maximal ideal $\mathfrak{m}_w$ of $\mathcal{O}_w$ under $h$ is precisely the ideal $(f)$.
  We may assume that $h(f) \neq 0$, possibly after modifying $h$ by adding an element of $\mathfrak{m}_w$ to some of the $h(X_i)$.
  Thus the kernel of $h$ is a prime ideal properly contained in $(f)$.
  The only such ideal is the zero ideal, i.e.\ $h$ is injective.
  By the universal property of localisations and henselisations, $h$ extends to $\mathcal{O}^h \supseteq \mathcal{O} \supseteq C[X_0, \dotsc, X_d]$,
  so we have a local $C$-embedding $\mathcal{O}^h \to \mathcal{O}_w$.
  For the valued field $(K_0,v_0)$ corresponding to $\mathcal{O}^h$, we therefore have an embedding of valued fields $(K_0,v_0) \hookrightarrow (L,w)$ over $C$.
  Since the residue field $K_0 v_0$ is $D$ by construction, we have $K_0 v_0 \models \varphi^\ast$,
  and therefore $\varphi \in \Th_\exists(K_0,v_0) \subseteq \Th_\exists(L,w)$.
\end{remark}

\section{Applications to function fields}
\label{sec:function-fields-pac}

In this section, we apply our previous results to analyse asymptotic theories of completions of function fields.
Problems of this kind were extensively studied in \cite{DittmannFehm_completions},
mostly for global function fields.
The setting here is that of a base field $K$ and a function field $F/K$.
Recall that for us, this just means that $F/K$ is finitely generated of transcendence degree $1$.
Recall also that we write $\mathbb{P}_{F/K}$ for the set of places of $F/K$.
We wish to know what sentences hold in all completions $\widehat{F}^v$, $v \in \mathbb{P}_{F/K}$, or in almost all completions, i.e.\ all but finitely many.
The situation for $K$ of characteristic zero is readily understandable with the usual Ax--Kochen--Ershov principles in residue characteristic zero,
so our attention is focused on $K$ of characteristic $p>0$.
We have to restrict our attention to \emph{universal/existential sentences},
i.e.\ boolean combination of existential sentences.
Note that while for a single field, the universal/existential theory is completely controlled by the existential theory,
the theory of all (or almost all) completions of $F/K$ is normally highly incomplete,
and the universal/existential theory encodes more information than its existential subpart.
The theory of universal/existential sentences holding in all (or almost all) completions
for instance encompasses information on whether a polynomial has a zero in all (almost all, respectively) completions,
and whether it has a zero in no (finitely many, respectively) completions.

A crucial piece of information in the setting above lies in the theories of the residue fields of $F/K$.
For each $v \in \mathbb{P}_{F/K}$, we view $Fv$ as an $\mathcal{L}_{\mathrm{ring}}(F)$-structure by interpreting the $\mathcal{L}_{\mathrm{ring}}$-symbols in the usual way,
and interpreting the constants for elements of $F$ by means of the residue map $F \to Fv$, which we take to be zero outside the valuation ring $\mathcal{O}_v$.%
\footnote{Every sentence involves only finitely many constants from $F$, each of which will lie in $\mathcal{O}_v$ for almost all $v \in \mathbb{P}_{F/K}$.
  Extending the residue map by zero outside the valuation ring $\mathcal{O}_v$ is therefore inessential if we are allowed to drop finitely many places,
  which is mostly the case for us.}
Therefore we can consider the $\mathcal{L}_{\mathrm{ring}}(F)$-\emph{theory of almost all residue fields of $F/K$},
i.e.\ the set of all $\mathcal{L}_{\mathrm{ring}}(F)$-sentences which hold in $Fv$ for all but finitely many $v \in \mathbb{P}_{F/K}$.
We now have:
\begin{proposition}\label{prop:theories-aa-completions-to-res-flds}
  Let $R_0$ be the $\mathcal{L}_{\mathrm{ring}}(F)$-theory of almost all residue fields of $F/K$.
  \begin{enumerate}
  \item Assume (R4).
    The universal/existential $\mathcal{L}_{\mathrm{val}}(F)$-theory of almost all completions $\widehat{F}^v$ is precisely the
    universal/existential theory of non-trivially henselian valued fields containing $F$ as a trivially valued subfield,
    with valuation ring formally smooth over $F$,
    with residue field satisfying the universal/existential part of $R_0$.

  \item Without assuming (R4):
    Suppose $R_0$ contains the theory of PAC fields.
    Then the universal/existential $\mathcal{L}_{\mathrm{ring}}(F)$-theory of almost all completions $\widehat{F}^v$ is the
    theory of non-trivially henselian valued fields containing $F$ as a trivially valued subfield,
    separable over $F$,
    with residue field satisfying the universal/existential part of $R_0$.
  \end{enumerate}
\end{proposition}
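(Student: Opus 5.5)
The approach is to pass from ``almost all completions'' to non-principal ultraproducts, and then use the existential elimination of Theorem~\ref{thm:exist-elimination} to move the question to residue fields. Let $T$ denote the class of valued fields described in (1).

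\emph{Reduction to ultraproducts.} By Łoś's theorem, a universal/existential sentence holds in all but finitely many $\widehat{F}^v$ if and only if it holds in every non-principal ultraproduct $(E,w)=\prod_v(\widehat{F}^v,\hat v)/\mathcal{U}$. So for (1) it suffices to prove two things:
\begin{enumerate}
\item[(a)] every such ultraproduct lies in $T$;
\item[(b)] every universal/existential sentence true in all such ultraproducts is true in every member of $T$.
\end{enumerate}
The converse inclusion of theories is then immediate from (a).

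\emph{Step (a).} Each ultraproduct is henselian and non-trivially valued. It contains $F$ as a trivially valued subfield, since each $a\in F^\times$ is a $v$-unit for almost all $v$, as in the proof of Proposition~\ref{prop:fs-from-ultraproduct}. Its valuation ring is formally smooth over $F$ by Corollary~\ref{cor:aa-completion-fs}. Its residue field is $\prod_v Fv/\mathcal{U}$ as an $\mathcal{L}_{\mathrm{ring}}(F)$-structure, because the residue field of $\widehat F^v$ is $Fv$. Hence by Łoś this residue field satisfies $R_0$, and in particular its universal/existential part.

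\emph{Step (b).} Write the sentence as a conjunction of disjunctions of existential and negated existential sentences. Each conjunct is equivalent to one of the form $\psi_1\lor\neg\psi_2$ with $\psi_1,\psi_2$ existential, since existential sentences are closed under disjunction and universal ones under conjunction; so it suffices to treat one such conjunct. Suppose some $(K,v)\in T$ satisfies $\neg\psi_1\land\psi_2$. Theorem~\ref{thm:exist-elimination}~(1), applied with $C=F$, gives existential $\mathcal{L}_{\mathrm{ring}}(F)$-sentences $\psi_i^\ast$ such that, for every member of $T$, $\psi_i$ holds in the valued field if and only if $\psi_i^\ast$ holds in its residue field. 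Then $Kv\models\neg\psi_1^\ast\land\psi_2^\ast$. Since $Kv$ satisfies the universal/existential part of $R_0$, the sentence $\psi_1^\ast\lor\neg\psi_2^\ast$ is not in $R_0$. Hence infinitely many $Fv$ satisfy $\neg\psi_1^\ast\land\psi_2^\ast$.

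Choose a non-principal ultrafilter concentrating on these places. The resulting ultraproduct lies in $T$ by (a), and its residue field satisfies $\neg\psi_1^\ast\land\psi_2^\ast$. By the elimination it therefore satisfies $\neg\psi_1\land\psi_2$, contradicting the hypothesis.

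\emph{Part (2).} This follows the same skeleton, with formal smoothness replaced by separability of the valued field over $F$, and now working in $\mathcal{L}_{\mathrm{ring}}(F)$. For (a), the ultraproducts are separable over $F$ by Corollary~\ref{cor:aa-completion-fs} together with Proposition~\ref{prop:fs-separability}~(3). For (b), the role of Theorem~\ref{thm:exist-elimination} (which needs (R4)) must be played by a monotonicity or elimination statement for existential $\mathcal{L}_{\mathrm{ring}}(F)$-sentences. The candidate is \cite[Proposition~5.1]{DittmannFehm_completions}, which applies when the residue fields are PAC. This is exactly why the hypothesis that $R_0$ contains the theory of PAC fields is needed: it guarantees that all relevant residue fields, both of members of the class and of the ultraproducts, are PAC. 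One then obtains the elimination by the Separation Lemma argument of Theorem~\ref{thm:exist-elimination}~(1) and concludes as before.

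\emph{Main obstacle.} I expect the main obstacle to be matching the hypotheses of that cited proposition with the class in (2), namely henselian fields separable over $F$ with PAC residue field. If it only yields an inclusion of existential theories given an embedding of residue fields, I would first pass to saturated elementary extensions to realise such an embedding, and then apply the separation argument.
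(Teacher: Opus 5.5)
Your part (1) is correct and is essentially the paper's argument, organised slightly differently. The paper fixes a member $(E,w)$ of the class, uses compactness to choose $\mathcal U$ with $\prod_v Fv/\mathcal U\equiv_\exists Ew$, and applies Corollary~\ref{cor:existential-ake} to $(E,w)$ and $\prod_v\widehat F^v/\mathcal U$. You run the same compactness argument sentence by sentence through Theorem~\ref{thm:exist-elimination}~(1); both ultimately rest on Proposition~\ref{prop:main-monotonicity}.

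\textbf{The gap is in part (2).} You claim that the hypothesis on $R_0$ ``guarantees that all relevant residue fields, both of members of the class and of the ultraproducts, are PAC''. This holds for the ultraproducts but is false for members of the class. The class only requires the residue field to satisfy the \emph{universal/existential part} of $R_0$, and being PAC is an $\forall\exists$-property that boolean combinations of existential sentences do not capture. For example, let $k=\prod_v Fv/\mathcal U$, which is PAC. Then $k$ is existentially closed in the regular extension $k(x)$, so $k(x)$ satisfies the universal/existential part of $R_0$ without being PAC. Moreover, $k(x)$ is the residue field of a member of the class: henselise the Gauss extension of the valuation of $\prod_v\widehat F^v/\mathcal U$ to the rational function field over it. So if the transfer result you cite needs PAC residue fields on both sides, as you say, then neither it nor the elimination you derive from it via the Separation Lemma applies to the $(K,v)$ in your step (b).

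What the hypothesis actually gives is weaker: every residue field $Ew$ in the class is existentially equivalent over $F$ to a PAC field, namely a suitable ultraproduct of the $Fv$. This follows from the compactness argument you already use. You must check that this weaker condition suffices, which is why the paper's proof compares each $(E,w)$ directly with one ultraproduct of completions whose residue field is PAC and $\equiv_\exists Ew$. The check runs as follows for a henselian $E$ separable over $F$ and an $F$-variety $W$.
\begin{itemize}
\item If $W(E)\neq\emptyset$, take the locus of an $E$-point. Using separability of $E/F$, this gives a geometrically integral variety $V$ mapping to $W$, defined over a finite separable extension $F'/F$ that $F$-embeds into $Ew$.
\item Conversely, if such a $V$ has a smooth $Ew$-point for some $F$-embedding $F'\hookrightarrow Ew$, then Hensel's lemma gives $W(E)\neq\emptyset$.
\end{itemize}
Both the embeddability of $F'$ and the existence of such a smooth point are existential conditions over $F$ on the residue field. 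In a PAC field the smooth point exists automatically, and it then transfers to any field existentially equivalent to it over $F$. Hence PAC-ness is needed only on the ultraproduct side, and with this observation your step (b) for part (2) goes through.
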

\begin{proof}
  This is mostly formal after our preceding work.
  Let $T_1$ be the $\mathcal{L}_{\mathrm{val}}(F)$-theory of valued fields $(E,w)$ containing $F$ as a trivially valued subfield,
  with $\mathcal{O}_w$ formally smooth over $F$.
  Every non-trivial ultraproduct of completions $\widehat{F}^v$ satisfies $T_1$ by Corollary~\ref{cor:aa-completion-fs},
  and it certainly satisfies $R_0$.
  Let $T_2$ be the class of $(E,w) \models T_1$ with residue field $Ew$ satisfying the universal/existential part of $R_0$.
  Now every sentence in $T_2$ holds in almost all $\widehat{F}^v$.
  Let conversely $(E,w)$ be an arbitrary model of $T_2$.
  By construction, we can find an ultrafilter $\mathcal{U}$ on $\mathbb{P}_{F/K}$ such that the corresponding ultraproduct over $Fv$
  has the same existential theory as $Ew$.
  Assuming (R4), the ultraproduct of $\widehat{F}^v$ along $\mathcal{U}$ has the same universal/existential theory as $(E,w)$ by Corollary~\ref{cor:existential-ake}.
  Therefore every universal/existential sentence holding in almost all completions $\widehat{F}^v$ holds in all models of $T_2$.
  This proves the first point.

  The second point is proved entirely analogously, substituting \cite[Proposition~5.1]{DittmannFehm_completions}
  for Corollary~\ref{cor:existential-ake}.
\end{proof}

\begin{corollary}\label{cor:aa-res-to-aa-completions}
  Let $K$ be a computable field with a computably enumerable $p$-basis.
  Let $F/K$ be a function field as above,
  and also consider it as a computable field.
  \begin{enumerate}
  \item Assume (R4).
    The universal/existential $\mathcal{L}_{\mathrm{val}}(F)$-theory of almost all completions $\widehat{F}^v$ is many-one reducible to
    the $\mathcal{L}_{\mathrm{ring}}(F)$-theory of almost all residue fields of $F/K$.
  \item Assume that $K$ is pseudo-algebraically closed, but do not assume (R4).
    Then we still have the same result for the universal/existential $\mathcal{L}_{\mathrm{ring}}(F)$-theory of almost all completions.
  \end{enumerate}
\end{corollary}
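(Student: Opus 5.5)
The plan is to combine Proposition~\ref{prop:theories-aa-completions-to-res-flds} with the computable existential elimination of Theorem~\ref{thm:exist-elimination}, applied with $C = F$. First I check the computability hypotheses for $F$. Since $F/K$ is finitely generated, $F$ is a computable field as assumed. A computably enumerable $p$-basis of $F$ is obtained from one of $K$ together with finitely many elements of $F$. Concretely, $F^p K(x_1,\dotsc,x_n) = F$ for generators $x_i$ of $F/K$. Hence a $p$-basis of $F$ can be taken to be a subset of $B_K \cup \{x_1,\dotsc,x_n\}$, which differs from $B_K$ only by finitely many additions and removals: only finitely many elements of $B_K$ become $p$-dependent in $F$, by a degree-of-inseparability count as in Lemma~\ref{lem:almost-separable}. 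Hard-coding these finitely many changes keeps the set computably enumerable.

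Given a universal/existential $\mathcal{L}_{\mathrm{val}}(F)$-sentence $\varphi$, I write it in conjunctive normal form as a conjunction of disjunctions of existential sentences and negated existential sentences. Using Theorem~\ref{thm:exist-elimination}~(2), I replace each existential $\mathcal{L}_{\mathrm{val}}(F)$-sentence $\psi$ by the computable $\psi^\ast$. This yields a computable universal/existential $\mathcal{L}_{\mathrm{ring}}(F)$-sentence $\varphi^\ast$. For every non-trivially henselian $(E,w)$ containing $F$ trivially valued with $\mathcal{O}_w$ formally smooth over $F$, we then have $(E,w) \models \varphi$ iff $Ew \models \varphi^\ast$.

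By Proposition~\ref{prop:theories-aa-completions-to-res-flds}~(1), $\varphi$ holds in almost all completions iff it holds in all such $(E,w)$ whose residue field satisfies the universal/existential part of $R_0$. I claim this happens iff $\varphi^\ast \in R_0$.
\begin{itemize}
\item If $\varphi^\ast \in R_0$, then $\varphi^\ast$ lies in the universal/existential part of $R_0$, so every such $(E,w)$ satisfies $\varphi$.
\item Conversely, suppose $\varphi$ holds in almost all $\widehat F^v$ but $\varphi^\ast \notin R_0$. Then $\neg\varphi^\ast$ holds in infinitely many $Fv$, so some non-principal ultraproduct $\prod Fv/\mathcal{U}$ satisfies $\neg\varphi^\ast$. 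The corresponding ultraproduct of completions has $\mathcal{O}_w$ formally smooth over $F$ by Corollary~\ref{cor:aa-completion-fs}, and residue field $\prod Fv/\mathcal{U}$. So it satisfies $\neg\varphi$ by the elimination, contradicting Łoś.
\end{itemize}
Thus $\varphi \mapsto \varphi^\ast$ is the required many-one reduction, proving (1).

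For (2), without (R4), I run the same argument in $\mathcal{L}_{\mathrm{ring}}(F)$, using Proposition~\ref{prop:theories-aa-completions-to-res-flds}~(2). The replacement for computable existential elimination is to derive it from \cite[Proposition~5.1]{DittmannFehm_completions}, via the separation-lemma and proof-search argument of Theorem~\ref{thm:exist-elimination}. This applies to the c.e.\ theory of henselian fields containing $F$, separable over $F$, with PAC residue field. I first need that $R_0$ contains the PAC axioms. This should follow because residue fields $Fv$ are finite extensions of the PAC field $K$ and are therefore PAC, and the PAC axioms are c.e. The main obstacle I expect is ensuring that the relevant class of models is c.e.\ axiomatised, so that proof search terminates. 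This needs separability over $F$ to be axiomatised via a c.e.\ $p$-basis, and this is where the $p$-basis computation from the first paragraph is essential.
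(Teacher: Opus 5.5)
Your proposal is correct and follows the paper's proof. Part (1) is Proposition~\ref{prop:theories-aa-completions-to-res-flds}~(1) combined with the computable elimination of Theorem~\ref{thm:exist-elimination}~(2) for $C=F$. Part (2) reruns the separation-lemma and proof-search argument with \cite[Proposition~5.1]{DittmannFehm_completions} in its place, and you supply details the paper leaves implicit: the c.e.\ $p$-basis of $F$, the Boolean reduction, and the Łoś argument.

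One small tidy-up in (2). Your elimination is set up only for valued fields with PAC residue field. The class in Proposition~\ref{prop:theories-aa-completions-to-res-flds}~(2), however, only asks that the residue field satisfy the universal/existential part of $R_0$, and that does not force PAC. So prove the direction ``$\varphi^\ast \in R_0$ implies $\varphi$ holds in almost all completions'' by the same ultraproduct argument you use for the converse. A non-principal ultraproduct of completions is separable over $F$ by Corollary~\ref{cor:aa-completion-fs} and Proposition~\ref{prop:fs-separability}~(3), and its residue field is PAC by Łoś. This makes the appeal to Proposition~\ref{prop:theories-aa-completions-to-res-flds} unnecessary.
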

\begin{proof}
  The first point is immediate from the first point of Proposition~\ref{prop:theories-aa-completions-to-res-flds} and the second point of Theorem~\ref{thm:exist-elimination}.
  For the second point, one can formally copy the proof of Theorem~\ref{thm:exist-elimination} in the setting of
  $\mathcal{L}_{\mathrm{ring}}(F)$-theories of henselian non-trivially valued fields separable over the trivially valued subfield $F$
  (cf.\ also Remark~\ref{rem:formal-bridges}).
  Indeed, since two such valued fields have the same universal/existential $\mathcal{L}_{\mathrm{ring}}(F)$-theory as soon as their
  residue fields have the same universal/existential $\mathcal{L}_{\mathrm{ring}}(F)$-theory by \cite[Proposition~5.1]{DittmannFehm_completions},
  the Separation Lemma \cite[Lemma~3.1.1]{TentZiegler} yields for every universal/existential $\mathcal{L}_{\mathrm{ring}}(F)$-sentence $\varphi$
  another such sentence $\varphi^\ast$ such that $\varphi$ holds in such a valued field if and only if $\varphi^\ast$ holds in the residue field.
  A suitable $\varphi^\ast$ can be effectively found from $\varphi$ by enumerating proofs.
  It follows from the second part of Proposition~\ref{prop:theories-aa-completions-to-res-flds} that $\varphi$ holds
  in almost all completions if and only if $\varphi^\ast$ holds in almost all residue fields.
  This gives the desired many-one reduction.
\end{proof}

We now wish to obtain examples of function fields $F/K$ where the universal/existential theory of almost all completions is in fact decidable.
In \cite[Section~6]{DittmannFehm_completions}, this was done for $K$ finite, i.e.\ the case of global function fields.
It was crucial there that the universal/existential $\mathcal{L}_{\mathrm{ring}}(F)$-theory of almost all residue fields admits an explicit
description:
indeed, even the full $\mathcal{L}_{\mathrm{ring}}(F)$-theory of almost all residue fields
can be described as the (decidable) $\mathcal{L}_{\mathrm{ring}}(F)$-theory of pseudofinite fields extending $K$
(see \cite[Lemma~4.5]{DittmannFehm_completions}, summarising results of \cite[Chapter~20]{FriedJarden}).
To apply our results above, we similarly give explicit descriptions of the full $\mathcal{L}_{\mathrm{ring}}(F)$-theory of almost all residue fields
in the case of regular function fields $F/K$ over certain PAC fields $K$.
This is done in the following theorem, whose proof will occupy us for a significant part of the remainder of this section.

Recall here that the \emph{imperfect exponent} of a field $K$ of characteristic $p>0$ is the number $e \in \mathbb{N} \cup \{ \infty \}$
for which $[K : K^p] = p^e$.
Following \cite[Definition~18.5.7]{FriedJarden}, we call the field $K$ \emph{$f$-free} for $f \in \mathbb{N}$
if its absolute Galois group $G_K$ is a free profinite group on $f$ many generators.
Similarly, following \cite[Section~27.1]{FriedJarden},
we call it \emph{$\omega$-free} if every finite embedding problem for its absolute Galois group is solvable;
equivalently, if the absolute Galois group of some (or equivalently every) countable elementary subfield of $K$
is free on countably infinitely many generators \cite[p.~960]{Chatzidakis_properties-forking-omega-free-pac}.
If $K$ is $f$-free for some $0 \leq f \leq \omega$, then a finite extension field of $K$ of separable degree $n$ is $f'$-free
for $f' = 1 + n(f-1)$.
(Note that this makes sense in that necessarily $n=1$ if $f=0$, i.e.\ $K$ is separably closed,
and this formula is to be interpreted as $f' = \omega$ if $f=\omega$.)
Indeed, this is immediately translated into the profinite Nielsen--Schreier formula for
open subgroups of index $n$ of free profinite groups \cite[Proposition~17.6.2]{FriedJarden}.
\begin{theorem}\label{thm:almost-all-res-pac}
  Let $K$ be an $f$-free PAC field of characteristic $p>0$ for some $0 \leq f \leq \omega$,
  with imperfect exponent $0 \leq e \leq \infty$.
  Let $F/K$ be a regular function field in one variable.
  Then the models of the $\mathcal{L}_{\mathrm{ring}}(F)$-theory of almost all residue fields $Fv$
  are precisely the fields $L$ containing $F$ which satisfy:
  \begin{enumerate}
  \item $L$ is PAC.
  \item $L$ has imperfect exponent $e$, and furthermore,
    the equivalent conditions of Lemma~\ref{lem:almost-separable}
    are satisfied for the extension $L/F$.
  \item $L$ is $f'$-free for some $f'$, where:
    \begin{enumerate}
    \item $f' = 0$ if $f = 0$;
    \item $f' = 1$ if $f = 1$;
    \item $f' = \omega$ if $f = \omega$;
    \item if $1 < f < \omega$, then either $f' = \omega$,
      or $f' = 1 + n(f-1)$ with $n = [(K^{\mathrm{sep}} \cap L) \colon K]$.
    \end{enumerate}
  \end{enumerate}
\end{theorem}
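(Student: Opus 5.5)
Let $\mathcal{K}$ denote the class of fields $L \supseteq F$ satisfying (1)--(3). The proof has two directions.

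\emph{Soundness.} Every non-principal ultraproduct $L = \prod Fv/\mathcal{U}$ lies in $\mathcal{K}$. Each $Fv$ is a finite extension of $K$, so it is PAC and has imperfect exponent $e$. By Corollary~\ref{cor:aa-completion-fs}, $\mathcal{O}_w$ is formally smooth over $F$ for the ultraproduct of completions, so Lemma~\ref{lem:fs-almost-separable} gives condition (2) for $L/F$. (Alternatively, by Łoś, one can check (3$_B$) placewise using Lemma~\ref{lem:ff-fs-exceptional-places}.) For (3), $Fv$ is $(1+n_v(f-1))$-free with $n_v = [Fv \cap K^{\mathrm{sep}} : K]$. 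If the $n_v$ are $\mathcal{U}$-bounded, they are $\mathcal{U}$-constant equal to some $n$, and $n = [(K^{\mathrm{sep}}\cap L):K]$. If they are unbounded, the ultraproduct is $\omega$-free. The cases $f \in \{0,1,\omega\}$ are stable under this formula. Being $f'$-free for finite $f'$ is elementary, and $\omega$-freeness is elementary via embedding problems, so Łoś applies.

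\emph{Completeness.} Given $L \in \mathcal{K}$, we must produce an ultrafilter $\mathcal{U}$ with $L \equiv \prod Fv/\mathcal{U}$ in $\mathcal{L}_{\mathrm{ring}}(F)$. We use the elementary equivalence theorem for PAC fields (Chatzidakis, after Cherlin--van den Dries--Macintyre, with imperfect exponent fixed): two $f'$-free or $\omega$-free PAC fields $L_1, L_2 \supseteq F$ of the same imperfect exponent are elementarily equivalent over $F$ provided the relative algebraic closures $F^{\mathrm{alg}}\cap L_i$ are $F$-isomorphic, and, in the imperfect case, provided the $p$-independence data of $F$ agree. Concretely, this data is which finite subsets of a $p$-basis of $F$ become $p$-dependent.

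It therefore suffices to show that for every finite Galois (or purely inseparable–then–Galois) extension $M/F$ and every compatible specification of the following data, there are infinitely many places $v$ with $Fv$ realising the same data:
\begin{enumerate}
\item the decomposition of $M$, i.e.\ which subfield $M \cap L$ embeds;
\item which finite subset $B_0\setminus\{c\}$ of the $p$-basis stays $p$-independent;
\item the value of $n$.
\end{enumerate}
A compactness argument then yields $\mathcal{U}$. By $\omega$-saturation considerations, in the $f'=\omega$ case we need places with $n_v$ arbitrarily large. In the $f'=1+n(f-1)$ case we need $n_v = n$ together with the correct behaviour of $F^{\mathrm{alg}}\cap L$.

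The separable part is a Chebotarev-type statement for function fields over PAC fields. Given $M/F$ Galois and a subgroup $H$, we must find infinitely many places whose residue field meets $M$ exactly as prescribed, with decomposition group conjugate to $H$ and with prescribed constant extension degree. We would prove this by applying the PAC property of $K$ (or of its finite extensions of degree $n$) to a suitable variety, namely a twisted form of the curve of $M$, using that $G_K$ is free so that the required embedding problem is solvable. This is the expected Lemma~\ref{lem:separable-half}.

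The inseparable part requires producing places $v$ where a chosen element $c$ of $B_K \cup\{t\}$ becomes a $p$-th power combination in $Fv$, i.e.\ $Fv$ is inseparable over $F$ in a prescribed way, while this is compatible with the separable data. We would take $v$ to be the zero of $X^{p}-c$-type polynomials: choose places of $F$ lying over the places of $K(t)$ given by irreducible polynomials $g(t)^p - c\,h(t)^p$, found again via PAC rational points. This is presumably Lemma~\ref{lem:inseparablisation}.

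Combining the separable and inseparable constructions simultaneously is the main obstacle. It is especially hard when $K$ is imperfect, since residue fields are then inseparable over $F$ and the Galois-theoretic and $p$-basis data interact. My first attempt would be to realise both conditions by a single PAC rational-point argument on a variety over a finite extension $K'$ of degree $n$ combining the Galois cover with a Frobenius-twisted component.
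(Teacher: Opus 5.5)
\textbf{Soundness is fine; completeness has a genuine gap.} Your soundness direction is correct and is essentially the paper's argument: Corollary~\ref{cor:aa-completion-fs} with Lemma~\ref{lem:fs-almost-separable} gives (2), and Łoś gives (1) and (3).

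The completeness direction fails because the elementary-equivalence criterion you invoke is false in exactly the case the theorem is about. The Cherlin--van den Dries--Macintyre/Chatzidakis criterion (same relative separable algebraic closure) needs both fields to be \emph{separable} over the common subfield. When $L/F$ is inseparable, the $\mathcal{L}_{\mathrm{ring}}(F)$-type is not determined by the relative algebraic closure plus the pattern of which subsets of a $p$-basis become dependent. Here is a counterexample.
\begin{itemize}
\item Take $K=\mathbb{F}_p(a)^{\mathrm{sep}}$ (so $f=0$, $e=1$) and $F=K(t)$.
\item Let $L_1$ be the separable closure of $K(x_0^{1/p^\infty},x_1^{1/p^\infty})$ for independent transcendentals $x_0,x_1$, with $t:=x_0^p+x_1^pa$.
\item Let $L_2$ be the separable closure of $K(x_0^{1/p^\infty},z_0^{1/p^\infty},z_1^{1/p^\infty})$ with $x_1:=z_0^p+z_1^pa$ and again $t:=x_0^p+x_1^pa$.
\end{itemize}
Both fields are separably closed with $p$-basis $\{a\}$ and satisfy (1)--(3). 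In both, $\{a,t\}$ is the only dependent subset of the $p$-basis of $F$, and $n=1$. Since $dt=x_1^p\,da$ with $x_1$ transcendental over $F$, any $u\in F^{\mathrm{sep}}$ has $du=(A+Bx_1^p)\,da$ with $A,B\in F^{\mathrm{sep}}$. So $du=0$ forces $u\in(F^{\mathrm{sep}})^p$, whence $F^{\mathrm{alg}}\cap L_i=F^{\mathrm{sep}}$ for both. Your data (1)--(3) therefore agree. But the coordinates of $t$ over $L_i^p$ with respect to $1,a,\dots,a^{p-1}$ are unique. Hence $\exists x_0,x_1,y\,(t=x_0^p+x_1^pa\wedge x_1=y^p)$ holds in $L_1$ and fails in $L_2$.

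So matching your data in infinitely many $Fv$ does not give an ultraproduct equivalent to $L$. You would have to control the algebraic behaviour of iterated $p$-coordinates, which typically lie in transcendental extensions of $F$. Your places over $g(t)^p-c\,h(t)^p$ force a single dependence but give no such control. The simultaneous handling of separable and inseparable data, which you leave open, is the heart of the proof.

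\textbf{What the paper does instead.} It avoids invariants entirely.
\begin{itemize}
\item By Proposition~\ref{prop:pac-normal-form}, modulo $f'$-free PAC fields of imperfect exponent $e$ containing $F$, every sentence is a disjunction of existential sentences $\psi$. These combine equations, conditions ``$p_i$ has no root or is inseparable'', and conditions ``$q_{i1},\dots,q_{ij_i}$ are $p$-independent''.
\item Given $L\models\psi$, take a finitely generated $K$-algebra $R\subseteq L$ containing a holomorphy ring $R_0\ni t$, the witnesses, the irreducible factors $r_j$, and a finite $p$-independent set $B^\ast$ controlling the $q$'s.
\item Produce infinitely many surjective $K$-homomorphisms $h\colon R\to K''$ with $h(t)$ a primitive element of $K''/K$, the $h(r_j)$ irreducible, and $h(B^\ast)$ $p$-independent.
\end{itemize}
The primitive-element condition (Corollary~\ref{cor:pac-primitive-element}) makes $h|_{R_0}$ the residue map of a place $v$ with $Fv=K''$ exactly. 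Without it the witnesses would only lie in an extension of $Fv$, and your Chebotarev sketch does not address this.

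The combination step is Lemma~\ref{lem:inseparablisation}.
\begin{itemize}
\item Condition (2) forces $t\notin L^pK$ unless $L/K$ is separable, and in the inseparable case gives $\dim\Omega_{E/K}=d+1$ for $E=\Frac(R)$.
\item With $m$ minimal such that $E^{p^m}K/K$ is separable, and a suitable relative $p$-basis $B\ni t$, the field $E'=E^{p^m}K(B\setminus\{t\})$ is separable over $K$ and $E=E'(t)$.
\item One applies Lemma~\ref{lem:separable-half} to $t^{p^m}$ and the polynomials $p_i^{(p^m)}$, extends uniquely to $t$, and recovers $p$-independence by Lemma~\ref{lem:p-indep-simple-ext}.
\end{itemize}
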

The proof relies on the basic model theory of PAC fields.
Our results are interesting already for $f=0$, i.e.\ the case of separably closed base field $K$,
where (for $e>0$) our results of the previous section are key to deducing information
on the universal/existential theories of almost all completions
(since the method of \cite[Section~6]{DittmannFehm_completions} only works for perfect fields $K$).
However, Theorem~\ref{thm:almost-all-res-pac} also appears to be new for $e=0$, i.e.\ the case of perfect $K$;
in the case of finite $K$ (which is somewhat parallel to PAC fields),
the Chebotarev Density Theorem plays a crucial role, and a substitute for it is needed in the present case.

\begin{remark}\label{rem:aa-res-description-first-order}
  Let us observe straight away that conditions (1), (2) and (3) on extension fields $L/F$ from Theorem~\ref{thm:almost-all-res-pac} are all first-order axiomatisable
  in the language $\mathcal{L}_{\mathrm{ring}}(F)$;
  this will be used in the proof of the theorem.
  The property of being PAC is first-order \cite[Proposition~11.3.2]{FriedJarden}.
  Regarding (2), it is clear how to axiomatise the condition on the imperfect exponent,
  and for the conditions from Lemma~\ref{lem:almost-separable},
  we fix a $p$-basis $B$ of $F$ and $\mathcal{L}_{\mathrm{ring}}(F)$-axiomatise condition (3)$_B$ of that lemma.

  Note that condition (3) forces the absolute Galois group of $L$ to have the so-called embedding property,
  since either it is profinite free or $L$ is $\omega$-free,
  see \cite[Definition~24.1.2, Proposition~17.7.3]{FriedJarden}.
  There are first-order sentences axiomatising that the absolute Galois group of $L$ has the embedding property
  \cite[Paragraph~(5.14)]{Chatzidakis_properties-forking-omega-free-pac},
  and we use these as part of our axiomatisation of (3).
  Free profinite groups on $f<\omega$ many generators
  are uniquely determined by having precisely those finite groups as (continuous) quotients which can be generated
  by $f$ elements \cite[Proposition~16.10.7~(b)]{FriedJarden}.
  On the other hand, $L$ is $\omega$-free if and only if its absolute Galois group has the embedding property
  and admits all finite groups as quotients \cite[Theorem~24.8.1]{FriedJarden}.
  It is now clear how to axiomatise condition (3) from the theorem if $f \in \{ 0, 1, \omega \}$,
  since a finite group arises as a quotient of the absolute Galois group if and only if $L$
  has a finite Galois extension with this Galois group.
  If $1 < f < \omega$, we axiomatise condition (3) by expressing the following in a first-order way:
  Firstly, if the absolute Galois group of $L$ admits some finite group as quotient
  which cannot be generated by fewer than $f'$ elements for some $f'$,
  then every finite group which can be generated by $f'$ elements appears as a quotient.
  Secondly, if some separable irreducible polynomial over $K$ of degree $n$ has a zero in $L$,
  then all finite groups which can be generated by $f'=1+n(f-1)$ elements occur as
  quotients of the absolute Galois group of $L$.
  Thirdly, if the absolute Galois group of $L$ admits some finite group as quotient which
  cannot be generated by fewer than $f'$ elements for some $f'$,
  but it fails to admit some finite group as a quotient which can be generated by $f'+1$ elements,
  then $f' = 1 + n(f-1)$ for some $n$,
  and $L$ contains one of the finitely many separable extension of $K$ of degree $n$.
\end{remark}

The following lemma is somewhat reminiscent of the results on existentially definable subsets of ample fields of \cite{Anscombe_existentially-generated-subfields},
cf.\ also \cite[Theorem~1]{Fehm_subfields-ample}.
\begin{lemma}\label{lem:ample-primitive-element}
  Let $K_0$ be ample, $K/K_0$ a finite separable extension.
  Let $X/K$ be a smooth geometrically integral scheme of finite type, $f \colon X \to \mathbb{A}^1_K$ a non-constant morphism.
  If $X(K) \neq \emptyset$, then there exists $x \in X$ with $f(x) \in \mathbb{A}^1_K(K) = K$ a primitive element of $K/K_0$.
\end{lemma}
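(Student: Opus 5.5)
The plan is to reduce to density of rational points on a Weil restriction. Since $K/K_0$ is finite separable, it has only finitely many intermediate fields. Hence the non-primitive elements of $K$ are exactly the elements of $K_1 \cup \dotsb \cup K_m$, where the $K_i \subsetneq K$ are the proper intermediate fields of $K/K_0$. Each $K_i$ is a $K_0$-linear subspace of $K$ of dimension $< d := [K:K_0]$. So it suffices to find $x \in X(K)$ with $f(x) \notin K_i$ for all $i$; this is the reading of the statement with $x$ a $K$-rational point, which is the meaningful one since $f(x) \in \mathbb{A}^1_K(K)$ is required. First we replace $X$ by an affine open neighbourhood of a given $K$-rational point. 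This keeps $X$ smooth and geometrically integral, keeps $X(K) \neq \emptyset$, and keeps $f$ non-constant, because a non-constant morphism on an irreducible scheme stays non-constant on any dense open.

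Next consider the Weil restriction $R = \mathrm{Res}_{K/K_0} X$, which exists because $X$ is affine. It is smooth over $K_0$, and since $K/K_0$ is separable, $R_{\overline{K_0}} \cong \prod_{\sigma} X^\sigma$, the product over the $d$ embeddings $\sigma \colon K \to \overline{K_0}$. In particular $R$ is geometrically integral, and $R(K_0) = X(K) \neq \emptyset$. Because $K_0$ is ample, $R$ is smooth and geometrically integral with a $K_0$-point, so $R(K_0)$ is Zariski dense in $R$ (the standard characterisation of ampleness).

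The morphism $f$ induces $F = \mathrm{Res}_{K/K_0} f \colon R \to \mathrm{Res}_{K/K_0}\mathbb{A}^1_K \cong \mathbb{A}^d_{K_0}$. On $K_0$-points it is simply $x \mapsto f(x) \in K \cong K_0^d$. Each $K_i$ is a $K_0$-linear subspace $W_i \subsetneq \mathbb{A}^d_{K_0}$, so $Z_i := F^{-1}(W_i)$ is closed in $R$. The key step is to show $Z_i \neq R$. For this it suffices to see that $F$ is dominant, and this can be checked after base change to $\overline{K_0}$. There $F$ becomes the product map $\prod_\sigma f^\sigma \colon \prod_\sigma X^\sigma \to \prod_\sigma \mathbb{A}^1$, under the identification $K \otimes_{K_0} \overline{K_0} \cong \overline{K_0}^d$. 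Each $f^\sigma$ is non-constant from an irreducible variety to $\mathbb{A}^1$, hence dominant, and a product of dominant morphisms of varieties over an algebraically closed field is dominant. So the image of $F$ is not contained in any proper closed subset, in particular not in $W_i$.

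Finally, $U = R \setminus (Z_1 \cup \dotsb \cup Z_m)$ is a non-empty open subset of $R$, and density gives a point of $R(K_0) \cap U$. This corresponds to $x \in X(K)$ with $f(x) \notin \bigcup_i K_i$, i.e.\ $f(x)$ is a primitive element of $K/K_0$. I expect the main care point to be the Weil restriction bookkeeping: its existence, via the affine reduction, and the identification of its geometric fibre as a product, which uses separability of $K/K_0$. The dominance argument itself is routine.
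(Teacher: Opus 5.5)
Your proof is correct and follows the same route as the paper. Both pass to the Weil restriction $\mathrm{Res}_{K/K_0}X$, use ampleness to get Zariski density of its $K_0$-points, and remove the preimages of the finitely many proper $K_0$-linear subspaces coming from the proper intermediate fields of $K/K_0$.

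You differ from the paper only in two points of bookkeeping. First, you check dominance of $\mathrm{Res}_{K/K_0} f$ by base change to $\overline{K_0}$, where it becomes the product $\prod_\sigma f^\sigma$ of dominant maps; the paper instead deduces dominance from flatness of $f$, which is preserved by \'etale Weil restriction and gives openness. Second, you make explicit the reduction to affine $X$ that guarantees the Weil restriction exists as a scheme.
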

\begin{proof}
  We follow the Weil restriction strategy of \cite[Proposition~4.1]{JardenPoonen_galois-points-varieties}.
  Let $R$ denote the Weil restriction functor from schemes over $K$ to schemes over $K_0$.
  Applying it to $f$, we obtain a morphism $R(f) \colon R(X) \to R(\mathbb{A}^1_K) \cong \mathbb{A}^d_{K_0}$,
  where $d = [K : K_0]$.
  The scheme $R(X)$ is smooth since $X$ is smooth,
  and geometrically integral since this is the case for $X$.
  Furthermore $R(f)$ is flat:
  the morphism $f$ is flat as a non-constant morphism from an integral scheme to $\mathbb{A}^1_K$ \cite[Proposition~14.4]{GoertzWedhorn1},
  and étale Weil restriction preserves flatness
  (see \cite[Proposition~4.10.1 and Remark~4.10.2]{Scheiderer_real-and-etale-cohomology}, noting that
  flatness satisfies fpqc descent).
  In particular $R(f)$ is open, and thus has dense image.
  Note also that $R(X)$ has a (smooth) $K_0$-point, corresponding to any $K$-point on $X$.
  Given an intermediate field $K_1$, $K_0 \subseteq K_1 \subsetneq K$,
  the $K_1$-points on $\mathbb{A}^1_{K_1}$ correspond to $K_0$-points of $\mathbb{A}^d_{K_0}$
  in a linear proper subspace.
  Since there are only finitely many such intermediate fields, the union of the corresponding subspaces
  forms a closed proper subset of $\mathbb{A}^d_{K_0}$.
  Passing to preimages under $R(f)$,
  and noting that the set of $K_0$-points of $R(X)$ is Zariski dense by ampleness,
  we obtain a $K_0$-point $x_0 \in R(X)(K_0)$ whose image under $R(f)$ is not contained
  in the closed subset of $\mathbb{A}^d_{K_0}$ under discussion.
  Then $x_0$ corresponds to a point $x \in X(K)$ with $f(x)$ not in $\mathbb{A}^1_{K_0}(K_1)$
  for any $K_1$,
  i.e.\ $f(x)$ is a primitive element of $K/K_0$.
\end{proof}

We deduce the following sharpening of the PAC property (cf.\ \cite[Proposition~11.1.3]{FriedJarden}).
\begin{corollary}\label{cor:pac-primitive-element}
  Let $K_0$ be a PAC field, $K/K_0$ a finite separable extension.
  Let $R$ be an integral domain finitely generated over $K$ with quotient field regular over $K$.
  Let $s \in R \setminus K$.
  Then there exists a $K$-homomorphism $R \to K$ sending $s$ to a primitive element of $K/K_0$.
\end{corollary}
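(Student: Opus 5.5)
We will deduce this from Lemma~\ref{lem:ample-primitive-element}, applied with $K_0$ PAC (hence ample) and the given finite separable extension $K/K_0$. The geometric input is $X = \Spec R$ and the morphism $f \colon X \to \mathbb{A}^1_K$ given by $s$. A $K$-rational point of $X$ is exactly a $K$-homomorphism $R \to K$, and $f$ evaluated at such a point is the image of $s$. So it suffices to arrange that $X$ is smooth, geometrically integral and of finite type over $K$, that $f$ is non-constant, and that $X(K) \neq \emptyset$.

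The steps are as follows.

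First, since $\Frac(R)$ is regular over $K$, the scheme $\Spec R$ is geometrically integral. Its generic fibre is smooth because $\Frac(R)/K$ is separable, and so the smooth locus is a non-empty open subset. We choose $0 \neq g \in R$ with $\Spec R[g^{-1}]$ smooth over $K$, and put $X = \Spec R[g^{-1}]$. Then $X$ is smooth, geometrically integral and of finite type over $K$, and any $K$-homomorphism $R[g^{-1}] \to K$ restricts to one on $R$.

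Second, $f$ is non-constant. As $K$ is algebraically closed in $\Frac(R)$ by regularity, $s \notin K$ means $s$ is transcendental over $K$, so $f \colon X \to \mathbb{A}^1_K$ is dominant.

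Third, we need $X(K) \neq \emptyset$. Since $K_0$ is PAC, so is its algebraic extension $K$ \cite[Corollary~11.2.5]{FriedJarden}. A PAC field has rational points on every non-empty geometrically integral variety, so $X(K) \neq \emptyset$. If $\dim X = 0$, then $R = K$, contradicting $s \notin K$, so this case does not occur.

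Finally, Lemma~\ref{lem:ample-primitive-element} gives a point $x \in X$ with $f(x) \in K$ a primitive element of $K/K_0$. Reading its proof, the point produced is a $K$-rational point: it corresponds to $x_0 \in R(X)(K_0) = X(K)$. So $x$ gives a $K$-homomorphism $R[g^{-1}] \to K$, and composing with $R \to R[g^{-1}]$ yields the desired homomorphism sending $s$ to a primitive element.

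The only delicate point is this last bookkeeping. The lemma states only $x \in X$, but we need $x \in X(K)$; we take this from the Weil restriction argument, where the point comes from $R(X)(K_0) = X(K)$. Apart from that, the argument only requires checking that the shrinking to the smooth locus keeps all hypotheses intact, which is routine.
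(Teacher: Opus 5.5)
Your proposal is correct and follows essentially the same route as the paper: localise to a smooth geometrically integral affine variety, use $s$ (transcendental over $K$ by regularity) to get a non-constant morphism to $\mathbb{A}^1_K$, obtain a $K$-point from the PAC property of $K$, and apply Lemma~\ref{lem:ample-primitive-element}. Your remark that the point produced by that lemma is genuinely $K$-rational, since it comes from $R(X)(K_0)=X(K)$, is a worthwhile clarification that the paper leaves implicit.
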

\begin{proof}
  The field $K$ is PAC \cite[Corollary~11.2.5]{FriedJarden}.
  Since the quotient field of $R$ is regular over $K$, $\operatorname{Spec} R$ is a geometrically integral $K$-variety.
  We may freely localise $R$ at finitely many elements.
  The variety $\operatorname{Spec} R$ is generically smooth,
  so after localising we may as well suppose that it is smooth.
  The element $s$ gives an embedding $K[x] \to R$, sending $x$ to $s$,
  and therefore a non-constant morphism $\operatorname{Spec} R \to \mathbb{A}^1_K$.
  By the PAC property, $\operatorname{Spec}(R)(K) = \operatorname{Hom}(R, K)$ is non-empty.
  The result now follows from Lemma~\ref{lem:ample-primitive-element}.
\end{proof}

The following lemma is a rather precise existence statement for homomorphisms $R \to K'$
onto finite extensions $K'/K$ for a PAC field $K$ and an integral domain $R$ finitely generated over $K$.
In other words, it gives closed points of integral geometrically reduced affine $K$-varieties $\Spec R$ with certain properties.
This is related to the role of the Chebotarev Density Theorem for varieties over finite fields.
It also seems related to Jarden's Čebotarev property of suitable PAC fields,
as discussed for instance in \cite[Remark after Corollary 14]{FriedHaranJarden_galois-stratification-frobenius},
but it does not seem to obviously follow from it.
\begin{lemma}\label{lem:separable-half}
  Let $K$ be an $f$-free PAC field of characteristic $p$ for some $0 \leq f \leq \omega$.
  Let $R$ be an integral domain finitely generated over $K$.
  Assume that the quotient field $L$ of $R$ is separable over $K$.
  Let an element $s \in R$ be given which is not algebraic over $K$.
  Let $p_1, \dotsc, p_n \in R[X]$ be monic polynomials
  which are separable and irreducible as elements of $L[X]$.
  Let $b_1, \dotsc, b_e \in R$ be elements which are $p$-independent in $L$,
  where $p^e \leq [K : K^p]$.
  Let $K' = L \cap K^{\mathrm{sep}}$, a finite separable extension of $K$.
  We write $f' = 1 + [K' : K] (f-1)$,
  so that $K'$ is $f'$-free.
  Assume that the Galois group of the splitting field of the $p_i$ over $L$
  can be generated by $f'$ elements.

  Then there exists a surjective $K$-homomorphism $h \colon R \to K'$
  such that $h(s)$ is a primitive element of $K'/K$,
  the images of the $p_i$ remain irreducible in $K'[X]$,
  and the images of the $b_i$ are $p$-independent in $K'$.
\end{lemma}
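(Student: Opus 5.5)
We construct $h$ by gluing three ingredients: a Galois-theoretic descent from $L$ to $K'$, Corollary~\ref{cor:pac-primitive-element} applied over $K'$, and an explicit encoding of the $p$-independence and irreducibility conditions as points of an auxiliary variety.

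First we base change to $K'$. Since $K' = L \cap K^{\mathrm{sep}}$ and $L/K$ is separable, $L/K'$ is regular. After localising $R$ at finitely many elements, we may assume $K' \subseteq R$. A $K'$-homomorphism $R \to K'$ is automatically a surjective $K$-homomorphism. So it suffices to produce a $K'$-rational point of $\Spec R$, which is geometrically integral over $K'$, with three properties.

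\emph{Irreducibility of the images of the $p_i$.} Let $M$ be the splitting field of the $p_i$ over $L$ and $G = \Gal(M/L)$. Write $M \cap K^{\mathrm{sep}} = K''$, a finite Galois extension of $K'$ with group $G/G_0$, where $G_0 = \Gal(M/LK'')$ is the geometric part.
\begin{itemize}
\item We first choose an epimorphism $\sigma\colon G_{K'} \to G$ compatible with the restriction $G_{K'} \to \Gal(K''/K')$. This uses that $G_{K'}$ is free on $f'$ generators, or $\omega$-free, and that $G$ is generated by $f'$ elements. The free case is the embedding property of free profinite groups (Gaschütz's lemma, \cite[Lemma~17.7.1]{FriedJarden}); the $\omega$-free case is solvability of finite embedding problems.
\item Next, after enlarging $R$ to contain the roots, we let $S$ be the integral closure of $R$ in $M$, localised so that $S/R$ is finite étale with group $G$.
\item We then twist $\Spec S$ by $\sigma$ to get a $K'$-variety $W$ whose $K'$-points map to $K'$-points of $\Spec R$ with decomposition group equal to the full $\sigma(G_{K'}) = G$. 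This is the standard PAC field-crossing argument \cite[Lemma~24.1.1 / Proposition~24.1.4]{FriedJarden}.
\end{itemize}
For such points, the Frobenius-type decomposition group is all of $G$. Since each $p_i$ is irreducible over $L$, $G$ acts transitively on its roots, and hence each image $h(p_i)$ stays irreducible over $K'$. The variety $W$ is geometrically integral precisely because $\sigma$ restricts correctly on $K''$.

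\emph{$p$-independence and primitivity.} To force the images of $b_1,\dots,b_e$ to be $p$-independent, we adjoin to $W$ (or $R$) auxiliary data that encodes it. Fix a $p$-basis $c_1,\dots,c_e$ of $K$, which exists since $p^e \le [K:K^p]$; it remains a $p$-basis of $K'$, as $K'/K$ is separable. We then require $h(b_i) = \sum_\alpha y_{i\alpha}^p c^\alpha$ with an invertible Jacobian-type determinant $\det\bigl(\partial h(b_i)/\partial c_j\bigr)$. Concretely, we introduce new variables $y_{i\alpha}$ and the relations $b_i = \sum_\alpha y_{i\alpha}^p c^\alpha$, and invert the determinant expressing that the coordinates of $db_i$ in the basis $dc_j$ are independent.

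The key check is that this extended ring $R^\sharp$ still has a regular (geometrically integral) fraction field over $K'$. We expect this via the hypothesis that the $b_i$ are $p$-independent in $L$: then $L(y)$ is obtained by adjoining $p$-th roots with a separable transcendence structure. Alternatively, we choose $y_{i\alpha}$ as free variables for all but the coefficients that determine the $c$-expansion, and express the $b_i$ as a change of $p$-basis of $L\otimes K'^{1/p}$. Finally, we apply Corollary~\ref{cor:pac-primitive-element} to $R^\sharp$ (or its twist $W$) over $K_0 = K$, with the element $s$. This gives a $K'$-point at which $h(s)$ is primitive for $K'/K$.

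\emph{Main obstacle.} We expect the main obstacle to be combining the twisting construction with the primitive-element and $p$-independence conditions while keeping a single geometrically integral variety. Concretely, we must show that the fibre product of the twisted cover $W$ with $\Spec R^\sharp$ over $\Spec R$ remains geometrically integral. This should follow from linear disjointness of $M$ and the purely inseparable-flavoured extension $\Frac(R^\sharp)$ over $L$: one is separable over $L$ and the other is regular over $L$ once we verify that the Jacobian condition makes it separable. We would first try to establish this regularity directly via differentials, using the criterion that a finitely generated extension is separable iff $p$-independent sets are preserved.
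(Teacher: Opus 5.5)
There is a genuine gap, in the $p$-independence step. The rest of your plan follows the paper. You pass to $K'$, over which $L$ is regular, and use the field-crossing construction behind \cite[Lemma~24.1.1]{FriedJarden} with an epimorphism $G_{K'}\to\Gal(M/L)$ compatible on $M\cap K^{\mathrm{sep}}$. You then apply Corollary~\ref{cor:pac-primitive-element} to the \emph{last} geometrically integral variety of the construction to make $h(s)$ primitive; that last step is exactly the paper's new input. The paper, however, also takes the $p$-independence step from the proof of \cite[Lemma~24.1.1]{FriedJarden}. There it is performed on top of the field-crossing ring $U$ and yields $U'$ finite over $U$, with quotient field still regular over $K'$, every $K'$-point of which has full decomposition group and $p$-independent images. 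So the gluing problem you single out does not arise. The device you build yourself instead does not work as stated.

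\textbf{The $p$-basis claim.} The bound $p^e\le[K:K^p]$ gives $e$ elements of $K$ that are $p$-independent, not a $p$-basis of size $e$.

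\textbf{The device fails under absolute $p$-independence.} Your relations $b_i=\sum_\alpha y_{i\alpha}^pc^\alpha$ amount to adjoining $p$-th roots of $\beta_i=b_i-\sum_{\alpha\neq0}y_{i\alpha}^pc^\alpha$ to a purely transcendental extension of $L$. Since $d\beta_i\equiv db_i$ modulo the image of $\Omega_K$, the resulting ring is a domain with fraction field separable over $K$ (equivalently over $K'$) only if $db_1,\dots,db_e$ are linearly independent modulo $\Omega_K\otimes_K L$. That is, the $b_i$ must be $p$-independent \emph{relative to $K$}, whereas the hypothesis gives only absolute $p$-independence. Families containing elements of $K$ are exactly what is fed into this lemma later: Lemma~\ref{lem:inseparablisation} uses $c_1,\dots,c_{e'}\in K$, and the proof of Theorem~\ref{thm:almost-all-res-pac} uses elements of $B_K'$.

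\textbf{A concrete failure.} Take $e=1$, $K$ of imperfect exponent $1$ with $p$-basis $\{c\}$, and $b_1=c$. Over $K^{1/p}$ the equation $\sum_\alpha Y_\alpha^pc^\alpha=c$ becomes $\bigl(\sum_\alpha Y_\alpha c^{\alpha/p}-c^{1/p}\bigr)^p=0$. So your auxiliary variety is geometrically non-reduced, inverting the Jacobian does not change this, and the PAC property gives nothing.

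\textbf{The gluing justification.} For $e\geq1$, $\Frac(R^\sharp)$ is never separable over $L$, since it is purely inseparable over a purely transcendental extension of $L$. What can be used is that $L$ is separably closed in it, together with separability over $K'$ of the compositum with the field-crossing field. That separability again requires relative $p$-independence.

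\textbf{How to repair it.} You would need an extra exchange argument in $\Omega_L$ that splits off the part of $\operatorname{span}(db_1,\dots,db_e)$ lying in $\Omega_K\otimes_K L$ modulo the rest. That part is controlled by a non-vanishing minor of coefficients along finitely many elements of a $p$-basis of $K$, which $h$ fixes. You would apply a device like yours only to the relatively $p$-independent remainder, with the $c$'s chosen disjoint from those elements. Alternatively, take this step from Fried--Jarden, as the paper does.
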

\begin{proof}
  For the case $K'=K$ (which is automatic if $f=0$), this follows from \cite[Lemma~24.1.1]{FriedJarden},
  as we now explain.
  Let $L'$ be the splitting field of the $p_i$ over $L$.
  We may freely localise $R$ at finitely many elements.
  Doing so, we may suppose that $R$ is integrally closed,
  that some specific primitive element of $L'/L$ is integral over $R$ and that the discriminant of its minimal polynomial
  is a unit of $R$.
  This means that the integral closure $S$ of $R$ in $L'$
  is a Galois cover of $R$ in the sense of \cite[Definition~6.1.3]{FriedJarden},
  associated to the Galois extension $L'/L$, see \cite[Lemma~6.1.2]{FriedJarden}.
  Since the $p_i$ are separable as polynomials over $K[X]$,
  the ideal in $K[X]$ generated by $p_i$ and its formal derivative $p_i'$ is the unit ideal for each $i$.
  Localising $R$ further, we may suppose that $p_i$ and $p_i'$ already generate the unit ideal in $R[X]$.\footnote{%
    One can show that this is in fact already implied by the Galois cover condition,
    since that forces $\Spec(R) \to \Spec(S)$ to be étale.}
  Since $L/K$ is separable and $K'=K$, the extension $L/K$ is regular.

  We now wish to apply \cite[Lemma~24.1.1]{FriedJarden}, with $F, E, E'$ in the notation there
  equal to $L', L, L$ in ours.
  Since $\Gal(K^{\mathrm{sep}}/K)$ is free on $f=f'$ many generators
  and $\Gal(L'/L)$ can be generated by $f'$ many elements,
  we can find an epimorphism $\tilde{\gamma} \colon \Gal(K^{\mathrm{sep}}/K) \to \Gal(L'/L)$
  whose composition with the restriction map $\Gal(L'/L) \to \Gal((L' \cap K^{\mathrm{sep}})/K)$ is
  the restriction $\Gal(K^{\mathrm{sep}}/K) \to \Gal((L' \cap K^{\mathrm{sep}})/K)$
  (this is an instance of \cite[Proposition~17.7.3]{FriedJarden} if $f$ is finite,
  and otherwise follows from the definition of $\omega$-freeness).
  Then, according to \cite[Lemma~24.1.1]{FriedJarden},
  there exists a surjective $K$-homomorphism $h \colon S \to M$ with $M$ the finite Galois extension of $K$
  associated to $\ker(\tilde{\gamma})$
  such that the images of the $b_i$ are $p$-independent in $M$,%
  \footnote{The lemma has ``$m$ bounded by $[K : K^p]$'', when it should be $p^m \leq [K : K^p]$, or equivalently ``$m$ bounded by the imperfect exponent of $K$''.}
  $h(R) = K$,
  and the associated decomposition group is $\Gal(L'/L)$.
  The last statement means that we have an isomorphism $\Gal(L'/L) \to \Gal(M/K)$
  given through the action of $\Gal(L'/L)$ on $S$ and passing to images under $h$,
  see \cite[Lemma~6.1.4]{FriedJarden}.
  Since $p_i$ and $p_i'$ generate the unit ideal in $R[X]$,
  similarly their images under $h$ generate the unit ideal in $K[X]$,
  i.e.\ the image of $p_i$ in $K[X]$ is a separable polynomial for each $i$.
  Since $\Gal(L'/L)$ acts transitively on the roots of each $p_i$,
  the same must happen in the image.
  Therefore the images of the $p_i$ in $K[X]$ are still irreducible.
  Now $h|_R \colon R \to K$ is the desired homomorphism.

  For the general case $K' \supseteq K$,
  we may assume that $K' \subseteq R$ by replacing $R$ by a localisation.
  We now want to work as in the first part of the proof, after replacing $K$ by $K'$.
  The only challenge is to ensure that the image of $s$ is a primitive element of $K'/K$
  in the application of \cite[Lemma~24.1.1]{FriedJarden}.
  For this, we inspect the proof of this lemma.
  It works by constructing an integral domain $U$, finite over $R$,
  whose quotient field $D$ is a finite separable extension of $L$, regular over $K'$.
  This $U$ is constructed (by a so-called field crossing argument)
  so that any $K'$-homomorphism $U \to K'$
  (which exists by the PAC property)
  induces a $K'$-homomorphism $S \to K'$ such
  the statement on decomposition groups above holds with respect to the ring extension $S/R$,
  and so the images of the $p_i$ under this homomorphism are irreducible in $K'[X]$.
  To ensure that the images of the $b_i$ remain $p$-independent under $U \to K'$,
  the proof of \cite[Lemma~24.1.1]{FriedJarden} invokes
  \cite[Proposition~10.11]{FriedJarden}.
  Inspection of this proof shows that this works by replacing $U$
  by another integral domain $U'$ (called $S$ in the proof),
  finite over $U$ and with quotient field regular over $K'$,
  such that any $K'$-homomorphism $U' \to K'$
  satisfies the desired statement on $p$-independence.
  By Corollary~\ref{cor:pac-primitive-element}, we can choose the homomorphism $U' \to K'$
  with the additional restriction that the image of $s$ is a primitive element of $K'/K$.
  Tracing back our steps, this ensures that all desired properties are satisfied.
\end{proof}

With a view towards generalising Lemma~\ref{lem:separable-half} beyond the case of $L/K$ separable,
we need the following easy lemma on $p$-independence in simple inseparable extensions of fields.
\begin{lemma}\label{lem:p-indep-simple-ext}
  Let $K$ be a field of characteristic $p$, let $x_1, \dotsc, x_n \in K$ be $p$-independent, and let $m \geq 1$.
  Then $x_1^{1/p^m}, x_2, \dotsc, x_n$ are $p$-independent in $K(x_1^{1/p^m})$.
\end{lemma}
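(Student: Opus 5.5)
We work with the degree $[M : M^p]$ and the $M^p$-linear structure, where $M = K(y)$ and $y = x_1^{1/p^m}$. Since $x_1,\dotsc,x_n$ are $p$-independent in $K$, we may extend them to a $p$-basis $B$ of $K$. The plan is to show that $(B \setminus \{x_1\}) \cup \{y\}$ is a $p$-basis of $M$; the claim then follows, since subsets of $p$-bases are $p$-independent.

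\emph{Spanning.} We have $M^p = K^p(y^p) = K^p(x_1^{1/p^{m-1}})$. Hence
\[ M^p\bigl(y, B \setminus \{x_1\}\bigr) \supseteq K^p\bigl(B \setminus \{x_1\}\bigr)(y) \ni y^{p^m} = x_1 , \]
so this field contains $K^p(B) = K$ and $y$, i.e.\ it is all of $M$.

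\emph{Independence.} The plan is a degree count. First, $x_1 \notin K^p$, so $x_1$ is not a $p$-th power in $K$. It follows that $X^{p^m} - x_1$ is irreducible over $K$, so $[M : K] = p^m$. Applying the Frobenius isomorphism gives $[M^p : K^p] = p^m$ as well. For finite imperfect exponent $[K : K^p] = p^e$ this yields
\[ [M : M^p] = \frac{[M : K]\,[K : K^p]}{[M^p : K^p]} = p^e . \]
A spanning set of size $e$ over $M^p$ in the sense above generates an extension of degree at most $p^e$, with equality only if the set is $p$-independent. This gives the result when $e$ is finite.

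\emph{Infinite $e$.} Here the degree count fails, so we argue locally. Suppose some finite subset of $(B\setminus\{x_1\}) \cup \{y\}$ is $p$-dependent in $M$. This dependence involves only finitely many elements of $B$. Let $B_0 \subseteq B$ be a finite subset containing $x_1$ and all the elements involved. Then all the relevant coefficients lie in a subfield of the form $K_1(y)$, where $K_1 = K^p(B_0 \cup B')$ for some finite further set $B'$. This reduces us to a finitely generated situation to which the same degree count applies.

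The cleaner alternative is via differentials: show that the $dy, dx_2, \dotsc, dx_n$ are linearly independent in $\Omega_M$. This route seems less direct, so I would stick to the degree argument.

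The main obstacle is making the reduction in the infinite case rigorous. To handle it, I would apply the finite case with $K$ replaced by $K^p(B_0)$, whose $p$-basis is $B_0$. I would then use that $p$-independence over the relevant subfield lifts to $M$, because $M$ is a free module over the compositum in question.
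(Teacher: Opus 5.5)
Your spanning argument and the degree count for finite imperfect exponent are correct. The case $[K:K^p]=\infty$, however, has a genuine gap, and that case matters: the lemma is stated for arbitrary $K$, and the paper later uses it for fields that may have $e=\infty$.

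The reduction you sketch fails at its key step, because $K^p(B_0)$ does \emph{not} have $p$-basis $B_0$ unless $B_0=B$. Its field of $p$-th powers is $K^{p^2}(B_0^p)$, and its $p$-basis is $B_0\cup(B\setminus B_0)^p$. For example, $K=\mathbb{F}_p(s,t)$ and $B_0=\{s\}$ give $K^p(B_0)=\mathbb{F}_p(s,t^p)$, whose $p$-basis is $\{s,t^p\}$. So $K^p(B_0\cup B')$ has infinite imperfect exponent whenever $K$ does, and your finite case never applies to it.

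Your accompanying justification, that $p$-independence lifts to $M$ because $M$ is free over the compositum, is also no reason. Every field is free over every subfield, yet $p$-independence is not preserved in extensions: $t$ is $p$-independent in $\mathbb{F}_p(t)$ but is a $p$-th power in $\mathbb{F}_p(t^{1/p})$. What you actually need is the other direction: a dependence relation in $M$ descends to any subfield containing the elements and coefficients involved. Combine this with a subfield of genuinely finite imperfect exponent. One choice is the subfield of $K$ generated over $\mathbb{F}_p$ by $x_1,\dotsc,x_n$ and the $K$-coordinates, with respect to $1,y,\dotsc,y^{p^m-1}$, of those coefficients. That would repair the argument.

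More simply, the detour through a $p$-basis of $K$ is unnecessary, because the degree count can be done inside a finite extension of $K^p$. Let $y=x_1^{1/p^m}$ and $M=K(y)$. Then $M^p(y,x_2,\dotsc,x_n)=K^p(y,x_2,\dotsc,x_n)=K^p(x_1,\dotsc,x_n)(y)$. This field has degree $p^n\cdot p^m$ over $K^p$: first, $[K^p(x_1,\dotsc,x_n):K^p]=p^n$; second, $y$ has degree exactly $p^m$ over $K^p(x_1,\dotsc,x_n)$, since it already has degree $p^m$ over $K$. Frobenius gives $[M^p:K^p]=[M:K]=p^m$. Hence $[M^p(y,x_2,\dotsc,x_n):M^p]=p^n$, which is $p$-independence, with no assumption on $[K:K^p]$.

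This is more elementary than the paper's proof. The paper quotes MacLane's theorem that $x_2,\dotsc,x_n$ stay $p$-independent in $K(x_1^{1/p^\infty})$. It then adjoins $y$, using $y\notin L^p(x_2,\dotsc,x_n)\subseteq K(x_1^{1/p^{m-1}})$.
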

\begin{proof}
  By \cite[Theorem~21]{MacLane_sep-trans-bases}, the elements $x_2, \dotsc, x_n$ are $p$-independent in $K(x_1^{1/p^\infty})$,
  and so a fortiori $p$-independent in its subfield $L := K(x_1^{1/p^m})$.
  Since $x_1^{1/p^m}$ is not contained in $L^p(x_2, \dotsc, x_n) \subseteq K(x_1^{1/p^{m-1}})$,
  it follows that $x_1^{1/p^m}, x_2, \dotsc, x_n$ must be $p$-independent in $L$.
\end{proof}

\begin{lemma}\label{lem:inseparablisation}
  Let $K$ be an $f$-free PAC field of characteristic $p$ for some $0 \leq f \leq \omega$.
  Let $R$ be an integral domain finitely generated over $K$ with quotient field $L$.
  Assume that the extension $L/K$ satisfies the equivalent conditions of Lemma~\ref{lem:almost-separable}.
  Let an element $s \in R \setminus L^p K$ be given which is not algebraic over $K$.
  Let $p_1, \dotsc, p_n \in R[X]$ be monic polynomials
  which are separable and irreducible as elements of $L[X]$.
  Let $s=b_1, \dotsc, b_e \in R$ be elements which are $p$-independent in $L$ over $K$,
  and let $c_1, \dotsc, c_{e'} \in K$ such that $b_1, \dotsc, b_{e}, c_1, \dotsc, c_{e'}$ are $p$-independent in $L$
  and $[K : K^p] \geq p^{e+e'}$.
  Let $K' = L \cap K^{\mathrm{sep}}$.
  We write $f' = 1 + [K' : K] (f-1)$,
  so that $K'$ is $f'$-free.
  Assume that the Galois group of the splitting field of the $p_i$ over $L$
  can be generated by $f'$ elements.

  Then there exists a purely inseparable finite extension $K''/K'$,
  a surjective $K$-homomorphism $h \colon R \to K''$
  such that $h(s)$ is a primitive element of $K''/K$,
  the images of the $p_i$ remain irreducible in $K''[X]$,
  and $h(b_1), \dotsc, h(b_{e}), h(c_1), \dotsc, h(c_{e'})$ are $p$-independent in $K''$.
\end{lemma}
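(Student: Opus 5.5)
We reduce to Lemma~\ref{lem:separable-half} by adjoining a $p^m$-th root of $s$ so that the quotient field becomes separable over $K$. Since $L/K$ satisfies the conditions of Lemma~\ref{lem:almost-separable}, the kernel of $\Omega_K \otimes L \to \Omega_L$ is at most one-dimensional.

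If $L/K$ is already separable, take $m=0$, $K''=K'$. The primitive element requirement and the irreducibility of the $p_i$ come from Lemma~\ref{lem:separable-half}. For the $p$-independence of the $h(b_i)$ together with the $h(c_j)$, apply Lemma~\ref{lem:separable-half} to the list $b_1,\dots,b_e,c_1,\dots,c_{e'}$; these lie in $R$ and are $p$-independent in $L$, with $p^{e+e'}\le[K:K^p]$ as the lemma requires.

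If $L/K$ is inseparable, let $m\ge 1$ be minimal such that $L_1 := L(s^{1/p^m})$ is separable over $K$. The key point is that such an $m$ exists and that $L_1$ works. The kernel is spanned by some $\omega = \sum \lambda_i\, dc_i \otimes 1$ for a suitable finite part of a $p$-basis of $K$, so some element of $K$ fails to stay $p$-independent in $L$. Because $s\notin L^pK$ and $ds$ together with the $dc_j$ are independent, I expect the relation in $L$ to be of the form that passing to $L(s^{1/p})$, and iterating, absorbs the kernel. Concretely I would argue via Lemma~\ref{lem:almost-separable}~(4) and a dimension count: adjoining $s^{1/p}$ keeps $\dim\Omega_{L_1/K}$ controlled, and Lemma~\ref{lem:p-indep-simple-ext} shows that $s^{1/p^m}, b_2,\dots,b_e, c_1,\dots,c_{e'}$ remain $p$-independent in $L_1$. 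The main obstacle is choosing $m$ correctly and verifying that $L_1/K$ is separable. A possible alternative is to adjoin instead $p^m$-th roots of a suitable element of $K$ after reindexing, and I would check which choice makes the kernel vanish. Call the resulting field $L_1$ in either case.

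Next set $R_1 = R[s^{1/p^m}]$, a domain finitely generated over $K$ with fraction field $L_1$. The polynomials $p_i$ stay separable and irreducible over $L_1$, since $L_1/L$ is purely inseparable. Their splitting field over $L_1$ has the same Galois group as over $L$, and $L_1\cap K^{\mathrm{sep}} = K'$. Apply Lemma~\ref{lem:separable-half} to $R_1$, the element $s_1 = s^{1/p^m}$ (transcendental over $K$), the polynomials $p_i$, and the $p$-independent family $s_1,b_2,\dots,b_e,c_1,\dots,c_{e'}$. This gives a surjective $h_1\colon R_1\to K'$ with $h_1(s_1)$ primitive for $K'/K$, the $p_i$ irreducible over $K'$, and the images $p$-independent in $K'$.

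Finally let $h = h_1|_R$ and $K'' = h(R)$. Then $K''$ contains $K(h(s)) = K(h_1(s_1)^{p^m})$, and we have $K'\supseteq K''\supseteq K'^{p^m}K$. The natural target of the statement is $K''$ purely inseparable over $K'$, and I suspect the intended bookkeeping is dual to mine: $K''\subseteq K'^{1/p^m}$ with the image of $s$ being $h(s)$ and $K'' = K'(h(s))$. I would fix this by renaming, viewing $K'$ inside $K'' := K'(h_1(s_1))$ via the Frobenius twist. The point is that $h_1(s_1)$ is $p$-independent over $K'$ by Lemma~\ref{lem:p-indep-simple-ext} run in reverse, so $K'(h_1(s_1)^{1/p^m})$, or its Frobenius image, is a proper purely inseparable extension. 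Primitivity of $h(s)$ for $K''/K$ then follows from $h_1(s_1)$ generating $K'/K$ together with $h(s)$ generating the inseparable part. Irreducibility of the $p_i$ passes to purely inseparable extensions because they are separable, and $p$-independence of $h(b_1),\dots,h(c_{e'})$ in $K''$ follows from Lemma~\ref{lem:p-indep-simple-ext}. Reconciling which field is the purely inseparable extension of which is the delicate bookkeeping step that I expect to need the most care.
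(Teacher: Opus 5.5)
There is a genuine gap at the central step. You take $m \geq 1$ minimal such that $L_1 = L(s^{1/p^m})$ is separable over $K$, but no such $m$ exists. No extension of $L$ can be separable over $K$ when $L/K$ is inseparable, because separability over $K$ descends to intermediate fields. Indeed, $p$-independence in a larger field implies $p$-independence in a subfield. So a $p$-basis of $K$ that stays $p$-independent in $L_1$ would stay $p$-independent in $L \subseteq L_1$. Hence Lemma~\ref{lem:separable-half} can never be applied to $R_1 = R[s^{1/p^m}]$, and your alternative of adjoining roots of elements of $K$ fails for the same reason.

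The bookkeeping trouble at the end is a symptom of this, not a separate issue. Even granting $h_1 \colon R_1 \to K'$, the restriction $h = h_1|_R$ sends $s$ to the $p^m$-th power $h_1(s_1)^{p^m}$. The image of $R$ is then just $K'$, since $K'^{p^m}K = K'$ for separable $K'/K$. So $h(s)$ cannot belong to a $p$-independent family. No renaming or Frobenius twist repairs this, since $h$ must be a $K$-homomorphism defined on $R$ itself.

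The missing idea is to go down rather than up. You need a subfield $L' \subseteq L$, separable over $K$, with $s^{p^m} \in L'$ and $L = L'(s)$. The paper constructs it as follows.
\begin{enumerate}
\item Extend $b_1, \dotsc, b_e$ to a relative $p$-basis $B$ of $L/K$. It has size $d+1$, where $d$ is the transcendence degree of $L/K$, by condition (4) of Lemma~\ref{lem:almost-separable} together with inseparability.
\item Take $m \geq 1$ minimal with $L^{p^m}K/K$ separable.
\item Arrange, after possibly replacing some $b^\ast \neq s$ by $b^\ast + s$, that $\{ b^{p^m} \colon b \in B,\ b \neq s \}$ is a relative $p$-basis of $L^{p^m}K/K$.
\item Set $L' = L^{p^m}K(\{ b \in B \colon b \neq s \})$. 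This is separable over $K$, and $L = L'(s)$.
\item By minimality of $m$, $s^{p^m}$ is not a $p$-th power in $L'$. It follows that $s^{p^m}, b_2, \dotsc, b_e, c_1, \dotsc, c_{e'}$ are $p$-independent in $L'$.
\item Apply Lemma~\ref{lem:separable-half} to $R' = R^{p^m}K[\{ b \in B \colon b \neq s \}]$, enlarged so that $R'[s] \supseteq R$. Use the distinguished element $s^{p^m}$ and the polynomials $p_i^{(p^m)}$, whose coefficients are raised to the $p^m$-th power so that they lie in $R'[X]$.
\item Extend the resulting surjection $R' \to K'$ to $h \colon R'[s] \to K'' = K'\bigl(h(s^{p^m})^{1/p^m}\bigr)$.
\end{enumerate}
Lemma~\ref{lem:p-indep-simple-ext} is then used in the direction you intended. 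It passes $p$-independence of $h(s^{p^m}), h(b_2), \dotsc, h(c_{e'})$ in $K'$ up to $p$-independence of $h(s), h(b_2), \dotsc, h(c_{e'})$ in $K''$. Irreducibility of the $p_i$ over $K''$ follows from irreducibility of $p_i^{(p^m)}$ over $K'$, separability of the images, and $K''/K'$ being purely inseparable.
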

\begin{proof}
  If $L/K$ is separable, we may take $K''=K'$ and reduce to Lemma~\ref{lem:separable-half}.
  Therefore assume that this is not the case.

  Replacing $R$ by a localisation,
  we may assume for each $i$ that the ideal in $R[X]$ generated by $p_i$ and its formal derivative $p_i'$
  is the unit ideal,
  in order to ensure that for each field with a homomorphism from $R$ the image of $p_i$ is separable.
  
  Let $d$ be the transcendence degree of $L/K$.
  Extend the list of $b_i$ to a relative $p$-basis $B$ of $L/K$.
  By our assumption of the conditions of Lemma~\ref{lem:almost-separable},
  the $L$-vector space $\Omega_{L/K}$ has dimension at most $d+1$.
  Since $L/K$ is inseparable, it must in fact have dimension exactly $d+1$ \cite[Chapitre~V, §~16, No~6, Corollaire~1]{Bourbaki_algebre-4567}.
  Therefore every relative $p$-basis of $L/K$, in particular $B$, has size $d+1$.

  Let $m>0$ be minimal such that $L^{p^m}K/K$ is separable:
  such $m$ exists since $L$ can be written as a finite purely inseparable extension of a finite separable extension
  of a purely transcendental extension of $K$,
  and so some $L^{p^m}$ is contained in a finite separable extension of a purely transcendental extension of $K$,
  and therefore itself separable over $K$.
  Since $B$ generates $L$ over $L^p K$, the set of $p^m$-th powers of elements of $B$
  generates $L^{p^m}K$ over $L^{p^{m+1}}K$.
  Since $L^{p^m}K/K$ has transcendence degree $d$,
  a relative $p$-basis has size $d$ \cite[Chapitre~V, §~16, No~6, Corollaire~1]{Bourbaki_algebre-4567}.
  The set of $p^m$-th powers of elements of $B$ therefore cannot be relatively $p$-independent in $L^{p^m}K$ over $K$,
  but it contains a relative $p$-basis.
  We may suppose that $\{ b^{p^m} \colon b \in B, b \neq s \}$ is a relative $p$-basis of $L^{p^m}K/K$:
  If it is not, then there must be an element $b^\ast \in B$, $b^\ast \neq s$,
  such that $\{ b^{p^m} \colon b \in B, b \neq b^\ast \}$ is a relative $p$-basis.
  Then either $(b^\ast)^{p^m}$ generates $L^{p^m}K$ over $L^{p^{m+1}}K(\{ b^{p^m} \colon b \in B, b \neq s, b \neq b^\ast \})$
  (in which case $\{ b^{p^m} \colon b \in B, b \neq s \}$ is the desired relative $p$-basis of $L^{p^m}K$ over $K$ after all),
  or $(b^\ast)^{p^m}$ lies in $L^{p^{m+1}}K(\{ b^{p^m} \colon b \in B, b \neq s, b \neq b^\ast \})$.
  In the latter case, $\{ (b^\ast + s)^{p^m} \} \cup \{ b^{p^m} \colon b \in B, b \neq s, b \neq b^\ast \}$
  is a relative $p$-basis, and we simply replace $b^\ast$ by $b^\ast + s$.
  This is permissible since it does not affect the conclusion:
  $p$-independence of a set of elements of any field is unaffected by replacing one element by its sum with another,
  much like for linear independence in a vector space.

  Let $L' = L^{p^m}K(\{ b \colon b \in B, b \neq s \})$.
  Since $L$ is generated over $L^{p^{m+1}}K$ by $B$,
  and $s^{p^m} \in L^{p^m}K = L^{p^{m+1}}K(\{b^{p^m}, b \in B, b \neq s \})$,
  we see that $L'$ is generated over $L'^pK$ by the $b \in B$, $b \neq s$.
  As the $b \in B$, $b \neq s$ are relatively $p$-independent over $K$ in $L$ and therefore in $L'$,
  it follows that the $b \in B$, $b \neq s$ must be a relative $p$-basis of $L'$ over $K$.
  Since the cardinality of this relative $p$-basis is equal to the transcendence degree $d$ of $L'/K$,
  $L'/K$ is separable \cite[Chapitre~V, §~16, No~6, Corollaire~1]{Bourbaki_algebre-4567}.

  Observe that $L = L'(s)$ is a simple purely inseparable extension of $L'$.
  We claim that $s^{p^m}, b_2, \dotsc, b_e, c_1, \dotsc, c_{e'}$ are $p$-independent in $L'$.
  First observe that $b_2, \dotsc, b_e, c_1, \dotsc, c_{e'}$ are certainly $p$-independent in $L'$
  since they are $p$-independent in $L$.
  Secondly, $s^{p^m}$ is not a $p$-th power in $L'$,
  since otherwise $L^{p^{m-1}}K \subseteq L'(s^{p^{m-1}})$ would be contained in $L'$ and therefore separable over $K$,
  in contradiction to the choice of $m$.
  If $s^{p^m}, b_2, \dotsc, b_e, c_1, \dotsc, c_{e'}$ were $p$-dependent in $L'$,
  we would therefore have $s^{p^m} \in L'^p(b_2, \dotsc, b_e, c_1, \dotsc, c_{e'}) \setminus L'^p$.
  In the field $L'(s^{p^{m-1}}) \subseteq L$, we therefore can write the $p$-th power $s^{p^m}$
  as a non-trivial polynomial in $b_2, \dotsc, b_e, c_1, \dotsc, c_{e'}$ with $p$-th power coefficients.
  This contradicts the $p$-independence of $b_2, \dotsc, b_e, c_1, \dotsc, c_{e'}$ in $L$.

  Now let $R' = R^{p^m}K[\{ b \in B, b \neq s \}]$.
  This is a finitely generated $K$-algebra with quotient field $L'$.
  We may extend it by finitely many generators in $L'$ to ensure that $R'[s] \supseteq R$.
  Write $p_i^{(p^m)}$ for the polynomial in $R'[X]$ obtained from $p_i$ by raising every coefficient to the $p^m$-th power.
  Note that $p_i^{(p^m)}$ and $p_i$ have the same splitting field over $L$
  (and indeed the same applies for their images over an arbitrary field with a homomorphism from $R$)
  since the roots of $p_i^{(p^m)}$ in an algebraic closure are the $p^m$-th powers of roots of $p_i$, and $p_i$ is separable.

  Now apply Lemma~\ref{lem:separable-half} to the $K$-algebra $R'$, the distinguished element $s^{p^m}$,
  the polynomials $p_i^{(p^m)}$,
  and the $p$-independent elements $s^{p^m}, b_2, \dotsc, b_e, c_1, \dotsc, c_{e'} \in L'$.
  We thus obtain a surjective $K$-homomorphism $R' \to K'$.
  Since $s^{p^m} \in R'$,
  this extends uniquely to a $K$-homomorphism $h \colon R'[s] \to K''$ for some extension field $K''$,
  which is finite purely inseparable and generated by the image of $s$ over $K'$.
  Since $K'$ is generated by $h(s^{p^m})$ over $K$,
  it is clear that $K''$ is generated by $h(s)$ over $K$ (not merely over $K'$).
  We now verify the desired properties for the $K$-homomorphism $h|_R \colon R \to K''$.
  By construction, the image of each $p_i^{(p^m)}$ is irreducible in $K'$,
  so the same holds for $p_i$ over $K'$
  (since an appropriate Galois group acts transitively on the set of roots of $p_i^{(p^m)}$
  and therefore must also act transitively on the set of roots of $p_i$),
  and so the image of $p_i$ over $K''$ is irreducible since the image of $p_i$ is separable
  and $K''/K'$ is purely inseparable.
  Finally, the images of $s^{p^m}, b_2, \dotsc, b_e, c_1, \dotsc, c_{e'}$ are $p$-independent in $K'$ by construction,
  and so the images of $s, b_2, \dotsc, b_e, c_1, \dotsc, c_{e'}$ are $p$-independent in $K'' = K'(h(s^{p^m})^{1/p^m})$
  by Lemma~\ref{lem:p-indep-simple-ext}.
\end{proof}

We need the following partial quantifier elimination result for PAC fields,
which is a routine derivation from the work of Cherlin, van den Dries, and Macintyre.
\begin{proposition}\label{prop:pac-normal-form}
  Let $F$ be a field of characteristic $p$, and let $0 \leq f \leq \omega$, $0 \leq e \leq \infty$.
  \begin{enumerate}
  \item
  Consider the $\mathcal{L}_{\mathrm{ring}}(F)$-theory of $f$-free PAC fields of imperfect exponent $e$ extending $F$.
  Modulo this theory, every $\mathcal{L}_{\mathrm{ring}}(F)$-sentence is equivalent to a disjunction
  of sentences of the form
  \begin{align*}
    \exists x_1, \dotsc, x_n \Big( & \bigwedge_{i = 1}^k f_i(x_1, \dotsc, x_n) = 0 \mathrel\land \\
    & \bigwedge_{i=1}^l (p_i(Y,x_1, \dotsc, x_n) \text{ has no root or is inseparable}) \mathrel\land \\
    & \bigwedge_{i = 1}^m \big((q_{i1}(x_1, \dotsc, x_n), \dotsc, q_{ij_i}(x_1, \dotsc, x_n)) \text{ are absolutely $p$-independent}\big) \Big)
  \end{align*}
  with $f_i \in F[X_1, \dotsc, X_n]$, $p_i \in F[Y, X_1, \dotsc, X_n]$, $q_{ij} \in F[X_1, \dotsc, X_n]$.
  (Here the properties described informally in words stand for $\mathcal{L}_{\mathrm{ring}}$-formulas in the obvious manner:
  e.g.\ a polynomial $p_i(Y,x_1, \dotsc, x_n) = a_dY^d + \dotsc a_1Y + a_0$ with $a_i \in F[X_1, \dotsc, X_n]$ having no root
  is an $\mathcal{L}_{\mathrm{ring}}$-expressible property of the $a_i$,
  and being separable means that $p_i$ is coprime to its formal derivative $\frac{\partial p_i}{\partial Y}$,
  which is likewise expressible in this way.)
  \item
  Suppose that $e$ is finite, fix $c_1, \dotsc, c_e \in F$,
  and consider the $\mathcal{L}_{\mathrm{ring}}(F)$-theory of $f$-free PAC fields of imperfect exponent $e$ extending $F$
  in which the $c_i$ form a $p$-basis.
  Modulo this theory, every $\mathcal{L}_{\mathrm{ring}}(F)$-sentence is equivalent to a disjunction
  of sentences of the form
  \begin{align*}
    \exists x_1, \dotsc, x_n \Big( & \bigwedge_{i = 1}^k f_i(x_1, \dotsc, x_n) = 0 \mathrel\land \\
    & \bigwedge_{i=1}^l (p_i(Y,x_1, \dotsc, x_n) \text{ has no root or is inseparable})
  \end{align*}
  with $f_i \in F[X_1, \dotsc, X_n]$ and $p_i \in F[Y, X_1, \dotsc, X_n]$.

  \item
  Suppose that $f$ is finite, fix an $f$-free subfield $K_0$ of $F$,
  and consider the $\mathcal{L}_{\mathrm{ring}}(F)$-theory of $f$-free PAC fields of imperfect exponent $e$ extending $F$
  in which $K_0$ is relatively separably closed.
  Modulo this theory, every $\mathcal{L}_{\mathrm{ring}}(F)$-sentence is equivalent to a disjunction
  of sentences of the form
  \begin{align*}
    \exists x_1, \dotsc, x_n \Big( & \bigwedge_{i = 1}^k f_i(x_1, \dotsc, x_n) = 0 \mathrel\land \\
    & \bigwedge_{i = 1}^m \big((q_{i1}(x_1, \dotsc, x_n), \dotsc, q_{ij_i}(x_1, \dotsc, x_n)) \text{ are absolutely $p$-independent}\big) \Big)
  \end{align*}
  with $f_i, q_{ij} \in F[X_1, \dotsc, X_n]$.

  \item
  In combination of the previous two points, suppose that both $e$ and $f$ are finite, fix $c_1, \dotsc, c_e \in F$
  and an $f$-free subfield $K_0$ of $F$,
  and consider the $\mathcal{L}_{\mathrm{ring}}(F)$-theory of $f$-free PAC fields of imperfect exponent $e$ extending $F$
  in which $K_0$ is relatively separably closed.
  Modulo this theory, every $\mathcal{L}_{\mathrm{ring}}(F)$-sentence is equivalent to a disjunction of sentences of the form
  \begin{align*}
    \exists x_1, \dotsc, x_n \Big( & \bigwedge_{i = 1}^k f_i(x_1, \dotsc, x_n) = 0 \Big)
  \end{align*}
  with $f_i \in F[X_1, \dotsc, X_n]$.
  (In particular, this theory is model complete.)
  \end{enumerate}
\end{proposition}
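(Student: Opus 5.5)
The plan is the standard elementary-equivalence criterion for PAC fields (Cherlin--van den Dries--Macintyre, in the form of \cite[§~5]{Chatzidakis_properties-forking-omega-free-pac}), combined with a compactness/separation argument.

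The criterion reads as follows. Let $L_1, L_2$ be $f$-free PAC fields of imperfect exponent $e$ containing $F$. Let $A_i \subseteq L_i$ be finitely generated $F$-subfields, or more generally subfields over which $L_i$ is separable. Suppose there is an $F$-isomorphism $A_1 \to A_2$ that extends to an isomorphism of the relative separable algebraic closures $A_1^{\mathrm{sep}} \cap L_1 \to A_2^{\mathrm{sep}} \cap L_2$. Then $L_1$ and $L_2$ are elementarily equivalent over $A_1 \cong A_2$. (For $\omega$-free fields the embedding property makes the Galois-theoretic condition just the compatibility of the restriction; for finite $f$ one uses the elementary equivalence theorem \cite[Theorem~20.3.3]{FriedJarden} and its imperfect version.)

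To get a normal form, by the Separation Lemma \cite[Lemma~3.1.1]{TentZiegler} it suffices to show the following. Whenever $L_1, L_2$ are models satisfying the same sentences of the displayed shape (and hence the same Boolean combinations of them, noting that these shapes are closed under conjunction and that the set of disjunctions is closed under disjunction), then $L_1 \equiv L_2$. Equivalently, if $L_1 \models \varphi$ and every sentence of the shape true in $L_1$ is true in $L_2$, then $L_2 \models \varphi$; this monotone version gives disjunctions directly. Pass to $\aleph_1$-saturated elementary extensions. Given a finite tuple $a$ from $L_1$ witnessing the relevant data, write down the existential sentence of the shape that says the following.
\begin{enumerate}
\item The tuple satisfies the ideal of polynomial relations of $a$ over $F$ (the $f_i$).
\item Each of finitely many polynomials over $F(a)$ that are irreducible in $L_1$ has no root in $L_1$, and each separable one among them stays rootless; this encodes the relative separable algebraic closure of $F(a)$ in $L_1$ up to finite level, via minimal polynomials of generators of finite subextensions together with non-splitting of the others.
\item Appropriate tuples of elements of $F[a]$ are absolutely $p$-independent; this encodes that $L_1/F(a)$ is separable after enlarging $a$ by a $p$-independent part (choose $a$ to contain part of a $p$-basis).
\end{enumerate}
A witness in $L_2$ gives an $F$-embedding of $F(a)$ with the same $p$-independence pattern and with matching finite separable algebraic data. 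By saturation and compactness (a König-type argument over finite Galois levels), this yields an isomorphism of relative separable closures. The $p$-independence condition ensures that $L_2$ is separable over the image. The criterion then transfers $\varphi$.

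For the special cases the argument is the same, with one ingredient deleted. In (2), a fixed $p$-basis $c_1,\dots,c_e \subseteq F$ makes every subfield containing $F$ automatically have $L$ separable over it, since the $p$-basis of $L$ lies in $F$. So the $p$-independence clauses are unnecessary. In (3), $K_0$ being relatively separably closed and $f$-free with $f$ finite forces the Galois-theoretic compatibility automatically: an $f$-free profinite group has only finitely many open subgroups of each index, and epimorphism-extension arguments \cite[Proposition~17.7.3]{FriedJarden} give the needed isomorphism once the algebraic part over $K_0$ matches. So the root clauses drop. Case (4) combines both, leaving positive existential sentences, and model completeness follows since every formula is then existential.

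The main obstacle is part (1) in the step where saturation and finite partial information are upgraded to an isomorphism of relative separable algebraic closures compatible with a single embedding, together with the separability of $L_2$ over the image. I would handle this exactly as in Chatzidakis' treatment, citing it for the Galois part and only adding the $p$-independence bookkeeping.
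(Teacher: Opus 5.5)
\textbf{There is a genuine gap in your choice of the common subfield.} Your criterion (the elementary equivalence theorem of Fried--Jarden, or Chatzidakis' version) needs $L_i$ to be \emph{separable} over $A_i$. This is a hypothesis, not a consequence of $A_i$ being finitely generated over $F$. You propose to achieve it for $A_1=F(a)$, with $a$ finite, by putting part of a $p$-basis into $a$. That cannot work in general.

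Take $F=\mathbb{F}_p(t)$ and $e=0$. Every model $L_1$ is perfect, while every $F(a)$ with $a$ finite has imperfect exponent at least $1$. A perfect field is separable only over perfect subfields, so $L_1/F(a)$ is never separable, whatever $a$ you pick and whatever $p$-independence clauses you record. This is exactly the situation in which the paper later uses the proposition: a function field $F$ over a perfect PAC field, with perfect models.

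The same defect affects your treatment of (2). The $c_i$ form a $p$-basis of the models, not necessarily of $F$; in the paper's own application $F$ has $p$-basis $c_0,\dots,c_e$. For instance, with $F=\mathbb{F}_p(c,s)$, $e=1$ and a model $L$ with $p$-basis $\{c\}$, $L$ is already inseparable over $F$ itself. Separability is automatic only over subfields in which $c_1,\dots,c_e$ are still a $p$-basis, for example over models.

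\textbf{The remedy is to take all of $L_1$ as the common subfield.} This also makes the saturation and K\"onig step unnecessary. Consider any finite fragment of the following diagram of $L_1$:
\begin{itemize}
\item polynomial equations and inequations (the latter written as $\exists y\,(gy=1)$);
\item ``this separable polynomial has no root'', for polynomials over $L_1$ without a root in $L_1$;
\item ``this tuple is $p$-independent'', for $p$-independent tuples from $L_1$.
\end{itemize}
Such a fragment is expressed by a sentence of the displayed shape that holds in $L_1$, hence in $L_2$. By compactness, $L_1$ embeds over $F$ into some $L_2^\ast\succeq L_2$ such that $L_1$ is relatively separably closed in $L_2^\ast$ and $L_2^\ast/L_1$ is separable. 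Then \cite[Theorem~5.15]{Chatzidakis_properties-forking-omega-free-pac}, which is your criterion with $A=L_1$, gives $L_1\preceq L_2^\ast$, so $L_2\models\varphi$.

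This is the paper's proof in different clothing. The paper adds predicates $R_d,Q_d$ for the root/separability and $p$-independence conditions. It observes that embeddings in the expanded language are exactly such extensions, so the theory is model complete there. It then eliminates negative occurrences of $R_d$, $Q_d$ and inequations to reach positive existential sentences.

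\textbf{For (2) and (3), the simplifications hold for extensions $K\subseteq L$ of models, not of arbitrary finitely generated subfields.}
\begin{itemize}
\item In (2), a shared $p$-basis makes $L/K$ separable.
\item In (3), $G_L\to G_{K_0}$ and $G_K\to G_{K_0}$ are epimorphisms between free profinite groups of the same finite rank. They are therefore isomorphisms by Hopficity, so $K$ is relatively separably closed in $L$. Hopficity is the mechanism here, not the ``finitely many open subgroups'' or epimorphism-lifting argument you sketch.
\end{itemize}
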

\begin{proof}
  Let us expand the language $\mathcal{L}_{\mathrm{ring}}(F)$ by an $(d+1)$-ary predicate $R_d$ for every $d \in \mathbb{N}$,
  and add axioms to the theory of fields expressing that $R_d(a_d, \dotsc, a_0)$ holds if and only if
  the polynomial $a_dY^d + \dotsb + a_1Y + a_0$ has no root or is inseparable.
  Furthermore, add a $d$-ary predicate $Q_d$ for every $d \geq 1$ and axioms expressing that $Q_d(a_1, \dotsc, a_d)$
  if and only if the $a_i$ are absolutely $p$-independent.
  In this way we obtain a definitional expansion of the $\mathcal{L}_{\mathrm{ring}}$-theory of fields
  and extension theories.
  Note that an extension of fields $K \subseteq L$ gives an extension of structures in the expanded language
  if and only if every separable polynomial in $K[X]$ without a root in $K$ does not gain a root in $L$
  (i.e.\ $K$ is relatively separably closed in $L$)
  and every $p$-independent tuple in $K$ remains $p$-independent in $L$ (i.e.\ $L$ is separable over $K$).

  By \cite[Theorem~5.15]{Chatzidakis_properties-forking-omega-free-pac} (after Cherlin--van den Dries--Macintyre),
  an extension $K \subseteq L$ of $f$-free PAC fields of imperfect exponent $e$ is elementary as soon as
  $K$ is relatively separably closed in $L$ and $L/K$ is separable.
  Therefore the theory of $f$-free PAC fields of imperfect exponent $e$ is model complete in our expanded language,
  and so every sentence is equivalent to an existential sentence in the expanded language.
  Note that negative occurrences of the predicates $R_d$ can be eliminated in favour of existential
  $\mathcal{L}_{\mathrm{ring}}$-formulas (since having a root is positively existentially expressible and
  being separable is quantifier-freely expressible, as the latter can be tested over the algebraic closure,
  where we have quantifier elimination).
  Likewise, negative occurrences of the predicate $Q_d$ can be eliminated since being $p$-dependent
  is existentially $\mathcal{L}_{\mathrm{ring}}$-expressible.
  Finally, polynomial inequalities can be rewritten as positive existential statements by replacing $x \neq 0$ by $\exists y (xy = 1)$.
  Therefore every sentence is equivalent to a positive existential sentence.
  The first statement follows.

  In the setting of finite exponent of imperfection $e$ with fixed $c_1, \dotsc, c_e$,
  any extension of fields $K \subseteq L$ which share the $p$-basis $c_1, \dotsc, c_e$ is automatically separable.
  Therefore the predicates $Q_d$ can be dispensed with.

  In the setting of finite $f$ with a $f$-free subfield $K_0$,
  for any extension of $f$-free fields $K \subseteq L$ in which $K_0$ is relatively separably closed,
  the subfield $K$ must necessarily be separably closed in $L$.
  Indeed, we have restriction morphisms of absolute Galois groups $G_L \to G_K \to G_{K_0}$.
  Each of these profinite groups is free on $f$ many generators by assumption,
  and both $G_L \to G_{K_0}$ and $G_K \to G_{K_0}$ are surjective since $K_0$ is relatively separably closed in $K$ and $L$.
  It follows that both these morphisms must in fact be isomorphisms \cite[Proposition~2.5.2]{RibesZalesskii_profinite-groups},
  and so $G_L \to G_K$ is also an isomorphism,
  wherefore $K$ is relatively separably closed in $L$.
  Thus in the setting above the predicates $R_d$ can be dispensed with.
\end{proof}

\begin{proof}[Proof of Theorem~\ref{thm:almost-all-res-pac}]
  We first show that every non-principal ultraproduct of residue fields $Fv$
  satisfies our conditions, and so all models of the $\mathcal{L}_{\mathrm{ring}}(F)$-theory of almost all $Fv$
  also satisfy them.
  Observe first that every finite extension $L/K$ is a PAC field of imperfect exponent $e$.
  For $n$ the separable degree of $L/K$, $L$ is $(1+n(f-1))$-free,
  and so $L$ satisfies condition (3).
  Therefore every non-principal ultraproduct of residue fields $Fv$ satisfies conditions (1) and (3)
  and has imperfect exponent $e$.
  Finally, every such ultraproduct is the residue field of a valuation ring formally smooth over $F$
  by Corollary~\ref{cor:aa-completion-fs},
  and therefore satisfies (2) by Lemma~\ref{lem:fs-almost-separable}.

  It remains to show that our conditions (1), (2), (3) entail the $\mathcal{L}_{\mathrm{ring}}(F)$-theory of almost all $Fv$.
  Equivalently, we take an arbitrary $\mathcal{L}_{\mathrm{ring}}(F)$-sentence $\varphi$
  that holds in some field $L/F$ satisfying the conditions,
  and need to show that there is a model of the theory of all residue fields of $F/K$
  additionally satisfying $\varphi$.
  Let $0 \leq f' \leq \omega$ be such that $L$ is $f'$-free.
  By Proposition~\ref{prop:pac-normal-form},
  $\varphi$ is equivalent modulo the theory of $f'$-free PAC fields extending $F$ to a disjunction of sentences $\psi$ of the form
  \begin{align*}
    \exists x_1, \dotsc, x_n \Big( & \bigwedge_{i = 1}^k f_i(x_1, \dotsc, x_n) = 0 \mathrel\land \\
    & \bigwedge_{i=1}^l (p_i(Y,x_1, \dotsc, x_n) \text{ has no root or is inseparable}) \mathrel\land \\
    & \bigwedge_{i = 1}^m \big((q_{i1}(x_1, \dotsc, x_n), \dotsc, q_{ij_i}(x_1, \dotsc, x_n)) \text{ are absolutely $p$-independent}\big) \Big)
  \end{align*}
  with $f_i \in F[X_1, \dotsc, X_n]$, $p_i \in F[Y, X_1, \dotsc, X_n]$, $q_{ij} \in F[X_1, \dotsc, X_n]$.
  We therefore now have $L \models \psi$.

  In the remainder of the proof,
  we proceed to construct certain $K$-algebras related to $L$.
  Most prominently, we will build a finitely generated subextension $E/F$ of $L/F$
  which in some sense approximates $L$ and in particular satisfies $E \models \psi$.
  We will explain the construction of these algebras below, but already indicate
  that they fit in the following diagram:
  \[\xymatrix{
      & & & & L \\
      & & R \ar@{^{(}->}[rr]^{\text{quot. field}} \ar@{->>}[ddll]_h & & E \ar@{^{(}->}[u] \\
      & & R_0 \ar@{^{(}->}[u]_{\text{f.g. algebra}} \ar@{^{(}->}[rr]^{\text{quot. field}} \ar@{->>}[dl] & & F \ar@{^{(}->}[u]_{\text{f.g. field ext.}} \\
      K'' \ar@{=}[r]& Fv & & K \ar@{^{(}->}[ul]|-{\text{f.g. algebra}} \ar@{^{(}->}[ur]_{\text{f.g. field ext.}} \ar@{^{(}->}[ll]^{\text{fin. field ext.}}&   
  }\]

  Since $F/K$ is separable, there exists a separating element $t \in F$ over $K$,
  i.e.\ an element such that $F/K(t)$ is a finite separable extension.
  Let $R_0 \ni t$ be an arbitrary integrally closed finitely generated $K$-subalgebra of $F$ with quotient field $F$
  (in particular $R_0$ is a holomorphy ring of $F/K$
  in the sense of \cite[Section~3.3]{FriedJarden} by \cite[Proposition~3.3.4]{FriedJarden}),
  and let $R$ be an $R_0$-subalgebra of $L$ generated by the coefficients of the $f_i, p_i, q_{ij}$
  and some tuple of witnesses $x_1, \dotsc, x_n$ of $\psi$ in $L$.
  Let $E$ be the fraction field of $R$.
  This is a finitely generated field extension of $K$ containing $F$.
  Note that we have $E \models \psi$, using the same witnesses $x_1, \dotsc, x_n$ as in $L$.

  We will in the following further enlarge $R$ (and correspondingly $E$) by adding finitely many additional generators.
  Write $\hat{p_i} = p_i(X, x_1, \dotsc, x_n) \in R[X]$ for each $i$, and
  $\hat{q_{ij}} = q_{ij}(x_1, \dotsc, x_n) \in R$ for all $i,j$.
  If some $\hat{p_i}$ is inseparable, i.e.\ shares a non-constant factor with its formal derivative in $L[X]$ and therefore in $E[X]$,
  we enlarge $R$ inside $E$ to ensure that the same holds in $R[X]$.
  Enumerate the monic irreducible factors in $L[X]$ of those $\hat{p_i}$ which are separable as $r_j \in L[X]$.
  Since these $\hat{p_i}$ have no root in $L$, the $r_j$ are not linear.
  Enlarging $R$ and $E$ inside $L$, we may assume $r_j \in R[X]$.
  Since $L$ is $f'$-free, the Galois group of the joint splitting field of the $r_j$
  over $L$ can be generated by $f'$ many elements.
  We may enlarge $R$ and $E$ inside $L$ to ensure that the corresponding statement also holds over $E$.

  If $K^{\mathrm{sep}} \cap L$ is a finite extension of $K$ (which will be the case if $1 < f' < \omega$ by the axioms),
  we additionally enlarge $R$ so that $K^{\mathrm{sep}} \cap L \subseteq R$.
  Let $K' = R \cap K^{\mathrm{sep}}$, which is a separable finite extension of $K$.
  Note that $K'$ is $f'$-free unless $1 < f < \omega$ and $f'$ is infinite.
  If we should be in this particular case,
  we can still ensure that $K'$ is $f''$-free for an arbitrarily large natural number $f''$:
  to do so, we first ensure that $[(K^{\mathrm{sep}} \cap L) : K]$ is large
  by replacing $L$ by a finite separable extension given
  by adjoining a generator of a suitable finite separable extension of $K$.
  This does not disturb conditions (1), (2) and (3), since (1) and (2) are preserved
  under passing to finite separable extensions, and any finite extension of $\omega$-free $L$
  remains $\omega$-free and therefore satisfies (3).
  Secondly, we can ensure that the enlarged $L$ still satisfies the formula $\psi$
  with the same witnesses $x_1, \dotsc, x_n$, simply by ensuring that none
  of the polynomials $r_j$ become reducible, which is clearly possible
  by a wise choice of extension.
  Through this enlargement of $L$, we have ensured that $[(K^{\mathrm{sep}} \cap L) : K]$
  is arbitrarily large,
  and we can now enlarge $R$ and $E$ to ensure that $K' = R \cap K^{\mathrm{sep}}$
  is of arbitrarily large degree over $K$ and so $f''$-free for arbitrarily large $f''$.
  This finishes our discussion of the case $1 < f < \omega$, $f'$ infinite,
  and we return to the general situation.

  Let $B_{L/K}$ be a relative $p$-basis of $L/K$;
  we may assume that $t \in B_{L/K}$ if $t \not\in L^pK$.
  Let $B_K$ be a $p$-basis of $K$, and let $B_K' \subseteq B_K$ be a maximal subset which is $p$-independent in $L$.
  Then $B_L := B_K' \cup B_{L/K}$ is a $p$-basis of $L$.
  If $L/K$ is separable, then $B_K' = B_K$, and otherwise $B_K \setminus B_K'$ contains precisely one element by hypothesis~(2).
  Note that we have $B_K \subseteq L^p(B_K')$ by maximality of $B_K'$
  and so $K = K^p(B_K) \subseteq L^p(B_K')$.
  If $t \in L^p K$, then it follows that the $p$-basis $B_K \cup \{ t \}$ of $F$
  is contained in $L^p(B_K')$,
  and so $B_K'$ is a maximal $p$-independent (in $L$) subset of $B_K \cup \{ t \}$.
  By hypothesis (2) (in terms of condition (2) of Lemma~\ref{lem:almost-separable}),
  this implies that $B_K'$ cannot be a proper subset of $B_K$
  and so (still under the extra condition that $t \in L^p K$) $L/K$ must be separable.
  We have $\hat{q_{ij}} \in L = L^p[B_L]$ for all $i,j$.
  Since for each $i$, the elements $\hat{q_{i1}}, \dotsc, \hat{q_{ij_i}} \in L$
  are $p$-independent,
  the exchange lemma for $p$-independence (see the discussion in \cite[p.~463]{MacLane_lattice-formulation})
  guarantees that there is a finite subset $B_i \subseteq B_L$
  of cardinality $j_i$ such that $L = L^p[B_L] = L^p[B_L \setminus B_i, \hat{q_{i1}}, \dotsc, \hat{q_{ij_i}}]$.
  We can find a finite subset $B^\ast \subseteq B_L$ containing all $B_i$,
  and enlarge $R$ and $E$ by finitely many generators inside $L$,
  to ensure that $B^\ast \subseteq R$
  and $R^p[B^\ast] = R^p[B^\ast \setminus B_i, \hat{q_{i1}}, \dotsc, \hat{q_{ij_i}}]$ for all $i$.
  If $t \not\in L^pK$ and so $t \in B_{L/K}$, we may enlarge $B^\ast$ to ensure that $t \in B^\ast$.

  We now construct a $K$-homomorphism $h$ from $R$ to a finite extension of $K'$.
  By our construction we have $t \in B^\ast$ unless $t \in L^pK$,
  in which case $L$ and its subfield $E$ are separable over $K$.
  Therefore Lemma~\ref{lem:separable-half} in the separable case and Lemma~\ref{lem:inseparablisation} otherwise
  produce a finite purely inseparable extension $K''/K'$
  and a surjective $K$-homomorphism $h \colon R \to K''$ such that $h(t)$ is a primitive element for $K''/K$,
  the images of the $r_j$ remain irreducible in $K''[X]$,
  and the images of $B^\ast$ are $p$-independent in $K''$.
  Restricting $h$ to $R_0$, we obtain a surjective $K$-homomorphism $R_0 \to K''$,
  corresponding (since $R_0$ is a holomorphy ring of $F/K$, see \cite[Proposition~3.3.2]{FriedJarden})
  to a place $v$ of $F/K$ with an identification $Fv \cong K''$.
  We claim that for this place $v$ we have $Fv \models \psi$, witnessed by $h(x_1), \dotsc, h(x_n)$.
  Observe first that the images of all the $\hat{p_i}$ in $K''$ have no root or are inseparable:
  for every $i$, either $\hat{p_i}$ is inseparable and therefore shares a factor with its formal derivative in $R[X]$,
  so the same holds for the images in $K''$,
  or $\hat{p_i}$ factors into some non-linear irreducible factors among the $r_j$,
  whose images remain irreducible in $K''$, so the image of $\hat{p_i}$ has no linear factor.
  Now we need to check that for each $i$, the tuple $(h(\hat{q_{i1}}), \dotsc, h(\hat{q_{ij_i}}))$
  is $p$-independent in $K''$.
  By construction, the images of the elements $B^\ast$ in $K''$ are contained in
  $h(R^p[B^\ast \setminus B_i, \hat{q_{i1}}, \dotsc, \hat{q_{ij_i}}]) \subseteq K''^p[h(B^\ast \setminus B_i), h(\hat{q_{i1}}), \dotsc, h(\hat{q_{ij_i}})]$.
  If $(h(\hat{q_{i1}}), \dotsc, h(\hat{q_{ij_i}}))$ were $p$-dependent,
  we see that the images of the $B^\ast$ in $K''$ would be contained in a subfield of $K''$
  generated over $K''^p$ by fewer than $\lvert B^\ast \rvert$ many elements,
  but this contradicts the $p$-independence of $h(B^\ast)$.
  Thus we have $Fv \models \psi$.
  
  Note that by varying the base ring $R_0$,
  we in fact obtain infinitely many places $v$ with $Fv \models \psi$.
  The separable part of $Fv/K$ is $K'$ by construction,
  and therefore $Fv$ is $f'$-free,
  or $f''$-free for arbitrarily large $f''$ in the case of $1 < f < \omega$, $f' = \omega$.
  By taking an ultraproduct of appropriate $Fv$,
  we therefore in all cases obtain a model of the theory of almost all residue fields of $F/K$
  which additionally satisfies $\psi$ and is $f'$-free.
  Therefore also $\varphi$ holds in this model, as desired.
\end{proof}

\begin{remark}\label{rem:pac-aa-completions-axiomatisable}
  In the situation of a function field $F/K$ as in Theorem~\ref{thm:almost-all-res-pac},
  we therefore obtain -- by combining the theorem with Proposition~\ref{prop:theories-aa-completions-to-res-flds} --
  an explicit axiomatisation of the universal/existential $\mathcal{L}_{\mathrm{ring}}(F)$-theory
  of almost all completions of $F$,
  and under (R4) also an explicit axiomatisation of the universal/existential $\mathcal{L}_{\mathrm{val}}(F)$-theory
  of almost all completions of $F$.
  However, note that the axioms involved here (for instance henselianity) are not all themselves universal/existential.

  Theorem~\ref{thm:almost-all-res-pac} requires that the function field $F/K$ is regular.
  If $F/K$ is only separable, we can however simply replace $K$ by its relative algebraic closure in $F$,
  which is itself an $f'$-free PAC field for suitable $f'$,
  and so we also obtain an axiomatisation of the universal/existential theory of almost all completions in this case.
\end{remark}

It is easy to see, assuming that the field $F$ is a suitably well-behaved computable field,
that the axioms for the $\mathcal{L}_{\mathrm{ring}}(F)$-theory described in Theorem~\ref{thm:almost-all-res-pac}
given in Remark~\ref{rem:aa-res-description-first-order} are computably enumerable,
and therefore the theory is computably enumerable.
However, with a view to applying Corollary~\ref{cor:aa-res-to-aa-completions},
we want to show that it is in fact a decidable theory.
For this we have to work harder.
We need the following fact after \cite[Section~23.1]{FriedJarden}, allowing us to construct PAC fields which extend a given field.
\begin{lemma}\label{lem:pac-construction}
  Let $0 \leq f \leq \omega$, and let $0 \leq e \leq \infty$.
  Let $K$ be a field of characteristic $p$.
  \begin{enumerate}
  \item Let finitely many separable polynomials $p_1, \dotsc, p_l \in K[Y]$ be given.
    Let $K'/K$ be the joint splitting field of the $p_i$.

    There exists an $f$-free PAC field $L$ of imperfect exponent $e$ extending $K$
    in which none of the $p_i$ have a root if and only if
    there exists an intermediate field $K \subseteq E \subseteq K'$ such that none of the $p_i$ have a root in $E$ and $\Gal(K'/E)$
    is generated by $f$ many elements.
    (If $f=\omega$, this condition just means that none of the $p_i$ have a root in $K$.)

    Furthermore, under this condition:
    \begin{enumerate}
    \item If $e=\infty$, then $L/K$ can be chosen to be separable.
    \item If $e<\infty$ and $p$-independent elements $c_1, \dotsc, c_e \in K$ are given,
      then $L/K$ can be chosen to have $p$-basis $c_1, \dotsc, c_e$.
    \end{enumerate}
  \item Let $K_0 \subseteq K$ be a relatively separably closed subfield which is $f$-free.
    Then there exists an $f$-free PAC field $L$ of imperfect exponent $e$ extending $K$
    in which $K_0$ is relatively separably closed.
    Furthermore:
    \begin{enumerate}
    \item If $e=\infty$, then $L/K$ can be chosen to be separable.
    \item If $e<\infty$ and $p$-independent elements $c_1, \dotsc, c_e \in K$ are given,
      then $L/K$ can be chosen to have $p$-basis $c_1, \dotsc, c_e$.
    \end{enumerate}
  \end{enumerate}
\end{lemma}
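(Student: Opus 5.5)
We follow the construction of \cite[Section~23.1]{FriedJarden}: build $L$ as the union of a tower of regular (or otherwise controlled) extensions of a suitable base field, making every absolutely irreducible variety acquire rational points, while controlling the absolute Galois group and the $p$-basis.

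\textbf{Part (1).} For the forward direction, let $L$ be as stated and set $E = K' \cap L$. Since $K'/K$ is Galois, restriction $G_L \to \Gal(K'/E)$ is surjective. As $G_L$ is free on $f$ generators, $\Gal(K'/E)$ is generated by $f$ elements. The $p_i$ have no root in $E \subseteq L$. (For $f = \omega$ one takes $E = K$; the condition is then trivial apart from the $p_i$ having no root in $K$.)

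For the converse, fix such an $E$. First choose an epimorphism $\hat F_f \to \Gal(K'/E)$. Then produce a separable extension $M/E$ with $M \cap K' = E$, with $G_M \cong \hat F_f$, and with $G_M \to \Gal(K'/E)$ equal to the chosen epimorphism. One way to get $M$ is to pass to a suitable regular extension of $E$ (e.g.\ a large purely transcendental extension $E(T)$) and use that free profinite groups are projective, realising $\hat F_f$ as a closed subgroup of $G_{E(T)}$ that maps onto the prescribed quotient. Its fixed field $M$ then has $M \cap K'E(T) = E(T)$, so no $p_i$ acquires a root in $M$, because $p_i$ has no root in $E$.

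Next apply \cite[Theorem~23.1.1]{FriedJarden}, or its proof, which makes a field PAC by a tower of regular extensions. This yields an extension $L/M$ that is regular, PAC, and has $G_L \to G_M$ an isomorphism. Regularity keeps $M$ relatively algebraically closed, so the $p_i$ still have no root, and $L$ is $f$-free.

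The imperfect exponent is adjusted along the way:
\begin{enumerate}
\item[(a)] If $e = \infty$, adjoin infinitely many transcendentals at the start. All steps are separable (regular) extensions, so $L/K$ is separable and $[L:L^p] = \infty$.
\item[(b)] If $e < \infty$, after each step replace the current field by its perfect closure relative to $c_1, \dots, c_e$. That is, pass to a purely inseparable extension in which $c_1, \dots, c_e$ becomes a $p$-basis, as in \cite[Section~23.1, Lemma~23.1.? / Proposition~11.?]{FriedJarden}. Purely inseparable steps change neither the absolute Galois group nor which separable polynomials have roots. PAC is preserved under algebraic extensions and under the union of the tower.
\end{enumerate}

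\textbf{Part (2).} Part (2) is the same construction with $E = K$, using the restriction $G_K \to G_{K_0}$ instead. Since $K_0$ is relatively separably closed in $K$, this restriction is surjective. Lift the identity of $G_{K_0} \cong \hat F_f$ via projectivity to a closed subgroup $H \le G_{K(T)}$ mapping isomorphically onto $G_{K_0}$. The fixed field $M$ has $G_M \cong \hat F_f$, with $G_M \to G_{K_0}$ an isomorphism, so $K_0$ is relatively separably closed in $M$. The rest of the argument proceeds as in part (1).

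\textbf{Main obstacle.} The delicate point is interleaving the $p$-basis adjustment in (b) with the PAC tower. Each regular step can enlarge the imperfect exponent, and each fix-up must not destroy the "no root" and Galois conditions. I would handle this by checking that purely inseparable extensions leave the separable closure data unchanged, and by citing \cite[Section~23.1]{FriedJarden}, which already treats the fixed $p$-basis case.
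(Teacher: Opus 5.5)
Your overall route is the paper's. You argue the forward direction via $E = L \cap K'$, the converse via \cite[Theorem~23.1.1]{FriedJarden}, transcendentals for $e=\infty$, and a purely inseparable adjustment for finite $e$. Part (2) is exactly the paper's argument; there projectivity does give an embedding, because you split a surjection onto $G_{K_0}$, and adjoining $T$ is unnecessary.

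\textbf{The gap: constructing $M$ in (1).} Projectivity of $\hat F_f$ only gives a homomorphism $\hat F_f \to G_{E(T)}$ lifting the epimorphism onto $\Gal(K'(T)/E(T))$. Nothing makes this lift injective, so its image, and hence $G_M$, need not be free. An embedding comes only from splitting a surjection \emph{onto} $\hat F_f$, and a large $T$ does not by itself supply one: $G_{E(T)}$ need not surject onto $\hat F_f$. For Hilbertian $E$, such a surjection would realise every $f$-generated finite group over $E$ by specialisation.

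What is missing is to first realise $\hat F_f$ as $\Gal(N/M_0)$ for some $M_0 \supseteq E$ with $K' \subseteq N$, compatibly with $\hat F_f \to \Gal(K'/E)$, and only then split $G_{M_0} \to \Gal(N/M_0)$. One way: let $\hat F_f$ act on $K'(X)$, with $X$ a set of indeterminates indexed by $\bigsqcup_U \hat F_f/U$, via the given map on $K'$ and by permutation on $X$; take $M_0$ to be the fixed field. This realisation step is already inside \cite[Theorem~23.1.1]{FriedJarden}.

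So the simplest repair is the paper's: apply that theorem directly to the Galois extension $K'/E$ and the epimorphism $\hat F_f \to \Gal(K'/E)$. This yields a PAC field $L \supseteq E$ with $G_L \cong \hat F_f$ compatibly, hence $L \cap K' = E$; it is separable over $K$ if you omit the final perfect closure in that proof. Your field $M$ and the second application of the theorem then become superfluous.

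\textbf{Part (b).} Your ``main obstacle'' disappears if, as in the paper, you adjust the $p$-basis once at the end rather than interleaving (interleaving works, but is unnecessary).
\begin{itemize}
\item Since $L/K$ is separable, $c_1, \dotsc, c_e$ remain $p$-independent in $L$. Extend them by a set $D$ to a $p$-basis of $L$ and put $L' = L(\{ d^{1/p^\infty} \colon d \in D \})$.
\item By \cite[Theorem~21]{MacLane_sep-trans-bases}, $c_1, \dotsc, c_e$ is a $p$-basis of $L'$.
\item $L'$ is PAC as an algebraic extension of $L$, restriction $G_{L'} \to G_L$ is an isomorphism, and no separable $p_i$ acquires a root in a purely inseparable extension.
\end{itemize}
This MacLane argument must replace your placeholder citation. \cite[Section~23.1]{FriedJarden} does not handle a prescribed $p$-basis; its Theorem~23.1.1 produces perfect fields.

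Finally, for the bare existence claim with finite $e$ and no $c_i$ prescribed, first adjoin transcendentals to $K$ so that $e$ $p$-independent elements are available.
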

\begin{proof}
  Let us first argue for (1).
  If a field $L$ as desired exists, then taking $E = L \cap K'$ shows that the given condition is necessary:
  indeed, $\Gal(K'/E)$ is a quotient of the absolute Galois group $G_L$ and therefore can be generated by $f$ many elements.

  Conversely, suppose that the given condition is satisfied with some intermediate field $E$.
  Then there exists an epimorphism of profinite groups $G \twoheadrightarrow \Gal(K'/E)$ with $G$ a free profinite group on $f$ many generators.
  By \cite[Theorem~23.1.1]{FriedJarden},
  there exists a separable extension field $L/K$ which is PAC, with an isomorphism between $G$ and the absolute Galois group of $L$ that
  turns $G \twoheadrightarrow \Gal(K'/E)$ into the restriction map $G_L \to \Gal(K'/E)$;
  in other words, $L$ is $f$-free and $L \cap K' = E$.
  In particular, none of the $p_i$ have a root in $L$.
  (In \cite[Theorem~23.1.1]{FriedJarden} as written, the field $L$ is made to be perfect, not separable over $K$;
  but it is clear from the proof that this is achieved after the fact by passing to a maximal purely inseparable extension,
  and one may as well construct $L$ to be separable over $K$.)

  This construction does not yet force $L$ to have imperfect exponent $e$.
  By replacing $K$ by a purely transcendental extension $K(t_1, t_2, \dotsc)$ as the very first step,
  we may always assume that $K$ and its separable extension $L$ have infinite imperfect exponent.
  This finishes the proof if $e=\infty$.
  Otherwise, we may suppose we are given $p$-independent elements $c_1, \dotsc, c_e \in K$ that we wish to turn into a $p$-basis.
  Then we first expand $c_1, \dotsc, c_e$ to a $p$-basis of $K$ by adding a further set of elements $D$.
  Then $L' = L(\{ \unspacedrt[p^\infty]{d} \colon d \in D \})$ is a purely inseparable extension of $L$ in which the $c_i$ form a $p$-basis
  \cite[Theorem~21]{MacLane_sep-trans-bases}.
  Furthermore, $L'$ remains PAC as an algebraic extension of $L$ and shares its absolute Galois group.
  This finishes the proof of (1).

  For (2), set $G = G_{K_0}$, and let $G \to G_K$ be a splitting of the restriction map $G_K \to G_{K_0}$
  (the latter is surjective since $K_0$ is relatively separably closed in $K$, and so a splitting exists
  since $G_{K_0}$ is a projective profinite group as $K_0$ is $f$-free).
  Replacing $K$ by a separable algebraic extension if necessary, we may assume that $G \to G_K$ is surjective.
  The remainder of the proof is as for (1).
\end{proof}

\begin{remark}\label{rem:nicely-computable}
  In the following, we work with ``nicely behaved'' computable fields $K$ of characteristic $p$.
  See \cite{StoltenbergHansenTucker_computable-rings-fields} for basics on computable algebra.
  The first condition beyond computability that needs to be satisfied for us is that $K$ must have a \emph{splitting algorithm}:
  one way of stating this is that there must be an algorithm which decides, given a one-variable polynomial over $K$,
  whether it has a zero in $K$.
  (See \cite[Section~3.2]{StoltenbergHansenTucker_computable-rings-fields};
  equivalently, condition (F) from \cite{Seidenberg_ConstrInAlgebra} must be satisfied.)
  This condition by itself is insufficient, as for example it does not guarantee that every finite extension of $K$
  also has a splitting algorithm.
  Therefore we also impose condition (P) from \cite{Seidenberg_ConstrInAlgebra}:
  there must be an algorithm which, given finitely many elements from $K$, decides whether they are (absolutely) $p$-independent.
  We also say that $K$ \emph{has decidable $p$-independence relation}.
  It is clear that this condition forces $K$ to have a computably enumerable $p$-basis
  (by simply ``greedily'' adding elements to a candidate $p$-independent set).
  One can in fact show, using the exchange property for $p$-dependence, that having a computably enumerable $p$-basis is also
  sufficient, although we will not use this fact.

  It now follows from \cite{Seidenberg_ConstrInAlgebra} that all usual algebraic operations can be done in a decidable fashion over $K$.
  Every finitely generated extension field $K(x_1, \dotsc, x_n)$ of $K$ is itself computable
  (even computably stable over $K$, i.e.\ there is only one choice of good indexing of $K(x_1, \dotsc, x_n)$ up to computable reïndexing)
  and satisfies the given conditions,
  and in fact all the algorithms are completely uniform, only requiring as input a set of generators of the ideal in $K[X_1, \dotsc, X_n]$
  of relations between the $x_i$.
  Furthermore, given an ideal in $K[X_1, \dotsc, X_n]$ in terms of finitely many generators, all usual questions are decidable:
  in particular, one can decide whether a given polynomial is in the ideal,
  one can decide whether the ideal is prime,
  and one can compute a primary decomposition and its associated primes.
  The latter in particular means that given polynomials $f_1, \dotsc, f_m \in K[X_1, \dotsc, X_n]$,
  one can compute a list of polynomial tuples $(f_{11}, \dotsc, f_{1i_1}), \dotsc, (f_{k1}, \dotsc, f_{ki_k})$ in $K[X_1, \dotsc, X_n]$,
  such that for each $1 \leq j \leq k$, the polynomials $f_{j1}, \dotsc, f_{ji_j}$ generate a prime ideal in $K[X_1, \dotsc, X_n]$
  which contains all the $f_i$,
  and conversely if $x_1, \dotsc, x_n$ in some extension field of $K$ are annihilated by the $f_i$,
  then they are annihilated by $f_{k1}, \dotsc, f_{ki_k}$ for some $k$.

  The notion of a computable field with a splitting algorithm and decidable $p$-independence relation
  seems to be the same as the notion of a presented field with elimination theory in \cite[Definition~19.2.8]{FriedJarden}.
  However, \cite[Chapter~19]{FriedJarden} is less precise concerning questions of uniformity among field extensions of $K$
  than will be required in the following.
\end{remark}

\begin{lemma}\label{lem:decide-almost-separable}
  Let $F$ be a computable field of characteristic $p$ with a splitting algorithm and decidable $p$-independence relation.
  Then there is an algorithm to decide the following:
  Given $n, m$ and polynomials $f_1, \dotsc, f_n \in F[X_1, \dotsc, X_m]$ generating a prime ideal,
  determine whether for the extension field $L=\Frac(F[X_1, \dotsc, X_m]/(f_1, \dotsc, f_n))$ of $F$
  the equivalent conditions of Lemma~\ref{lem:almost-separable} are satisfied.
\end{lemma}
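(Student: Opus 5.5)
We will decide condition (4) of Lemma~\ref{lem:almost-separable}: since $L/F$ is finitely generated, the conditions hold if and only if $\dim_L \Omega_{L/F} \leq d+1$, where $d$ is the transcendence degree of $L/F$. The algorithm computes $d$ and $\dim_L \Omega_{L/F}$ and compares them.

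Computing $d$ is routine. By Remark~\ref{rem:nicely-computable}, membership in the prime ideal $P=(f_1, \dotsc, f_n)$ is decidable, and so is elimination: we can decide whether $P \cap F[X_{i_1}, \dotsc, X_{i_k}] = 0$ for a given subset of variables. The transcendence degree $d$ is the maximal size of a subset of the $X_i$ whose images in $L$ are algebraically independent. Equivalently, we can compute the dimension of $F[X]/P$ by standard Gröbner-basis methods, which only need arithmetic in $F$.

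For $\Omega_{L/F}$, we use the standard presentation
\[ \Omega_{L/F} \cong \bigoplus_{i=1}^m L\, dX_i \Big/ \Big\langle \textstyle\sum_i \tfrac{\partial f_j}{\partial X_i}\, dX_i \colon 1 \leq j \leq n \Big\rangle. \]
This holds because $\Omega$ of the quotient $F[X]/P$ is presented by the Jacobian relations, and forming differentials commutes with localisation to the fraction field. Hence $\dim_L \Omega_{L/F} = m - r$, where $r$ is the rank over $L$ of the Jacobian matrix $J = (\partial f_j/\partial X_i)$ with entries read in $L$. We compute $r$ by exact linear algebra over the computable field $L$. Elements of $L$ are represented as quotients of polynomials modulo $P$, and zero-testing reduces to ideal membership in $P$, which is decidable. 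So Gaussian elimination over $L$, or equivalently testing which minors of $J$ lie in $P$, determines $r$ effectively. This step is uniform in the input by Remark~\ref{rem:nicely-computable}.

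The only delicate point is making sure we compute the right object. The derivatives $\partial f_j/\partial X_i$ are taken with the coefficients of the $f_j$ treated as constants, which is exactly differentiation relative to $F$. So the computed module is $\Omega_{L/F}$, not the absolute module $\Omega_L$. Note also that the decidable $p$-independence hypothesis is not needed here; computability of $L$ with decidable ideal membership suffices. With $d$ and $m-r$ in hand, the algorithm answers yes exactly when $m - r \leq d+1$. The equivalence of this test with the conditions of Lemma~\ref{lem:almost-separable} is condition (4) there.
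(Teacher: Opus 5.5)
Your proof is correct and follows the paper's argument: you decide condition (4) of Lemma~\ref{lem:almost-separable} by computing the transcendence degree $d$ and computing $\dim_L \Omega_{L/F}$ via linear algebra over $L$, using the explicit Jacobian presentation of $\Omega_{R/F}$, whose generic fibre is $\Omega_{L/F}$. Your rank computation (testing which minors of the Jacobian lie in $P$) and your remark that only decidable ideal membership in $F[X_1, \dotsc, X_m]$ is needed are valid refinements of that same route.
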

\begin{proof}
  Let $R = F[X_1, \dotsc, X_m]/(f_1, \dotsc, f_n)$, so $L$ is the fraction field of $R$.
  We can compute the transcendence degree $d$ of $L/F$ \cite[paragraph~6]{Seidenberg_ConstrInAlgebra}.
  We have to decide whether $\Omega_{L/F}$ is at most $(d+1)$-dimensional.
  This is the generic fibre of $\Omega_{R/F}$, where $R = F[X_1, \dotsc, X_m]/(f_1, \dotsc, f_n)$.
  The $R$-module $\Omega_{R/F}$ is given explicitly in terms of finitely many generators and relations \cite[Section~16.1]{Eisenbud_comm-algebra}.
  Thus its generic fibre is given as an $L$-vector space by the same generators and relations,
  and computing its dimension is a matter of linear algebra over the computable field $L$.
\end{proof}

\begin{proposition}\label{prop:almost-all-res-fields-decidable}
  Let $K$ be a computable PAC field of characteristic $p$ with free absolute Galois group,
  a splitting algorithm, and decidable $p$-independence relation.
  Let $F/K$ be a regular function field.
  Then the $\mathcal{L}_{\mathrm{ring}}(F)$-theory of almost all residue fields of $F/K$ is decidable.
\end{proposition}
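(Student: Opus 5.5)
The theory $T$ of almost all residue fields is computably enumerable, since by Remark~\ref{rem:aa-res-description-first-order} and Theorem~\ref{thm:almost-all-res-pac} it is axiomatised by a computably enumerable set of $\mathcal{L}_{\mathrm{ring}}(F)$-sentences. The only choice not given by the data is the possible value $f'$, together with the finitely many separable extensions of $K$ of a given degree in case (3)(d), and these can be enumerated using the splitting algorithm. It therefore suffices to show that the complement of $T$ is also computably enumerable. A sentence $\varphi$ is not in $T$ if and only if $\neg\varphi$ holds in some field $L \supseteq F$ satisfying conditions (1)--(3) of Theorem~\ref{thm:almost-all-res-pac}. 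So we have to semi-decide the existence of such a model of a given sentence $\chi = \neg\varphi$.

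For this we enumerate candidate certificates in the following form:
\begin{itemize}
\item a value $f'$ that is allowed by condition (3);
\item a disjunct $\psi$ from a normal form of $\chi$ modulo $f'$-free PAC fields extending $F$, as in Proposition~\ref{prop:pac-normal-form}(1);
\item a finitely generated extension $E = \Frac(F[X_1,\dotsc,X_m]/\mathfrak{p})$, given by a prime ideal via Remark~\ref{rem:nicely-computable}, together with witnesses in $E$.
\end{itemize}
The normal form can be found effectively by enumerating proofs from the computably enumerable theory of $f'$-free PAC fields of imperfect exponent $e$, which is computably enumerable when $f'$ is given. For a candidate we check four things. First, $E \models \psi$ with the given witnesses; the quantifier-free part, the ``no root'' conditions and the $p$-independence conditions are decidable, because $E$ is again a computable field with splitting algorithm and decidable $p$-independence relation. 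Second, $E/F$ satisfies the conditions of Lemma~\ref{lem:almost-separable}, which is decidable by Lemma~\ref{lem:decide-almost-separable}. Third, the Galois group of the splitting field of the relevant irreducible factors $r_j$ over $E$ is generated by $f'$ elements; this is computable because splitting fields and Galois groups over $E$ are computable. Fourth, the compatibility condition in (3)(d) holds between $f'$ and $[K^{\mathrm{sep}}\cap E : K]$.

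Such a certificate exists whenever $\chi$ has a model. This follows exactly as in the proof of Theorem~\ref{thm:almost-all-res-pac}: there we extracted precisely such a finitely generated $E \subseteq L$. The main work is the converse, that a certificate yields a genuine model $L$ of (1)--(3) and $\chi$. I would use Lemma~\ref{lem:pac-construction} to build from $E$ an $f'$-free PAC field $L$ of imperfect exponent $e$ in which the polynomials $r_j$ stay rootless. If $E$ is inseparable over $F$, take $c_1,\dots,c_e$ to be a $p$-basis of $L$ in the form $B_K'$ together with the relative $p$-basis elements, so that the extension is separable over $E$. Separability of $L/E$ keeps the $p$-independent tuples $p$-independent, and rootlessness is preserved, so $L \models \psi$. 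Also $\dim\ker(\Omega_F\otimes L\to\Omega_L)$ does not grow, because $L/E$ is separable, so (2) transfers from $E$ to $L$. For (3) in the case $1<f<\omega$, we additionally need $K^{\mathrm{sep}}\cap L$ to be controlled. Here part (2) of Lemma~\ref{lem:pac-construction} applies, with $K_0$ the relative separable closure of $K$ in $E$ ($f'$-free). To keep the rootlessness at the same time, I would try to combine parts (1) and (2) of that lemma through the same Galois-theoretic splitting construction. Alternatively, $f' = \omega$ can be chosen when that is permitted.

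I expect the hardest step to be this combination, together with making sure $e$ and the $p$-basis choice remain consistent with condition (2) when $e = \infty$ versus finite $e$. Deciding membership in $T$ then runs both enumerations in parallel. Finiteness of the candidate $f'$-values in case (3)(d) is not needed, since we only semi-decide.
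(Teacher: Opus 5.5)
Your overall strategy is fine in outline and close in spirit to the paper's: $T$ is computably enumerable, and consistency of $T\cup\{\chi\}$ is semi-decided by exhibiting a finitely generated $E\supseteq F$ that Lemma~\ref{lem:pac-construction} then extends to a model. However, the step that turns a certificate into a model fails when $e<\infty$.

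A finitely generated extension $E$ of $F$ has imperfect exponent $e+\operatorname{trdeg}(E/K)\geq e+1$. So no extension $L/E$ of imperfect exponent $e$ can be separable, and ``$B_K'$ together with relative $p$-basis elements'' of $E$ has too many elements to be a $p$-basis of $L$. Consequently neither the $p$-independence conjuncts of $\psi$ that you verify in $E$, nor your argument for condition (2), transfer to $L$. The test is in fact unsound. Take $K$ algebraically closed, so $e=f=0$, and let $\chi=\exists x\,\forall y\,(y^p\neq x)$. This sentence is already in the normal form of Proposition~\ref{prop:pac-normal-form}(1), and it is false in every model of $T$ because all such models are perfect. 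Yet $E=F$ with witness $x=t$, for a separating element $t$, passes all four of your checks.

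The paper's remedy, for finite $e$, is to fix a $p$-basis $c_0,\dotsc,c_e$ of $F$, run through its $e$-element subsets, and work modulo the theory in which $c_1,\dotsc,c_e$ form a $p$-basis. Proposition~\ref{prop:pac-normal-form}(2) then removes all $p$-independence conjuncts. The only $p$-theoretic check left on $E$ is that $c_1,\dotsc,c_e$ are $p$-independent in $E$, which is exactly what condition (2) amounts to when $e<\infty$. Lemma~\ref{lem:pac-construction}(1)(b) then produces $L$ with $p$-basis $c_1,\dotsc,c_e$, without needing $L/E$ to be separable. For $e=\infty$ your argument does go through, via Lemma~\ref{lem:pac-construction}(1)(a).

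The second gap is the case $1<f<\omega$ with finite $f'>1$ (including $f'=f$), which you leave open. Your fourth check is not backed by any construction, since Lemma~\ref{lem:pac-construction}(1) gives no control over $K^{\mathrm{sep}}\cap L$. Falling back on $f'=\omega$ does not help for sentences whose only models are $f'$-free with $f'$ finite.

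The paper avoids combining parts (1) and (2) of the lemma altogether. For $f'=1+n(f-1)$ it runs through the finitely many separable $K'/K$ of degree $n$, and works modulo the theory of $f'$-free PAC fields extending $FK'$ in which $K'$ is relatively separably closed. By Proposition~\ref{prop:pac-normal-form}(3),(4), the ``no root'' conjuncts disappear modulo this theory, because any extension of such fields is automatically relatively separably closed. The check on $E$ is then only that $K'$ is relatively separably closed in $E$, together with the $p$-basis condition, and Lemma~\ref{lem:pac-construction}(2) alone produces the model.

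With these two changes your certificate search becomes correct. The paper organises the same idea as a terminating procedure on the generic point of the prime ideal $(f_1,\dotsc,f_k)$, enlarging the ideal whenever the generic point fails, but that difference is inessential.
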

\begin{proof}
  Let $T$ be this theory.
  Let $e$ be the imperfect exponent of $K$, and let $0 \leq f \leq \omega$ such that $K$ is $f$-free.
  We will use the characterisation of $T$ from Theorem~\ref{thm:almost-all-res-pac}.
  We must check whether the set of consequences of $T$ is decidable.
  Alternatively (passing to negations) we must, given an $\mathcal{L}_{\mathrm{ring}}(F)$-sentence $\varphi$,
  decide whether $T \cup \{ \varphi \}$ is consistent.

  Let $T_0$ be the $\mathcal{L}_{\mathrm{ring}}(F)$-theory of PAC fields $L$ of imperfect exponent $e$ extending $F$
  such that the equivalent conditions of Lemma~\ref{lem:almost-separable} are satisfied for $L/F$;
  in other words, this is the theory axiomatised by axiom schemes (1) and (2) from Theorem~\ref{thm:almost-all-res-pac}.
  It is clear from Remark~\ref{rem:aa-res-description-first-order}
  that this theory has a computably enumerable axiomatisation
  (using that $F$ has a computably enumerable $p$-basis
  as the $p$-independence relation for finite tuples is decidable).

  In the following, we will occasionally make a case distinction between the cases of finite and infinite $e$.
  If $e$ is finite, we may fix a $p$-basis $c_0, \dotsc, c_e$ of $F$,
  and then condition (2) from Theorem~\ref{thm:almost-all-res-pac} is equivalent
  to the assertion that some $e$-element subset of $\{ c_0, \dotsc, c_e \}$ is a $p$-basis.
  By considering all reorderings of $c_0, \dotsc, c_e$ in parallel, we may thus -- in the case of finite $e$ --
  replace $T_0$ by the stronger theory of PAC fields of imperfect exponent $e$ extending $F$
  in which $c_1, \dotsc, c_e$ are a $p$-basis.

  Let a sentence $\varphi$ as above be given.
  We wish to test whether $T \cup \{ \varphi \}$ is consistent.
  We begin by checking whether there exists an $\omega$-free model of this theory.
  If $f=0$ or $f=1$, none such can exist, so assume for this step that $f>1$.
  It suffices to check whether there is an $\omega$-free model of $T_0 \cup \{ \varphi \}$
  (since any such automatically satisfies axiom scheme (3) from Theorem~\ref{thm:almost-all-res-pac}).
  We may assume that $\varphi$ is of the form given in Proposition~\ref{prop:pac-normal-form}.
  (We use here that $T_0$ is computably enumerable, so that a disjunction of formulas of this form
  equivalent to the original $\varphi$ can be effectively found.)
  So if $e<\infty$, we may assume that $\varphi$ is of the form
  \[ \exists x_1, \dotsc, x_n \Big( \bigwedge_{i = 1}^k f_i(x_1, \dotsc, x_n) = 0 \mathrel\land \bigwedge_{i=1}^l (p_i(Y,x_1, \dotsc, x_n) \text{ has no root or is inseparable}) \Big)\]
  for suitable $f_i, p_i$.
  We may assume that the $f_i$ generate a prime ideal:
  by Remark~\ref{rem:nicely-computable}, we can compute the prime ideals associated to a primary decomposition of the ideal $(f_1, \dotsc, f_k)$,
  in terms of generators, and then replace the ideal $(f_1, \dotsc, f_k)$ by each of the prime ideals in turn.
  (If the $f_i$ generate the unit ideal and so there are no associated primes, the algorithm terminates here, since no solution can be found.)
  Consider the field $L_0=\Frac(F[X_1, \dotsc, X_n]/(f_1, \dotsc, f_k))$.
  By Remark~\ref{rem:nicely-computable}, we can test if $c_1, \dotsc, c_e$ are $p$-independent in $L_0$.
  We can also test if each of the polynomials $p_i(Y, X_1, \dotsc, X_n) \in L_0[Y]$ has no root or is inseparable over $L_0$.
  If both of these questions have positive answers,
  then by Lemma~\ref{lem:pac-construction}~(1)
  there exists an $\omega$-free extension field $L/L_0$ in which none of the separable $p_i(Y, X_1, \dotsc, X_n) \in L_0[Y]$ have a root
  and $c_1, \dotsc, c_e$ are a $p$-basis.
  Such $L$ is a model of $T_0 \cup \{ \varphi \}$ and hence of $T \cup \{ \varphi \}$.
  Otherwise, the computably enumerable $\mathcal{L}_{\mathrm{ring}}(F \cup \{ X_1, \dotsc, X_n \})$-theory given by $T_0$,
  the assertion that a polynomial $g \in F[X_1, \dotsc, X_n]$ annihilates the $X_i$ if and only if $g$ is in the ideal generated by the $f_i$,
  and the assertion that each of the $p_i(Y, X_1, \dotsc, X_n)$ has no root or is inseparable,
  is inconsistent, given that each model of it is naturally an extension of $L_0$.
  We can then effectively find a contradiction, which in this case means that we can find a polynomial $g$ not in the ideal generated by the $f_i$
  such that for every witness $(x_1, \dotsc, x_n)$ of $\varphi$ as above, we must have $g(x_1, \dotsc, x_n) = 0$.
  So we may add $g$ to the $f_i$ and start over.
  This procedure will terminate since we cannot have a properly increasing infinite sequence of polynomial ideals.

  In the case $e=\infty$, we can work as above, but must use the different normal form for $\varphi$ from Proposition~\ref{prop:pac-normal-form}.
  Therefore we must check whether in the field $L_0$ the necessary tuples are $p$-independent,
  which we can since $L_0$ has decidable $p$-independence relation (Remark~\ref{rem:nicely-computable}).
  We must also check whether $L_0/F$ satisfies the condition of Lemma~\ref{lem:almost-separable},
  since this is part of $T_0$.
  This is decidable by Lemma~\ref{lem:decide-almost-separable}.
  This finishes our treatment of $\omega$-free models.

  If in the above we have found an $\omega$-free model of $T_0 \cup \{ \varphi \}$, we are done,
  so we may suppose there is no such model.
  This means that a certain computably enumerable extension theory of $T_0 \cup \{ \varphi \}$ is inconsistent.
  We can effectively find a contradiction,
  and therefore effectively find a certain finite $f_0$ such that $T_0 \cup \{ \varphi \}$ has no $f'$-free model for any $f' > f_0$.
  Therefore, to test the consistency of $T \cup \{ \varphi \}$, it suffices to test whether it has an $f'$-free model for finitely many $f'$.
  Fix one such $f'$.
  Let us first handle the cases $f' = 0$ and $f'=1$ (which can only occur for $f'=f$).
  Let us first assume that $e<\infty$, so we again may assume that $\varphi$ is of the form
  \[ \exists x_1, \dotsc, x_n \Big( \bigwedge_{i = 1}^k f_i(x_1, \dotsc, x_n) = 0 \mathrel\land \bigwedge_{i=1}^l (p_i(Y,x_1, \dotsc, x_n) \text{ has no root or is inseparable}) \Big)\]
  for suitable $f_i, p_i$.
  (Notice that for general $\varphi$, its normal form in this sense modulo the axioms of $f'$-freeness may differ from its normal form
  modulo the axioms of $\omega$-freeness.)
  We can now proceed almost exactly as before in the $\omega$-free case;
  however, to check the existence of a suitable $f'$-free PAC field extending $L_0$,
  we need to verify an additional Galois-theoretic condition from Lemma~\ref{lem:pac-construction}~(1) regarding the polynomials $p_i$.
  Nonetheless, it is clear that this is decidable.
  The case $e=\infty$ is entirely analogous.

  This leaves us to decide whether $T \cup \{ \varphi \}$ has an $f'$-free model for a given finite $f'>1$.
  Inspecting the axioms, this can only happen if $f$ is finite and $f' = 1 + n(f-1)$ for some $n$;
  furthermore, in that case the part of a model separably algebraic over $K$ necessarily has degree $n$.
  The field $K$ only has finitely many separable extensions of degree $n$,
  and we can effectively enumerate them (in terms of the minimal polynomial of a generator, say).
  So we may as well fix a separable extension $K'/K$ of degree $n$ and only consider models which contain $K'$ as a relatively separably closed subfield.
  Note that $K'$ itself is necessarily $f'$-free.
  So let us work in the $\mathcal{L}_{\mathrm{ring}}(FK')$-theory $T''$ of $f'$-free PAC fields extending the field compositum $FK'$
  in which $K'$ is relatively separably closed and the axioms of $T'$ are satisfied.
  It is clear that $T''$ is computably enumerable.
  We wish to check whether $T'' \cup \{ \varphi \}$ is consistent.
  If $e$ is finite, by Proposition~\ref{prop:pac-normal-form} we may suppose that $\varphi$ has the form
  \[ \exists x_1, \dotsc, x_n \Big( \bigwedge_{i = 1}^k f_i(x_1, \dotsc, x_n) = 0 \Big) \]
  for suitable $f_i \in FK'[X_1, \dotsc, X_n]$.
  Working as above, we may suppose the $f_i$ generate a prime ideal in $FK'[X_1, \dotsc, X_n]$,
  and consider the field $L_0 = \Frac(FK'[X_1, \dotsc, X_n]/(f_1, \dotsc, f_k))$.
  If the field $K'$ is separably closed in $L_0$ and $c_1, \dotsc, c_e$ are $p$-independent,
  then $L_0$ embeds into a model of $T''$ as desired by Lemma~\ref{lem:pac-construction}~(2).
  Note that we can check whether $K'$ is separably closed in its finitely generated extension field $L_0$
  by first determining the degree of its relative algebraic closure in $L_0$, and then testing whether finitely many
  separable polynomials over $K'$ have a zero in $L_0$.
  If $L_0$ does not embed into a model of $T''$ as desired,
  we proceed inductively as above, adding a new polynomial $g$ to the list of $f_i$.
  The case $e=\infty$ is the same, mutatis mutandis.
\end{proof}

\begin{corollary}\label{cor:pac-aa-completions-decidable}
  Let $K$ be a computable PAC field with free absolute Galois group,
  a splitting algorithm,
  and a computably enumerable $p$-basis.
  Let $F/K$ be a separable function field.
  \begin{enumerate}
  \item The universal/existential $\mathcal{L}_{\mathrm{ring}}(F)$-theory of almost all completions of $F$ is decidable.
  \item Assume (R4). Then also the universal/existential $\mathcal{L}_{\mathrm{val}}(F)$-theory of almost all completions of $F$ is decidable.
  \end{enumerate}
\end{corollary}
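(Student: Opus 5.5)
The plan is to combine Corollary~\ref{cor:aa-res-to-aa-completions} with Proposition~\ref{prop:almost-all-res-fields-decidable}, after a reduction to the regular case. A many-one reduction to a decidable set yields decidability. The only mismatches with those results are the hypotheses: Proposition~\ref{prop:almost-all-res-fields-decidable} asks for a regular function field and for decidable $p$-independence, whereas here $F/K$ is only separable and $K$ only has a computably enumerable $p$-basis.

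\textbf{Step 1: reduce to the regular case.} Let $K_1$ be the relative algebraic closure of $K$ in $F$. Since $F/K$ is separable, $K_1/K$ is finite separable and $F/K_1$ is regular. Moreover $K_1$ is PAC, and it is $f'$-free with $f' = 1+[K_1:K](f-1)$ by the Nielsen--Schreier remark preceding Theorem~\ref{thm:almost-all-res-pac}, so it again has free absolute Galois group. The places of $F/K$ and of $F/K_1$ coincide, since every $K$-trivial valuation is trivial on the algebraic extension $K_1$. Hence the completions, the residue fields and the ``almost all'' theories are the same, and we may replace $K$ by $K_1$. We must also check that $K_1$ is computable with a splitting algorithm and decidable $p$-independence relation. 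Since $K_1$ is a finite extension of $K$ given by an explicit minimal polynomial, this follows from Remark~\ref{rem:nicely-computable}, and it also shows that $F$ is a suitably computable field.

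\textbf{Step 2: reconcile the computability hypotheses.} The corollary assumes only a computably enumerable $p$-basis of $K$, plus a splitting algorithm. Proposition~\ref{prop:almost-all-res-fields-decidable} needs decidable $p$-independence. Remark~\ref{rem:nicely-computable} asserts, without proof, that a computably enumerable $p$-basis suffices via the exchange property; I would supply that argument here. Given a finite tuple, enumerate the $p$-basis $B$ and search in parallel for two kinds of certificate:
\begin{enumerate}
\item a dependence relation expressing one element of the tuple in $K^p$-span of monomials in the others; this search is semi-decidable because $K$ is computable;
\item a finite subset $B_0\subseteq B$ such that the tuple together with $B_0 \setminus B_1$ spans the same space over $K^p$ as $B_0$, for some $B_1\subseteq B_0$ with $|B_1|$ equal to the tuple length.
\end{enumerate}
The second certificate witnesses independence by exchange. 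I expect this to be the main (if modest) obstacle, since this equivalence is only asserted in the paper. If it turns out delicate, I would simply note that in the intended examples $p$-independence is decidable anyway, and fall back on that.

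\textbf{Step 3: conclude.} With these hypotheses in place, Proposition~\ref{prop:almost-all-res-fields-decidable} gives decidability of the $\mathcal{L}_{\mathrm{ring}}(F)$-theory of almost all residue fields. Corollary~\ref{cor:aa-res-to-aa-completions} then gives a many-one reduction to it:
\begin{enumerate}
\item in part (2), assuming (R4), for the universal/existential $\mathcal{L}_{\mathrm{val}}(F)$-theory;
\item in part (1), without (R4), using that $K$ is PAC, for the $\mathcal{L}_{\mathrm{ring}}(F)$-theory.
\end{enumerate}
Both conclusions of the corollary follow.
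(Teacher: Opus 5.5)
Your proposal is correct and follows the paper's proof. Like the paper, you replace $K$ by its relative algebraic closure in $F$, which is finite separable over $K$, still PAC with free absolute Galois group, and has the same places, completions and residue fields; you then combine Proposition~\ref{prop:almost-all-res-fields-decidable} with Corollary~\ref{cor:aa-res-to-aa-completions}. Your Step~2 exchange-lemma argument is sound and fills a step the paper leaves tacit: the paper notes only that the computably enumerable $p$-basis survives, yet applies Proposition~\ref{prop:almost-all-res-fields-decidable}, which assumes decidable $p$-independence and so relies on the claim Remark~\ref{rem:nicely-computable} merely asserts. Carry out Step~2 before invoking Remark~\ref{rem:nicely-computable} for $K_1$, since that remark presupposes decidable $p$-independence of $K$; alternatively, apply Step~2 directly to $K_1$, whose $p$-basis is that of $K$.
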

\begin{proof}
  The relative algebraic closure of $K$ in $F$ is a finite separable extension of $K$.
  As such, it is computable and has a splitting algorithm \cite[Theorem~3.2.4]{StoltenbergHansenTucker_computable-rings-fields},
  and a computably enumerable $p$-basis of $K$ remains a computably enumerable $p$-basis of this finite separable extension.
  It is also a PAC field with free absolute Galois group.
  Hence we may as well replace $K$ by this finite separable extension.
  Therefore assume that $K$ is relatively separably closed in $F$, i.e.\ $F/K$ is regular.
  Now the $\mathcal{L}_{\mathrm{ring}}(F)$-theory of almost all residue fields is decidable by Proposition~\ref{prop:almost-all-res-fields-decidable}.
  The result follows from Corollary~\ref{cor:aa-res-to-aa-completions}.
\end{proof}

\begin{remark}\label{rem:constructing-examples-pac}
  Let us indicate how to obtain examples of fields $K$ as in the corollary.
  On the one hand, these exist for abstract computability-theoretic reasons.
  Namely, for every prime $p$, $0 \leq e \leq \infty$ and $0 \leq f \leq \omega$,
  the theory of $f$-free PAC fields of characteristic $p$ and imperfect exponent $e$
  is decidable
  (see \cite[Theorem~30.6.1]{FriedJarden} for the perfect case;
  the general statement can be shown without difficulty along the lines of the
  proof of Proposition~\ref{prop:almost-all-res-fields-decidable}, but with substantial simplification),
  and therefore has decidable models $K$ (i.e.\ models with domain $\mathbb{N}$ with decidable elementary diagram)
  \cite[Theorem~4.1]{Harizanov_pure-computable-model-theory}.
  Such $K$ certainly have a splitting algorithm and decidable $p$-independence relation.

  On the other hand, one can also produce simple examples of suitable $K$ by hand.
  For instance, for every prime $p$ and $0 \leq e < \infty$, the separable closure $\mathbb{F}_p(t_1, \dotsc, t_e)^{\mathrm{sep}}$
  admits a natural indexing turning it into a computable field with splitting algorithm and the decidable $p$-independence relation
  (where splitting algorithm and decidability of $p$-independence are easily lifted from suitable finitely generated subfields).
  As a separably closed field, it is also PAC.
\end{remark}

We also obtain decidability results for the universal/existential $\mathcal{L}_{\mathrm{val}}(F)$-theory of all (as opposed to almost all)
completions of a function field.
\begin{proposition}\label{prop:all-completions-decidable}
  Assume (R4).
  Let $K$ be a computable PAC field with free absolute Galois group, a splitting algorithm, and decidable $p$-independence relation.
  Let $F/K$ be a separable function field.
  Then the universal/existential $\mathcal{L}_{\mathrm{val}}(F)$-theory of all completions of $F$ is decidable.
\end{proposition}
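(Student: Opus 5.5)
We reduce the theory of all completions to the theory of almost all completions plus finitely many exceptional completions, handled individually. Let $\varphi$ be a universal/existential $\mathcal{L}_{\mathrm{val}}(F)$-sentence; it holds in all completions iff it holds in almost all completions and in each of the remaining finitely many. The first condition is decidable by Corollary~\ref{cor:pac-aa-completions-decidable}. The difficulty is that the exceptional set depends on $\varphi$ and is not given to us. We therefore make the exceptional set explicit, using Lemma~\ref{lem:ff-fs-exceptional-places}. As in the proof of Corollary~\ref{cor:pac-aa-completions-decidable}, we first pass to the relative algebraic closure of $K$ in $F$, so that $F/K$ is regular.

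Fix a separating element $t \in F$, and compute the finite set $S$ of places $v$ with $v(t) < 0$ or $v$ ramified in $F/K(t)$. This is possible by standard computations with integral closures and discriminants over the computable field $K(t)$, using the splitting algorithm. For $v \notin S$, Lemma~\ref{lem:ff-fs-exceptional-places} says that $\mathcal{O}_v$ is formally smooth over $F$ in a strong sense: the $ds\otimes 1$, $s \in B_K\cup\{t\}$, are independent, and $Fv$ satisfies condition (3$_B$). Consequently the completions $\widehat F^v$ with $v \notin S$ are models of the theory $T_2$ of Proposition~\ref{prop:theories-aa-completions-to-res-flds}(1), except that their residue fields are the individual finite extensions $Fv$. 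We then need to show that $\varphi$ holds in all $\widehat F^v$ with $v\notin S$ iff $\varphi^\ast$ from Theorem~\ref{thm:exist-elimination}, extended to boolean combinations, holds in all such $Fv$. This follows from Corollary~\ref{cor:existential-ake} applied to each $\widehat F^v$ individually, which is legitimate because $\widehat{\mathcal{O}_v}$ is formally smooth over $F$ by Lemma~\ref{lem:fs-completion}. So it remains to decide whether a given $\mathcal{L}_{\mathrm{ring}}(F)$-sentence holds in $Fv$ for all $v\notin S$. We decide this as follows: test whether it holds in almost all $Fv$ using Proposition~\ref{prop:almost-all-res-fields-decidable}, then locate the finitely many possible failures.

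The main obstacle is this last step, since failures of an arbitrary sentence in individual residue fields are not obviously computable. I would instead argue semantically. Every residue field $Fv$, $v\notin S$, is a finite extension of $K$, hence an $f'$-free PAC field of imperfect exponent $e$. By Lemma~\ref{lem:ff-fs-exceptional-places} it also satisfies condition (2) of Theorem~\ref{thm:almost-all-res-pac}, and (3) is automatic. Thus each such $Fv$ is a model of the axioms of Theorem~\ref{thm:almost-all-res-pac}. Hence a sentence true in all models of that theory holds in every $Fv$ with $v \notin S$, not just almost all. Conversely, if a sentence holds in all $Fv$, it holds in almost all, so it lies in that theory. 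Therefore \enquote{holds in all $Fv$, $v\notin S$} coincides with membership in the decidable theory of Proposition~\ref{prop:almost-all-res-fields-decidable}. One subtlety needs checking: constants of $F$ with $v(a)<0$ are interpreted as $0$. This affects only finitely many places, and I would enlarge $S$ to include the poles of the finitely many constants occurring in $\varphi$.

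Finally, for each of the finitely many $v\in S$ we decide $\varphi$ in $\widehat F^v$ directly. Since $\widehat F^v \cong Fv(\!(\pi)\!)$-like structures need not be formally smooth over $F$, I would apply Theorem~\ref{thm:exist-elimination} over the smaller parameter field $K(t')$ or $F$ where possible. More robustly, I would use that $\widehat F^v$ is existentially equivalent over $F$ to the henselisation $F^{h,v}$, by (R4) and the existential closedness of $F^{h}$ in $\widehat F^v$. I would then decide the existential theory of the henselisation by reducing, via Theorem~\ref{thm:exist-elimination}(3) applied over the parameter field $K$ extended by a $p$-basis lift, to the existential theory of the finite extension $Fv$ of $K$. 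That theory is decidable as the theory of a PAC field with known invariants. This exceptional-place step is where I expect the most technical care to be needed.
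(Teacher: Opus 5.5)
Your overall shape matches the paper: decide truth in almost all completions, make the exceptional set explicit via Lemma~\ref{lem:ff-fs-exceptional-places}, and treat the remaining places one by one. However, the step that is supposed to make the exceptional set explicit fails. Corollary~\ref{cor:existential-ake} and Theorem~\ref{thm:exist-elimination} concern valued fields in which $F$ sits as a \emph{trivially valued} subfield. The description of models in Theorem~\ref{thm:almost-all-res-pac} concerns fields \emph{containing} $F$. Neither applies to an individual place. $F$ contains elements of negative $v$-value, so $F\not\subseteq\widehat{\mathcal O_v}$ and ``formally smooth over $F$'' is meaningless there. Hence $\widehat F^v$ is not a model of $T_2$. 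Likewise $Fv$, with constants interpreted via residues, is not a field containing $F$. Lemma~\ref{lem:ff-fs-exceptional-places} gives you only the independence of the $ds\otimes 1$ and the instances of (3$_B$), not the trivial-valuation or subfield axioms.

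Concretely, pick $c\in K^\times$ such that the places above $t=c$ are unramified in $F/K(t)$, and let $v$ be one of them. Then $v\notin S$, even after adding the poles and zeros of the constants $t,c$. The sentence $\exists y\,((t-c)y=1\wedge y\in\mathcal O)$ holds in every valued field with $F$ trivially valued, so $\varphi^\ast$ may be taken to be $1=1$. Yet it fails in $\widehat F^v$. Similarly, $t\neq c$ holds in every model of the axioms of Theorem~\ref{thm:almost-all-res-pac} but fails in $Fv$. So both of your ``for all $v\notin S$'' equivalences are false, and no exceptional set read off from the constants of $\varphi$ alone can be correct.

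The missing idea is compactness. The trivial-valuation axioms form a scheme, each instance fails at only finitely many places, and any single consequence uses only finitely many instances. The paper first confirms, via Corollary~\ref{cor:pac-aa-completions-decidable}, that $\varphi$ holds in almost all completions. It then searches for a derivation of $\varphi$ from a finite set $\Phi$ of the axioms from Proposition~\ref{prop:theories-aa-completions-to-res-flds}(1). These are: henselianity; $F$ trivially valued; independence of $ds\otimes 1$ for $s\in B_K\cup\{t\}$; and the residue-field axioms, including the instances of (3$_B$). The bad places are those where some constant occurring in $\Phi$ has non-zero value, together with the fixed exceptional places of Lemma~\ref{lem:ff-fs-exceptional-places}. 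At every other place $\widehat F^v\models\Phi$, hence $\widehat F^v\models\varphi$, and no transfer theorem is ever applied to a single completion.

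Your treatment of the remaining places is also inadequate as stated. Theorem~\ref{thm:exist-elimination} only describes $\mathcal{L}_{\mathrm{val}}(C)$-theories for a trivially valued $C$, so it cannot capture the $\mathcal{L}_{\mathrm{val}}(F)$-theory of $\widehat F^v$, where constants from $F$ have non-zero value. The paper uses Lemma~\ref{lem:single-completion-decidable} instead. There this theory is axiomatised by henselianity, containing $(F,v)$, a fixed uniformiser $\pi\in F$, and $Fv$ being existentially closed in the residue field. The last scheme is computably enumerable because the elementary diagram of the PAC field $Fv$ is, by the Chatzidakis embedding criterion.
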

In the proof, it is necessary to deal with individual places of $F/K$, and in particular to allow places as inputs to algorithms.
For this one needs to code them in some way in terms of the computable field structure; the precise details here do not matter.
Since the valuation ring of a place is a localisation of a finitely generated $K$-algebra
at a (finitely generated) prime ideal, one way of presenting a place is by a suitable list of generators (elements of $F$).

We follow the strategy of \cite[Corollary~6.7]{DittmannFehm_completions}.
We need the following lemma describing the universal/existential theory of a single completion.
\begin{lemma}\label{lem:single-completion-decidable}
  Assume (R4).
  Let $F/K$ be a function field of characteristic $p$, let $v \in \mathbb{P}_{F/K}$ and let $\pi \in F$ be a uniformiser for $v$.
  The universal/existential $\mathcal{L}_{\mathrm{val}}(F)$-theory of the completion $\widehat{F}^v$
  is entailed by the axioms of henselian valued fields containing the valued field $(F,v)$,
  the axiom that $\pi$ is a uniformiser,
  and axioms expressing that $Fv$ is existentially closed in the residue field
  (i.e.\ that the residue field satisfies the universal/existential $\mathcal{L}_{\mathrm{ring}}(Fv)$-theory of $Fv$).

  If $K$ is a computable PAC field with free absolute Galois group, a splitting algorithm, and decidable $p$-independence relation,
  then this universal/existential $\mathcal{L}_{\mathrm{val}}(F)$-theory of the completion $\widehat{F}^v$
  is decidable.
  The decision procedure is uniform in $v$ in the sense that there is one algorithm which,
  given an $\mathcal{L}_{\mathrm{val}}(F)$-sentence and a place $v \in \mathbb{P}_{F/K}$,
  determines whether the sentence holds in $\widehat{F}^v$.
\end{lemma}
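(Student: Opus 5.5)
We split the statement into its two parts: the axiomatisation, and then the decision procedure built on top of it.

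For the axiomatisation, let $T_v$ be the stated theory and let $(L,w)$ be any model. We want $(L,w) \equiv_\exists \widehat{F}^v$ in $\mathcal{L}_{\mathrm{val}}(F)$, since universal/existential sentences are boolean combinations of existential ones.

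The plan is to reduce to valued fields containing $\widehat{F}^v$. First, $\pi$ is a uniformiser of $w$ and $w$ extends $v$, so $F$ sits inside $L$ with the induced discrete valuation. Passing to an $\aleph_1$-saturated elementary extension, we may assume $L$ has a uniformiser and the completion of $F$ embeds. Concretely, the finest proper coarsening $w^0$ yields a complete discretely valued field $(Lw^0,\overline w)$. The closure of $F$ in it is a copy of $\widehat{F}^v$, by saturation and completeness as in the proof of Proposition~\ref{prop:main-monotonicity}.

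Now choose $C = $ a suitable subfield and reduce to that proposition. The cleanest route is to take $C=\widehat{F}^v$ itself rather than $F$. The difficulty is that $\widehat{F}^v$ is not trivially valued; instead, we transfer via residue fields. Existential closedness of $Fv$ in $Lw$ means, after a further elementary extension, $Lw \succeq_\exists$-extends $Fv$. Then:
\begin{enumerate}
\item By Lemma~\ref{lem:e-theory-pass-to-dvr}-style arguments (R4 together with the section Lemma~\ref{lem:bivalued-exists-section}), $(L,w)\equiv_\exists (Lw^0,\overline w)$ over $F$. This requires $Lw^0/F$ separable, which holds since $F/K$ has a separating element and $\widehat F^v/F$ is separable.
\item $(Lw^0,\overline w)$ is a complete DVR field containing $\widehat{F}^v$ with the same uniformiser $\pi$, i.e.\ an immediate-type unramified extension. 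Its residue field extension $Lw/Fv$ has $Fv$ existentially closed.
\item The unramified extension $\widehat{F}^v\subseteq (Lw^0,\overline w)$ is existentially closed. This is a relative version of (R4)/\cite[Corollary~4.16]{ADF_existential} with parameters $\widehat F^v$: e.g.\ write $Lw^0\supseteq \widehat{F}^v$ and compare with $Lw(\!(\pi)\!)$ using Cohen structure theory.
\end{enumerate}
Step (3) is the main obstacle: a coefficient field must be chosen compatibly with $F$, and $Fv/K$ may be inseparable. I would try to use formal smoothness of $\widehat{\mathcal O}_v$ over a $p$-basis-generated subfield together with Lemma~\ref{lem:dvr-embedding} in a relative form, mimicking Remark~\ref{rem:compare-münsterlemma}. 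The point would be to lift an embedding of residue fields $Fv\hookrightarrow Lw$ (existential closedness gives embeddings into elementary extensions) to embeddings of valued fields fixing $F$. Existential closedness then follows by the usual saturation/embedding criterion.

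For decidability, we use the axiomatisation and the same Separation Lemma argument as in Theorem~\ref{thm:exist-elimination}. This reduces a given sentence $\varphi$ effectively to a universal/existential $\mathcal{L}_{\mathrm{ring}}(Fv)$-sentence about the residue field, modulo the computably enumerable theory $T_v$; here $\pi$ and generators of $\mathcal O_v$ are part of the input coding of $v$. It then suffices to decide the universal/existential theory of the single field $Fv$ relative to itself, i.e.\ to decide existential sentences with parameters in $Fv$ true in $Fv$. The field $Fv$ is a finite extension of $K$, computable uniformly in $v$, and PAC with free absolute Galois group, with splitting algorithm and decidable $p$-independence. For such a field, the existential $\mathcal{L}_{\mathrm{ring}}(Fv)$-theory is decidable by the procedure in the proof of Proposition~\ref{prop:almost-all-res-fields-decidable}: compute prime components, test for smooth points using the PAC property, and iterate by ideal enlargement. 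All steps are uniform in the data presenting $v$, giving a single algorithm.
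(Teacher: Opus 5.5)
\textbf{The gap is your step (3).} You flag it yourself and leave it at ``I would try''. It carries the entire content of the axiomatisation claim, since steps (1) and (2) only replace $(L,w)$ by $(Lw^0,\overline w)$, so as written the first half of the lemma is not proved. The paper does not re-derive this half: it quotes it directly from \cite[Theorem~4.12]{ADF_existential}.

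Your sketch for (3) also points the wrong way. The embedding $Fv\subseteq Lw$ is already lifted by $\widehat F^v\subseteq Lw^0$. What the embedding criterion for existential closedness of $\widehat F^v$ in $Lw^0$ needs is a lift of an embedding of $Lw$ over $Fv$ into the residue field of a sufficiently saturated $N\succeq\widehat F^v$.

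Your worry about coefficient fields compatible with $F$ and inseparability of $Fv/K$ goes away if you work relative to $\widehat{\mathcal O}_v$ rather than a subfield of $F$:
\begin{enumerate}
\item $\mathcal O_{\overline w}$ is flat over $\widehat{\mathcal O}_v$ with the common uniformiser $\pi$.
\item $\mathcal O_{\overline w}/\pi\mathcal O_{\overline w}=Lw$ is separable over $Fv$, because existential closedness preserves $p$-independence.
\item Hence $\mathcal O_{\overline w}$ is formally smooth over $\widehat{\mathcal O}_v$ by \cite[Chapitre~0, Théorème~19.7.1]{EGA-IV-1}.
\item As in the proof of Lemma~\ref{lem:dvr-embedding}, \cite[Chapitre~0, Corollaire~19.3.11]{EGA-IV-1} lifts $\mathcal O_{\overline w}\to Lw\hookrightarrow Nv$ to a local $\widehat{\mathcal O}_v$-homomorphism into the $\mathfrak m$-adic completion of $\mathcal O_N$. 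It is injective because $\pi\mapsto\pi\neq 0$.
\item This gives an embedding of $Lw^0$, over $\widehat F^v\supseteq F$, into the complete discretely valued residue field of the finest proper coarsening of $N$.
\end{enumerate}
Apply Lemma~\ref{lem:e-theory-pass-to-dvr} to both $(L,w)$ and $N$ (which is again a model of the axioms). This yields $\Th_\exists(L,w)\subseteq\Th_\exists(\widehat F^v)$ in $\mathcal{L}_{\mathrm{val}}(F)$. The reverse inclusion follows from $\widehat F^v\subseteq Lw^0$ and step (1).

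\textbf{Step (1).} Your justification that $Lw^0/F$ is separable does not work. The lemma does not assume $F/K$ separable, so no separating element is available. Separability of $\widehat F^v/F$ alone says nothing about $Lw^0/\widehat F^v$. You need separability of $Lw^0$ over $\widehat F^v$, which follows from the formal smoothness above, together with separability of $\widehat F^v/F$, which follows from excellence of $\mathcal O_v$.

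\textbf{Decidability.} Your outline can be made to work, but the Separation Lemma is a detour. It already presupposes that the axiom scheme is computably enumerable, and that is exactly what requires deciding the universal/existential theory of $Fv$. Once you have that, the first part guarantees that a proof search for $\varphi$ or $\neg\varphi$ terminates, which is all the paper does.

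The paper gets decidability of the full elementary diagram of $Fv$ from \cite[Theorem~5.15]{Chatzidakis_properties-forking-omega-free-pac}. Elementary extensions of $Fv$ are precisely the $f$-free PAC fields that are separable over $Fv$, have the same imperfection degree, and in which $Fv$ is relatively algebraically closed. This is a computably enumerable axiomatisation of a complete theory.

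Your direct procedure for existential sentences over the PAC field $Fv$ is a legitimate alternative. It rests on two facts: PAC fields are existentially closed in regular extensions, and the rational points of a non-geometrically-integral component lie in its non-smooth locus, so you can recurse. However, it is not literally the procedure of Proposition~\ref{prop:almost-all-res-fields-decidable}, and you would have to write it out.
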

\begin{proof}
  The description of the universal/existential $\mathcal{L}_{\mathrm{val}}(F)$-theory of $\widehat{F}^v$ is immediate from \cite[Theorem~4.12]{ADF_existential}.
  For the decidability, it suffices to observe that the given axioms form a computably enumerable scheme,
  since then one can search for a derivation in a proof calculus of a given sentence or its negation.
  This computably enumerable nature of the axioms is clear once we observe that the universal/existential
  $\mathcal{L}_{\mathrm{ring}}(Fv)$-theory of $Fv$ is decidable.
  The field $Fv$ is a finite extension of $K$, and therefore itself a PAC field.
  It is $f$-free for some $f$ that can be determined from the absolute Galois group of $K$ and the separable degree of $Fv/K$.
  In fact, even the full $\mathcal{L}_{\mathrm{ring}}(Fv)$-theory of $Fv$ (i.e.\ its elementary diagram) is decidable.
  Indeed, a field $E/Fv$ is an elementary extension of $Fv$ if and only if it is PAC, $f$-free,
  separable over $Fv$, it has the same degree of imperfection as $Fv$,
  and $Fv$ is relatively algebraically closed in it \cite[Theorem~5.15]{Chatzidakis_properties-forking-omega-free-pac}.
  This is a computably enumerable set of axioms for the elementary diagram.
\end{proof}

\begin{proof}[Proof of Proposition~\ref{prop:all-completions-decidable}]
  Let $B_K$ be a computably enumerable $p$-basis of $K$, and let $t \in F$ be a separating element over $K$,
  so that $B_K \cup \{ t \}$ is a computably enumerable $p$-basis of $F$.
  Let $\varphi$ be a universal/existential $\mathcal{L}_{\mathrm{val}}(F)$-sentence.
  We wish to decide if it holds in all completions of $F$.
  By Proposition~\ref{prop:almost-all-res-fields-decidable} (using (R4)),
  we can decide if it holds in almost all completions of $F$.
  If no, then it certainly does not hold in all completions, and we are done.
  Otherwise, by Proposition~\ref{prop:theories-aa-completions-to-res-flds},
  the sentence $\varphi$ is implied by the following axioms for a valued field $(E,w)$ in the language $\mathcal{L}_{\mathrm{val}}(F)$:
  \begin{enumerate}
  \item $(E,w)$ is a henselian non-trivially valued field;
  \item $(E,w)$ contains $F$ as a trivially valued subfield;
  \item the valuation ring $\mathcal{O}_w$ is formally smooth over $F$,
    which can be axiomatised by asserting that the elements $ds \otimes 1 \in \Omega_{\mathcal{O}_w} \otimes Ew$, $s \in B_K \cup \{ t \}$, are $Ew$-linearly independent,
    cf.\ Corollary~\ref{cor:fo-independent-differentials} and Corollary~\ref{cor:fo-fs};
  \item the residue field $Ew$ is a model of the theory of almost all residue fields
    as axiomatised in Theorem~\ref{thm:almost-all-res-pac},
    cf.\ Remark~\ref{rem:aa-res-description-first-order}:
    this is $\mathcal{L}_{\mathrm{ring}}(F)$-axiomatised by the axioms for being PAC,
    axiom scheme (3) on absolute Galois groups from that theorem,
    the axiom of having the same imperfect exponent as $K$,
    and for every finite non-empty subset $B_0$ of the $p$-basis $B_K \cup \{ t \}$ of $F$
    the axiom that there exists an element $c \in B_0$ such that $B_0 \setminus \{ c \}$ is $p$-independent in $Ew$.    
  \end{enumerate}
  By an exhaustive search, we can find a derivation in a proof calculus
  of $\varphi$ from a finite collection $\Phi$ of such axioms.

  Let $v_1, \dotsc, v_n \in \mathbb{P}_{F/K}$ be an enumeration of the finitely many places
  at which either one of the finitely many non-zero constants from $F$ mentioned in $\Phi$
  has non-zero value,
  or which occur as exceptional places in Lemma~\ref{lem:ff-fs-exceptional-places};
  it is clear that such an enumeration can be effectively found
  (noting that the exceptional places in Lemma~\ref{lem:ff-fs-exceptional-places} do not depend on $\Phi$
  and therefore can be fixed in advance).
  Let us consider a place $v \in \mathbb{P}_{F/K}$ not among the $v_i$.
  We claim that all the axioms in $\Phi$, and therefore $\varphi$, are satisfied in the completion $\widehat{F}^v$.
  Certainly $\widehat{F}^v$ is a henselian non-trivially valued field,
  and all the non-zero constants from the axioms $\Phi$ have value zero.
  Further, by choice of $v$ avoiding the exceptional places in Lemma~\ref{lem:ff-fs-exceptional-places},
  the elements $ds \otimes 1 \in \Omega_{\mathcal{O}_v} \otimes Fv$, $s \in B_K \cup \{ t \}$,
  are $Fv$-linearly independent
  and so the formal smoothness axioms are satisfied.
  The residue field $Fv$ is a finite extension of $K$,
  and so it satisfies axiom schemes (1) and (3) from Theorem~\ref{thm:almost-all-res-pac}
  and has the same imperfect exponent as $K$.
  Lastly, there exists a $c \in B_K \cup \{ t + \mathfrak{m}_v\}$
  such that the $ds \in \Omega_{Fv}$, $s \in (B_K \cup \{ t + \mathfrak{m}_v \}) \setminus \{ c \}$,
  are $Fv$-linearly independent,
  and so $(B_K \cup \{ t + \mathfrak{m}_v \}) \setminus \{ c \}$ is $p$-independent in $Fv$.
  Recalling that $t$ is identified with $t + \mathfrak{m}_v$ when we see $Fv$ as an $\mathcal{L}_{\mathrm{ring}}(F)$-structure,
  this shows that $Fv$ satisfies those axioms on residue fields $Ew$
  contained among the $\Phi$.
  This proves our claim that $\widehat{F}^v \models \Phi$ and so $\widehat{F}^v \models \varphi$.

  Therefore it only remains to check whether $\widehat{F}^{v_i} \models \varphi$ for $i = 1, \dotsc, n$.
  This is a decidable problem by Lemma~\ref{lem:single-completion-decidable}.
\end{proof}

\section*{Acknowledgements}

I would like to thank the Hausdorff Research Institute for Mathematics Bonn,
funded by the DFG under Germany's Excellence Strategy (EXC-2047/1 -- 390685813),
for its hospitality during the autumn 2025 trimester programme ``Definability, decidability, and computability''
while parts of this paper were written.

Some of the results of this paper, still in conjectural form, were first presented at the symposion
``À la recherche de l'inconnu: Challenges in the model theory of local and global fields'' in 2024;
I would like to thank its organiser, Jochen Koenigsmann, and the other participants for the ensuing discussion.
I am grateful to Franz-Viktor Kuhlmann for a discussion of references concerning Remark~\ref{rem:lu-discrete}.
I would also like to thank Arno Fehm for pointing me toward \cite{JardenPoonen_galois-points-varieties},
and for numerous helpful comments on a draft version.

\printbibliography

\end{document}